\numberwithin{equation}{subsection}
\newtheorem{thm}{Theorem}[subsubsection]
\newtheorem*{thm*}{Theorem}
\newtheorem{cor}[thm]{Corollary}
\newtheorem*{cor*}{Corollary}
\newtheorem{lem}[thm]{Lemma}
\newtheorem{prop}[thm]{Proposition}
\newtheorem{prop-const}[thm]{Proposition-Construction}
\newtheorem{conjecture}{Conjecture}[subsection]
\newtheorem*{conjecture*}{Conjecture}
\newtheorem*{princ*}{Principle}
\theoremstyle{remark}
\newtheorem{rem}[thm]{Remark}
\newtheorem{example}[thm]{Example}
\newtheorem{defin}[thm]{Definition}
\newcommand{\into}{\hookrightarrow}\raggedbottom
\newcommand{\onto}{\twoheadrightarrow}
\newcommand{\bA}{{\mathbb A}}
\newcommand{\bB}{{\mathbb B}}
\newcommand{\bC}{{\mathbb C}}
\newcommand{\bD}{{\mathbb D}}
\newcommand{\bG}{{\mathbb G}}
\newcommand{\bM}{{\mathbb M}}
\newcommand{\bP}{{\mathbb P}}
\newcommand{\bZ}{{\mathbb Z}}
\newcommand{\cC}{{\mathcal C}}
\newcommand{\cD}{{\mathcal D}}
\newcommand{\cF}{{\mathcal F}}
\newcommand{\cG}{{\mathcal G}}
\newcommand{\cH}{{\mathcal H}}
\newcommand{\cK}{{\mathcal K}}
\newcommand{\cL}{{\mathcal L}}
\newcommand{\cN}{{\mathcal N}}
\newcommand{\cO}{{\mathcal O}}
\newcommand{\cW}{{\mathcal W}}
\newcommand{\sC}{{\EuScript C}}
\newcommand{\sD}{{\EuScript D}}
\newcommand{\sE}{{\EuScript E}}
\newcommand{\sH}{{\EuScript H}}
\newcommand{\sL}{{\EuScript L}}
\newcommand{\sP}{{\EuScript P}}
\newcommand{\sX}{{\EuScript X}}
\newcommand{\sY}{{\EuScript Y}}
\newcommand{\sZ}{{\EuScript Z}}
\newcommand{\fb}{{\mathfrak b}}
\newcommand{\fg}{{\mathfrak g}}
\newcommand{\fn}{{\mathfrak n}}
\newcommand{\fp}{{\mathfrak p}}
\newcommand{\ft}{{\mathfrak t}}
\newcommand{\on}{\operatorname}
\renewcommand{\lim}{\on{lim}}
\newcommand{\LSU}{\on{LocSys}_G(U)}
\newcommand{\LSD}{\on{LocSys}_G(\overset{\circ}{D_x})}
\newcommand{\LSBA}{\on{LocSys}_{G^{\on{ab}}}(U,\chi_S^{\on{ab}})}
\newcommand{\LS}{\on{LocSys}_G(U,\chi_S, \tau)}
\newcommand{\LSN}{\on{LocSys}_G(U,\chi_S)}
\newcommand{\BunN}{\on{Bun}_{\check{G},\check{N}}(X,S)}
\newcommand{\NR}{\on{QCoh}((\on{LocSys}_G(\overset{\circ}{D})_{\chi_S}^{\on{RS}})_{\on{Ran},S})}
\newcommand{\LSsigma}{\on{LocSys}_G(U, \cO_S,\tau)}
\newcommand{\Autr}{D(\on{Bun}_{\check{G},\check{N}}(X,S))^{\check{T}_S, \chi_S}}
\newcommand{\Autreal}{\sH_{\check{G},\chi_S,\on{Ran},S}^{\on{geom}}}
\newcommand{\LSDisk}{\on{LocSys}_G(\overset{\circ}{D})}
\renewcommand{\subset}{\subseteq}
\newcommand{\dR}{\on{dR}}
\newcommand{\biggg}{\bBigg@{4}}
\newcommand{\Biggg}{\bBigg@{5}}
\begin{document}

\frenchspacing

\setlength{\epigraphwidth}{0.4\textwidth}
\renewcommand{\epigraphsize}{\footnotesize}

\begin{abstract}

For a reductive group $G$, we prove that complex irreducible rigid $G$-local systems with quasi-unipotent monodromies and finite order abelianization on a smooth curve are motivic, generalizing a theorem of Katz for $GL_n$. We do so by showing that the Hecke eigensheaf corresponding to such a local system is itself motivic. Unlike other works in the subject, we work entirely over the complex numbers.

In the setting of de Rham geometric Langlands, we prove the existence of Hecke eigensheaves associated to any irreducible $G$-local system with regular singularities. We also provide a spectral decomposition of a naturally defined automorphic category over the stack of regular singular local systems with prescribed eigenvalues of the local monodromies at infinity. 

Finally, we establish a relationship between rigidity for complex local systems and automorphic rigidity, answering a conjecture of Yun in the tame complex setting.
\end{abstract}

\title[Motivic realization of rigid $G$-local systems on curves]{Motivic realization of rigid $G$-local systems on curves and\\
tamely ramified geometric Langlands}

\author{Joakim F\ae rgeman}

\address{Yale University, 
Department of Mathematics, 
KT 903, 219 Prospect Street, 
New Haven, CT 06520}

\email{joakim.faergeman@yale.edu}

\maketitle

\tableofcontents

\newpage

\section{Introduction}

\subsection{Motivic local systems}

\subsubsection{}\label{s:intro} 
Simpson conjectured that irreducible rigid local systems with quasi-unipotent local monodromies and finite order abelianization are motivic. In recent years, there has been progress on understanding the properties of rigid local systems (see e.g. \cite{esnault2018cohomologically}, \cite{klevdal2020g}), giving evidence to the conjecture. This progress is based on reduction to characteristic $p$, where Lafforgue's work on the Langlands conjectures is available. In this paper, we show that ideas in the geometric Langlands program in characteristic zero can be used in the study of rigid local systems.

\subsubsection{} In more detail, given a smooth quasi-projective complex variety $Y$, a natural problem is to identify the local systems of $Y$ that appear in the cohomology of a family of varieties over $Y$. We say that an irreducible complex local system $\sL$ is motivic if there exists a Zariski-open dense subset $U\subset Y$ such that $\sL_{\vert U}$ is a direct summand of $R^if_*(\bC_Z)$ for some $i\in \bZ$, where $f:Z\to U$ is a smooth projective family, and $\bC_Z$ is the constant sheaf on $Z$. 

When $\on{dim} Y=1$, Katz proved Simpson's conjecture by describing rigid local systems inductively (\cite[Thm. 5.2.1, 8.4.1]{katz2016rigid}). For higher-dimensional varieties, less is known (see e.g. \cite{corlette2008classification} and \cite{langer2018rank} that concern the rank two and rank three cases, respectively).

\subsubsection{} If $G$ is a connected reductive complex algebraic group, rigidity and motivicity make sense more generally for any $G$-local system on $Y$. Namely, we call a $G$-local system motivic if for any finite-dimensional representation of $G$, the corresponding local system on $Y$ is motivic in the above sense. As such, one may pose the similar question of whether rigidity implies motivicity in this case. Our main theorem asserts that this is true whenever $\on{dim} Y=1$:

\begin{thm}\label{t:main}

Let $Y$ be a smooth complex curve. Then any irreducible rigid $G$-local system on $Y$ with quasi-unipotent local monodromies at infinity and finite order abelianization is motivic.

\end{thm}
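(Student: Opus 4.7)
The plan is to proceed through geometric Langlands: produce a Hecke eigensheaf attached to $\sigma$, use rigidity to show this eigensheaf is of motivic origin, and then exploit the Hecke eigenvalue property to transfer motivicity from the eigensheaf to $\sigma$.

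First, using the existence result for Hecke eigensheaves stated in the abstract (in the regular-singular de Rham setting), I would attach to the irreducible local system $\sigma$---together with its tame monodromies at $S := Y \setminus U$---a Hecke eigensheaf $\cF_\sigma$ in an appropriate automorphic category on $\Bun_G$ with level structure at $S$ determined by the monodromies. Next, I would invoke the bridge between rigidity for complex local systems and automorphic rigidity (the tame complex case of Yun's conjecture promised in the abstract). Rigidity of $\sigma$ forces $\cF_\sigma$ to be, up to a finite-order character twist, the intermediate extension of a rank-one local system from an open substack of the relevant level moduli; the finite-order abelianization hypothesis ensures this coefficient local system is itself of finite order, hence motivic.

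With $\cF_\sigma$ now known to be a geometric $\IC$ sheaf with a finite-order coefficient, I would transfer motivicity to $\sigma$ via the Hecke eigenvalue property. For any representation $V$ of $G$, the Hecke functor $H_V$ identifies $H_V(\cF_\sigma) \simeq \cF_\sigma \boxtimes V_\sigma$ on $\Bun_G \times U$. The left-hand side is obtained by pull-push along the Hecke correspondence, so by the decomposition theorem and passage to a finite \'etale cover trivializing the finite-order coefficient, $H_V(\cF_\sigma)$ is realized as a direct summand of $R^i f_* \bC_Z$ for a smooth projective family $f : Z \to \Bun_G \times U$. Restricting to a single $\bC$-point $x_0 \in \Bun_G$ isolates $V_\sigma|_U$ as a summand of $R^i (f_{x_0})_* \bC$, exhibiting $V_\sigma$ as motivic. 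Since $V$ was arbitrary, $\sigma$ itself is motivic.

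The main obstacle is the transfer step: to pass from the IC-sheaf statement to a genuine motivic realization, one must handle the non-smoothness of Hecke correspondences (via resolution and the decomposition theorem) and absorb the finite-order coefficient into the geometry through a finite \'etale cover. A secondary difficulty is constructing the eigensheaf $\cF_\sigma$ with the compatibilities needed for the automorphic rigidity step to apply; this is where the de Rham geometric Langlands results of the paper---in particular the spectral decomposition over the stack of regular-singular local systems with prescribed local monodromies---enter essentially.
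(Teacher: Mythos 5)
Your overall architecture (construct a Hecke eigensheaf, show it is motivic, transfer motivicity to $\sigma$ via the eigenvalue property) matches the paper, and your final transfer step is essentially the paper's: the paper packages the resolution-of-singularities and decomposition-theorem issues into the statement that the six functors preserve the category of motivic D-modules (Proposition \ref{p:converse}), then restricts $H_V(\cG_{\sigma})\simeq \cG_{\sigma}\boxtimes\sigma_V$ to a point with non-vanishing $!$-fiber. But your middle step --- the mechanism by which rigidity makes the eigensheaf motivic --- has a genuine gap. You assert that rigidity of $\sigma$ forces $\cF_{\sigma}$ to be, up to a finite-order twist, the intermediate extension of a rank-one local system on an open substack. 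Nothing supports this: Theorem \ref{t:autspecrigidity} goes in the \emph{opposite} direction (automorphic rigidity of a geometric automorphic datum implies spectral rigidity of the local systems it affords, not a structural description of the eigensheaf), and the eigensheaf produced by Theorem \ref{t:eigen} arises from Kac-Moody localization of quasi-coherent sheaves on oper spaces, with no a priori description as a clean or IC extension of anything of rank one. The structure you invoke is a special feature of Yun's rigid automorphic data constructions, not a consequence of spectral rigidity.

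The paper's actual mechanism is different. The spectral decomposition $\on{QCoh}(\LSsigma)\curvearrowright D(\BunN)^{(\check{T}_S,\chi_S),\sL_{\tau}}$, combined with the fact that $\sigma$ is an \emph{isolated} point of $\LSsigma$, yields an idempotent extracting from any object its $\sigma$-isotypic direct summand, which is automatically a Hecke eigensheaf with eigenvalue $\sigma$ (or zero). This idempotent is applied not to $\cF_{\sigma}$ but to a manifestly motivic test object: the $!$-averaged delta sheaf $\widetilde{\delta}_{\sP}$ convolved with a twisted affine Hecke kernel. This is where quasi-unipotence of the local monodromies and torsion of the abelianization enter --- they make the character sheaves $\chi_x$ and $\sL_{\tau}$ finite-order, hence motivic, hence $\widetilde{\delta}_{\sP}$ motivic; the resulting summand $\cG_{\sigma}$ is then motivic because the six functors preserve motivicity. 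Its non-vanishing is checked by the adjunction $\on{Hom}(\cG_{\sigma},\cF_{\sigma})=\on{Hom}(\cH(\widetilde{\delta}_{\sP}),\cF_{\sigma})\neq 0$, choosing $\sP$ where $\cF_{\sigma}$ has non-zero $!$-fiber. In particular, the oper-constructed eigensheaf $\cF_{\sigma}$ is never itself shown to be motivic; it only serves as a witness to non-vanishing. Without this argument (or an actual proof of your IC-of-rank-one claim), your proof does not close.
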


\subsection{Geometric Langlands}

Our proof makes use of the geometric Langlands correspondence. Let us therefore review the relevant background. 

Since the focus of this paper is on G-local systems, we describe spectral categories in terms of $G$ and geometric categories in terms of the Langlands dual group $\check{G}$. While this is in contrast with usual conventions in the literature, we hope it will not cause too much confusion.

\subsubsection{}  Henceforth, let $X$ denote a smooth projective complex curve. For a finite set of points $S=\lbrace x_1,...,x_n\rbrace \subset X$, we write $U=X-S$ for its complement. Our goal is to understand complex local systems on $U$. By the Riemann-Hilbert correspondence, we may work with de Rham local systems on $U$ instead; i.e., algebraic vector bundles on $U$ with a regular singular integrable connection.

\subsubsection{} Denote by $\check{G}$ the Langlands dual group of $G$, $\check{B}\subset \check{G}$ the dual Borel subgroup, and $\check{N}\subset \check{B}$ its unipotent radical. We write $\on{Bun}_{\check{G}}(X)$ for the moduli stack of $\check{G}$-bundles on $X$, and we write $\BunN$ for the stack parametrizing $\check{G}$-bundles on $X$ equipped with a reduction to $\check{N}$ on $S\subset X$.

\subsubsection{}

The tamely ramified geometric Langlands correspondence predicts that to any regular singular local system $\sigma$, there is a corresponding Hecke eigensheaf $\cF_{\sigma}$. Explicitly, $\cF_{\sigma}$ is a D-module on $\BunN$ such that that for any $V\in \on{Rep}(G)$, the corresponding Hecke functor
\[
H_V: D(\BunN) \to D(\BunN\times U)
\]

\noindent satisfies
\[
H_V(\cF_{\sigma})\simeq \cF_{\sigma}\boxtimes \sigma_V
\]

\noindent compatibly with the tensor product structure on $\on{Rep}(G)$ and composition of Hecke correspondences. Here, $\sigma_V=\sigma\overset{G}{\times} V$ is the local system on $U$ induced by $V$.

\subsubsection{} The key point\footnote{Implicit in \cite{yun2014rigidity}, for example.} is that whenever $\sigma$ is rigid with quasi-unipotent local monodromies and torsion abelianization, the corresponding Hecke eigensheaf $\cF_{\sigma}$ ought to be motivic in a suitable sense. More precisely, given a D-module $\cF\in D(\BunN)$, let us call $\cF$ motivic if it comes from a combination of sheaf-theoretic operations\footnote{I.e., $*,!-$push/pull, outer tensor product, and Verdier duality.} applied to the constant sheaf on a point. In §5, we show that if $\cF$ is an irreducibe local system, this definition of motivicity coincides with the one in §\ref{s:intro}. Since motivicity in this sense is easily seen to be preserved by Hecke functors, realizing a local system as the eigenvalue of a motivic Hecke eigensheaf forces the local system itself to be motivic (see Section \ref{s:overview} for details). As such, it suffices to prove the following:

\begin{thm}\label{t:Hecke}
Let $\sigma$ be an irreducible rigid local $G$-local system on $U$ with quasi-unipotent monodromies at infinity and finite order abelianization. Then there exists a motivic Hecke eigensheaf $\cF_{\sigma}$ with eigenvalue $\sigma$.

\end{thm}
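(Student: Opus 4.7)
The plan is to produce the eigensheaf $\cF_\sigma$ in two stages. First, I would invoke the existence theorem for Hecke eigensheaves attached to any irreducible regular-singular $G$-local system, which is one of the main de Rham geometric Langlands contributions promised in the abstract. This yields an essentially unique eigensheaf $\cF_\sigma \in D(\BunN)$ with eigenvalue $\sigma$, without any rigidity input.

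Second, to upgrade $\cF_\sigma$ to a motivic object I would use the spectral decomposition of the automorphic category $\Autr$ over $\LS$ also established in the paper. Fix the formal datum so that $\sigma$ corresponds to a point of $\LS$, which is possible because the monodromies of $\sigma$ are quasi-unipotent and the abelianization is of finite order. Choose a motivic generator $\Omega$ of the automorphic category---concretely, a Poincar\'e--Whittaker-type sheaf on $\BunN$ assembled from constant sheaves on affine flag varieties at the points of $S$ by $*/!$-operations along maps of algebraic stacks. Applying the spectral action, the resulting universal family is motivic as an object over $\BunN \times \LS$. Rigidity of $\sigma$ then enters: with the formal type prescribed by $\chi_S$ and $\tau$, rigidity is precisely the assertion that $\sigma$ is an isolated---hence Zariski-open---point of $\LS$. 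Restriction to this open point is an idempotent projection on the automorphic side that preserves the class of motivic D-modules, and the eigenvalue of the resulting sheaf is $\sigma$ by the compatibility in the spectral decomposition.

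The main obstacle I expect is exhibiting a motivic automorphic generator $\Omega$ whose spectral projection to the $\sigma$-component is nonzero for every rigid irreducible $\sigma$. Unlike the positive characteristic arguments used in prior work on Simpson's conjecture, one cannot invoke Artin--Schreier; the Whittaker normalization has to be implemented via the exponential D-module over $\bC$, and one must verify that this construction---together with the spectral action used to isolate the $\sigma$-fiber---remains within the class of motivic D-modules defined in \S 5. A secondary technicality is to make precise the equivalence between rigidity in the classical sense of \S\ref{s:intro} and Zariski openness of the point $\sigma$ in $\LS$ once the formal monodromies at $S$ have been prescribed; this should reduce to a deformation-theoretic computation on the spectral side, matching the tangent complex of $\LS$ at $\sigma$ with the parabolic cohomology controlling infinitesimal deformations of $\sigma$.
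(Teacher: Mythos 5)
Your overall strategy is the paper's: combine the existence theorem for eigensheaves with the spectral decomposition, and use isolation of $\sigma$ to extract a direct summand of a motivic automorphic sheaf. But the two points you flag as "obstacles" are exactly where the argument is incomplete, and the paper closes them differently from what you propose. First, you take the generator $\Omega$ to be a Poincar\'e--Whittaker sheaf; this forces you to prove both that $\Omega$ is motivic (it is built from the exponential D-module $(\psi^-)^!(\on{exp})$, which is irregular and hence not itself in $D(-)^{\on{mot}}$, so motivicity of the pushforward is not automatic) and that its $\sigma$-isotypic projection is nonzero. The paper sidesteps both issues at once: instead of $\Omega$ it uses $\widetilde{\delta}_{\sP}=\on{Av}_!^{(\check{T}_S,\chi_S),\sL_\tau}(\delta_{\sP})$ for a point $\sP\in\BunN$ where the $!$-fiber of the (Whittaker-normalized, coherent) eigensheaf $\cF_\sigma$ from Theorem \ref{t:eigen} is nonzero. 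Motivicity of $\widetilde{\delta}_\sP$ is immediate since $\chi_S$ and $\sL_\tau$ have finite order, and nonvanishing of the projection $\cG_\sigma$ follows from the adjunction $\on{Hom}(\cH(\widetilde{\delta}_{\sP}),\cF_\sigma)=\on{Hom}(\widetilde{\delta}_{\sP},\cF_\sigma)\neq 0$. So the existence theorem is used not merely to produce \emph{some} eigensheaf to be "upgraded," but as the detector that certifies the motivic candidate is nonzero; without it your plan has no way to rule out $\cG_\sigma=0$. (The Whittaker-sheaf variant you describe is recorded in Remark \ref{r:can} as an a posteriori reformulation, again relying on the Whittaker normalization of $\cF_\sigma$ for nonvanishing.)

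Second, your identification of rigidity with isolation of $\sigma$ in $\LS$ is not correct as stated, and the projection you want does not exist at that level. Rigidity in the paper means isolation in $\LSsigma$, which further fixes the actual local monodromy conjugacy classes $\cO_x\in\on{LocSys}_G(\overset{\circ}{D}_x)^{\on{RS}}_{\chi_x}$, not just their eigenvalues $\chi_x$; a rigid $\sigma$ can deform inside $\LS$ by degenerating the Jordan type at a puncture. This is why the paper first applies the Hecke functor $\cH$ given by pull--push along $\prod_x\{\cO_x\}\to\prod_x\on{LocSys}_G(\overset{\circ}{D}_x)^{\on{RS}}_{\chi_x}$ (equivalently, convolution with an explicit object of $\bigotimes_x D(\check{I},\chi_x\backslash\check{G}(K_x)/\check{I},\chi_x)$ via Theorem \ref{t:dyz}): only after this base change does $\sigma$ become an open-and-closed point of the relevant spectral stack $\on{LocSys}_G(U,\cO_S)$, producing the idempotent. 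One must then also check that $\cH$ preserves motivicity, which is where Proposition \ref{p:converse} (stability of $D(-)^{\on{mot}}$ under the six operations) and the quasi-unipotence hypothesis are used.
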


\subsubsection{Motivic nature of the Langlands program} Arithmetically, over a global field, the Langlands conjectures predict that motives correspond to certain automorphic representations. However, once we pass to the complex numbers, the geometric Langlands correspondence seems to have very little to do with motives. One conceptual explanation is the categorical nature of geometric Langlands over the complex numbers where the tools employed often are of representation-theoretic flavor. 

To show that Hecke eigensheaves corresponding to irreducible rigid local systems are motivic, we show that they are realized as direct summands of a canonically defined Whittaker sheaf, which is motivic by its nature (see Remark \ref{r:can} for a precise statement). In particular, these Hecke eigensheaves admit a concrete description. We deduce this from the spectral decomposition theorem in §\ref{ss:sd} and the existence of Hecke eigensheaves associated to irreducible local systems.

\subsection{Spectral decomposition}\label{ss:sd}

The proof of Theorem \ref{t:Hecke} relies on spectrally decomposing a suitable automorphic category over the stack of regular singular local systems on $U$ with fixed quasi-unipotent local monodromies and fixed abelianization, which we describe next.

The subsequent results hold over an arbitrary algebraically closed field $k$ of characteristic zero.

\subsubsection{}

For each $x\in S$, pick an element $\chi_x\in \ft/X_{\bullet}(T)$. These define character sheaves on $\check{T}$ that we similarly denote by $\chi_x$. Since $\underset{x\in S}{\prod}\check{T}$ canonically acts on the stack $\BunN$, we may consider the category $D(\BunN)^{\check{T}_S,\chi_S}$ of $(\underset{x\in S}{\prod}\check{T},\underset{x\in S}{\prod}\chi_x)$-equivariant D-modules on $\BunN$.

\subsubsection{} Let $\on{LocSys}_G(\overset{\circ}{D}_x)^{\on{RS}}_{\chi_x}$ be the reduced stack parametrizing regular singular local systems on the punctured disk $\overset{\circ}{D}_x$ whose monodromy has eigenvalue $\chi_x$ (we refer to §\ref{s:lsrestr} for details). For example if $\chi_x=0$, then $\on{LocSys}_G(\overset{\circ}{D}_x)^{\on{RS}}_{\chi_x}\simeq \cN/G$, where $\cN$ denotes the nilpotent cone of $\fg$.

\subsubsection{} Denote by $\on{LocSys}_G(U)$ the stack of local systems on $U$ (see §\ref{s:actualls} for a precise definition). We let
\[
\on{LocSys}_G(U,\chi_S):=\on{LocSys}_G(U)\underset{\prod_{x\in S} \on{LocSys}_G(\overset{\circ}{D}_x)}{\times} \prod_{x\in S}\on{LocSys}_G(\overset{\circ}{D}_x)^{\on{RS}}_{\chi_x}
\]

\noindent be the moduli stack parametrizing regular singular local systems on $U$ whose monodromy at $x\in S$ has eigenvalue $\chi_x$. By construction, we have natural maps
\begin{equation}\label{eq:nat1}
\LSN\to \on{LocSys}_G(D_x)=\bB G, \:\: x\notin S,
\end{equation}
\begin{equation}\label{eq:nat2}
\LSN\to \on{LocSys}_G(\overset{\circ}{D}_x)^{\on{RS}}_{\chi_x}, \:\: x\in S.
\end{equation}

\subsubsection{} Recall that Bezrukavnikov's geometric realization of the affine Hecke algebra provides an equivalence of monoidal categories
\[
\on{IndCoh}(\fn/B\underset{\fg/G}{\times} \fn/B)\simeq D(\check{I}\backslash \check{G}(K)/\check{I})^{\on{ren}},
\]

\noindent where the right-hand side denotes the category of renormalized\footnote{So that the compact objects of $D(\check{I}\backslash \check{G}(K)/\check{I})^{\on{ren}}$ are exactly those that become compact after applying the forgetful functor $D(\check{I}\backslash \check{G}(K)/\check{I})^{\on{ren}}\to D(\check{G}(K)/\check{I})$. In particular, we have a fully faithful embedding $D(\check{I}\backslash \check{G}(K)/\check{I})\into D(\check{I}\backslash \check{G}(K)/\check{I})^{\on{ren}}$.} D-modules on $\check{I}\backslash \check{G}(K)/\check{I}$. Here, $\check{G}(K)$ is the loop group of $\check{G}$, and $\check{I}$ is its Iwahori subgroup. This induces a monoidal functor\footnote{Which we remind was already constructed in \cite{arkhipov2009perverse}.}
\[
\on{QCoh}(\on{LocSys}_G(\overset{\circ}{D}_x)^{\on{RS}}_0)\simeq \on{QCoh}(\cN/G)\to D(\check{I}\backslash \check{G}(K)/\check{I})
\]

\noindent by pull-push along the correspondence
\[
\cN/G\leftarrow \fn/B\to \fn/B\underset{\fg/G}{\times} \fn/B.
\]

\subsubsection{} We need a recent extension of the above result to the case of generalized eigenvalues due to Gurbir Dhillon, Yau Wing Li, Zhiwei Yun and Xinwen Zhu. More precisely, consider the stack
\[
\on{LocSys}_B(\overset{\circ}{D}_x)\underset{\on{LocSys}_T(\overset{\circ}{D}_x)}{\times} \lbrace \chi_x\rbrace/T
\]

\noindent of $B$-local systems on $\overset{\circ}{D}_x$ whose induced $T$-local system coincides with $\chi_x$.
\begin{thm}[Dhillon-Li-Yun-Zhu, \cite{dhillonendo}]\label{t:dyz}
There is a monoidal equivalence of categories
\[
\on{IndCoh}(\on{LocSys}_B(\overset{\circ}{D}_x)_{\chi_x}\underset{\on{LocSys}_G(\overset{\circ}{D}_x)}{\times}\on{LocSys}_B(\overset{\circ}{D}_x)_{\chi_x})\simeq D(\check{I},\chi_x\backslash \check{G}(K_x)/\check{I},\chi_x)^{\on{ren}}.
\]
\end{thm}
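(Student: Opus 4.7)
The plan is to reduce to Bezrukavnikov's equivalence for trivial monodromy (the $\chi_x = 0$ case, recalled just above the statement) via compatible endoscopic decompositions on the two sides.

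First, I would construct a candidate functor following the Arkhipov--Bezrukavnikov blueprint: the two copies of $\on{LocSys}_B(\overset{\circ}{D}_x)_{\chi_x}$ in the derived fiber product play the role of the two Iwahori reductions in the Steinberg correspondence, and the monodromic constraint $\chi_x$ on the $T$-part matches the character $\chi_x$ of $\check{I}$ through $\check{I} \twoheadrightarrow \check{I}/[\check{I},\check{I}] \simeq \check{T}$. Pull-push along the natural Steinberg-type correspondence should send $\on{IndCoh}$ of the derived self-intersection to $\chi_x$-monodromic D-modules on $\check{I}\backslash \check{G}(K_x)/\check{I}$, taking the structure sheaf of the diagonal to the monodromic delta sheaf on the unit Schubert cell (the monoidal unit on each side).

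Second, I would exhibit compatible endoscopic decompositions. Associated to $\chi_x$, let $H_{\chi_x} \subseteq \check{G}$ be the endoscopic (pseudo-Levi) subgroup generated by $\check{T}$ and those root subgroups whose root takes integer value on $\chi_x$. On the spectral side, a local analysis of $B$-local systems with prescribed $T$-monodromy should identify
\[
\on{LocSys}_B(\overset{\circ}{D}_x)_{\chi_x}\underset{\on{LocSys}_G(\overset{\circ}{D}_x)}{\times}\on{LocSys}_B(\overset{\circ}{D}_x)_{\chi_x}
\]
with the analogous Steinberg-type stack for $H_{\chi_x}$ with trivial $T$-parameter, since after base change along $\lbrace \chi_x\rbrace /T \to \lbrace 0\rbrace /T$ only roots integral at $\chi_x$ contribute to the tangent complex. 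On the geometric side, an extension of Lusztig--Yun type results should yield a monoidal equivalence between the $\chi_x$-monodromic affine Hecke category of $\check{G}$ and the unipotent affine Hecke category of $H_{\chi_x}$, intertwining convolution. Having matched both sides with the corresponding Bezrukavnikov data for $H_{\chi_x}$, one applies the unramified Bezrukavnikov theorem for $H_{\chi_x}$ to conclude the underlying equivalence of categories. Monoidality is then essentially automatic: tensor product preserves the $T$-parameter on the spectral side, and convolution preserves the monodromic character on the geometric side, so both monoidal structures descend to the endoscopic block and are identified via Bezrukavnikov.

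The main obstacle will be the geometric-side endoscopic reduction, namely producing the $H_{\chi_x}$-block description of $D(\check{I},\chi_x\backslash \check{G}(K_x)/\check{I},\chi_x)^{\on{ren}}$ in a form compatible with both convolution and renormalization. Carefully tracking compact generators (recall renormalization is rigged so that compact objects remain compact after the forgetful functor to $D(\check{G}(K_x)/\check{I})$) and matching them with the derived-coherent generators on the spectral side is the delicate point. A secondary but important subtlety is that the spectral fiber product must be taken in the derived sense; passing to reduced stacks would destroy the equivalence, since the extra derived directions are precisely what the nontrivial convolutions on the geometric side see.
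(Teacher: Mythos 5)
The paper does not prove Theorem \ref{t:dyz}: it is imported wholesale from the forthcoming work \cite{dhillonendo} of Dhillon--Li--Yun--Zhu and used as a black box, so there is no in-text argument to measure your proposal against. That said, your sketch does track the strategy the cited work is understood to follow, and which the paper itself alludes to in Step 1 of the proof of Theorem \ref{t:autspecrigidity2}: there the category $D(\check{I}_H,\chi\backslash \check{H}(K)/\check{I}_H,\chi)$ attached to the connected centralizer $H$ of $\exp(\chi)$ is exhibited, via \cite{dhillonendo}, as a direct summand of $D(\check{I},\chi\backslash \check{G}(K)/\check{I},\chi)$ --- exactly your endoscopic reduction. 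Your identification of the spectral Steinberg stack for $(G,\chi)$ with the unipotent one for the endoscopic group, including the tangent-complex observation that only $\chi$-integral roots survive the base change along $\{\chi\}/T$, is the right mechanism on that side, as is your warning that the fiber product must be kept derived.

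The gap is that the two inputs you invoke are themselves essentially the entire content of the theorem rather than reductions of it. The ``extension of Lusztig--Yun type results'' giving a monoidal equivalence between the $\chi$-monodromic affine Hecke category of $\check{G}$ and the unipotent affine Hecke category of the endoscopic dual group is precisely what \cite{dhillonendo} proves; it is not available off the shelf, and ``monoidality is essentially automatic'' understates the difficulty: compatibility of the monodromic--unipotent dictionary with convolution, with renormalization on both sides, and with the Wakimoto and central-sheaf generators used to pin down Bezrukavnikov's equivalence is where the work actually lives. A second caution concerns which side of duality the endoscopic group lives on. The paper takes $H\subset G$ to be the connected centralizer of $\exp(\chi)$ and only then passes to $\check{H}$; your $H_{\chi_x}\subseteq \check{G}$ generated by $\check{T}$ and the $\chi$-integral root subgroups need not coincide with the dual of that centralizer (pseudo-Levis of $\check{G}$ and duals of centralizers in $G$ can differ by isogeny and component-group issues), and the block structure of the monodromic affine Hecke category is governed by the stabilizer of $\chi$ in the extended affine Weyl group, hence by affine roots rather than finite ones. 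So the proposal is a credible outline of the intended architecture, but it defers, rather than supplies, the proof.
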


\noindent As above, one gets a monoidal functor
\begin{equation}\label{eq:monfun2}
\on{QCoh}(\on{LocSys}_G(\overset{\circ}{D}_x)^{\on{RS}}_{\chi_x})\to D(\check{I},\chi_x\backslash \check{G}(K_x)/\check{I},\chi_x)
\end{equation}

\noindent by pull-push along the correspondence
\[
\on{LocSys}_G(\overset{\circ}{D}_x)^{\on{RS}}_{\chi_x}\leftarrow \on{LocSys}_B(\overset{\circ}{D}_x)_{\chi_x}\to \on{LocSys}_B(\overset{\circ}{D}_x)_{\chi_x}\underset{\on{LocSys}_G(\overset{\circ}{D}_x)}{\times} \on{LocSys}_B(\overset{\circ}{D}_x)_{\chi_x}.
\]

\subsubsection{} Recall that in the unramified setting, Drinfeld-Gaitsgory \cite{gaitsgory2010generalized} construct a spectral decomposition of $D(\on{Bun}_{\check{G}}(X))$ over $\on{LocSys}_G(X)$:\footnote{See \cite[§16-17]{Cstterm}, where key inputs of the argument are recorded in detail.}
\[
\on{QCoh}(\on{LocSys}_G(X))\curvearrowright D(\on{Bun}_{\check{G}}(X)).
\]

\noindent We generalize Drinfeld-Gaitsgory's spectral decomposition to the tamely ramified setting. The main input needed is a forthcoming result of the author and E. Bogdanova (see Theorem \ref{t:tamesatake} below).
\begin{thm}[conditional on Theorem \ref{t:tamesatake}]\label{t:action}
We have an action
\[
\on{QCoh}(\on{LocSys}_G(U,\chi_S))\curvearrowright D(\BunN)^{\check{T}_S,\chi_S}.
\]

\noindent The action satisfies the following:
\begin{itemize}
    \item For $x\in X\setminus S$, the induced action
    \[
    \on{Rep}(G)\curvearrowright D(\BunN)^{\check{T}_S,\chi_S}
    \]
\noindent coming from (\ref{eq:nat1}) coincides with the usual Hecke action via geometric Satake.

\item For $x\in S$, the induced action
\[
\on{QCoh}(\on{LocSys}_G(\overset{\circ}{D}_x)^{\on{RS}}_{\chi_x})\curvearrowright D(\BunN)^{\check{T}_S,\chi_S}
\]

\noindent coming from (\ref{eq:nat2}) coincides with the Hecke action via Theorem \ref{t:action}.
\end{itemize}
\end{thm}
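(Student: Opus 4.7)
The plan is to adapt Drinfeld-Gaitsgory's strategy from the unramified setting \cite{gaitsgory2010generalized}. Rather than directly constructing an action of $\on{QCoh}(\LSN)$, I would assemble it over the Ran space of $U$ out of two local Hecke actions: the ordinary unramified action at moving points $x\notin S$ via geometric Satake, and the tamely ramified action at each $x\in S$ afforded by the monoidal functor (\ref{eq:monfun2}) arising from Theorem \ref{t:dyz}. One then descends this Ran-level action along a suitable forgetful map to recover an action of $\on{QCoh}(\LSN)$ on $D(\BunN)^{\check{T}_S,\chi_S}$.

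In more detail, I would first introduce a Ran-space variant $\LSN_{\on{Ran},S}$ classifying, over a finite subset $I\subset U$, a $(U,\chi_S)$-local system together with trivializations on formal neighborhoods of $I\cup S$. Next, build a monoidal action
\[
\on{QCoh}(\LSN_{\on{Ran},S}) \curvearrowright D(\BunN)^{\check{T}_S,\chi_S}
\]
by combining the factorizable Hecke actions at both kinds of points. The compatibility between the unramified geometric Satake action and the tame affine Hecke action at $S$ --- the compatibility that makes these local data glue into a single factorization action --- is precisely the content of Theorem \ref{t:tamesatake}. Third, show that pullback along
\[
\LSN_{\on{Ran},S} \longrightarrow \LSN
\]
is an equivalence on $\on{QCoh}$, thereby promoting the Ran-level action to the desired one. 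Finally, the two compatibility bullets follow tautologically from this construction: on restriction to a single point $x\notin S$ (resp.\ $x\in S$) the global action recovers the geometric Satake factor (resp.\ the factor coming from (\ref{eq:monfun2})) that was fed in at the Ran level.

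The hard part will be the third step, the descent from $\LSN_{\on{Ran},S}$ down to $\LSN$. In the unramified setting the analogous statement is essentially a $1$-affineness-type result whose proof passes through a Koszul-duality reduction to formal local systems; here one has to adapt that argument in the presence of the prescribed tame monodromies. I would handle this by base change along the fiber product that defines $\LSN$: since the monodromy constraints at $S$ are imposed separately from the structure on $U$, the Ran-space machinery should interact with this fiber product in a controlled way, reducing the genuinely new content to the unramified descent on $U$ together with the local picture at $S$ already captured by Theorems \ref{t:dyz} and \ref{t:tamesatake}. In this sense, the real new input is packaged into Theorem \ref{t:tamesatake} itself, and the descent step amounts to an organizational argument combining it with the unramified result of Drinfeld-Gaitsgory.
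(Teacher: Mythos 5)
Your proposal reaches for a ``spectral descent'' route that is quite different from what the paper actually does, and I think it misidentifies where the difficulty lies.

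In both Drinfeld--Gaitsgory's unramified argument and this paper's tamely ramified version, the factorization Hecke action is genuinely an action of the \emph{local} category $\NR$ (whose fiber over a point set is a tensor product of local spherical/affine Hecke categories) on $D(\BunN)^{\check{T}_S,\chi_S}$. The content of the spectral decomposition is that this $\NR$-action factors through the localization functor $\NR\to\on{QCoh}(\LSN)$, i.e.\ that the kernel of this monoidal functor acts by zero. This is not a statement about the spectral side: your ``$\LSN_{\on{Ran},S}$ with trivializations'' picture addresses the question of recovering $\on{QCoh}(\LSN)$ from Ran-space data (and an equivalence there is plausible), but it does \emph{not} address why the action of a purely local Hecke sheaf should vanish once it maps to zero in $\on{QCoh}(\LSN)$. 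A local Hecke category knows nothing about global local systems; the factoring is forced on you only by a property of the automorphic category itself.

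This is why the actual argument (in \cite{gaitsgory2010generalized} and here) proves the factoring on a generating class of automorphic sheaves and then bootstraps. Drinfeld--Gaitsgory use Kac--Moody localization at critical level together with geometric Eisenstein series; the present paper shows that Kac--Moody localization alone generates (Corollary \ref{c:pc}, via the comparison with $\on{Loc}^{\on{QCoh}}_{\check{G},S}$ and Proposition \ref{p:ppanff}), and proves the factoring on KM-localized sheaves in Proposition \ref{p:km}. The latter is not an ``organizational'' consequence of Theorem \ref{t:tamesatake}: it rests on a genuinely new compatibility between opers with prescribed residues, Feigin--Frenkel duality, and the Bezrukavnikov/Dhillon--Li--Yun--Zhu equivalence at those residues --- namely Theorem \ref{t:factormoving}, which in turn invokes Conjecture \ref{c:linearity}. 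Your proposal has no analogue of this step.

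Concretely, the gap is in your Step 2: ``build a monoidal action of $\on{QCoh}(\LSN_{\on{Ran},S})$ by combining the factorizable Hecke actions.'' The factorizable Hecke actions give you an action of $\NR$, and hence by restriction along any monoidal functor $\on{QCoh}(\LSN_{\on{Ran},S})\to\NR$ an action of $\on{QCoh}(\LSN_{\on{Ran},S})$. That part is free. What is not free is the claim that the original $\NR$-action factors through this restriction, and neither your Step 2 nor your Step 3 (the descent $\LSN_{\on{Ran},S}\to\LSN$) speaks to it. You also mischaracterize the unramified story: there is no Koszul-duality or $1$-affineness descent argument in \cite{gaitsgory2010generalized}; the factoring there is proved on generators via localization and Eisenstein series, exactly as sketched above. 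So ``reducing to the unramified descent on $U$'' would reduce to an argument that does not exist in that form.

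To repair the proposal, you would need to import the automorphic-side input: show that Kac--Moody localization at critical level, with Iwahori-type equivariance and the character twist $\chi_S$ at $S$, generates $D(\BunN)^{\check{T}_S,\chi_S}$ (this is the analogue of Proposition \ref{p:ppanff}/Corollary \ref{c:pc}), and show that the $\NR$-action on the image of localization factors through $\on{QCoh}(\LSN)$ by relating the local oper spaces (with prescribed residues $\varpi(-\lambda_x-\check\rho)$) to $\on{LocSys}_G(\overset{\circ}{D}_x)^{\on{RS}}_{\chi_x}$. That second step is precisely where the paper's Conjecture \ref{c:linearity} and Theorem \ref{t:factormoving} enter, and is the new mathematical content beyond the factorizable Satake functor.
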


\begin{rem}\label{r:remupg}
Let
\[
\LSN':=\on{LocSys}_G(U)\underset{\underset{x\in S}{\prod} \on{LocSys}_G(\overset{\circ}{D}_x)}{\times} \underset{x\in S}{\prod}\on{LocSys}_B(\overset{\circ}{D}_x)_{\chi_x}.
\]

\noindent It is a formal consequence of Theorem \ref{t:action} and Theorem \ref{t:dyz} that we obtain a spectral decomposition of $D(\BunN)^{\check{T}_S,\chi_S}$ over $\LSN'$. While this is a stronger result (and more natural from the point of view of geometric Langlands), we choose to work with $\LSN$, which is more convenient when working with rigid local systems.
\end{rem}

\begin{rem}
Let $P_x\subset G$ be a parabolic for each $x\in S$. Write $I_{P_x}$ for the induced parahoric subgroup of $G(O_x)$ and $I_{\check{P}_x}\subset \check{G}(O_x)$ for the Langlands dual parahoric. Suppose we are given central elements $\chi_x\in Z(M_x)$, where $Z(M_x)$ denotes the center of the Levi subgroup of $P_x$, and let
\[
\on{LocSys}_{P_x}(\overset{\circ}{D}_x)_{\chi_x}=\on{LocSys}_{P_x}(\overset{\circ}{D}_x)\underset{\on{LocSys}_{M_x}(\overset{\circ}{D}_x)}{\times} \lbrace \chi_x\rbrace/M_x.
\]

\noindent Combining the results of \cite{dhillonchen}, \cite{dhillon2023localization}, one should obtain a parabolic version of Theorem \ref{t:dyz}. In this case, the same proof of Theorem \ref{t:action} works to give a spectral decomposition
\[
\on{QCoh}(\on{LocSys}_G(U)\underset{\underset{x\in S}{\prod} \on{LocSys}_G(\overset{\circ}{D}_x)}{\times} \underset{x\in S}{\prod} \on{LocSys}_{P_x}(\overset{\circ}{D}_x)_{\chi_x})\curvearrowright D(\on{Bun}_{\check{G},\check{N}_{P_S}}(X,S))^{\check{M}_S,\chi_S},
\]

\noindent where the right-hand side denotes the category of $(\underset{x\in S}{\prod}\check{M}_x,\underset{x\in S}{\prod}\chi_x)$-equivariant D-modules on the moduli of $\check{G}$-bundles on $X$ equipped with the data of a reduction to $\check{N}_{P_x}$, the unipotent radical of $\check{P}_x$, at each $x\in S$.
\end{rem}

\subsubsection{}
The main technical input needed in the proof of Theorem \ref{t:action} is a factorizable geometric Satake functor with tame ramification at the marked points, which we presently describe. Such a functor will appear in forthcoming work joint with Ekaterina Bogdanova.

Let $\NR$ be the unital factorization category over $\on{Ran}_{X_{\on{dR}},S}$, Ran's space with marked points at $S$, whose fiber at $x'\sqcup S\subset X(k)$ is
\[
\on{Rep}(G)\otimes \bigotimes_{x\in S} \on{QCoh}(\on{LocSys}_G(\overset{\circ}{D}_x)^{\on{RS}}_{\chi_x}).
\]

\noindent Similarly, let $\sH_{\check{G},\chi_S,\on{Ran},S}^{\on{geom}}$ be the unital factorization category over $\on{Ran}_{X_{\on{dR}},S}$ whose fiber at $x'\sqcup S\subset X(k)$ is
\[
D(\check{G}(O_{x'})\backslash \check{G}(K_{x'})/\check{G}(O_{x'}))\otimes \bigotimes_{x\in S} D(\check{I},\chi_x\backslash \check{G}(K_x)/\check{I},\chi_x).
\]

\noindent We need a factorizable geometric Satake functor in this setting. That is:
\begin{thm}[\cite{Satakefunctor}]\label{t:tamesatake}
There exists a unital factorizable monoidal functor
\[
\NR\to \Autreal,
\]

\noindent where the categories are equipped with their respective external convolution monoidal structures. The functor restricts to (\ref{eq:monfun2}) over the ramification points and the usual geometric Satake functor away from the ramification points. 
\end{thm}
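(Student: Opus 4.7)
The construction proceeds by combining fiberwise functors via a factorization gluing procedure, following the template of Gaitsgory's factorizable geometric Satake. Over a point $x' \sqcup S \in \Ran_{X_{\on{dR}},S}$, the fiber functor is the external tensor product of the classical geometric Satake functor at the unmarked point $x'$ and the Dhillon-Li-Yun-Zhu functor (\ref{eq:monfun2}) at each marked point of $S$. Both input functors are unital and monoidal, so fiberwise unitality and monoidality are automatic; the content of the theorem is promoting this pointwise data to a morphism of factorization categories over $\Ran_{X_{\on{dR}},S}$.

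The factorization structure on $\NR$ is essentially formal: the moduli of regular singular local systems on a disjoint union of formal disks decomposes as a product, so $\on{QCoh}$ factorizes accordingly. The factorization structure on $\Autreal$ arises from the Beilinson-Drinfeld affine Grassmannian $\on{Gr}_{\check{G},\Ran,S}$ equipped with an Iwahori-Whittaker structure of central character $\chi_x$ along each marked point. By the general calculus of factorization categories, to produce a factorizable monoidal functor between such categories it suffices to (i) define the fiber functors over each finite subset $T \supset S$, (ii) verify compatibility under disjoint union of finite sets (trivial from the pointwise data, since both the Satake and Dhillon-Li-Yun-Zhu functors are monoidal and distribute over external tensor products), and (iii) verify compatibility under the diagonal/collision morphisms.

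The main obstacle is step (iii), which splits into two subcases. The collision of two unmarked points is handled by Gaitsgory's construction of central sheaves, which identifies nearby cycles of the Satake sheaf along a family $x' \sqcup x'' \to x''$ with the tensor product in $\on{Rep}(G)$. The genuinely new case is the collision of an unmarked point $x'$ with a marked point $x \in S$. Here one must show that taking nearby cycles of a spherical Satake sheaf at $x'$ as $x' \to x$ yields a central object in $D(\check{I},\chi_x \backslash \check{G}(K_x)/\check{I},\chi_x)$, and that the resulting central functor matches, under Theorem \ref{t:dyz}, the pull-push along the spectral correspondence
\[
\on{LocSys}_G(\overset{\circ}{D}_x)^{\on{RS}}_{\chi_x} \times \bB G \leftarrow \on{LocSys}_G(\overset{\circ}{D}_{\{x,x'\}})^{\on{RS}}_{\chi_x} \to \on{LocSys}_G(\overset{\circ}{D}_x)^{\on{RS}}_{\chi_x},
\]
where the middle term parametrizes local systems on a twice-punctured disk with prescribed monodromy eigenvalue at $x$ and arbitrary regular singular monodromy at $x'$. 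This is a tame-ramification enhancement of the Arkhipov-Bezrukavnikov construction of the central functor $\on{Rep}(G) \to Z(\on{Sph}_{\check{G},\check{I}})$ combined with a factorizable upgrade of Theorem \ref{t:dyz}; making both precise and checking their compatibility constitutes the main technical work of the forthcoming \cite{Satakefunctor}.
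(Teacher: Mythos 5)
The paper does not actually prove Theorem~\ref{t:tamesatake}; it is cited from the forthcoming joint work \cite{Satakefunctor} with Bogdanova, and the only internal hint is the sentence that both sides are ``glued from their restrictions to (powers of) $U$ and $S$,'' so that the content is compatibility of the gluing maps. Your proposal is a reasonable expansion of precisely that sketch, and the overall strategy you describe --- pointwise data as the external tensor of classical Satake at unmarked points and the Dhillon--Li--Yun--Zhu functor (\ref{eq:monfun2}) at marked points, with the essential content concentrated in the collision of an unmarked point with a marked one, handled by a tame-ramification version of the Arkhipov--Bezrukavnikov/Gaitsgory central functor --- is consistent with the paper's framing and with the references the paper itself invokes in the proof of Lemma~\ref{l:adj} (namely \cite{gaitsgory1999construction}, \cite{bezrukavnikov2009tensor}, \cite{salmon2023restricted}) for exactly the nearby-cycles/central sheaf input at nonzero eigenvalues.

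One point in your write-up is misleading, though. Your displayed spectral ``correspondence''
\[
\on{LocSys}_G(\overset{\circ}{D}_x)^{\on{RS}}_{\chi_x}\times\bB G \leftarrow \on{LocSys}_G(\overset{\circ}{D}_{\{x,x'\}})^{\on{RS}}_{\chi_x} \to \on{LocSys}_G(\overset{\circ}{D}_x)^{\on{RS}}_{\chi_x}
\]
is not actually a correspondence of stacks in the usual sense: the object on the left is the generic fiber (two distinct points) and the object on the right is the special fiber (collided points), and these do not receive honest maps from a common middle term. What one really has is a family of moduli stacks over a one-dimensional base recording the moving point $x'$, and the compatibility one must verify is that nearby cycles along $x'\to x$ on the automorphic side matches the degeneration of $\on{QCoh}$ of this family on the spectral side. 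In other words, you should formulate step (iii) for the marked collision in terms of a specialization/nearby-cycles diagram over $X$, not as pull-push along a two-sided correspondence. This is cosmetic for the unmarked-unmarked collision (where Gaitsgory's construction already packages things correctly), but for the unmarked-marked case it is exactly the place where the tame-twisted central-functor technology (as in \cite{bezrukavnikov2009tensor}, \cite{salmon2023restricted}) has to be invoked, so it is worth stating accurately. Beyond that, since the actual proof lives in \cite{Satakefunctor}, there is no internal argument in the present paper to compare against, and your sketch captures what the paper suggests the forthcoming construction does.
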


\noindent Both categories are glued from their restrictions to (powers of) $U$ and $S$, and the construction of the above functor amounts to checking that the gluing maps are compatible in a precise sense.

\subsubsection{} The proof of Theorem \ref{t:action} mimics the unramified situation of Drinfeld-Gaitsgory \cite{gaitsgory2010generalized}. Namely, the idea of \cite{gaitsgory2010generalized} is that one has good control over D-modules on $\on{Bun}_{\check{G}}(X)$ obtained via:
\begin{itemize}
    \item Kac-Moody localization at critical level.

    \item Geometric Eisenstein series.
\end{itemize}

\noindent More precisely, with the notation of \emph{loc.cit}, the action of $\on{Rep}(G)_{\on{Ran}_X}$ on objects in $D(\on{Bun}_{\check{G}}(X))$ obtained in the above two ways factors through an action of $\on{QCoh}(\on{LocSys}_G(X))$. It turns out that one gets all D-modules on $\on{Bun}_{\check{G}}(X)$ in this way, which in turn gives the spectral decomposition.

Our proof in the tamely ramified setting is similar, except we show that it suffices to consider D-modules obtained by Kac-Moody localization only. In particular, we obtain a simplified proof of the spectral decomposition in the unramified setting that avoids Eisenstein series.

\begin{rem}
In \cite{ben2018betti}, Ben-Zvi and Nadler introduced a Betti version of the geometric Langlands conjecture. Here, automorphic D-modules are replaced by automorphic sheaves of $\bC$-vector spaces with nilpotent singular support. Providing a spectral decomposition of the automorphic category in this setting should be easier. However, the motivic sheaves one naturally encounters (e.g. the Whittaker sheaf and delta sheaves) tend not to have nilpotent singular support, and so we are forced to work in the de Rham setting for our purposes.
\end{rem}

\subsection{Existence of Hecke eigensheaves} 

\subsubsection{} Recall that our strategy for proving the motivicity of rigid local systems is to show that there exists a Hecke eigensheaf corresponding to such a rigid local system that is motivic in the sense of \S 1.2. Generalizing the construction in \cite{faergeman2022non}, we prove the existence of a Hecke eigensheaf associated to any irreducible local system on $U$ with regular singularities.

\begin{thm}\label{t:eigen}

Let $\sigma$ be an irreducible local system on $U$ with regular singularities. There exists a coherent Hecke eigensheaf $\cF_{\sigma}\in D(\BunN)$ with eigenvalue $\sigma$. Moreover, $\cF_{\sigma}$ can be chosen to be suitably Whittaker-normalized.

\end{thm}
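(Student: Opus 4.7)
The plan is to produce $\cF_\sigma$ by applying the spectral action from Theorem~\ref{t:action} to a skyscraper sheaf at $\sigma$, with the Whittaker-normalized Poincaré series as the base object. Setup: let $\chi_x \in \ft/X_\bullet(T)$ encode the eigenvalue of the semisimple part of the local monodromy of $\sigma$ at each $x \in S$. Then $\sigma$ defines a closed point $i_\sigma: \Spec k \hookrightarrow \LSN$, and we set $k_\sigma := (i_\sigma)_* k \in \on{QCoh}(\LSN)$. Let $\on{Poinc}_{\chi_S} \in D(\BunN)^{\check{T}_S, \chi_S}$ denote the Whittaker-normalized Poincaré series: roughly, the $!$-extension of a non-degenerate character on $\check{N}$-reductions, modified to be $(\check{T}_S, \chi_S)$-equivariant at $S$. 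Define
\[
\cF_\sigma := k_\sigma \star \on{Poinc}_{\chi_S},
\]
where $\star$ denotes the action of Theorem~\ref{t:action}.

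The Hecke eigen-property is essentially formal from Theorem~\ref{t:action}: for $x \notin S$ and $V \in \on{Rep}(G)$, the action of $V$ factors as pullback of the tautological bundle along $\LSN \to \on{LocSys}_G(D_x) = \bB G$ followed by the action on $k_\sigma$, giving $k_\sigma \otimes \sigma_V|_x$; the analogous statement at $x \in S$ follows from the parahoric refinement and Theorem~\ref{t:dyz}. Hence $H_V(\cF_\sigma) \simeq \cF_\sigma \boxtimes \sigma_V$ with the expected compatibilities in $V$.

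The main obstacle is non-vanishing, together with the Whittaker normalization. The approach, generalizing \cite{faergeman2022non} to the tamely ramified setting, is to realize the Whittaker-coefficient functor
\[
\on{coeff}: D(\BunN)^{\check{T}_S, \chi_S} \to \on{QCoh}(\LSN)
\]
as a morphism of $\on{QCoh}(\LSN)$-module categories (where the source acquires its module structure from Theorem~\ref{t:action}) and to identify $\on{coeff}(\on{Poinc}_{\chi_S}) \simeq \cO_{\LSN}$, up to a line-bundle twist. Granting this, $\on{coeff}(\cF_\sigma) \simeq k_\sigma \otimes_{\cO_{\LSN}} \cO_{\LSN} \simeq k_\sigma$, whose fiber at $\sigma$ is $k \neq 0$; this is simultaneously the non-vanishing statement and the desired Whittaker normalization. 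Irreducibility of $\sigma$ enters to ensure that $i_\sigma$ is a genuine closed point of $\LSN$ (rather than landing in a more singular substack) and that the corresponding Whittaker functional is non-degenerate, so that the eigensheaf is uniquely determined by its Whittaker coefficient.

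Coherence of $\cF_\sigma$ follows from compactness of $k_\sigma \in \on{QCoh}(\LSN)$ together with the fact that the action of Theorem~\ref{t:action} — constructed via the factorizable tame Satake functor of Theorem~\ref{t:tamesatake} — preserves compactness when applied to the Poincaré series (which is itself compact in a suitable renormalized sense). The hardest step is the identification $\on{coeff}(\on{Poinc}_{\chi_S}) \simeq \cO_{\LSN}$: it amounts to a tamely ramified refinement of the Fundamental Local Equivalence-style calculation in \cite{faergeman2022non}, and its proof uses the compatibility between Whittaker averaging at the unramified places and the spectral action coming from Theorem~\ref{t:tamesatake} at the ramified places.
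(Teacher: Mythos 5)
Your construction of the eigensheaf as $k_\sigma\star \on{Poinc}_{\chi_S}$ is a genuinely different route from the paper's, which instead uses Arinkin's theorem that an irreducible $\sigma$ generically admits an oper structure, localizes the skyscraper on $\on{Op}_{G,\lambda_S}^{\on{RS}}(U')^{\on{mon-free}}$ via Kac--Moody modules at critical level, and proves non-vanishing by computing the Whittaker coefficient through Drinfeld--Sokolov reduction (Lemma \ref{l:globsectDS} together with Propositions \ref{p:coeffwhit} and \ref{p:globalsect=coeff}). The formal part of your argument is fine: acting by the skyscraper at $\sigma$ does produce a Hecke eigensheaf with eigenvalue $\sigma$, granted Theorem \ref{t:action}. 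But the entire content of the theorem is the non-vanishing, and there you have assumed exactly the hard statement: that $\on{coeff}_{\chi_S}$ is $\on{QCoh}(\LSN)$-linear for the spectral action \emph{and} that $\on{coeff}_{\chi_S}(\on{Poinc}_{!,\chi_S})\simeq \cO_{\LSN}$. This is a global Casselman--Shalika/Whittaker-normalization theorem (a tame analogue of the computation of the coefficient of the Poincar\'e series in the unramified setting); it is not proved in this paper, is not among its inputs, and is of at least the same order of difficulty as the statement you are trying to prove. Saying it "amounts to a tamely ramified refinement of the FLE-style calculation" does not close the gap --- the paper's oper construction exists precisely to avoid needing this. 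Note also that even your formal setup secretly depends on the oper machinery: the linearity of Kac--Moody localization over the spectral action (Proposition \ref{p:km}, via Theorem \ref{t:factormoving} and Conjecture \ref{c:linearity}) is what makes Theorem \ref{t:action} usable in the first place.

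A second, more localized problem is coherence. You deduce it from compactness of $k_\sigma$ in $\on{QCoh}(\LSN)$, but $\LSN$ is only quasi-smooth (Corollary \ref{c:qcqs}); for a non-rigid irreducible $\sigma$ the point need not be smooth, and the skyscraper at a singular point of a quasi-smooth stack is not perfect, hence not compact in $\on{QCoh}$. The paper obtains coherence instead from Corollary \ref{c:pc}: the localization functor preserves compact objects, and the Kac--Moody module attached to the oper delta sheaf is compact (Remark \ref{r:Heckecoh}). If you want to salvage your approach, you would need both the tame Whittaker-normalization theorem above and a separate argument for coherence, e.g. replacing $k_\sigma$ by a compact object of $\on{QCoh}(\LSN)$ supported at $\sigma$ with non-zero fiber --- at which point you are close to reconstructing the paper's argument in different clothing.
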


\begin{rem}

In the unramified situation, it follows that any Whittaker-normalized Hecke eigensheaf $\cF_{\sigma}$ corresponding to an irreducible local system $\sigma$ on $X$ automatically satisfies:

\begin{itemize}
    \item $\cF_{\sigma}$ is regular holonomic (cf. \cite{arinkin2020stack}).

    \item $\cF_{\sigma}$ is cuspidal (cf. \cite{braverman2006deformations}).

    \item For generic $\sigma$, $\cF_{\sigma}$ is irreducible (on each connected component) and perverse (cf. \cite{faergeman2022non}).

\end{itemize}

\noindent We expect all of the above to hold in the tamely ramified situation as well, although we do not consider any of these problems in this paper.
 
\end{rem}

\subsubsection{} We construct the Hecke eigensheaf through Kac-Moody localization at critical level using opers, as developed by Beilinson-Drinfeld and then later advanced by Frenkel-Gaitsgory and Arinkin. In particular, the techniques are well-known. However, we highlight one step in which the methods differ: in the construction of Beilinson-Drinfeld, they prove their Hecke eigensheaf is non-zero by computing its characteristic cycle. We in turn prove our eigensheaf is non-zero by computing its Whittaker coefficient.

\subsubsection{} The construction uses the fact that passing from quasi-coherent sheaves on local opers to Kac-Moody modules at critical level via Feigin-Frenkel duality (see §\ref{s:4.1}) is compatible with the Hecke action coming from Theorem \ref{t:action}. When the opers have nilpotent residues, this is proven by Frenkel-Gaitsgory in \cite{frenkel2005fusion}. The general case of arbitrary residues is current work in progress by the author and Gurbir Dhillon (see Conjecture \ref{c:linearity}).

\subsubsection{}\label{s:autspecrigidity} Using the existence of Hecke eigensheaves, we outline how rigidity for local systems is related to Yun's notion of rigidity for geometric automorphic data \cite{yun2014rigidity}. More precisely, let us denote by a \emph{tamely ramified geometric automorphic datum} a tuple:
\begin{itemize}
    \item $\sL$ a rank 1 character sheaf on $\on{Bun}_{Z(\check{G}),1}(X,S)$, where the latter denotes the algebraic group stack of $Z(\check{G})$-bundles on $X$ equipped with a trivialization on $S\subset X$.

    \item A collection $\check{P}_S=\lbrace \check{P}_x\rbrace_{x\in S}$, where $\check{P}_x\subset \check{G}$ is a parabolic subgroup of $\check{G}$.

    \item A collection $\chi_S=\lbrace \chi_x\rbrace_{x\in S}$ of rank 1 character sheaves on $\check{P}_x$.
\end{itemize}

\noindent Denote by $I_{\check{P}_x}$ the induced parahoric given by the preimage of $\check{P}_x$ under $\check{G}(O_x)\to \check{G}$. We also denote by $\chi_x$ the induced character sheaf on $I_{\check{P}_x}$. Note that $\chi_x$ is pulled back from a character sheaf on $\check{M}_x^{\on{ab}}=\check{M}_x/[\check{M}_x,\check{M}_x]$, where $\check{M}_x$ denotes the Levi subgroup of $\check{P}_x$.

Let $\on{Bun}_{\check{G}}^{\infty\cdot S}$ denote the pro-stack of $\check{G}$-bundles on $X$ equipped with a trivialization on the formal completion of $S$ in $X$. As in \cite{yun2014rigidity}, we may consider the category
\[
D(\on{Bun}_G^{\infty\cdot S})^{(I_{\check{P}_S},\chi_S),\sL}
\]

\noindent of $(\underset{x\in S}{\prod} I_{\check{P}_x},\underset{x\in S}{\prod}\chi_x)$-equivariant D-modules on $\on{Bun}_G^{\infty\cdot S}$ that are  equivariant against $(\on{Bun}_{Z(\check{G}),1}(X,S),\sL)$.\footnote{We remark that while we do not have an action $\on{Bun}_{Z(\check{G}),1}(X,S)\curvearrowright \on{Bun}_{\check{G}}^{\infty\cdot S}$, we do have an action $D(\on{Bun}_{Z(\check{G}),1}(X,S))\curvearrowright D(\on{Bun}_{\check{G}}^{\infty\cdot S})^{I_{\check{P}_S},\chi_S}$.}

Let us say that a local system $\sigma$ on $U$ is \emph{afforded} by a tamely ramified geometric automorphic datum $(\sL, I_{\check{P}_S},\chi_S)$ if there exists a Hecke eigensheaf in $D(\on{Bun}_G^{\infty\cdot S})^{(I_{\check{P}_S},\chi_S),\sL}$ with eigenvalue $\sigma$. For the sake of simplicity, write $M_{\sigma}$ for the moduli stack of regular singular local systems on $U$ whose local monodromies at infinity and whose abelianization coincide with those of $\sigma$ (see §\ref{s:properdef} for a precise definition). Let $M_{\sigma}^{\on{irr}}\subset M_{\sigma}$ be the open substack consisting of irreducible local systems. 

We prove the following connection between automorphic and spectral rigidity:
\begin{thm}\label{t:autspecrigidity}
Suppose $\sigma$ is an irreducible local system on $U$ afforded by a tamely ramified geometric automorphic datum $(\sL, I_{\check{P}_S},\chi_S)$. If $\sigma$ is the only irreducible local system afforded by $(\sL, I_{\check{P}_S},\chi_S)$, then $M_{\sigma}^{\on{irr}}$ has a single $k$-point. Similarly, if the datum $ (\sL,I_{\check{P}_S},\chi_S)$ only affords finitely many irreducible local systems, then $M_{\sigma}^{\on{irr}}$ is a disjoint union of finitely many connected components, each of which has a single $k$-point.
\end{thm}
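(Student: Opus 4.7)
The plan is to show that every irreducible local system $\sigma'$ defining a $k$-point of $M_\sigma^{\on{irr}}$ is afforded by the same datum $(\sL, I_{\check{P}_S}, \chi_S)$. Granting this, the hypothesis directly bounds $M_\sigma^{\on{irr}}(k)$: finitely many (respectively, a unique) afforded local systems force finitely many (respectively, one) $k$-points up to isomorphism. Since $M_\sigma^{\on{irr}}$ is of finite type over the algebraically closed field $k$, such finiteness forces the stack to be zero-dimensional, yielding a disjoint union of finitely many connected components each with a single $k$-point.

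For the key claim, fix $\sigma' \in M_\sigma^{\on{irr}}(k)$ and invoke Theorem \ref{t:eigen} to obtain a Whittaker-normalized Hecke eigensheaf $\cF_{\sigma'} \in D(\BunN)$ with eigenvalue $\sigma'$. By definition of $M_\sigma$, the local monodromy eigenvalues of $\sigma'$ at $S$ match $\chi_S$ and its abelianization corresponds to $\sL$ under geometric class field theory; consequently $\cF_{\sigma'}$ canonically upgrades to an object of $D(\BunN)^{\check{T}_S, \chi_S}$ equivariant against $(\on{Bun}_{Z(\check{G}),1}(X,S), \sL)$. When $\check{P}_x = \check{B}$ for every $x \in S$, $(I_{\check{B}_x}, \chi_x)$-equivariance on $\on{Bun}_{\check{G}}^{\infty\cdot S}$ decomposes as $I_{\check{N}_x}$-equivariance (i.e., descent to $\BunN$) together with $(\check{T}_x, \chi_x)$-equivariance, identifying the above category with $D(\on{Bun}_{\check{G}}^{\infty \cdot S})^{(I_{\check{B}_S}, \chi_S), \sL}$; hence $\sigma'$ is afforded.

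For a general parabolic $\check{P}_x$, the plan is to apply a parabolic averaging functor from $(I_{\check{B}_x}, \chi_x)$- to $(I_{\check{P}_x}, \chi_x)$-equivariance at each $x \in S$; this is well-defined because $\chi_x$ is pulled back from $\check{M}_x^{\on{ab}}$. By the expected compatibility between parabolic averaging and the Hecke action of Theorem \ref{t:action}, the averaged object remains an eigensheaf with eigenvalue $\sigma'$, so $\sigma'$ is still afforded. The hard part is precisely this last step: verifying both nonvanishing of the averaged object and compatibility of averaging with the spectral action. Nonvanishing should follow from the Whittaker normalization of $\cF_{\sigma'}$ provided by Theorem \ref{t:eigen}, which guarantees a nonzero Whittaker coefficient that persists under averaging; compatibility with the Hecke action amounts to tracking the interaction between parabolic averaging on the geometric side and the monoidal equivalence underlying the parabolic analog of Theorem \ref{t:dyz}. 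Once these two points are in place, every $\sigma' \in M_\sigma^{\on{irr}}(k)$ is afforded by $(\sL, I_{\check{P}_S}, \chi_S)$ and the argument closes.
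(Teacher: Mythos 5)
Your high-level plan --- show that every $\rho\in M_\sigma^{\on{irr}}(k)$ is afforded by $(\sL, I_{\check{P}_S},\chi_S)$ and then count --- is exactly the paper's, and the Borel case is handled correctly. The real issue is your claim that "a nonzero Whittaker coefficient persists under averaging." This is unjustified, and the paper does not argue this way. In general the parabolic averaging $\on{Av}_*$ from $(\check{I}_S,\chi_S)$- to $(I_{\check{P}_S},\chi_S)$-equivariance can annihilate a particular sheaf with nonzero Whittaker coefficient; $\on{coeff}_{\chi_S}$ is a functional on the $\check{T}_S$-equivariant category, and there is no evident reason it factors through $\on{oblv}\circ\on{Av}_*$. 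What the paper actually does is show, using the spectral decomposition together with \cite[Lemma 6.7]{bezrukavnikov2023dimensions} (extended to nonzero $\chi$ via the endoscopic reduction of \cite{dhillonendo}), that $\cF_\rho$ lies in the cocompletion of the image of the action map from $\bigotimes_{x\in S}D(\check{I}_x,\chi_x\backslash\check{G}(K_x)/\check{I}_x,\chi_x)\otimes D(\on{Bun}_{\check{G}}^{\infty\cdot S})^{(\check{P}_S,\chi_S),\sL}$; a purely formal adjunction argument then produces \emph{some} $\boxtimes_{x\in S}\cH_x$ for which $\on{Av}_*\bigl(\boxtimes_{x\in S}\cH_x\star\cF_\rho\bigr)\neq 0$. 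There is no need, and no direct way, to verify that the averaging of the specific Whittaker-normalized eigensheaf itself is nonzero.

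Relatedly, the key technical input is not "the parabolic analog of Theorem \ref{t:dyz}" as you suggest, but rather the identification, under Theorem \ref{t:dyz} itself, of the character sheaf $\chi\in D(\check{P}/\check{B},\chi)\into D(\check{I},\chi\backslash\check{G}(K)/\check{I},\chi)$ with (a shift of) the structure sheaf of a fiber product built from $\on{LocSys}_{P_x}(\overset{\circ}{D}_x)_{\chi_x}$ on the spectral side. This is what translates "$\sigma$ is afforded by the parahoric datum" into "$\sigma$ admits a lift to $\on{LocSys}_G(U)\underset{\prod\on{LocSys}_G(\overset{\circ}{D}_x)}{\times}\prod\on{LocSys}_{P_x}(\overset{\circ}{D}_x)_{\chi_x}$", which is the bridge between the automorphic hypothesis and the spectral decomposition; since any $\rho\in M_\sigma^{\on{irr}}(k)$ has the same local restriction $\cO_x$ as $\sigma$, it inherits the same lift. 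Your proposal leaves this bridge entirely implicit, so the argument does not yet close even granting compatibility of averaging with the Hecke action.
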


\begin{rem}
When $M_{\sigma}^{\on{irr}}$ consists of a single point, $\sigma$ is physically rigid, and when $M_{\sigma}^{\on{irr}}$ consists of finitely many points, $\sigma$ is cohomologically rigid (in the sense of §1.6 below).\footnote{This is slightly false as stated. Namely, one usually defines physical/cohomological rigidity in terms of the moduli stack $M_{\sigma}$, and not $M_{\sigma}^{\on{irr}}$ (as we do in §1.6 below). The fact that we are formulating Theorem \ref{t:autspecrigidity} in terms of $M_{\sigma}^{\on{irr}}$ reflects the fact that we can only prove the existence of Hecke eigensheaves associated to irreducible local systems.} Thus, the above theorem should be read as saying that physical (resp. cohomological)\footnote{In the sense that the geometric automorphic datum affords a single (resp. finitely many) isomorphism classes of irreducible local systems. We emphasize that this is an oversimplified definition of Yun's notion \cite[§2.7]{yun2014rigidity} of a strongly (resp. weakly) rigid geometric automorphic datum.} rigidity for geometric automorphic data implies physical (resp. cohomological) spectral rigidity, at least in the special case of a tamely ramified geometric automorphic datum.
\end{rem}

\subsection{Overview of the proof of the main theorem}\label{s:overview} Given the spectral decomposition and existence of Hecke eigensheaves described above, let us outline the proof that rigid local systems with quasi-unipotent local monodromies and torsion abelianization are motivic. 

Briefly, the idea is that we may spectrally decompose a suitable automorphic category over the moduli stack in which our rigid local system defines an isolated point. This allows us to extract direct summands of objects in the automorphic category which are Hecke eigensheaves with eigenvalue the rigid local system. Thus, it simply remains to find a motivic automorphic sheaf whose corresponding direct summand is non-zero.

\subsubsection{} For simplicity, assume that $G$ is adjoint. Let $\sigma$ be a rigid local system on $U$ with regular singularities, quasi-unipotent local monodromies at infinity and torsion abelianization. Let $\omega_X$ be the canonical sheaf on $X$ and consider the $\check{T}$-bundle $\rho(\omega_X(S))\in \on{Bun}_{\check{T}}(X)$.\footnote{As usual, $\rho$ denotes the cocharacter of $\check{T}$ given by the half-sum of all the positive roots of $\fg$.} For each $x\in S$, let $\chi_x\in \ft//\widetilde{W}^{\on{aff}}$ be the eigenvalue of the monodromy of $\sigma$ at $x$.

By definition of rigidity\footnote{We are using that $G$ is semisimple here. In general, one has to fix the abelianization of $\sigma$ in the moduli description.} (see §\ref{s:rigidls} below), we may find $\cO_x\in \on{LocSys}_G(\overset{\circ}{D}_x)^{\on{RS}}_{\chi_x}$ for each $x\in S$ such that $\sigma$ defines an isolated point in the moduli stack
\[
\on{LocSys}_G(U,\cO_S):=\on{LocSys}_G(U,\chi_S)\underset{\prod_{x\in S}\on{LocSys}_G(\overset{\circ}{D}_x)}{\times} \prod_{x\in S}\lbrace \cO_x\rbrace/\bB \on{Aut}(\cO_x).
\]

\noindent Here, $\on{Aut}(\cO_x)$ denotes the automorphism group of $\cO_x$ in $\on{LocSys}_G(\overset{\circ}{D}_x)$.

\subsubsection{} By the spectral decomposition theorem, we have an action
\[
\on{QCoh}(\on{LocSys}_G(U,\cO_S))\curvearrowright \on{Vect}\underset{\on{QCoh}(\underset{x\in S}{\prod}\on{LocSys}_G(\overset{\circ}{D}_x)^{\on{RS}}_{\chi_x})}{\otimes} D(\BunN)^{\check{T}_S,\chi_S},
\]

\noindent where the action $\on{QCoh}(\underset{x\in S}{\prod}\on{LocSys}_G(\overset{\circ}{D}_x)^{\on{RS}}_{\chi_x})\curvearrowright \on{Vect}$ is induced by the point
\[
\prod_{x\in S} \cO_x\in \prod_{x\in S}\on{LocSys}_G(\overset{\circ}{D}_x)^{\on{RS}}_{\chi_x}.
\]

\noindent Denote by $\cH$ the endofunctor
\[
D(\BunN)^{\check{T}_S,\chi_S}\to \on{Vect}\underset{\on{QCoh}(\underset{x\in S}{\prod}\on{LocSys}_G(\overset{\circ}{D}_x)^{\on{RS}}_{\chi_x})}{\otimes} D(\BunN)^{\check{T}_S,\chi_S}\to
\]
\[
\to D(\BunN)^{\check{T}_S,\chi_S}
\]

\noindent given by pull-push along $\underset{x\in S}{\prod} \cO_x\to \underset{x\in S}{\prod}\on{LocSys}_G(\overset{\circ}{D}_x)^{\on{RS}}_{\chi_x}$. Note that $\cH$ is simply given by convolving with an object of $\underset{x\in S}{\otimes} D(\check{I},\chi_x\backslash \check{G}(K_x)/\check{I},\chi_x)$ under Theorem \ref{t:dyz}.

Let $\cF_{\sigma}\in D(\BunN)^{\check{T}_S,\chi_S}$ be the Hecke eigensheaf with eigenvalue $\sigma$ guaranteed by Theorem \ref{t:eigen}, and let $\sP: \on{Spec}(\bC)\to \BunN$ be a $\bC$-point such that the $!$-fiber of $\cF_{\sigma}$ at $\sP$ does not vanish. Denote by $\delta_{\sP}=\sP_!(\bC)\in D(\BunN)$ the $!$-delta sheaf corresponding to $\sP$. Write $\on{Av}_!^{\check{T}_S,\chi_S}$ for the (partially defined) left adjoint to the forgetful functor
\[
D(\BunN)^{\check{T}_S,\chi_S}\to D(\BunN).
\]

\noindent Concretely, $\on{Av}_!^{\check{T}_S,\chi_S}(-)=a_!(\underset{x\in S}{\boxtimes}\chi_x \boxtimes -)$, where $a:\underset{x\in S}{\prod}\check{T}\times \BunN\to \BunN$ is the action map.

Since $\sigma$ defines an isolated point in $\on{LocSys}_G(U,\cO_S)$, it follows that $\cH(\on{Av}_!^{\check{T}_S,\chi_S}(\delta_{\sP}))$ admits a direct summand $\cG_{\sigma}$ that is a Hecke eigensheaf with eigenvalue $\sigma$. Moreover, $\cG_{\sigma}$ is readily seen to be motivic.\footnote{This is where we use the quasi-unipotence assumption on the local monodromies of $\sigma$.} Thus, if we can show that $\cG_{\sigma}$ is non-zero, this proves Theorem \ref{t:Hecke}. However, it follows by construction that
\[
\on{Hom}_{D(\BunN)^{\check{T}_S,\chi_S}}(\cG_{\sigma},\cF_{\sigma})=\on{Hom}_{D(\BunN)^{\check{T}_S,\chi_S}}(\cH(\on{Av}_!^{\check{T}_S,\chi_S}(\delta_{\sP})),\cF_{\sigma})\neq 0.
\]

\noindent This finishes the proof.

\subsubsection{} We highlight that the Hecke eigensheaf $\cG_{\sigma}$ is very concretely described. To reiterate:
\begin{enumerate}
    \item Take a delta sheaf $\delta_{\sP}\in D(\BunN)$ for which the $!$-fiber of $\cF_{\sigma}$ is non-zero.

    \item $!$-convolve with the sheaf $\underset{x\in S}{\boxtimes} \chi_x$ under the action of $\underset{x\in S}{\prod} \check{T}$ on $\BunN$.

    \item Apply the Hecke functor $\cH$ and take the corresponding direct summand.
\end{enumerate}

\begin{rem}\label{r:can}
One might object that the choice of $\sP\in \BunN$ was not canonical. We may alternatively argue as follows: since the eigensheaf $\cF_{\sigma}$ is Whittaker-normalized, we see that the Hecke eigensheaf with eigenvalue $\sigma$ occuring as a direct summand of the sheaf $\cH(\on{poinc}_{!,\chi_S})$ is non-zero. Here, $\on{poinc}_{!,\chi_S}$ is the Whitttaker sheaf in $D(\BunN)^{\check{T}_S,\chi_S}$ corepresenting the functor $\on{coeff}_{\chi_S}$ considered in §\ref{s:whittakercoeff} and is easily seen to be motivic. This gives an entirely canonical construction of a motivic Hecke eigensheaf corresponding to $\sigma$ (and does not depend on $\sigma$, only the moduli stack $\on{LocSys}_G(U,\cO_S)$).
\end{rem}

\subsection{Relation to other work}\label{s:relation}

\subsubsection{} Katz' proof that rigid $GL_n$-local systems with quasi-unipotent local monodromies are motivic relies on completing the complex local system $\rho: \pi_1(X)\to GL_n(V)$ to an $\ell$-adic local system and employing tools in the theory of $\ell$-adic perverse sheaves on the affine line. Relatedly, rigid complex local system can be defined over a number field which opens up the possibility of studying rigid local systems using methods outside complex geometry (see e.g. \cite{esnault2018cohomologically}). We emphasize the difference between the (pioneering) methods of Katz in \cite{katz2016rigid} and those used here. Namely, we consider the main contribution of this paper to be the demonstration of the usefulness of geometric Langlands in characteristic zero for the study of motives.

Let us expand on this point. The idea of studying rigid local systems by understanding their corresponding Hecke eigensheaves is far from new. In \cite{frenkel2009rigid}, inspired by a family of rigid\footnote{In the sense that they are determined by their local factors at the ramified points.} automorphic representations, Frenkel and Gross construct by explicit means for each simple complex algebraic group $G$ a rigid irregular complex connection on $\bP^1\setminus \lbrace 0,\infty\rbrace$. In \cite{heinloth2013kloosterman}, Heinloth-Ngô-Yun construct the $\ell$-adic counterpart to the connection of Frenkel-Gross as the eigenvalue of a corresponding Hecke eigensheaf (their construction makes sense over the complex numbers as well). The connection of \cite{heinloth2013kloosterman} has since been shown to coincide with that of Frenkel-Gross by Zhu in \cite{zhu2017frenkel}. This was again done by showing that the Hecke eigensheaves coincide.\footnote{Crucially, Frenkel-Gross show that their connection admits an oper structure which allows for the construction of its Hecke eigensheaf using the Beilinson-Drinfeld machinery.}

Building on the above, Yun \cite{yun2014rigidity} introduced the notion of a rigid geometric automorphic datum, which is designed to produce Hecke eigensheaves whose eigenvalues are (conjecturally) rigid. The upshot here is that it is often easier to construct a rigid local system with prescribed properties\footnote{Such as having a prescribed monodromy group.} by constructing the corresponding Hecke eigensheaf than it is to construct the local system directly.

Our paper differs from this story in a couple of ways. First, while the focus of the above papers is often on the construction of rigid local systems, we assume the existence of a rigid local system and prove its motivicity. Second, we employ techniques in the categorical geometric Langlands program over the complex numbers. The main idea of this paper, which is entirely categorical, is that the spectral decomposition realizes Hecke eigensheaves corresponding to rigid local systems as direct summands of motivic D-modules. As such, while the categorical geometric Langlands program (as opposed to its non-categorical versions)\footnote{Say the Langlands correspondence for function fields, which we remind has played a key role in the theory of rigid local systems, the work of Esnault-Groechenig \cite{esnault2018cohomologically} being a great such example.} has long seemed unfit to answer questions about motives, this paper provides an example of its use.

\subsubsection{}\label{s:differentrigids} Finally, we remark that in the literature one may encounter different notions of rigidity for $G$-local systems on curves. For a smooth projective curve $X$, a finite subset $S\subset X$ and an irreducible local system $\sigma$ on $U=X-S$, recall that $M_{\sigma}$ is the moduli stack of local systems on $U$ with prescribed local monodromies at infinity given by those of $\sigma$ and whose abelianization coincides with that of $\sigma$ as in §\ref{s:autspecrigidity}. There are a priori three types of rigidity for $\sigma$:
\begin{itemize}
    \item Physical rigidity: $M_{\sigma}$ has a single $k$-point. That is, there is only a single local system ($\sigma$ itself) on $U$ whose local monodromies and abelianization coincide with those of $\sigma$.

    \item Cohomological rigidity: $\sigma$ is a smooth isolated point in $M_{\sigma}$. That is, the tangent complex at $\sigma$ vanishes.

    \item Rigidity: $\sigma$ defines an isolated (potentially non-reduced) point in $M_{\sigma}$.
\end{itemize}

\noindent In general, one has tautological implications
\[
\on{physical}\: \on{rigidity}\Rightarrow \on{rigidity,}
\]
\[
\on{cohomological}\: \on{rigidity}\Rightarrow \on{rigidity.}
\]

\noindent In §\ref{s:rigidls}, we show that cohomological rigidity and rigidity coincide for irreducible local systems on curves.\footnote{This is not true in higher dimensions. See \cite{de2022rigid} for a counterexample.} That is, if $\sigma$ is an irreducible local system on $U$ defining an isolated point in $M_{\sigma}$, then it is automatically smooth. As such, we have
\[
\on{physical}\: \on{rigidity}\Rightarrow \on{rigidity}=\on{cohomological}\: \on{rigidity.}
\]

\noindent When $G=GL_n$, Katz shows that physical rigidity and cohomological rigidity coincide and that these can only occur in genus $\leq 1$. For a general reductive group $G$, it is not true that cohomological rigidity implies physical rigidity.\footnote{See \cite{kamgarpour2022arithmetic} for a counter example when $G=PGL_2$.}

\subsection{Structure of the paper}
\subsubsection{} In section 2, we review preliminaries on rigid local systems.

\subsubsection{} In section 3, we prove the spectral decomposition theorem.

\subsubsection{} In Section 4, we prove the existence of Hecke eigensheaves associated to local systems with regular singularities.

\subsubsection{} In section 5, we study the category of sheaves of geometric origin on an algebraic stack and prove the main theorem.

\subsection{Conventions}

\subsubsection{Base field} In Section 2,3 and 4, we work over an algebraically closed field $k$ of characteristic zero. In Section 5, we specialize to $k=\bC$.

\subsubsection{Categorical conventions} We use the language of higher category theory and higher algebra in the sense of \cite{lurie2009higher}, \cite{lurie2017higher}, \cite{gaitsgory2019study}. Throughout, by a (DG) category, we mean a $k$-linear presentable stable $(\infty,1)$-category. We denote by $\on{DGCat}_{\on{cont}}$ the category of DG-categories in which the morphisms are colimit-preserving functors.

For a category $\cC$ equipped with a t-structure, we denote by $\cC^{\heartsuit}$ the heart of the t-structure.

\subsubsection{Derived algebraic geometry} Our DAG conventions follow those of \cite{gaitsgory2019study}. In particular, we will freely use the language of prestacks in the sense of \emph{loc.cit}.

Throughout, all terminology is derived. For example, our schemes and stacks are assumed to be derived. Moreover, when we talk about the category of quasi-coherent sheaves, D-modules etc. on a prestack it will be in the derived sense.

\subsection{Acknowledgements} This paper owes its existence to my graduate advisor, Sam Raskin, who initially suggested a version of the strategy outlined in Section \ref{s:overview} to prove the motivicity of rigid local systems on curves. We moreover thank Sam for many helpful discussions and for carefully reading a draft of this paper.

We thank David Ben-Zvi, Ekaterina Bogdanova, Gurbir Dhillon, Hélène Esnault, Dennis Gaitsgory, Tom Gannon, Andreas Hayash, Masoud Kamgarpour, Daniel Litt, Tony Pantev, Andrew Salmon, Carlos Simpson and Zhiwei Yun for useful discussions.

\section{Basic notation} 

\subsection{Preliminaries} 

\subsubsection{Stacks} In this paper, all algebraic stacks will be locally almost of finite type and locally QCA in the sense of \cite{drinfeld2013some}. This ensures good functorial properties of D-modules. Henceforth, we will refer to these simply as algebraic stacks.

\subsubsection{D-modules} Let $\sY$ be a prestack locally almost of finite type. Following \cite{gaitsgory2017study}, we denote by $D(\sY)$ the DG category of $D$-modules on $\sY$.

\subsubsection{Pullback} Let $f:\sX\to \sY$ be a map of prestacks locally almost of finite type. We have the $!$-pullback functor $f^!: D(\sY)\to D(\sX)$. If its left adjoint is defined,\footnote{It is defined on holonomic D-modules, for example.} we denote it by $f_!$.

\subsubsection{Pushforward.} If $f$ is ind-representable, we have the $*$-pushforward functor $f_{*,\on{dR}}:D(\sX)\to D(\sY)$. Whenever its left adjoint is defined, we denote it by $f^{*,\on{dR}}$.

\subsubsection{Lie theory} Throughout, we let $G$ be a connected reductive complex algebraic group. We choose a torus and a Borel $T\subset B$, as well as an opposing Borel $B^{-}$ so that $B^{-}\cap B=T$. We let $N, N^{-}$ denote the unipotent radicals of $B,B^-$. Denote by $\fg=\on{Lie}(G), \fb=\on{Lie}(B), \ft=\on{Lie}(T)$ etc. We let $\Lambda, \check{\Lambda}$ (resp. $\Lambda^+, \check{\Lambda}^+$) be the lattice of integral weights and coweights of $G$ (resp. dominant weights and dominant coweights). Similarly, we denote by $X^{\bullet}(T), X_{\bullet}(T)$ the lattice of characters and cocharacters of $T$. Let $W$ be the finite Weyl group of $G$ and $\widetilde{W}^{\on{aff}}=X_{\bullet}(T)\rtimes W$ the extended affine Weyl group for $G$.

As in the introduction, we let $X$ be an irreducible smooth projective complex curve, and we fix a finite set of points $S=\lbrace x_1,...,x_n\rbrace\subset X$.

\subsubsection{Disks}\label{s:disk} Let $x_I: T\to X_{\on{dR}}^I$ be an $I$-tuple of maps to $X_{\dR}$ from an affine scheme $T$. That is, a map $T_{\on{red}}\to X^I$. Let $\Gamma_{x_I}$ be the closed reduced subscheme of $T_{\on{Red}}\times X$ given by the union of the graphs of $x_I$. We let
\[
\widehat{D_{x_I}}=(\Gamma_{x_I})_{\on{dR}}\underset{(T\times X)_{\on{dR}}}{\times} T\times X
\]

\noindent be the formal completion of $T\times X$ along $\Gamma_{x_I}$. Let $D_{x_I}$ be its affinization.\footnote{Formally, the affinization functor is the partially defined left adjoint functor to the inclusion of affine schemes into the category of prestacks.} We denote by $\overset{\circ}{D_{x_I}}:=D_{x_I}\setminus \Gamma_{x_I}$ the punctured disk at $x_I$.

Suppose $x\in X(k)$ is a $k$-point. Choosing a uniformizer at $x$, we have identifications
\[
\widehat{D_x}\simeq \on{Spf}(k[[t]]),\: D_x\simeq \on{Spec}(k[[t]]), \: \overset{\circ}{D_x}\simeq \on{Spec}(k((t))).
\]

\noindent In general, we write $O_x$ for the ring of functions on $D_x$ and $K_x$ for the ring of functions on $\overset{\circ}{D}_x$.

\subsection{Local systems on the punctured disk}\label{s:ls} 

\subsubsection{} As in \cite{raskin2015notion}, we let $\on{LocSys}_G(\overset{\circ}{D})$ denote the prestack of $G$-local systems on the formal punctured disk:
\[
\on{LocSys}_G(\overset{\circ}{D_x})=\fg((t))dt/G(K_x).
\]

\noindent Here, $G(K_x)$ denotes the loop group of $G$ at $x$. The above quotient is taken with respect to the gauge action of $G(K_x)$ on $\fg((t))dt:=\fg\otimes\Omega^1_{\overset{\circ}{D_x}}$, and then we étale sheafify to obtain $\LSDisk$. Recall that the gauge action is defined as follows:
\[
G(K)\times \fg((t))dt\to \fg((t))dt, \;\; (g,\Gamma dt)\mapsto \on{Ad}_g(\Gamma)dt-(dg)g^{-1}.
\]

\noindent Later, we will need to consider $\on{LocSys}_G(\overset{\circ}{D_x})$ as a mapping prestack, see Appendix \ref{s:fin}.

\begin{rem}
The term $(dg)g^{-1}$ does not make sense unless $G$ is the general linear group. Properly, we should replace $(dg)g^{-1}$ with the pullback of the Cartan form on $G$ under $g:\overset{\circ}{D}\xrightarrow{g} G$, see \cite[§1.12]{raskin2015notion}.
\end{rem}

\subsection{Local systems on the punctured disk with restricted variation}\label{s:lsrestr}

We want a candidate for the stack of regular singular de Rham local systems on the punctured disk whose monodromy has some fixed eigenvalue. When the eigenvalue is $1$ (i.e., when the local systems are unipotent), we may simply take the stack $\cN/G$, where $\cN\subset \fg$ is the nilpotent cone.

When the eigenvalue is not $1$, giving an explicit description is more complicated. There are various ways to approach this problem (see Remark \ref{r:alt} below), but it turns out that a very convenient way is via the stack of local systems on the punctured disk with restricted variation. This stack was essentially introduced in \cite{arinkin2020stack}. More precisely, \cite{arinkin2020stack} studies stacks associated to \emph{gentle Tannakian categories}, the latter being a notion defined in \emph{loc.cit}. In the forthcoming paper \cite{lsrestr} of Bogdanova, it is explained how the stack of local systems with restricted variation on the punctured disk, to be denoted $\on{LocSys}^{\on{restr}}_G(\overset{\circ}{D})$, fits this framework. 

Below, following \cite{lsrestr}, we give the definition of $\on{LocSys}^{\on{restr}}_G(\overset{\circ}{D})$, referring to \emph{loc.cit} for its basic properties.

\subsubsection{} Denote by $\on{Lisse}(\overset{\circ}{D})^{\heartsuit}$ the abelian category of pairs $(V,\nabla)$, where $V$ is a finite-dimensional $K=k((t))$-vector space, and $\nabla$ is a $k$-linear map
\[
\nabla: V\to Vdt
\]

\noindent satisfying
\[
\nabla(fv)=f\nabla(v)+df\otimes v
\]

\noindent for all $f\in K, v\in V$. We let $\on{QLisse}(\overset{\circ}{D})=\on{Ind}(\on{Lisse}(\overset{\circ}{D}))$, where $\on{Lisse}(\overset{\circ}{D})$ denotes the bounded derived category of $\on{Lisse}(\overset{\circ}{D})^{\heartsuit}$.

\subsubsection{} From the Tannakian description of $\on{LocSys}_G(\overset{\circ}{D})$, one obtains a map cf. \cite{lsrestr}:
\[
\on{LocSys}_G(\overset{\circ}{D})^{\on{restr}}\to \on{LocSys}_G(\overset{\circ}{D}).
\]

\noindent The map is a bijection on $k$-points.

\subsubsection{} Recall that a local systems $\sigma\in \on{LocSys}_G(\overset{\circ}{D})(k)$ is semi-simple if whenever it admits a reduction to a parabolic $P$, it further admits a reduction to the Levi subgroup $M$ for some splitting $M\into P$.

Given two local systems $\sigma_1,\sigma_2\in \on{LocSys}_G(\overset{\circ}{D})^{\on{restr}}(k)$ with $\sigma$ semi-simple, we say that $\sigma_1$ is a semi-simplification of $\sigma_2$ if there exists a parabolic $P\subset G$ and reductions $\sigma_{1,P}, \sigma_{2,P}$ of $\sigma_1,\sigma_2$, respectively, such the induced $M$-local systems of the reductions coincide.

\subsubsection{Topological properties}
We have the following description of the connected components of $\on{LocSys}_G(\overset{\circ}{D})^{\on{restr}}$:
\begin{prop}[\cite{arinkin2020stack}, Proposition 3.7.2+Theorem 1.8.3]\label{p:agkrrv}
There is a bijection between the connected components of $\on{LocSys}_G(\overset{\circ}{D})^{\on{restr}}$ and the set of isomorphism classes of semi-simple $G$-local systems on $\overset{\circ}{D}$. The bijection is uniquely characterized by the fact that two local systems $\sigma_1,\sigma_2\in \on{LocSys}_G(\overset{\circ}{D})^{\on{restr}}(k)$ lie in the same connected component if and only if they have the same semi-simplification.

Moreover, the underlying reduced prestack of a connected component is an algbraic stack of finite type.
\end{prop}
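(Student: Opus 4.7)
The plan is to prove the two assertions in turn: first, establish the bijection between connected components and isomorphism classes of semi-simple local systems; second, identify each component (after reduction) with a finite-type algebraic stack.

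For the bijection, I would define the candidate map by sending a connected component to the semi-simplification of any of its $k$-points. Well-definedness rests on showing that semi-simplification is locally constant on $\on{LocSys}^{\on{restr}}_G(\overset{\circ}{D})$. This is the heart of the argument, and the key input is the restricted-variation property of the Tannakian category $\on{QLisse}(\overset{\circ}{D})$: in a family of objects over a connected base, the Jordan--H\"older constituents cannot vary, since irreducible representations of $\hat{\bZ}$ (or their de Rham analogues) with restricted variation form a discrete collection in this Tannakian context. Surjectivity is immediate, as every semi-simple $\sigma_{ss}$ is itself a $k$-point of $\on{LocSys}^{\on{restr}}_G(\overset{\circ}{D})$.

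Injectivity is the main geometric input: given $\sigma$ with semi-simplification $\sigma_{ss}$, I need to exhibit a connected family linking the two. By definition of the semi-simplification, there exists a parabolic $P \subset G$ and a reduction $\sigma_P$ of $\sigma$ whose associated $M$-local system agrees with $\sigma_{ss}$ (for a splitting $M \hookrightarrow P$). I would then invoke the $\bG_m$-action on the unipotent radical $N_P$ induced by a regular dominant cocharacter of the center of $M$: this produces an $\bA^1$-family of $P$-local systems whose fiber at $t=1$ is $\sigma_P$ and whose fiber at $t=0$ is the split extension, i.e.\ the $P$-local system induced from $\sigma_{ss}$. Inducing this family up to $G$ yields a map $\bA^1 \to \on{LocSys}^{\on{restr}}_G(\overset{\circ}{D})$ connecting $\sigma$ to $\sigma_{ss}$, as required.

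For the finite-type assertion, I would parametrize each connected component explicitly. Fix a semi-simple $\sigma_{ss}$; by the argument above, every point of the corresponding component is obtained by first choosing a parabolic reduction with associated graded $\sigma_{ss}$ and then an extension class. Such extension classes are classified by a suitable $\on{Ext}^1$ in $\on{QLisse}(\overset{\circ}{D})$, which is finite-dimensional since $\overset{\circ}{D}$ has cohomological dimension one in the restricted-variation setting. Thus the reduced component is presented as a quotient of a finite-dimensional affine space by the algebraic automorphism group $\on{Aut}(\sigma_{ss})$ (itself of finite type modulo its pro-unipotent radical, which acts trivially on the reduction). The main obstacle will be controlling the automorphism pro-group and ensuring that the exhausting family above accounts for \emph{all} local systems with semi-simplification $\sigma_{ss}$, not merely a dense locus; this relies again on the restricted-variation hypothesis to rule out pathological deformations, and should be handled by a careful dévissage on the nilpotent extension class.
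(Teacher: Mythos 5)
The paper does not prove this proposition; it is imported verbatim from \cite{arinkin2020stack} (Prop.\ 3.7.2 and Thm.\ 1.8.3). Your reconstruction follows the same two-step strategy as that source: the $\bG_m$-contraction of the unipotent radical to connect a local system to its semi-simplification, and a discreteness statement for semi-simple objects in the restricted setting to separate the components. The contraction argument is correct as far as it goes (modulo checking that the resulting $\bA^1$-family actually factors through the restricted stack, which it does since it is constant up to conjugation away from $0$).

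The genuine gap is in the converse direction, i.e.\ the well-definedness of your map. You justify local constancy of the semi-simplification by asserting that irreducible objects ``with restricted variation form a discrete collection.'' As stated this begs the question: on the un-restricted stack $\on{LocSys}_G(\overset{\circ}{D})$ the semi-simple local systems form a positive-dimensional continuous family (for $G=\bG_m$ every local system is irreducible and they sweep out all of $\on{LocSys}_{\bG_m}(\overset{\circ}{D})$), and the entire point of passing to $\on{LocSys}_G(\overset{\circ}{D})^{\on{restr}}$ is that this continuum becomes disconnected. Discreteness is therefore the conclusion, not an available input; to obtain it one must use the actual definition of restricted variation (a family over a connected reduced finite-type base is locally constant along the base, e.g.\ isomonodromic with locally constant traces of monodromy), which is precisely the content of the cited results and is nowhere engaged with in your write-up. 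Your finite-typeness argument is likewise left incomplete by your own admission; note that for the punctured disk there is a more concrete route, used implicitly in \S\ref{s:altdesc}: the reduced component is the image of the proper map from the finite-type stack $\on{LocSys}_B(\overset{\circ}{D})_{\lambda}$, or alternatively is supported on a finite union of adjoint orbits $G/Z_G(g)$ inside the fiber of $G/G\to T//W$ (cf.\ Lemma \ref{l:finman}), either of which avoids the d\'evissage on extension classes entirely.
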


\subsubsection{} For each $\lambda\in \ft$, we may consider the element $\lambda \frac{dt}{t}\in \on{LocSys}_T(\overset{\circ}{D})^{\on{restr}}(k)$. By abuse of notation, we similarly denote this element by $\lambda$. Observe that the composition
\begin{equation}\label{eq:TtoG}
\ft\xrightarrow{\frac{dt}{t}} \on{LocSys}_T(\overset{\circ}{D})^{\on{restr}}\to \on{LocSys}_G(\overset{\circ}{D})^{\on{restr}}
\end{equation}

\noindent factors through $\ft//\widetilde{W}^{\on{aff}}$.

\subsubsection{} For $\chi\in\ft//\widetilde{W}^{\on{aff}}$, let $\on{LocSys}_G(\overset{\circ}{D})^{\on{RS}}_{\chi}$ be the underlying \emph{reduced} prestack of the connected component of $\on{LocSys}_G(\overset{\circ}{D})^{\on{restr}}$ containing the image of $\chi$ under (\ref{eq:TtoG}). By Proposition \ref{p:agkrrv}, $\on{LocSys}_G(\overset{\circ}{D})^{\on{RS}}_{\chi}$ is an algebraic stack of finite type. Moreover, the proposition shows that the map that associates to a $k$-point of $\ft//\widetilde{W}^{\on{aff}}$ its corresponding connected component of $\on{LocSys}_G(\overset{\circ}{D})^{\on{restr}}$ is an injection of sets.
\begin{rem}
The superscript "RS" stands for "regular singular", "restricted" and "residue".
\end{rem}

\subsubsection{Alternative description}\label{s:altdesc} For $\lambda\in\ft$, consider the prestack
\[
\on{LocSys}_B(\overset{\circ}{D})_{\lambda}^{\on{restr}}:=\on{LocSys}_B(\overset{\circ}{D})^{\on{restr}}\underset{\on{LocSys}_T(\overset{\circ}{D})^{\on{restr}}}{\times} \lambda/T.
\]

\noindent Note that $\on{LocSys}_B(\overset{\circ}{D})_{\lambda}^{\on{restr}}$ does not depend on $\lambda$ up to integral translates by $X_{\bullet}(T)$. Moreover, $\on{LocSys}_B(\overset{\circ}{D})_{\lambda}^{\on{restr}}$ is irreducible as an algebraic stack of finite type. By (the proof of) \cite[Prop. 4.3.3]{arinkin2020stack}, the natural map
\[
\on{LocSys}_B(\overset{\circ}{D})_{\lambda}^{\on{restr}}\to \on{LocSys}_B(\overset{\circ}{D})_{\lambda}:=\on{LocSys}_B(\overset{\circ}{D})\underset{\on{LocSys}_T(\overset{\circ}{D})}{\times} \lambda/T
\]

\noindent is an isomorphism. In particular, the map
\[
\on{LocSys}_B(\overset{\circ}{D})_{\lambda}\to \on{LocSys}_G(\overset{\circ}{D})
\]

\noindent factors through $\on{LocSys}_G(\overset{\circ}{D})^{\on{restr}}\to \on{LocSys}_G(\overset{\circ}{D})$.

\subsubsection{} By \cite[§3]{arinkin2020stack}, the map
\[
\on{LocSys}_B(\overset{\circ}{D})_{\lambda}\to \on{LocSys}_G(\overset{\circ}{D})^{\on{restr}}
\]

\noindent is proper. Moreover, by the proof of Proposition 3.7.2 in \emph{loc.cit}, it follows that $\on{LocSys}_G(\overset{\circ}{D})^{\on{RS}}_{\chi}$ is given by the image of
\[
\on{LocSys}_B(\overset{\circ}{D})_{\lambda}\to \on{LocSys}_G(\overset{\circ}{D})^{\on{restr}},
\]

\noindent where $\lambda\in \ft/X_{\bullet}(T)$ is any lift of $\chi$.

\begin{example}
If $\chi=0$, we have
\[
\on{LocSys}_B(\overset{\circ}{D})_{0}^{\on{restr}}\simeq\fn/B,\;\; \on{LocSys}_G(\overset{\circ}{D})^{\on{RS}}_{0}\simeq\cN/G.
\]

\end{example}

\begin{rem}\label{r:alt}
If one insisted on avoiding local systems with restricted variation and only working with $\on{LocSys}_G(\overset{\circ}{D})$, we could define $\on{LocSys}_G(\overset{\circ}{D})^{\on{RS}}_{\chi}$ as follows. Let $\lambda\in \ft$ be an \emph{anti-dominant} representative of $\chi$. One can show that the natural map
\[
\lambda\frac{dt}{t}+t^{-1}\fn[[t]]dt/N(O)\to \lambda\frac{dt}{t}+\fn((t))/N(K)=\on{LocSys}_B(\overset{\circ}{D})\underset{\on{LocSys}_T(\overset{\circ}{D})}{\times} \lambda/T
\]

\noindent is an isomorphism in this case.\footnote{If $\lambda$ is moreover dominant, then these in turn coincide with $(\fn+\lambda)/N$.} Denote by $\kappa: \fg\to \ft//W$ the Harish-Chandra map. Then $\on{LocSys}_G(\overset{\circ}{D})^{\on{RS}}_{\chi}$ coincides with the underlying reduced stack of the connected component of $\kappa^{-1}(\lambda)\frac{dt}{t}+\fg[[t]]dt/G(O)$ containing $\lambda\frac{dt}{t}$. 
\end{rem}

\subsubsection{} The following lemma gives a criterion for logarithmic connections on the punctured disk to factor through $\on{LocSys}_G(\overset{\circ}{D})^{\on{RS}}_{\chi}$.
\begin{lem}\label{l:cstresidue}
Let $S=\on{Spec}(A)$ be an affine scheme, and let $(\sP,\nabla)$ be a $G$-bundle on $S\widehat{\times} D=\on{Spf}(A[[t]])$ equipped with a relative connection along $S$ with pole of order at most $1$. Taking the residue, we obtain a map
\begin{equation}\label{eq:resnonordered}
S\to \fg/G\to \ft//W\to \ft//\widetilde{W}^{\on{aff}}.
\end{equation}

\noindent If the map (\ref{eq:resnonordered}) factors through $\chi\in \ft//\widetilde{W}^{\on{aff}}$, then the corresponding map $S\to \on{LocSys}_G(\overset{\circ}{D})$ factors through $\on{LocSys}_G(\overset{\circ}{D})^{\on{RS}}_{\chi}$.
\end{lem}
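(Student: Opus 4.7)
The plan is to deduce the factorization by combining the universal property of $\on{LocSys}_G(\overset{\circ}{D})^{\on{restr}}$ with the classification of its connected components by semi-simplification (Proposition \ref{p:agkrrv}).

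First, I would check that the map $S \to \on{LocSys}_G(\overset{\circ}{D})$ induced by $(\sP, \nabla)$ factors through $\on{LocSys}_G(\overset{\circ}{D})^{\on{restr}}$. By the Tannakian description from \cite{lsrestr}, this amounts to producing, for every $V \in \on{Rep}(G)$, an object of $\on{Lisse}(\overset{\circ}{D}) \otimes_k \on{Perf}(A)$ in a way that is exact and symmetric monoidal in $V$. Associating to $V$ the vector bundle $V_{\sP}$ on $S \widehat{\times} D$, restricting to $S \widehat{\times} \overset{\circ}{D}$, and equipping it with the induced connection does exactly this; finite-dimensionality over $K$ is automatic since we started from a vector bundle on the unpunctured formal disk.

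Next, $\on{LocSys}_G(\overset{\circ}{D})^{\on{RS}}_{\chi}$ is the reduced locus of an open-and-closed substack of $\on{LocSys}_G(\overset{\circ}{D})^{\on{restr}}$, and the embedding $\on{LocSys}_G(\overset{\circ}{D})^{\on{RS}}_{\chi} \hookrightarrow \on{LocSys}_G(\overset{\circ}{D})$ is a monomorphism; thus factoring through it is a \emph{condition} on the map from $S$, which, after reducing $S$, can be checked on geometric points. By Proposition \ref{p:agkrrv} this condition is precisely that each geometric fiber of $(\sP, \nabla)$ has semi-simplification isomorphic to the local system $\lambda \tfrac{dt}{t}$ for some (equivalently, any) lift $\lambda \in \ft$ of $\chi$.

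Finally, I would compute the semi-simplification at a geometric point $s \in S$ with residue $A_s \in \fg$ satisfying $[A_s] = \chi$ in $\ft//\widetilde{W}^{\on{aff}}$. Trivializing the $G$-bundle on the formal disk at $s$, the connection takes the form $d + A_s \tfrac{dt}{t} + \text{(regular in } t\text{)}$. By a formal analogue of Levelt's classification of regular singular connections, this is gauge-equivalent to a connection with residue in a Borel subalgebra containing $A_s$, whose induced $T$-connection has residue $A_s^{ss}$, the semi-simple part of $A_s$. Via the description of $\on{LocSys}_G(\overset{\circ}{D})^{\on{RS}}_{\chi}$ in §\ref{s:altdesc} as the image of $\on{LocSys}_B(\overset{\circ}{D})_{\lambda} \to \on{LocSys}_G(\overset{\circ}{D})^{\on{restr}}$, this exhibits $\sigma_s$ as lying in the desired connected component, since $A_s^{ss}$ and $\lambda$ have the same image in $\ft//\widetilde{W}^{\on{aff}}$ and therefore define isomorphic semi-simple $T$-local systems on $\overset{\circ}{D}$.

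The main obstacle is the last step: a clean identification of the semi-simplification of a formal logarithmic connection with residue determined, up to $\widetilde{W}^{\on{aff}}$, by its residue's semi-simple part. This rests on the classical (but non-trivial) fact that a regular singular local system on $\overset{\circ}{D}$ is determined up to semi-simplification by the conjugacy class of the exponential of its residue, equivalently by the $\widetilde{W}^{\on{aff}}$-orbit of the semi-simple part of the residue; everything else in the plan is a formal consequence of results recorded earlier in the paper.
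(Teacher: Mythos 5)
Your route is genuinely different from the paper's: you go top-down, first landing in $\on{LocSys}_G(\overset{\circ}{D})^{\on{restr}}$ and then identifying the connected component pointwise via semi-simplification (Proposition \ref{p:agkrrv}). The paper instead argues bottom-up: after reducing to $S$ connected with trivialized bundle, it constructs an explicit gauge transformation $g=\underset{i\geq 1}{\prod}\on{exp}(t^iB_i)\in G(O)(S)$ that puts the connection in a form where all coefficients lie in $A\otimes\fb$ and the residue equals an anti-dominant lift $\lambda$ of $\chi$ modulo $A\otimes\fn$, i.e.\ it lifts the whole family to $\on{LocSys}_B(\overset{\circ}{D})_{\lambda}$; the solvability of each inductive step is the non-resonance statement that $\alpha(\lambda)-i$ is invertible for $i\geq 1$ and $\alpha$ positive (Lemma \ref{l:techlemma}).

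The gap in your plan is the sentence asserting that the factorization is a condition which, "after reducing $S$, can be checked on geometric points." The target $\on{LocSys}_G(\overset{\circ}{D})^{\on{RS}}_{\chi}$ is by definition the \emph{reduced} prestack underlying a connected component of $\on{LocSys}_G(\overset{\circ}{D})^{\on{restr}}$, and that component is not reduced in general; a map from a non-reduced affine scheme $S$ into the component need not factor through its reduction, and whether it does cannot be detected on geometric points. You are also not free to replace $S$ by $S_{\on{red}}$: the lemma is used to produce maps of stacks such as (\ref{eq:nottildetotilde}) and the restriction maps on opers, so it must hold functorially for arbitrary, possibly non-reduced, test schemes. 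Relatedly, your last step invokes a Levelt-type normal form only at a single geometric point; the substantive content of the lemma is precisely that this normal form can be achieved \emph{in the family} over $A$, which is what the paper's inductive gauge argument supplies and your plan omits. (Your first step, that the family factors through $\on{LocSys}_G(\overset{\circ}{D})^{\on{restr}}$ because each $V_{\sP}$ is a finite-rank free module with connection, and your pointwise identification of the component, are fine as far as they go.)
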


\begin{proof}

\emph{Step 1.}
We may assume $S$ is connected. Locally trivializing the $G$-bundle, we may further assume the bundle is trivial. As such, let
\[
\nabla=d+\Gamma dt
\]

\noindent be the corresponding connection given by $S\to \on{LocSys}_G(\overset{\circ}{D})$, where
\[
\Gamma=t^{-1}\Gamma_{-1}+\Gamma_0+t\Gamma_1+\cdot\cdot\cdot \in (A\otimes \fg)((t)).
\]

\noindent By §\ref{s:altdesc}, it suffices to provide a lift to a map $S\to \on{LocSys}_B(\overset{\circ}{D})_{\lambda}$ for some lift $\lambda\in\ft$ of $\chi$. That is, we need to gauge $\nabla$ by an element of $g\in G(O)(S)=\on{Hom}(\on{Spf}(A[[t]]),G)$ so that all its terms are elements of $(A\otimes \fb)((t))$ with residue $\lambda$.

Conjugating by an element of $g_0\in G$, we may assume that $\Gamma_{-1}\in A\otimes \fb$. That is, $\Gamma_{-1}$ defines a map $S\to \fb$. By assumption, its further projection to $\ft//\widetilde{W}^{\on{aff}}$ is constant. Since $S$ is connected and $\widetilde{W}^{\on{aff}}$ is discrete, this forces the map $S\to\fb\to \ft$ to be constant. In other words, the element $\Gamma_{-1}$ defines an element of $\lambda+A\otimes \fn\subset A\otimes \fb$.

\emph{Step 2.}
Denote by
\[
\cK_i(S)=\on{Ker}(\on{Hom}(\on{Spf}(A[[t]]),G)\to \on{Hom}(\on{Spec}(A[[t]]/(t^i)),G))\subset G(O)(S)
\]

\noindent the $i$'th congruence subgroup.
We will create the element $g$ by induction. Namely, we will create elements $g_i\in\cK_i(S)$ such that the first $i+1$ terms of the connection
\[
\on{Gauge}{}_{g_ig_{i-1}\cdot\cdot\cdot g_1}(\nabla)
\]

\noindent lie in $A\otimes \fb$. Then $g=\underset{i\geq 1}{\prod} g_i$.

We will let $g_1=\on{exp}(tB_1)\in\cK_1(S)$ for some $B_1\in\on{Lie}(\cK_1(S))=t(A\otimes \fg)[[t]]$.\footnote{Recall that $\cK_i$ is pro-unipotent so that the exponential map is well-defined.} Note that
\[
\on{Gauge}_{\on{exp}(tB_1)}(d+\Gamma dt)=\on{Ad}_{\on{exp}(tB_1)}(d+\Gamma dt)- (d\on{exp}(tB_1))\cdot \on{exp}(tB_1)^{-1}
\]
\[
=\big(\Gamma dt + [tB_1,\Gamma dt]+t(\cdot\cdot\cdot)\big) -\big(B_1+t(\cdot\cdot\cdot)\big).
\]

\noindent In particular, the degree zero term of the above expression is given by 
\begin{equation}\label{eq:mustlieinb}
\Gamma_0+[B_1,\Gamma_{-1}]-B_1.
\end{equation}

\noindent By potentially choosing a different $g_0\in G$, we may assume that $\lambda$ is anti-dominant. By Lemma \ref{l:techlemma} below, we can find $B_1\in A\otimes \fg$ so that (\ref{eq:mustlieinb}) lies in $A\otimes \fb$.

In general, assume that we have found $g_1,...,g_{i-1}$ with the desired properties. We let $g_i=\on{exp}(t^iB_i)$ for a suitable $B_i\in A\otimes \fg$. Namely, the term in front of $t^{i-1}$ in the connection
\[
\on{Gauge}_{g_ig_{i-1}\cdot\cdot\cdot g_1}(d+\Gamma dt)
\]

\noindent is given by 
\[
\Gamma_{i-1}+[B_i,\Gamma_{-1}]-iB_i.
\]

\noindent Applying Lemma \ref{l:techlemma} again, we may choose $B_i$ so that the above expression lies in $A\otimes \fb$.

\end{proof}

\noindent We used the following lemma in the proof above.
\begin{lem}\label{l:techlemma}
Let $\Gamma_{-1}\in \lambda+A\otimes \fn$ with $\lambda\in\ft(k)$ anti-dominant. For $i\geq 1$, let $\Gamma_{i-1}\in A\otimes \fg$. Then there exists $B_i\in A\otimes \fg$ such that
\begin{equation}\label{eq:thingyouwantinb}
\Gamma_{i-1}+[B_i,\Gamma_{-1}]-iB_i\in A\otimes \fb.
\end{equation}
\end{lem}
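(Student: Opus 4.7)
The plan is to reduce the problem to the invertibility of a single $A$-linear endomorphism of $A \otimes \fn^-$. Using the direct sum decomposition $\fg = \fb \oplus \fn^-$ of $k$-vector spaces, an element of $A \otimes \fg$ lies in $A \otimes \fb$ precisely when its projection to $A \otimes \fn^-$ vanishes. Since $\Gamma_{-1} \in A \otimes \fb$, the operator $B \mapsto [B,\Gamma_{-1}] - iB$ preserves $A \otimes \fb$, so in searching for $B_i$ we may restrict attention to $B_i \in A \otimes \fn^-$. Thus it suffices to show that the composition
\[
\bar{\phi}\colon A \otimes \fn^- \xrightarrow{B \mapsto [B,\Gamma_{-1}] - iB} A \otimes \fg \xrightarrow{\on{pr}_{\fn^-}} A \otimes \fn^-
\]
is an $A$-linear isomorphism, for then one takes $B_i := -\bar{\phi}^{-1}\bigl(\on{pr}_{\fn^-}(\Gamma_{i-1})\bigr)$.

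To analyze $\bar{\phi}$, write $\Gamma_{-1} = \lambda + n$ with $n \in A \otimes \fn$. For $B$ in the root space $\fg_{-\alpha}$ (with $\alpha > 0$) one has $[B,\lambda] - iB = (\alpha(\lambda) - i)\,B \in \fg_{-\alpha} \subset \fn^-$. Anti-dominance of $\lambda$ forces $\alpha(\lambda) \leq 0$ for every positive root $\alpha$, hence $\alpha(\lambda) - i \leq -i < 0$ for $i \geq 1$. This non-vanishing is the crucial (and essentially only substantive) input into the argument; it is exactly where the anti-dominance assumption is used.

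The contribution of $[B,n]$ is handled by the height filtration $F_k \fn^- := \bigoplus_{\on{ht}(\alpha) \leq k} \fg_{-\alpha}$. For $B \in \fg_{-\alpha}$ and $n_\beta \in \fg_\beta$ with $\beta > 0$, the bracket $[B,n_\beta] \in \fg_{\beta - \alpha}$ either lands in $\fb$ (when $\beta - \alpha$ is zero or a positive root) or in $\fg_{-(\alpha - \beta)} \subset \fn^-$ with strictly smaller height $\on{ht}(\alpha - \beta) \leq \on{ht}(\alpha) - 1$. Hence $\bar{\phi}$ respects the filtration $\{A \otimes F_k \fn^-\}_k$ and acts on the associated graded piece $F_k/F_{k-1} = \bigoplus_{\on{ht}(\alpha) = k}\fg_{-\alpha}$ by the nonzero scalar $\alpha(\lambda) - i$ on each summand. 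A finite induction on $k$ (the filtration terminates since $\fn^-$ is finite-dimensional) shows $\bar{\phi}$ is invertible on $A \otimes \fn^-$, producing the required $B_i$. The whole argument is really a triangular bookkeeping; the only genuine input is the diagonal non-vanishing coming from anti-dominance of $\lambda$ and positivity of $i$.
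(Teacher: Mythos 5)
Your proof is correct and follows essentially the same route as the paper: both arguments rest on the decomposition of $\fn^-$ into root spaces, the observation that the operator $B\mapsto [B,\Gamma_{-1}]-iB$ is triangular with respect to root height with diagonal entries $\alpha(\lambda)-i$, and the non-vanishing of these entries supplied by anti-dominance of $\lambda$ together with $i\geq 1$. The only difference is presentational — you package the triangular inversion as invertibility of a filtered $A$-linear operator with invertible associated graded, whereas the paper solves for the components $y_{-\alpha}$ explicitly by downward induction starting from the longest root.
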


\begin{proof}
For every positive root $\alpha$, let $x_{-\alpha}\in A\otimes\fn^{-}_{-\alpha}$ be the summand of $\Gamma_{i-1}$ corresponding to $-\alpha$. We will choose suitable $y_{-\alpha}\in A\otimes\fn^{-}_{-\alpha}$, and then let $B_i=\underset{\alpha}{\sum} y_{-\alpha}\in A\otimes \fn^{-}$. To find $y_{-\alpha}$, we induct on the length of $\alpha$.

Suppose first that $\alpha$ is the longest root. In this case, the summand of
\[
\Gamma_{i-1}+[y_{-\alpha},\Gamma_{-1}]-iy_{-\alpha}
\]

\noindent corresponding to the root $-\alpha$ is given by 
\[
x_{-\alpha}+(\alpha(\lambda)-i)y_{-\alpha}.
\]

\noindent Since $\lambda$ is anti-dominant, the element $\alpha(\lambda)-i$ is a non-zero element of $k$, in particular a unit of $A$. Thus, we take
\[
y_{-\alpha}=-(\alpha(\lambda)-i)^{-1}x_{-\alpha}.
\]

\noindent For an arbitrary root $\alpha$, the summand of (\ref{eq:thingyouwantinb}) corresponding to $-\alpha$ will be
\[
x_{-\alpha}+\gamma_{>-\alpha}+(\alpha(\lambda)-i)y_{-\alpha},
\]

\noindent where $\gamma_{>-\alpha}$ depends on $x_{-\beta}$ with $\beta$ being a strictly shorter root than $\alpha$. Then we take
\[
y_{-\alpha}=-(\alpha(\lambda)-i)^{-1}(x_{-\alpha}+\gamma_{>-\alpha}).
\]

\end{proof}

\subsubsection{} We end this subsection with the following lemma:
\begin{lem}\label{l:finman}
The stack $\on{LocSys}_G(\overset{\circ}{D})^{\on{RS}}_{\chi}$ has finitely many $k$-points.
\end{lem}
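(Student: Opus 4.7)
I would reduce the finiteness assertion to the classical fact that a reductive algebraic group has only finitely many unipotent conjugacy classes. First, by Proposition~\ref{p:agkrrv} combined with the fact that $\on{LocSys}_G(\overset{\circ}{D})^{\on{restr}} \to \on{LocSys}_G(\overset{\circ}{D})$ induces a bijection on $k$-points, the $k$-points of $\on{LocSys}_G(\overset{\circ}{D})^{\on{RS}}_{\chi}$ correspond to isomorphism classes of $G$-local systems on $\overset{\circ}{D}$ whose semi-simplification is the image of $\lambda\frac{dt}{t}$ for any lift $\lambda \in \ft$ of $\chi$. Such local systems are automatically regular singular, since they admit a $B$-reduction with regular singular associated graded, and regular singularity is closed under extensions.

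Next, I would invoke the standard classification of regular singular $G$-local systems on $\overset{\circ}{D}$: up to isomorphism they are in bijection with $G$-conjugacy classes in $G$ via the formal (algebraic) monodromy, and this bijection sends the semi-simplification of a local system to the Jordan semi-simple part of its monodromy. Hence the $k$-points of $\on{LocSys}_G(\overset{\circ}{D})^{\on{RS}}_{\chi}$ are parametrized by $G$-conjugacy classes $[g] \subset G$ whose semi-simple part $g_s$ is conjugate to a fixed semi-simple element $s \in T$ determined by $\lambda$.

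Finally, Jordan decomposition identifies such conjugacy classes with unipotent conjugacy classes in the centralizer $Z_G(s)$, which (since $s$ is semi-simple and we are in characteristic zero) is a possibly disconnected reductive group. The finiteness of unipotent conjugacy classes in a reductive group -- a classical theorem, e.g.\ via Jacobson--Morozov and the classification of nilpotent orbits by weighted Dynkin diagrams -- then yields the desired finiteness.

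The only step requiring real care is the compatibility between the notion of semi-simplification used in \cite{arinkin2020stack} (a $P$-reduction splitting through its Levi quotient) and the Jordan semi-simple part of the algebraic monodromy. This can be checked Tannakially by restricting along any representation $G \to GL(V)$ and noting that the $B$-reduction provides a weight filtration whose associated graded has diagonal monodromy determined by $\lambda$ and the weights of $V$; equivalently, one may use the explicit identification $\on{LocSys}_B(\overset{\circ}{D})_{\lambda}^{\on{restr}} \simeq \on{LocSys}_B(\overset{\circ}{D})_{\lambda}$ of \S\ref{s:altdesc} to realize the requisite $B$-reduction directly.
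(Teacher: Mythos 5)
Your proof is correct, and arrives at the same core observation as the paper --- that a $k$-point is determined by a ``monodromy'' conjugacy class $us$ with $s$ fixed semi-simple, and there are finitely many unipotent classes --- but by a different route. The paper first uses Deligne's restriction equivalence to pass from $\overset{\circ}{D}$ to $\bG_m$, then spreads out and reduces to $k=\bC$ (since $\on{LocSys}_G(\overset{\circ}{D})^{\on{RS}}_{\chi}$ is defined over a subfield of $\bC$), and then applies Riemann--Hilbert to identify the $\bC$-points with the fiber of $G/G \to G/\!/G \simeq T/\!/W$; finiteness is immediate since every element in the fiber has the form $us$. You instead stay purely algebraic: you use Proposition~\ref{p:agkrrv} to describe the $k$-points of the connected component, then appeal to the algebraic classification of regular singular connections on $\overset{\circ}{D}$ by formal monodromy, and finish with Jordan decomposition plus finiteness of unipotent orbits in the (possibly disconnected) reductive group $Z_G(s)$. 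Your approach avoids the Lefschetz-type reduction and transcendental input, at the cost of needing the classification of regular singular $\overset{\circ}{D}$-connections by conjugacy class in $G$ over an arbitrary algebraically closed field of characteristic zero, and of the compatibility --- which you correctly flag as the delicate point --- between the ``semi-simplification'' of \cite{arinkin2020stack} (Levi reduction through a parabolic) and the Jordan semi-simple part of the monodromy. Both of these are standard but non-trivial; the paper sidesteps them at the price of invoking $\bC$-analytic methods. Either route is acceptable.
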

\begin{proof}
By \cite[Prop. 15.35]{deligne1989groupe}, restriction along
\[
\overset{\circ}{D}\to \bG_m
\]

\noindent induces an equivalence of regular singular connections. The stack $\on{LocSys}_G(\overset{\circ}{D})^{\on{RS}}_{\chi}$ may be defined over a subfield of the complex numbers. As such, we may assume that $k=\bC$. Applying the Riemann-Hilbert correspondence, we need to show that the fibers of
\[
\on{LocSys}_G(S^1)=G/G\to G//G\simeq T//W
\]

\noindent have finitely many $\bC$-points. However, this is clear: there exists some semisimple element $s\in G$ such that any element in the fiber has the form $us$ for some unipotent $u\in G$.
\end{proof}

\subsection{Local systems on the punctured curve}

\subsubsection{} Let $X$ be a smooth projective curve, and let $S\subset X(k)$ be a finite set of $k$-points. Let $U=X-S$ be the complement. We denote by $\bB G$ the classifying stack of $G$-bundles that are étale locally trivial.

\subsubsection{} For prestacks $Y,Z$, we write $\on{Maps}(Y,Z)$ for the mapping prestack between $Y$ and $Z$. For an affine test scheme $T$, we have
\[
\on{Maps}(Y,Z)(T):=\on{Maps}_{\on{PreStk}}(Y\times T, Z).
\]

\subsubsection{}\label{s:lsUnaive} Let
\[
\on{LocSys}_G(U)^{\on{naive}}:=\on{Maps}(U_{\on{dR}}, \bB G).
\]

\noindent This is the prestack parametrizing $G$-bundles on $U$ equipped with an integrable connection. This space is too big in general; instead, we want to require that the underlying $G$-bundles on $U$ extend to $X$. We explain how to do that next.

\subsubsection{} Let $\on{Bun}_G(X)^{\infty\cdot S}$ be the pro-stack whose $T$-points are given by pairs $(\sP,\phi)$, where $\sP$ is a $G$-bundle on $X\times S$, and $\phi$ is a trivialization of $\sP$ on the formal completion of $T\times X$ along $T\times S$. We have a natural action of $G(K_S):=\underset{x\in S}{\prod} G(K_x)$
on $\on{Bun}_G(X)^{\infty\cdot S}$ by changing the trivialization on the punctured disk around each $x\in S$. Consider the quotient
\[
\overset{\circ}{\on{Bun}}_G:=\on{Bun}_G(X)^{\infty\cdot S}/G(K_S)
\]

\noindent obtained by first taking the prestack quotient, and then étale sheafifying. Concretely, the $T$-points of $\overset{\circ}{\on{Bun}}_G$ are given by $G$-bundles $\sP$ on $U\times T$ such that there exists an étale cover $T'\to T$ so that the pullback of $\sP$ to $U\times T'$ extends to $X\times T'$.
\begin{lem}\label{l:drinfsimp}
Let $\sP$ be the $G$-bundle on $T\times U$ corresponding to a map 
$T\to \overset{\circ}{\on{Bun}}_G$. Then there exists an étale cover $T''\to T$ so that the pullback to $T''\times U$ becomes Zariski-locally trivial on $U$.
\end{lem}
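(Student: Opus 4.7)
The plan is to apply the Drinfeld-Simpson theorem on $B$-structures for $G$-bundles on the projective curve $X$, and then exploit the solvability of a Borel subgroup. First, unwinding the description of $T$-points of $\overset{\circ}{\on{Bun}}_G$, I would, after replacing $T$ by an étale cover that is absorbed into the eventual $T''$, assume that $\sP$ extends to a $G$-bundle $\widetilde{\sP}$ on $T\times X$. Drinfeld-Simpson then guarantees that, after a further étale cover of $T$, $\widetilde{\sP}$ admits a reduction to a chosen Borel $B\subset G$, yielding a $B$-bundle $\widetilde{\sP}_B$ on $T\times X$.

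Writing $B$ as an extension $1\to N\to B\to B/N\to 1$ of its unipotent radical $N$ by the torus quotient $B/N$, the induced $(B/N)$-bundle on $T\times X$ is classified by a finite collection of line bundles indexed by the characters of $B/N$. Each such line bundle is Zariski-locally trivial on $T\times X$ by the definition of a line bundle, so every point of $T\times U$ admits an affine Zariski open neighborhood $W$ on which all of these line bundles become trivial simultaneously. On such a $W$, the $B$-bundle $\widetilde{\sP}_B|_W$ reduces further to an $N$-bundle; the unipotence of $N$ together with the vanishing $H^1(W,\cO_W)=0$ on affine schemes (via d\'evissage along the lower central series of $N$) forces this $N$-bundle to be trivial. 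Consequently $\widetilde{\sP}|_W$, and therefore $\sP|_W$, is trivial, so that the $W$'s form a Zariski open cover witnessing the required local triviality on $T''\times U$.

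The main technical input is the Drinfeld-Simpson theorem itself, which handles the reduction of structure group to a Borel for a family of $G$-bundles on $X$; this is where the étale base change on $T$ is really used. The remaining steps are standard facts: line bundles are Zariski-locally trivial by definition, and $H^1$ with coefficients in an iterated extension of $\bG_a$'s vanishes on any affine scheme, so no additional work is required to propagate triviality of the torus part to triviality of the full $B$-bundle.
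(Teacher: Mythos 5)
Your proof is correct, and it takes a slightly different route from the paper's. The paper's proof is a two-line direct citation: after passing to an \'etale cover $T'\to T$ over which $\sP$ extends to $T'\times X$, it invokes \cite[Thm.~2]{drinfeld1995b}, which already asserts that $G$-bundles on a projective curve become Zariski-locally trivial after a further \'etale base change. You instead invoke the Borel-reduction form of Drinfeld--Simpson (a $B$-structure exists \'etale-locally on the base) and then rederive Zariski-local triviality by hand: the $B/N$-part is given by line bundles, hence trivializes on affine Zariski opens $W$; the remaining $N$-torsor on affine $W$ is trivial because $N$ is iterated by $\bG_a$'s and $H^1(W,\cO_W)=0$. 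That dévissage is essentially how Drinfeld--Simpson deduce their Zariski-local triviality statement from the $B$-reduction theorem, so what you have done is unfold one further step of their paper; the paper simply cites the end result. Both are fine. Two minor points worth being explicit about if you wrote this up: (i) the paper works with a connected reductive $G$, not just semisimple, so you should make sure the version of Drinfeld--Simpson you invoke covers this (the $B$-reduction theorem does); (ii) when you pass from a trivialization of the $T$-part on $W$ to an $N$-reduction of the $B$-bundle, you are using the splitting $T\hookrightarrow B$ of $B\twoheadrightarrow B/N$, which is available since $B$ is a Borel of a reductive group in characteristic zero -- worth a half-sentence.
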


\begin{proof}
Let $T'\to T$ be an étale cover such that the pullback of $\sP$ admits an extension to $T'\times X$. By \cite[Thm. 2]{drinfeld1995b}, we may find a further étale cover $T''\to T'$ such that the pullback becomes Zariski-locally trivial on $X$.
\end{proof}

\subsubsection{} We have a natural map
\[
\overset{\circ}{\on{Bun}}_G\to\on{Bun}_G(U),
\]

\noindent which is a monomorphism. That is, for every affine scheme $T$, the induced map of groupoids 
\[
\overset{\circ}{\on{Bun}}_G(T)\to\on{Bun}_G(U)(T)
\]

\noindent is fully faithful.

\subsubsection{}\label{s:actualls} Consider the prestack
\[
\on{LocSys}_G(U):=\on{LocSys}_G(U)^{\on{naive}}\underset{\on{Bun}_G(U)}{\times}\overset{\circ}{\on{Bun}}_G.
\]

\noindent We similarly have a monomorphism
\[
\on{LocSys}_G(U)\to \on{LocSys}_G(U)^{\on{naive}}.
\]

\subsection{Restricting local systems}

\subsubsection{} Let $x\in S$. We explain how to define a restriction map
\begin{equation}\label{eq:restric}
\on{LocSys}_G(U)\to \on{LocSys}_G(\overset{\circ}{D_x}).
\end{equation}

\noindent The construction uses the "fake" de Rham punctured disk appearing in Appendix \ref{app:A}. The reader is invited to skip this subsection on a first read and take the map (\ref{eq:restric}) on faith.

\subsubsection{} Consider the prestack $\on{LocSys}_G(\overset{\circ}{D_x})^{\on{naive}}$ defined by étale sheafifying the functor that associated to an affine test scheme $T$ the groupoid
\[
\on{Maps}_{\on{PreStk}}((\overset{\circ}{D_x})^{\on{fake}}_{\on{dR}},\bB G).
\]

\noindent Here, $(\overset{\circ}{D_x})^{\on{fake}}_{\on{dR}}$ denotes the fake de Rham punctured for the map $T\onto \lbrace x\rbrace\into X$. The definition is due to Gaitsgory and is recalled in Appendix \ref{app:A}.

\subsubsection{} By Lemma \ref{l:asheaf}, we may alternatively define $\on{LocSys}_G(\overset{\circ}{D_x})$ as the étale sheafification of the functor that associates to an affine test scheme $T$ the groupoid
\[
\on{Maps}_{\on{PreStk}}((\overset{\circ}{D_x})^{\on{fake}}_{\on{dR}}, \on{pt}/G).
\]

\noindent Here, $\on{pt}/G$ denotes the \emph{prestack} quotient of $\on{pt}$ by $G$ (that is, we do not étale sheafify the quotient). By construction, we have a map
\begin{equation}\label{eq:ctonaive}
\on{LocSys}_G(\overset{\circ}{D_x})\to \on{LocSys}_G(\overset{\circ}{D_x})^{\on{naive}}.
\end{equation}

\noindent Moreover, the map (\ref{eq:ctonaive}) is a monomorphism. Indeed, this simply follows from the fact that $\on{pt}/G \to \bB G$ is a monomorphism.

\subsubsection{} Since we have a natural map $(\overset{\circ}{D_x})^{\on{fake}}_{\on{dR}}\to U_{\on{dR}}$, we obtain a restriction map
\[
\on{LocSys}_G(U)^{\on{naive}}\to \on{LocSys}_G(\overset{\circ}{D_x})^{\on{naive}}.
\]

\noindent By Lemma \ref{l:drinfsimp}, the composition 
\[
\on{LocSys}_G(U)\to \on{LocSys}_G(U)^{\on{naive}}\to \on{LocSys}_G(\overset{\circ}{D_x})^{\on{naive}}
\]

\noindent naturally factors as 
\[\begin{tikzcd}
	{\mathrm{LocSys}_G(U)} && {\mathrm{LocSys}_G(\overset{\circ}{D_x})} \\
	\\
	{\mathrm{LocSys}_G(U)^{\mathrm{naive}}} && {\mathrm{LocSys}_G(\overset{\circ}{D_x})^{\mathrm{naive}}.}
	\arrow[from=1-1, to=3-1]
	\arrow[from=3-1, to=3-3]
	\arrow[dashed, from=1-1, to=1-3]
	\arrow[from=1-3, to=3-3]
\end{tikzcd}\]

\noindent This produces the desired map (\ref{eq:restric}).

\subsection{Rigid local systems}\label{s:rigidls}

\subsubsection{} Choose eigenvalues $\chi_x\in \ft//\widetilde{W}^{\on{aff}}$ for all $x\in S$. We put
\[
\LSN:=\LSU\underset{\prod_{x\in S} \LSD}{\times} \prod_{x\in S} \on{LocSys}_G(\overset{\circ}{D}_x)^{\on{RS}}_{\chi_x}.
\]

\subsubsection{} Let us study the geometric properties of $\LSN$ in more detail. Define $\widetilde{\on{LocSys}}_G(U)^{\on{RS}}$ to be the stack whose $T$-points are pairs $(\sP,\nabla)$, where $\sP$ is a $G$-bundle on $X\times T$, and $\nabla$ is a connection of $\sP$ on $U\times S$ relative to $S$ whose polar part is an element of
\[
\Gamma(X\times T, \fg_{\sP}\otimes \Omega^1_X( S)/\Omega^1_X)\subset \Gamma(X\times T, \fg_{\sP}\otimes \Omega^1_X(\infty\cdot S)/\Omega^1_X).
\]

\noindent This stack is studied in detail in \cite{herrero2020quasicompactness}.

\subsubsection{} We have a natural residue map
\[
\widetilde{\on{LocSys}}_G(U)^{\on{RS}}\to \fg/G\to \ft//\widetilde{W}^{\on{aff}}
\]

\noindent for each $x\in S$. We let
\[
\widetilde{\on{LocSys}}_G(U,\chi_S):=\widetilde{\on{LocSys}}_G(U)^{\on{RS}}\underset{\underset{x\in S}{\prod}\ft//\widetilde{W}^{\on{aff}}}{\times} \underset{x\in S}{\prod}\chi_x.
\]

\noindent In other words, $\widetilde{\on{LocSys}}_G(U,\chi_S)$ parametrizes $G$-bundles on $X$ equipped with a regular singular connection on $U$ whose eigenvalues of the residues at $x\in S$ are given by $\chi_x$.

\subsubsection{} By Lemma \ref{l:cstresidue}, we have a natural surjective map
\begin{equation}\label{eq:nottildetotilde}
\widetilde{\on{LocSys}}_G(U,\chi_S)\to \LSN.
\end{equation}

\subsubsection{} The following is the main theorem of \cite{herrero2020quasicompactness}:
\begin{thm}\label{l:qsqcher}
$\widetilde{\on{LocSys}}_G(U,\chi_S)$ is an algebraic stack of finite type.
\end{thm}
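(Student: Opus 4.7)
The plan is to combine a presentation of $\widetilde{\on{LocSys}}_G(U,\chi_S)$ as a closed substack of an affine bundle over $\on{Bun}_G(X)$ with a boundedness argument for the Harder--Narasimhan type of the underlying $G$-bundle. Algebraicity and local finite presentation should be essentially formal from the first step; the entire content of the theorem is finite-typeness, i.e., quasi-compactness over $\on{Bun}_G(X)$.

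For the first step, I would realize the stack classifying pairs $(\sP,\nabla)$ as a relative affine scheme $\cC\to \on{Bun}_G(X)$ of finite presentation: the polar part of $\nabla$ lies in the fiber of the vector bundle $\fg_\sP\otimes \Omega^1_X(S)/\Omega^1_X$, which is finite-dimensional, and once the polar part is fixed the space of regular extensions is a torsor under $H^0(X,\fg_\sP\otimes \Omega^1_X)$. The residue map
\[
\cC\to \prod_{x\in S}\fg_\sP|_x/G \to \prod_{x\in S}\ft//\widetilde{W}^{\on{aff}}
\]
is representable and of finite presentation, and pulling back along the inclusion of $\{\chi_x\}$ cuts out $\widetilde{\on{LocSys}}_G(U,\chi_S)$ as a closed substack of $\cC$.

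The heart of the argument is then showing that the forgetful map $\widetilde{\on{LocSys}}_G(U,\chi_S)\to \on{Bun}_G(X)$ factors through a quasi-compact open substack. Concretely, one wants to prove a boundedness statement: any $G$-bundle $\sP$ admitting a regular singular connection with residue eigenvalues in the prescribed $\widetilde{W}^{\on{aff}}$-orbits has bounded Harder--Narasimhan polygon. The key ingredient is the residue theorem applied to line bundles coming from reductions of structure: given a parabolic reduction $\sP_P$ and a dominant character $\lambda$ of $P$, the induced line bundle $\sP_\lambda$ inherits a logarithmic connection whose residues are controlled by $\lambda$ applied to the residues of $\nabla$, and the sum-of-residues formula $\deg(\sP_\lambda)=-\sum_{x\in S}\langle\lambda,\on{res}_x(\nabla)\rangle$ bounds $\deg(\sP_\lambda)$ uniformly in terms of $\chi_S$. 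This bounds the HN polygon of $\sP$; combining with Behrend's result that $G$-bundles of bounded HN type form a quasi-compact open substack of $\on{Bun}_G(X)$, one concludes.

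The main obstacle is the boundedness step. For $G=GL_n$ this is a classical theorem of Simpson and Nitsure, but for general reductive $G$ one must address the fact that a parabolic reduction $\sP_P$ is not a priori compatible with the connection: the residue of $\nabla$ on $\sP$ lies in $\fg$, not necessarily in $\fp$, so the associated connection on $\sP_\lambda$ is most cleanly obtained by projecting the residue to $\fp/[\fp,\fp]$ via the semisimple part, using that $\lambda$ vanishes on $[\fp,\fp]$. The cleanest route, carried out in \cite{herrero2020quasicompactness}, is to work directly with Ramanathan stability for $G$-bundles and exploit that $\chi_S$ lies in a finite subset of $\ft//\widetilde{W}^{\on{aff}}$, which gives the required uniform bound on $\langle\lambda,\on{res}_x(\nabla)\rangle$.
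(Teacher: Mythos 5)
Your proposal and the paper's proof both ultimately rest on Herrero's boundedness theorem in \cite{herrero2020quasicompactness}, so the mathematical substance is the same; the difference is in the organization and in how much of the underlying mechanism you reproduce. The paper cites \cite[Thm.\ 4.16]{herrero2020quasicompactness}, which gives finite-typeness of the fiber of $\on{LocSys}_G(U)$ over a fixed $k$-point of $\prod_{x\in S}\LSD$, and then invokes Lemma \ref{l:finman} (finiteness of $k$-points of $\on{LocSys}_G(\overset{\circ}{D})^{\on{RS}}_{\chi}$) to conclude, since only finitely many tuples of local data can occur. You instead aim for a direct quasi-compactness statement over $\on{Bun}_G(X)$ by bounding the Harder--Narasimhan polygon uniformly; this is essentially what Herrero proves, so your route is fine, but it is doing more work in-house than the paper does.

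One caveat about your sketch of the boundedness step: the displayed equality $\deg(\sP_\lambda)=-\sum_{x\in S}\langle\lambda,\on{res}_x(\nabla)\rangle$ is only valid when the logarithmic connection preserves the parabolic reduction $\sP_P$, which is not automatic. When it does not, there is no induced connection on $\sP_\lambda$ and no residue formula; what one actually has is an \emph{inequality} on $\deg(\sP_\lambda)$ coming from the second fundamental form of the reduction, with an error term depending on the genus, $|S|$, and the residues (this is the Simpson--Nitsure argument for $GL_n$ and Herrero's generalization). You correctly flag this as the delicate point and defer to \cite{herrero2020quasicompactness}, which is exactly what the paper does, so the gap is only in the illustrative sketch, not in the logical structure of your argument.
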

\begin{proof}
By \cite[Thm. 4.16]{herrero2020quasicompactness}, for a $k$-point
\[
\on{Spec}(k)\to \prod_{x\in S} \LSD
\]

\noindent corresponding to a tuple of regular singular local systems, the algebraic stack
\[
\LSU\underset{\underset{x\in S}{\prod}\LSD}{\times} \on{Spec}(k)
\]

\noindent is of finite type. The Theorem now follows from Lemma \ref{l:finman}.

\end{proof}

\begin{cor}\label{c:qcqs}
$\LSN$ is a quasi-smooth algebraic stack of finite type.
\end{cor}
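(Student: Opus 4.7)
\textbf{Plan of proof of Corollary \ref{c:qcqs}.} The result is really two statements: (a) $\LSN$ is an algebraic stack of finite type, and (b) its cotangent complex has Tor-amplitude in $[0,1]$ (equivalently, the tangent complex lies in cohomological degrees $[-1,0]$).

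For (a), the natural place to start is the surjection (\ref{eq:nottildetotilde}),
\[
\widetilde{\on{LocSys}}_G(U,\chi_S)\twoheadrightarrow \LSN,
\]
whose source is algebraic of finite type by Theorem \ref{l:qsqcher}. The plan is to show this map is smooth surjective; descent along smooth surjections will then transport algebraicity and finite-typeness to the target. To check smoothness I would analyze the fibers directly: a point of $\LSN$ is a local system with a prescribed conjugacy class of residue at each $x\in S$, while a point of $\widetilde{\on{LocSys}}_G(U,\chi_S)$ carries an actual choice of residue in that class. The fiber over a given local system is thus cut out by choosing a gauge bringing the residue into a standard form, which is exactly the content of the construction in the proof of Lemma \ref{l:cstresidue}. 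Unravelling this, the map should identify locally with a smooth $G(O_S)$-torsor over an affine space of polar parts, and in particular is smooth.

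For (b), I would work directly with the fiber-product description
\[
\LSN=\on{LocSys}_G(U)\underset{\prod_{x\in S}\LSD}{\times}\prod_{x\in S}\on{LocSys}_G(\overset{\circ}{D}_x)^{\on{RS}}_{\chi_x}.
\]
Quasi-smoothness is preserved under fiber products of quasi-smooth stacks, so it suffices to verify quasi-smoothness of each of the three factors. The stack $\on{LocSys}_G(U)$ is quasi-smooth because its tangent complex at $\sigma$ is $C^{\ast}_{\on{dR}}(U,\on{ad}\sigma)[1]$, and de Rham cohomology of an affine curve is concentrated in degrees $0,1$. The same argument gives quasi-smoothness of $\LSD$, since de Rham cohomology of a punctured disk is again concentrated in degrees $0,1$. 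For $\on{LocSys}_G(\overset{\circ}{D}_x)^{\on{RS}}_{\chi_x}$, I would invoke the description in §\ref{s:altdesc} presenting it as the (proper) image of $\on{LocSys}_B(\overset{\circ}{D})_{\lambda}$, together with the identification $\on{LocSys}_B(\overset{\circ}{D})_{\lambda}\simeq \on{LocSys}_B(\overset{\circ}{D})^{\on{restr}}_{\lambda}$; a local model such as $\cN/G$ in the unipotent case (which is well-known to be quasi-smooth, being a global complete intersection modulo a smooth group) then gives the needed fact for general $\chi$ after conjugating residues into a standard form.

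The step I expect to be the main technical obstacle is verifying that $\on{LocSys}_G(\overset{\circ}{D}_x)^{\on{RS}}_{\chi_x}$ is quasi-smooth for arbitrary $\chi$; although intuitive (it should look étale-locally like a twisted form of $\cN/G$ coming from the Levi of the centralizer of the semisimple part of $\chi$), making this rigorous requires unwinding the somewhat subtle definition of local systems with restricted variation and comparing it to the gauge-theoretic picture. Once this local input is available, assembling it with the two global/semilocal quasi-smoothness statements via the fiber-product formula for cotangent complexes is formal and yields the conclusion.
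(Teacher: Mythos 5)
There are two genuine gaps here, one in each half of your argument.

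For finite-typeness, you propose to prove that the surjection \eqref{eq:nottildetotilde} is \emph{smooth} and then descend. Smoothness is neither established by your sketch nor needed, and I do not believe it holds: the fiber of $\widetilde{\on{LocSys}}_G(U,\chi_S)\to \LSN$ over a local system on $U$ is the space of logarithmic extensions of the underlying bundle across $S$ with residue in the prescribed class (a choice of lattice plus a lift $\lambda_x\in\ft$ of $\chi_x$), not a torsor under $G(O_S)$ over an affine space of polar parts --- the target already quotients by changes of trivialization near $S$, so the source carries \emph{less} ambient freedom, not a smooth group's worth more. The paper's route is much softer: one observes separately that $\LSN$ is (algebraic and) locally of finite type from its fiber-product description, and then uses only the \emph{surjectivity} of \eqref{eq:nottildetotilde} together with Theorem \ref{l:qsqcher} to conclude quasi-compactness, hence finite type. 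No smoothness of the map enters.

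For quasi-smoothness, the reduction to "each factor of the fiber product is quasi-smooth" does not work. First, the base $\prod_{x\in S}\LSD$ is not an algebraic stack (it is a quotient of an ind-scheme by an ind-group with infinite-dimensional tangent spaces), so "quasi-smoothness is preserved under fiber products" --- which in any case requires control of the base, e.g.\ smoothness --- is not available. Second, and more seriously, the factors $\on{LocSys}_G(\overset{\circ}{D}_x)^{\on{RS}}_{\chi_x}$ are by definition taken with their \emph{reduced} structure; in the unipotent case this is the classical stack $\cN/G$, whose cotangent complex is not perfect in amplitude $[-1,0]$ (the nilpotent cone is quasi-smooth only as a \emph{derived} complete intersection, which is exactly the structure that has been discarded). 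So the "main technical obstacle" you isolate is in fact a false statement, and the fiber-product formula for tangent complexes would feed you the unbounded cotangent complex of a reduced singular stack. The paper avoids all of this by computing the tangent complex of the global moduli directly (Lemma \ref{l:TC}, via Yun's local-global argument): it is identified with $H^*_{\on{dR}}(X,j_{!*}\fg_\sigma)[1]$-type cohomology on the \emph{compact} curve, which is finite-dimensional and concentrated in degrees $[-1,1]$, giving quasi-smoothness in one stroke. Your observation that $T_\sigma\on{LocSys}_G(U)\simeq C^*_{\on{dR}}(U,\fg_\sigma)[1]$ is correct and is indeed the first step of that computation, but the passage from $U$ to the intermediate extension on $X$ (which encodes the residue constraints) is the substantive content, not a formal fiber-product manipulation.
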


\begin{proof}
The fact that $\LSN$ is quasi-smooth follows from Lemma \ref{l:TC} below. Moreover, it is easy to see that $\LSN$ is locally of finite type. Since the map (\ref{eq:nottildetotilde}) is surjective and $\widetilde{\on{LocSys}}_G(U,\chi_S)$ is of finite type, so is $\LSN$.
\end{proof}

\subsubsection{Fixing the abelianization} Before defining the notion of a rigid local system, we need to introduce a variant of $\LSN$ where we fix the abelianization of the local systems in the moduli description.

\subsubsection{}\label{s:2.3.2} Let $G^{\on{ab}}=G/[G,G]$ be the abelianization of $G$. Write $\chi_x^{\on{ab}}$ for the image of $\chi_x$ under the map $\ft/X_{\bullet}(T)\to \fg^{\on{ab}}/X_{\bullet}(G^{\on{ab}})$. Note that we have a map
\[
\LSN \to \on{LocSys}_{G^{\on{ab}}}(U,\chi_S^{\on{ab}})= \on{LocSys}_{G^{\on{ab}}}(U)\underset{\underset{x\in S}{\prod}\on{LocSys}_{G^{\on{ab}}}(\overset{\circ}{D}_x)}{\times}\prod_{x\in S}\lbrace\chi_x^{\on{ab}}\rbrace/G^{\on{ab}}.
\]

\noindent For $\tau\in \on{LocSys}_{G^{\on{ab}}}(U, \chi_S^{\on{ab}})(k)$, we may form the fiber product
\[
\LS:=\LSN\underset{\LSBA}{\times}\lbrace \tau\rbrace.
\]

\noindent Analytically (i.e., when $k=\bC$ and under Riemann-Hilbert), the above stack parametrizes $G$-local systems on $U$ whose monodromies have eigenvalue $\chi_x$ at $x$ and whose abelianization is $\tau$.

\subsubsection{}\label{s:rigid} For each $x\in S$, fix $\cO_x\in \on{LocSys}_G(\overset{\circ}{D}_x)^{\on{RS}}_{\chi_x}(k)$. Let $\on{Aut}(\cO_x)$ be its stabilizer in $\on{LocSys}_G(\overset{\circ}{D}_x)^{\on{RS}}_{\chi_x}$. Consider the substack
\[
\LSsigma:=\LS\underset{\prod_{x\in S}\on{LocSys}_G(\overset{\circ}{D}_x)_{\chi_x}^{\on{RS}}}{\times} \prod_{x\in S}\bB\on{Aut}(\cO_x)\subset \LS.
\]

\noindent Analytically, choosing $\cO_x\in \on{LocSys}_G(\overset{\circ}{D}_x)^{\on{RS}}_{\chi_x}$ amounts to choosing a conjugacy class inside the stack $G/G=\on{LocSys}_G(S^1)$.

\begin{defin}
A $G$-local system $\sigma$ on $U$ is \emph{rigid} if it defines an isolated point in $\LSsigma$ for some  $\lbrace \cO_x\rbrace_{x\in S}$ and some abelianization $\tau$. That is, the locally closed embedding $\sigma/\on{Aut}(\sigma)\into (\LSsigma)_{\on{red}}$ is required to be both an open and a closed embedding.
\end{defin}

\begin{rem}\label{r:cohorigid}
If $\sigma$ is irreducible, this is equivalent to requiring that $\sigma$ is isolated in $\LSsigma^{\on{irr}}$, where the latter denotes the open substack of $\LSsigma$ whose local systems are irreducible. Indeed, this follows from the fact that if $\sigma$ is an irreducible local system, the map $\sigma/\on{Aut}(\sigma)\into \LSN$ is a closed embedding (see e.g. \cite[Prop. 4.3.5]{arinkin2020stack}).\footnote{Note that isolated points are not automatically preserved under open embeddings of stacks. For example, the embedding $\on{pt}=\bG_m/\bG_m\into \bA^1/\bG_m$ provides a counter example.}
\end{rem}

\subsubsection{}\label{s:cohorigid}
As noted in §\ref{s:differentrigids}, there is an a priori stronger notion of rigidity called \emph{cohomological rigidity}. By definition, a $G$-local system $\sigma$ on $U$ is cohomologically rigid if it defines a \emph{smooth} isolated point in $\LSsigma$. Let us show that the notions of rigidity and cohomological rigidity coincide for local systems on curves.\footnote{This is not true in higher dimensions, see \cite{de2022rigid}.} That is, if one has an isolated point in $\LSsigma$, then it is smooth.
\begin{lem}\label{l:TC}
For $\sigma \in \LSsigma$, the tangent complex at $\sigma$ is given by 
\[
T_{\sigma}\LSsigma=H^*_{\on{dR}}(U, j_{!*}\fg^{\on{der}}_{\sigma})[1].
\]

\noindent Here $j$ is the open embedding $U\into X$, and $\fg^{\on{der}}_{\sigma}$ is the local system on $U$ induced by the $G$-representation $\fg^{\on{der}}:=\on{Lie}([G,G])$.

\end{lem}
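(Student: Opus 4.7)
The plan is to compute the tangent complex by writing $\LSsigma$ as an iterated fiber product and applying the standard tangent-fiber-sequence, then recognizing the result as the de Rham cohomology of the intermediate extension via its characterization at each puncture. I will work at a fixed $\sigma$ and use that $\on{LocSys}_G(U) \hookrightarrow \on{LocSys}_G(U)^{\on{naive}} = \on{Maps}(U_{\dR}, \bB G)$ is a monomorphism, hence formally \'etale, so the tangent complex of $\on{LocSys}_G(U)$ at $\sigma$ is $C^*_{\dR}(U,\fg_{\sigma})[1]$. Analogously, the tangent complex of $\on{LocSys}_G(\overset{\circ}{D}_x)$ at $\sigma|_{\overset{\circ}{D}_x}$ is $C^*_{\dR}(\overset{\circ}{D}_x,\fg_{\sigma|_{\overset{\circ}{D}_x}})[1]$, and for the classifying stack $\bB\on{Aut}(\cO_x)$ the tangent complex is $\on{Lie}\on{Aut}(\cO_x)[1]=H^0_{\dR}(\overset{\circ}{D}_x,\fg_{\cO_x})[1]$.

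Step one is to combine these for $\LSN$. Writing
\[
\LSN=\on{LocSys}_G(U)\underset{\prod_{x\in S}\on{LocSys}_G(\overset{\circ}{D}_x)}{\times}\prod_{x\in S}\on{LocSys}_G(\overset{\circ}{D}_x)^{\on{RS}}_{\chi_x}
\]
and noting that $\on{LocSys}_G(\overset{\circ}{D}_x)^{\on{RS}}_{\chi_x}$ is a reduced locally closed substack of $\on{LocSys}_G(\overset{\circ}{D}_x)$, I will argue that the induced map on tangent complexes fits $T_{\sigma}\LSN$ into the fiber sequence
\[
T_{\sigma}\LSN\to C^*_{\dR}(U,\fg_{\sigma})[1]\to \bigoplus_{x\in S}C^*_{\dR}(\overset{\circ}{D}_x,\fg_{\sigma|_{\overset{\circ}{D}_x}})[1]/T_{\cO_x}\on{LocSys}_G(\overset{\circ}{D}_x)^{\on{RS}}_{\chi_x}.
\]
Step two is to further take the fiber over $\prod_{x\in S}\bB\on{Aut}(\cO_x)$ to obtain $\LSsigma$, which contributes $\bigoplus_x H^0_{\dR}(\overset{\circ}{D}_x,\fg_{\cO_x})[1]$. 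Identifying the combined map of the two corrections at each puncture with the natural map from the global de Rham complex to the punctured-disk complex, one recognizes that the full tangent complex is the mapping cone of
\[
C^*_{\dR}(U,\fg_{\sigma})[1]\longrightarrow \bigoplus_{x\in S} C^*_{\dR}(\overset{\circ}{D}_x,\fg_{\sigma|_{\overset{\circ}{D}_x}})[1]\big/\on{Lie}\on{Aut}(\cO_x)[1],
\]
with the quotient by $\on{Lie}\on{Aut}(\cO_x)$ corresponding exactly to cutting out the tangent directions along the $G$-adjoint orbit of $\cO_x$, i.e.\ killing the invariant part $H^0_{\dR}(\overset{\circ}{D}_x,\fg_{\cO_x})$.

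Step three is the passage from $\LSN$ to $\LSsigma$ by fixing the abelianization. The map $\LSN\to\LSBA$ on tangent complexes at $\sigma$ is induced by $\fg\onto\fg^{\on{ab}}$, and taking the fiber at $\tau$ replaces $\fg$ by $\fg^{\on{der}}$ throughout. The whole resulting complex, a cone of the map from $C^*_{\dR}(U,\fg^{\on{der}}_{\sigma})[1]$ to the truncated punctured-disk complexes, is precisely the standard presentation of $H^*_{\dR}(U,j_{!*}\fg^{\on{der}}_{\sigma})[1]$: near each $x$, $j_{!*}$ agrees with $j_{*}$ on the image of $j_!\to j_*$ and the correction cut out at the puncture is exactly the removal of the monodromy-invariants, matching the quotient by $H^0_{\dR}(\overset{\circ}{D}_x,\fg^{\on{der}}_{\cO_x})$.

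The main obstacle I anticipate is justifying carefully step one, namely identifying the tangent complex of $\on{LocSys}_G(\overset{\circ}{D}_x)^{\on{RS}}_{\chi_x}$ at $\cO_x$ within $C^*_{\dR}(\overset{\circ}{D}_x,\fg_{\cO_x})[1]$ and checking that its complementary directions give the "nearby-cycles mod invariants" contribution that produces $j_{!*}$ rather than $j_*$. For this I would use the description of §\ref{s:altdesc} via $\on{LocSys}_B(\overset{\circ}{D})_{\lambda}\to \on{LocSys}_G(\overset{\circ}{D})^{\on{RS}}_{\chi}$ being proper and surjective onto this component, which reduces the calculation to an explicit Lie-algebra cohomology computation parallel to the Grothendieck--Springer picture, and then identify the result with the local contribution of $j_{!*}$ as in standard proofs of the logarithmic de Rham comparison.
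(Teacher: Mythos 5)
Your overall approach matches the paper's: compute $T_{\sigma}\on{LocSys}_G(U)$ by comparing with $\on{LocSys}_G(U)^{\on{naive}}$, then unwind the fiber-product structure of $\LSsigma$ and recognize the total fiber as $H^*(X, j_{!*}\fg^{\on{der}}_\sigma)[1]$. The paper's own proof is terse; after establishing $T_\sigma\on{LocSys}_G(U)\simeq H^*_{\dR}(U,\fg_\sigma)[1]$ it simply cites \cite[\S3.2.2]{yun2014rigidity} for the rest, so your proposal amounts to spelling out that citation. Two points deserve attention.

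First, ``is a monomorphism, hence formally \'etale'' is not a valid inference: a monomorphism of prestacks need not be formally \'etale (a closed embedding of schemes is already a counterexample). The paper proves the lifting property directly: for a square-zero extension $T\into T'$, a $G$-bundle on $U\times T'$ whose restriction to $U\times T$ extends to $X\times T$ already extends to $X\times T'$, by formal smoothness of $\bB G$. The monomorphism property gives uniqueness of the lift; existence is the actual content.

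Second, the ``main obstacle'' you anticipate---identifying $T_{\cO_x}\on{LocSys}_G(\overset{\circ}{D}_x)^{\on{RS}}_{\chi_x}$ inside $T_{\cO_x}\on{LocSys}_G(\overset{\circ}{D}_x)$ via the Grothendieck--Springer picture---is not actually needed. Unwinding the iterated derived fiber products defining $\LSsigma$ and using the identity $(A\times_B C)\times_C D\simeq A\times_B D$ gives
\[
\LSsigma \simeq \Bigl(\on{LocSys}_G(U) \underset{\prod_{x\in S}\on{LocSys}_G(\overset{\circ}{D}_x)}{\times} \prod_{x\in S}\bB\on{Aut}(\cO_x)\Bigr) \underset{\LSBA}{\times} \{\tau\},
\]
so the factor $\on{LocSys}_G(\overset{\circ}{D}_x)^{\on{RS}}_{\chi_x}$ disappears from the computation altogether. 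The only local datum needed at each $x$ is the tangent map of the adjoint-orbit inclusion $\bB\on{Aut}(\cO_x)\to\on{LocSys}_G(\overset{\circ}{D}_x)$, namely $\on{Lie}\on{Aut}(\cO_x)[1]\to H^*_{\dR}(\overset{\circ}{D}_x,\fg_{\cO_x})[1]$. The genuinely nontrivial remaining step is identifying the resulting total fiber with $H^*(X, j_{!*}\fg^{\on{der}}_\sigma)[1]$; that is the content of \cite[\S3.2.2]{yun2014rigidity}, which your steps two and three reproduce in outline and which is correct as sketched.
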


\begin{proof}
First, we claim that 
\[
T_{\sigma}\on{LocSys}_G(U)\simeq H^*_{\on{dR}}(U,\fg_{\sigma})[1].
\]

\noindent This is clear if we replace $\on{LocSys}_G(U)$ with $\on{LocSys}_G(U)^{\on{naive}}=\on{Maps}(U_{\on{dR}},\bB G)$, see e.g. \cite{arinkin2015singular}. Thus, it suffices to show that
\[
\on{LocSys}_G(U)\to \on{LocSys}_G(U)^{\on{naive}}
\]

\noindent is formally étale. By definition, it suffices to show that
\[
\overset{\circ}{\on{Bun}}_G\to \on{Bun}_G(U)
\]

\noindent is formally étale. Concretely, we will show that if $T\subset T'$ is a first-order thickening of an affine scheme $T$, and we have a diagram
\[\begin{tikzcd}
	T && {\overset{\circ}{\on{Bun}}_G} \\
	\\
	{T'} && {\mathrm{Bun}_G(U),}
	\arrow[from=1-1, to=1-3]
	\arrow[from=1-3, to=3-3]
	\arrow[from=1-1, to=3-1]
	\arrow[from=3-1, to=3-3]
\end{tikzcd}\]

\noindent there exists a unique lift $T'\to \overset{\circ}{\on{Bun}}_G$. In other words, we need to show that if $\sP$ is a $G$-bundle on $U\times T'$ such that $\sP$ admits an extension to $X\times T$ when restricted to $U\times T$, then $\sP$ admits an extension to $X\times T'$. However, this is immediate from formal smoothness of $\bB G$.

The lemma now follows from the argument \cite[§3.2.2]{yun2014rigidity}.
\end{proof}

\begin{lem}\label{l:equiv}
For $\sigma\in \LSsigma$ irreducible, the following are equivalent:
\begin{itemize}
    \item $\sigma$ defines a smooth isolated point in $\LSsigma$.

    \item The tangent complex $T_{\sigma}\LSsigma$ vanishes.

    \item $H^1(U_{\on{dR}}, j_{!*}\fg^{\on{der}}_{\sigma})=0$.

\end{itemize}

\end{lem}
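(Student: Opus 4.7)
The plan is to use Lemma \ref{l:TC} to rewrite everything in terms of cohomology on the projective curve $X$, namely $T_\sigma \LSsigma = H^*_{\on{dR}}(U, j_{!*}\fg^{\on{der}}_\sigma)[1]$, and then show that for $\sigma$ irreducible, the outer cohomologies $H^0$ and $H^2$ of $j_{!*}\fg^{\on{der}}_\sigma$ vanish automatically, so that the only potentially nonvanishing piece of the tangent complex sits in degree $0$.

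For the vanishing of $H^0(X, j_{!*}\fg^{\on{der}}_\sigma)$, I would argue as follows. Since the middle extension embeds into $j_*\fg^{\on{der}}_\sigma$, global sections inject into $H^0(U, \fg^{\on{der}}_\sigma)$, which is the space of $\on{Im}(\sigma)$-invariants in $\fg^{\on{der}}$ under the adjoint action. Irreducibility of $\sigma$ means the monodromy image is not contained in any proper parabolic of $G$, so its centralizer is contained in $Z(G)$, and hence the invariants equal $\Lie Z([G,G])$, which vanishes because $[G,G]$ is semisimple. For the vanishing of $H^2$, I would invoke Poincaré/Verdier duality on the projective curve together with the self-duality of $\fg^{\on{der}}_\sigma$ induced by the Killing form, which is nondegenerate on $\fg^{\on{der}}$; this reduces $H^2$-vanishing to $H^0$-vanishing already established.

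With these two vanishings, $T_\sigma \LSsigma$ is concentrated in degree $0$ with $H^0(T_\sigma \LSsigma) = H^1(X, j_{!*}\fg^{\on{der}}_\sigma)$. The equivalence $(2) \Leftrightarrow (3)$ is then immediate, since $T_\sigma \LSsigma = 0$ is equivalent to vanishing of its only possibly nontrivial cohomology. For $(1) \Leftrightarrow (3)$, note that smoothness of $\sigma$ in the quasi-smooth stack $\LSsigma$ (Corollary \ref{c:qcqs}) is equivalent to $H^1(T_\sigma \LSsigma) = H^2(X, j_{!*}\fg^{\on{der}}_\sigma) = 0$, which holds automatically; once $\sigma$ is a smooth point, the classical truncation is reduced locally around $\sigma$, so isolatedness of $\sigma$ in $(\LSsigma)_{\on{red}}$ coincides with vanishing of the classical tangent space $H^0(T_\sigma \LSsigma) = H^1(X, j_{!*}\fg^{\on{der}}_\sigma)$, which is exactly (3).

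I do not expect any deep obstacle here: the argument is a short diagram chase once Lemma \ref{l:TC} and the standard duality for $j_{!*}$ on a projective curve are in hand. The one subtle point worth spelling out carefully is the identification of the Zariski tangent space of the reduced classical truncation with $H^0$ of the derived tangent complex at a smooth point of a quasi-smooth stack; this is where the automatic vanishing of $H^{-1}(T_\sigma) = H^0(X, j_{!*}\fg^{\on{der}}_\sigma)$ and of $H^1(T_\sigma)$ are used to ensure there is no discrepancy coming from infinitesimal automorphisms or from a non-reduced derived structure.
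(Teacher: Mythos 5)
Your proof is correct and follows essentially the same route as the paper's: you compute the tangent complex via Lemma \ref{l:TC}, show the outer cohomologies $H^0$ and $H^2$ of $j_{!*}\fg^{\on{der}}_\sigma$ vanish for irreducible $\sigma$ (via irreducibility and Poincar\'e/Killing-form duality respectively), and then observe that all three conditions collapse to the single assertion $H^1(X_{\on{dR}}, j_{!*}\fg^{\on{der}}_\sigma)=0$. The only organizational difference is that you derive both implications uniformly from the $H^0, H^2$ vanishings, whereas the paper first establishes ``smooth isolated $\Rightarrow$ tangent complex vanishes'' directly by noting $\on{Aut}(\sigma)$ is finite (so $T_\sigma\LSsigma = T_\sigma(\sigma/\on{Aut}(\sigma)) = 0$), and only afterwards proves the $H^0, H^2$ vanishings; this is a presentational rather than substantive difference. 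Your remark about the identification of the classical tangent space with $H^0$ of the derived tangent complex at a smooth point is precisely the subtle point the paper is implicitly using, and the automatic vanishing of $H^{-1}(T_\sigma) = H^0(X_{\on{dR}}, j_{!*}\fg^{\on{der}}_\sigma)$ (equivalently, finiteness of $\on{Aut}(\sigma)$) is what lets the stacky structure not interfere.
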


\begin{proof}
Suppose first that $\sigma$ defines a smooth isolated point in $\LSsigma$. It follows that the tangent complex $T_{\sigma}\LSsigma$ coincides with that of $\sigma/\on{Aut}(\sigma)$, where $\on{Aut}(\sigma)$ is the automorphism group of $\sigma$ in $\LSsigma$. We claim that $\on{Aut}(\sigma)$ is a finite group. As noted in \cite[§11.2.4]{faergeman2022non}, the automorphism group of $\sigma$ in $\LSN$ is some finite extension of the center of $G$. Since the map $Z(G)\to G^{\on{ab}}$ is an isogeny, it follows from the definition of $\LSsigma$ that $\on{Aut}(\sigma)$ is a finite group. In particular
\[
T_{\sigma}\LSsigma=0.
\]

Next, let us prove that if $\sigma\in \LSsigma$ is irreducible as a local system, then 
\begin{equation}\label{eq:vani}
H^0(X_{\on{dR}}, j_{!*}\fg^{\on{der}}_{\sigma})=H^2(X_{\on{dR}}, j_{!*}\fg^{\on{der}}_{\sigma})=0.
\end{equation}

\noindent By Poincaré duality, it suffices to show that $H^0(X_{\on{dR}}, j_{!*}\fg^{\on{der}}_{\sigma})$ vanishes. Note that for any local system $\sigma$, one has 
\[
H^0(X_{\on{dR}}, j_*\fg_{\sigma})=\on{Lie}(Z(\sigma)),
\]

\noindent where $Z(\sigma)$ denotes the centralizer of $\sigma$ in $G$.\footnote{That is, if $\sigma$ is represented by a representation $\pi_1(U)\to G$, then $Z(\sigma)$ is the subgroup of $G$ consisting of elements that centralize the image.} As noted above, when $\sigma$ is irreducible, one has $\on{Lie}(Z(\sigma))=\on{Lie}(Z(G))$. It follows that
\[
H^0(X_{\on{dR}}, j_*\fg^{\on{der}}_{\sigma})=0.
\]

We have a triangle
\[
i_{*,\on{dR}}i^!j_{!*}\fg^{\on{der}}_{\sigma}\to j_{!*}\fg^{\on{der}}_{\sigma}\to j_*\fg^{\on{der}}_{\sigma}.
\]

\noindent Note that $i_{*,\on{dR}}i^!j_{!*}\fg^{\on{der}}_{\sigma}$ is concentrated in perverse degrees $\geq 1$. In particular, we have an injection 
\[
H^0(X_{\on{dR}}, j_{!*}\fg^{\on{der}}_{\sigma})\into H^0(X_{\on{dR}}, j_*\fg^{\on{der}}_{\sigma})=0.
\]

\noindent This shows that $H^1(X_{\on{dR}}, j_{!*}\fg^{\on{der}}_{\sigma})=0$ is equivalent to the vanishing of the tangent complex of $\LSsigma$ at $\sigma$. The latter clearly implies that $\sigma$ defines a smooth isolated point.
\end{proof}

\begin{cor}
Any isolated point in $\LSsigma$ is automatically smooth.
\end{cor}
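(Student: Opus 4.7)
The plan is to invoke Lemma \ref{l:equiv} after showing that $\mathcal{H}^1$ of the tangent complex $T_\sigma \LSsigma$ vanishes automatically for any irreducible $\sigma$ (in line with the implicit setting of §\ref{s:cohorigid} and Remark \ref{r:cohorigid}). Since $\LSsigma$ is quasi-smooth by Corollary \ref{c:qcqs}, $T_\sigma \LSsigma$ has amplitude in $[-1,1]$, and smoothness of $\LSsigma$ at $\sigma$ is equivalent to $\mathcal{H}^1(T_\sigma \LSsigma) = 0$, i.e., by Lemma \ref{l:TC}, to the vanishing of $H^2_{\dR}(X, j_{!*}\fg^{\on{der}}_\sigma)$.

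For irreducible $\sigma$, Poincaré duality on $X$, combined with the Killing-form self-duality of the local system $\fg^{\on{der}}_\sigma$, identifies $H^2_{\dR}(X, j_{!*}\fg^{\on{der}}_\sigma)$ with the dual of $H^0_{\dR}(X, j_{!*}\fg^{\on{der}}_\sigma)$. The vanishing of this $H^0$ is precisely what was established inside the proof of Lemma \ref{l:equiv}: irreducibility forces $Z(\sigma) = Z(G)$, and the contribution of $\on{Lie}(Z(G)) \subset \fz(\fg)$ drops out of $\fg^{\on{der}}$ (after fixing the abelianization $\tau$).

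Hence $\LSsigma$ is in fact smooth at every irreducible $\sigma$, independently of any isolated-ness hypothesis. Adding the assumption that $\sigma$ is isolated immediately promotes this to the desired conclusion, namely that $\sigma$ is a smooth isolated point of $\LSsigma$. No further obstacle arises: the corollary is essentially a repackaging of the vanishing of $H^0_{\dR}(X, j_{!*}\fg^{\on{der}}_\sigma)$ already proved in Lemma \ref{l:equiv}, combined with Poincaré duality and the quasi-smoothness from Corollary \ref{c:qcqs}.
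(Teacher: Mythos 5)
Your proof establishes the wrong thing. The Poincaré duality argument you give (combining $H^0 = 0$ with self-duality of $\fg^{\on{der}}_\sigma$) shows $H^2_{\dR}(X, j_{!*}\fg^{\on{der}}_\sigma) = 0$, i.e.\ $\mathcal{H}^1(T_\sigma\LSsigma) = 0$. This is the vanishing of the \emph{obstruction space}, and indeed it holds for every irreducible $\sigma$ — that computation is already carried out inside Lemma \ref{l:equiv}, where $H^0 = H^2 = 0$ is verified. So you have reproved a known vanishing, not a new one.

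But that is not what the corollary asserts. In Lemma \ref{l:equiv}, the phrase ``smooth isolated point'' is equivalent to the \emph{full} vanishing $T_\sigma\LSsigma = 0$, which (given $H^0 = H^2 = 0$) reduces precisely to the vanishing of the \emph{tangent space} $\mathcal{H}^0(T_\sigma) = H^1_{\dR}(X, j_{!*}\fg^{\on{der}}_\sigma)$. Your argument does not touch $H^1_{\dR}$ at all, and indeed cannot, since it never uses the hypothesis that $\sigma$ is isolated. The claim ``$\LSsigma$ is in fact smooth at every irreducible $\sigma$, independently of any isolated-ness hypothesis'' would, under the corollary's notion of smoothness, mean every irreducible regular singular $G$-local system is cohomologically rigid — which is false, as the moduli space is generically positive-dimensional. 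The paper's proof instead uses the inequality $\on{v.dim}\,\LSsigma \leq \on{dim}\,\LSsigma$ valid for quasi-smooth stacks: the isolated-ness of $\sigma$ (plus finiteness of $\on{Aut}(\sigma)$) forces $\on{dim} = 0$ on the component containing $\sigma$, while $H^0 = H^2 = 0$ makes $\on{v.dim} = \chi(T_\sigma) = \dim H^1_{\dR}$. Combining these gives $\dim H^1_{\dR} \leq 0$, which is the missing step. You would need this (or some equivalent numerical argument exploiting isolated-ness) to complete the proof.
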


\begin{proof}
Let $\sigma$ be such an isolated point. By Lemma \ref{l:equiv}, it suffices to show that
\[
H^1(X_{\on{dR}}, j_{!*}\fg^{\on{der}}_{\sigma})=0.
\]
We know from the above proof that the automorphism group of $\sigma$ is finite. Note that by Lemma \ref{l:TC}, $\LSsigma$ is a quasi-smooth algebraic stack in the sense of \cite[\S 8]{arinkin2015singular}. As such, it follows (see e.g. \cite[p.2]{khan2019virtual}) that we have an inequality
\[
\on{v.dim}\LSsigma\leq \on{dim}\LSsigma,
\]

\noindent where $\on{v.dim}\LSsigma=\chi(T_{\sigma}\LSsigma)$ denotes the virtual dimension of $\LSsigma$ at the connected component containing $\sigma$. Since the dimension of this component is zero, it follows that
\[
\on{dim}H^1(X_{\on{dR}}, j_{!*}\fg^{\on{der}}_{\sigma})=\chi(T_{\sigma}\LSsigma)\leq 0.
\]

\noindent Here we used that $H^0(X_{\on{dR}}, j_{!*}\fg^{\on{der}}_{\sigma})=H^2(X_{\on{dR}}, j_{!*}\fg^{\on{der}}_{\sigma})=0$ as noted in the previous lemma.
\end{proof}

\section{Spectral decomposition}

In this section, we prove the spectral decomposition described in \S\ref{ss:sd}. We mimic Drinfeld-Gaitsgory's proof in the unramified case \cite{gaitsgory2010generalized}. A key input is a recent extension of Bezrukavnikov's geometric realization of the affine Hecke category to the situation of non-zero eigenvalues due to Gurbir Dhillon, Yau Wing Li, Zhiwei Yun and Xinwen Zhu, which we review in the following subsection.

\subsection{Geometric realization of twisted affine Hecke category}\label{ss:sheaf}

\subsubsection{Mellin transform} Recall (e.g. \cite[App. A]{gannon2022classification}) that the Mellin transform provides a symmetric monoidal equivalence
\[
D(\check{T})\simeq \on{QCoh}(\ft/X_{\bullet}(T)).
\]
\noindent Here, the monoidal structure on $D(\check{T})$ is given by convolutation, and the monoidal structure on $\on{QCoh}(\ft/X_{\bullet}(T))$ is given by tensor product.

Let $\chi \in \ft/X_{\bullet}(T)$ corresponding to a map $\on{Spec}(k)\to \ft/X_{\bullet}(T)$. Pulling back along this map yields a symmetric monoidal functor 
\[
D(\check{T})\simeq \on{QCoh}(\ft/X_{\bullet}(T))\to \on{Vect}.
\]

Denote by $m$ the multiplication map $\check{T}\times \check{T}\to \check{T}$. Then
\[
m^!: D(\check{T})\to D(\check{T}\times \check{T})\simeq D(\check{T})\otimes D(\check{T})
\]

\noindent endows $D(\check{T})$ with a comonoidal structure, which is dual to $(D(\check{T}),m_{*,\on{dR}})$. Thus, the above monoidal functor yields a comonoidal functor 
\[
\on{Vect}\to D(\check{T}).
\]

\noindent Such a functor is determined by a multiplicative sheaf in $D(\check{T})$, which we similarly denote by $\chi$.\footnote{By a multiplicative sheaf, we mean a D-module $\chi\in D(\check{T})$ with an identification $m^!(\chi)\simeq \chi\boxtimes \chi.$}

\subsubsection{} Let $\check{I}$ be the Iwahori subgroup of $\check{G}(O)$
given by the preimage of $\check{B}$ under the map $\check{G}(O)\to \check{G}$ evaluating at $t=0$.

We have natural maps 
\[
\check{I}\to \check{B}\to \check{T}.
\]

\noindent Pulling back $\chi$ along this composition, we get a multiplicative sheaf on $\check{I}$ that we also denote by $\chi$.

\subsubsection{} Consider the twisted affine Hecke category
\[
D(\check{I}, \chi\backslash \check{G}(K)/\check{I},\chi)
\]

\noindent of bi-($\check{I},\chi)$-equivariant D-modules on $\check{G}(K)$. This category carries a monoidal structure under convolution. We need the following theorem in our construction of the spectral decomposition.

\begin{thm}[Dhillon-Li-Yun-Zhu, \cite{dhillonendo}]\label{t:metaplectic}
There is a monoidal equivalence:
\[
\on{IndCoh}(\on{LocSys}_B(\overset{\circ}{D})_{\chi}\underset{\on{LocSys}_G(\overset{\circ}{D})}{\times}\on{LocSys}_B(\overset{\circ}{D})_{\chi})\simeq D(\check{I},\chi\backslash \check{G}(K)/\check{I}, \chi)^{\on{ren}},
\]

\noindent where both sides are equipped with their respective convolution monoidal structures.
\end{thm}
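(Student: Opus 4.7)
The plan is to adapt Bezrukavnikov's strategy for the untwisted geometric realization of the affine Hecke category, with the main new ingredient being a careful treatment of the twist by $\chi$. I would proceed in three stages.

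First, I would handle the case of integral $\chi \in X_{\bullet}(T)$. Here the multiplicative sheaf $\chi$ on $\check{T}$ becomes (after Mellin transform and translation) isomorphic to the trivial one, and the derived stack $\on{LocSys}_B(\overset{\circ}{D})_{\chi}$ becomes isomorphic to $\on{LocSys}_B(\overset{\circ}{D})_0 \simeq \fn/B$ up to a shift by a cocharacter, cf.\ §\ref{s:altdesc}. Under this identification, the desired equivalence reduces to the classical Bezrukavnikov equivalence together with the description $\on{LocSys}_G(\overset{\circ}{D})_0^{\on{RS}} \simeq \cN/G$.

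For general $\chi \in \ft/X_{\bullet}(T)$, the functor from the spectral to the geometric side should be constructed by pull-push along the natural monoidal correspondence
\[
\on{LocSys}_B(\overset{\circ}{D})_{\chi} \underset{\on{LocSys}_G(\overset{\circ}{D})}{\times} \on{LocSys}_B(\overset{\circ}{D})_{\chi} \longleftarrow \mathcal{S}_{\chi} \longrightarrow \check{I}\backslash \check{G}(K)/\check{I},
\]
where $\mathcal{S}_{\chi}$ is an appropriate twisted Steinberg-type correspondence recording the $\chi$-monodromic structure. Monoidality is verified by checking that composition of Hecke correspondences corresponds to the fiber product of the groupoid structures on both sides over $\on{LocSys}_G(\overset{\circ}{D})_{\chi}^{\on{RS}}$. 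To check the functor is an equivalence, I would identify compatible compact generators on both sides: on the geometric side, the standard and costandard sheaves $j_{w,!}*\chi$, $j_{w,*}*\chi$ indexed by $w \in \widetilde{W}^{\on{aff}}$, together with Wakimoto sheaves; and on the spectral side, the images of line bundles and their derived pushforwards from the diagonal. Fully-faithfulness then reduces to matching explicit Hom computations via the Mellin transform, while essential surjectivity follows from compact generation on both sides.

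The main obstacle I expect will be the compatibility with renormalization. On the spectral side, the category $\on{IndCoh}$ of the fiber product differs from $\on{QCoh}$ precisely because of the singularity at the non-classical locus of the derived fiber product, and extra compact objects correspond to singular support at the zero section. On the geometric side, renormalization is designed so that forgetting one of the $(\check{I},\chi)$-equivariances preserves compactness. Matching these two renormalizations requires careful local analysis of the singular support of the structure sheaf of $\on{LocSys}_B(\overset{\circ}{D})_\chi \times_{\on{LocSys}_G(\overset{\circ}{D})} \on{LocSys}_B(\overset{\circ}{D})_\chi$ at non-integral $\chi$, where the derived structure is genuinely nontrivial and does not reduce directly to the classical Springer resolution picture. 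A secondary subtlety is the possible appearance of nontrivial extensions among standard objects associated to different cosets of the integral Weyl group of $\chi$ in $\widetilde{W}^{\on{aff}}$; these must be matched with the corresponding Ext groups between spectral generators, which requires keeping track of the full derived enhancement throughout.
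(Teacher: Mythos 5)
You have written a proof sketch for a statement the paper does not actually prove: Theorem \ref{t:metaplectic} is imported verbatim from the work of Dhillon--Li--Yun--Zhu \cite{dhillonendo} and is used in this paper strictly as a black box. There is therefore no internal proof to compare yours against; the only information the paper gives about the argument is in Step 1 of the proof of Theorem \ref{t:autspecrigidity2}, where it is recorded that \cite{dhillonendo} proceeds by \emph{endoscopy}: for non-integral $\chi$ one passes to the connected centralizer $H\subset G$ of the element $\on{exp}(\chi)\in T$, for which $\chi$ is central, exhibits $D(\check{I}_H,\chi\backslash \check{H}(K)/\check{I}_H,\chi)$ as a direct summand of $D(\check{I},\chi\backslash \check{G}(K)/\check{I},\chi)$, and reduces to a central-character variant of Bezrukavnikov's theorem for $H$, with the matching identification of the twisted Steinberg stacks on the spectral side. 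Your proposal contains no trace of this reduction, which is the structural heart of the known argument.

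Judged on its own terms, your direct approach has gaps that are not merely technical. The correspondence $\mathcal{S}_{\chi}$ carrying the whole construction is never defined, and defining it correctly (together with its monoidal structure) is exactly where the content lies; in the untwisted case this is the role played by Gaitsgory's central sheaves and the Wakimoto/monodromic machinery, and you would need the $\chi$-twisted analogue (cf. \cite{bezrukavnikov2009tensor}), which your sketch presupposes rather than supplies. The claim that ``essential surjectivity follows from compact generation on both sides'' is a non sequitur: a fully faithful functor between compactly generated categories need not be essentially surjective unless one shows its image contains compact generators of the target, which is again the hard step. Finally, your integral-$\chi$ reduction only recovers the classical Springer picture; the new phenomena occur precisely when $\on{exp}(\chi)$ is non-central, where the component group and the failure of the integral affine Weyl group to equal $\widetilde{W}^{\on{aff}}$ enter, and this is what the endoscopic reduction is designed to control.
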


\begin{rem}\label{r:renorm}
Here, $D(\check{I},\chi\backslash \check{G}(K)/\check{I}, \chi)^{\on{ren}}$ is the renormalized category of $D(\check{I},\chi\backslash \check{G}(K)/\check{I}, \chi)$ defined as the ind-completion of the small subcategory of $D(\check{I},\chi\backslash \check{G}(K)/\check{I}, \chi)$ consisting of objects that become compact after applying the forgetful functor
\[
D(\check{I},\chi\backslash \check{G}(K)/\check{I}, \chi)\to D(\check{I},\chi\backslash \check{G}(K)).
\]

\noindent In particular, we have a fully faithful embedding
\[
D(\check{I},\chi\backslash \check{G}(K)/\check{I}, \chi)\into D(\check{I},\chi\backslash \check{G}(K)/\check{I}, \chi)^{\on{ren}}.
\]

\end{rem}

\subsubsection{} Note that we have a monoidal functor 
\[
\on{QCoh}(\on{LocSys}_G(\overset{\circ}{D})^{\on{RS}}_{\chi})\to \on{QCoh}(\on{LocSys}_B(\overset{\circ}{D})_{\chi})\to 
\]
\[
\to \on{QCoh}(\on{LocSys}_B(\overset{\circ}{D})_{\chi}\underset{\on{LocSys}_G(\overset{\circ}{D})}{\times}\on{LocSys}_B(\overset{\circ}{D})_{\chi}),
\]

\noindent where the first functor is given by pullback along $\on{LocSys}_B(\overset{\circ}{D})_{\chi}\to \on{LocSys}_G(\overset{\circ}{D})^{\on{RS}}_{\chi}$, and the second functor is pushforward via the diagonal map. By construction, the image lies in 
\[
D(\check{I},\chi\backslash \check{G}(K)/\check{I}, \chi)\subset D(\check{I},\chi\backslash \check{G}(K)/\check{I}, \chi)^{\on{ren}}
\]

\noindent under the equivalence of Theorem \ref{t:metaplectic}.
\begin{rem}
We remind that the stack $\on{LocSys}_G(\overset{\circ}{D})_{\chi}^{\on{RS}}$ only depends on $\chi\in \ft/X_{\bullet}(T)$ up to its projection to $\ft//\widetilde{W}^{\on{aff}}$, whereas this is not true for the stack $\on{LocSys}_B(\overset{\circ}{D})_{\chi}$.
\end{rem}

Thus, we get:

\begin{cor}\label{c:spec1}
There is an induced monoidal functor
\[
\on{QCoh}(\on{LocSys}_G(\overset{\circ}{D})^{\on{RS}}_{\chi})\to D(\check{I},\chi_x\backslash \check{G}(K)/\check{I},\chi).
\]
\end{cor}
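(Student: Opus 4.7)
The plan is to check that the composition already exhibited in the text preceding the corollary has all the required properties. Explicitly, letting $p\colon \on{LocSys}_B(\overset{\circ}{D})_{\chi}\to \on{LocSys}_G(\overset{\circ}{D})^{\on{RS}}_{\chi}$ be the proper map recalled in \S\ref{s:altdesc} and letting $\Delta\colon \on{LocSys}_B(\overset{\circ}{D})_{\chi}\to \on{LocSys}_B(\overset{\circ}{D})_{\chi}\underset{\on{LocSys}_G(\overset{\circ}{D})}{\times}\on{LocSys}_B(\overset{\circ}{D})_{\chi}$ be the diagonal, I would define the functor as the composition $\Delta_*\circ p^*$, followed by the monoidal equivalence of Theorem \ref{t:metaplectic}.

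First I would verify monoidality. This is an instance of the general principle that, for a map $f\colon Y\to X$ of reasonable prestacks, the pull-push $\Delta_*\circ f^*$ realizes $\on{QCoh}(X)$ as the center (or at least as a monoidal subcategory) of $\on{QCoh}(Y\underset{X}{\times} Y)$ with its convolution structure: the diagonal $\Delta$ is the unit of the convolution algebra object and pullback preserves tensor products, so $\Delta_*p^*(\cF\otimes\cG)\simeq \Delta_*(p^*\cF\otimes p^*\cG)\simeq \Delta_*p^*\cF \ast \Delta_*p^*\cG$, the last isomorphism being base change along the fiber square defining the convolution product. Compatibility with units is immediate because $\Delta_*p^*(\cO)=\Delta_*(\cO)$ is by definition the monoidal unit on the right.

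Next I would check that the image lands in the non-renormalized subcategory $D(\check{I},\chi\backslash \check{G}(K)/\check{I},\chi)\subset D(\check{I},\chi\backslash \check{G}(K)/\check{I},\chi)^{\on{ren}}$. Using the description of the renormalization recalled in Remark \ref{r:renorm}, and the analogous renormalization on the spectral side whereby $\on{IndCoh}$ is renormalized with respect to $\on{QCoh}$, it suffices to show that $\Delta_*p^*\cF$ lies in $\on{QCoh}$ inside $\on{IndCoh}$ for $\cF\in\on{QCoh}(\on{LocSys}_G(\overset{\circ}{D})^{\on{RS}}_{\chi})$. Since $p$ is proper and the stacks in question are quasi-smooth algebraic stacks of finite type (by Corollary \ref{c:qcqs} and its local analogue), $\Delta_*$ and $p^*$ preserve coherence, and the argument reduces to the fact that $\Delta$ is a closed embedding of finite Tor-dimension between such stacks so that $\Delta_*$ carries $\on{QCoh}$ to $\on{QCoh}$.

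I expect the main obstacle to be the last verification: one must pin down precisely the renormalization conventions on both sides of Theorem \ref{t:metaplectic} and confirm that the equivalence there identifies the distinguished small subcategories. Once the conventions are aligned, the argument is a formal consequence of properness of $p$ and the quasi-smoothness of the relevant stacks.
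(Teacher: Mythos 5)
Your definition of the functor — pullback along $p$ followed by pushforward along the diagonal, composed with Theorem~\ref{t:metaplectic} — matches the paper, and your base-change computation of $\Delta_*p^*\cF\ast\Delta_*p^*\cG$ is the right way to see monoidality (the paper only asserts it); the aside about $\on{QCoh}(\on{LocSys}_G(\overset{\circ}{D})^{\on{RS}}_{\chi})$ being the ``center'' is looser than needed, but the actual calculation stands.

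Where the proposal does not close up is precisely where you flag the difficulty: showing the image lands in the non-renormalized $D(\check{I},\chi\backslash\check{G}(K)/\check{I},\chi)\subset D^{\on{ren}}$. You reduce this to the claim that $\Delta_*p^*\cF$ ``lies in $\on{QCoh}$ inside $\on{IndCoh}$'' together with an unverified compatibility of renormalizations under Theorem~\ref{t:metaplectic}. But the first half of this is tautological --- the pull-push is already built out of $\on{QCoh}$-level operations, so its composite with $\Xi\colon\on{QCoh}\hookrightarrow\on{IndCoh}$ has image in $\Xi(\on{QCoh})$ by definition; the coherence and finite-Tor-dimension discussion for $\Delta$ is therefore not doing any work here (and for these quasi-smooth stacks finite Tor-dimension of $\Delta$ is in any case not automatic). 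What actually needs an argument is the second half: that the equivalence of Theorem~\ref{t:metaplectic} carries $\Xi(\on{QCoh})\subset\on{IndCoh}$ into $D\subset D^{\on{ren}}$, or at least does so on the image of $\Delta_*p^*$. This is exactly what the paper's ``by construction'' refers to: the functor from $\on{QCoh}(\cN/G)$ (resp. its $\chi$-twisted variant) to the Iwahori--Hecke category is constructed directly in \cite{arkhipov2009perverse} (resp. \cite{dhillonendo}) as landing in the non-renormalized category, and the pull-push description is then a compatibility of that construction with Theorem~\ref{t:metaplectic}. Your proposal tries to invert the logic and deduce the containment abstractly from renormalization conventions, which would require unwinding how the equivalence identifies the two subcategories --- the genuine input you identify but do not supply.
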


\subsection{Ran space with marked points}

We will use the language of factorization categories and factorization spaces in the sense of \cite{raskin2015chiral}. We refer to \cite{Cstterm} for a detailed account of factorization and unitality.

\subsubsection{Ran space} Recall the Ran space of $X$ as considered in \cite{gaitsgory2010notes}. Let $\on{fSet}$ denote the $(1,1)$-category whose objects are non-empty finite sets and whose morphisms are surjective maps. For each surjective map, $\phi:I\twoheadrightarrow J$, we get an induced diagonal map 
\[
\Delta_{\phi}: X^J \to X^I.
\]

\noindent As such, we may form the colimit
\[
\on{Ran}_X:=\underset{\on{fSet}^{\on{op}}}{\on{colim}} \: X^I.
\]

\noindent Concretely, the $T$-points of $\on{Ran}_X$ are identified with non-empty finite subsets of maps $T\to X$.

Replacing $X$ by $X_{\on{dR}}$, we similarly obtain the de-Rham Ran space 
\[
\on{Ran}_{X_{\on{dR}}}=\underset{I\in \on{fSet}^{\on{op}}}{\on{colim}} \: X_{\on{dR}}^I\simeq (\on{Ran}_X)_{\on{dR}}.
\]

\noindent Its $T$-points are given by non-empty finite subsets of maps $T_{\on{red}}\to X$.

Since the maps $\Delta_{\phi}: X^J\to X^I$ are proper, it follows that we may write
\[
D(\on{Ran}_X)\simeq \underset{\on{fSet}^{\on{op}}}{\on{colim}} \: D(X^I),
\]

\noindent where the structure maps are given by $(\Delta_{\phi})_{*,\on{dR}}$.

\subsubsection{Ran space with marked points.} It will be convenient for us to work with a variant of the Ran space in which we account for marked points. We recall this space as considered in \cite[\S 2.5]{gaitsgory2011contractibility}.

\subsubsection{} Let $S$ be a (possibly empty) finite set. We let $\on{fSet}_S$ be the (1,1)-category whose objects are finite sets $I$ equipped with an injective map
\[
S\into I.
\]

\noindent Morphisms in this category are given by surjective maps $I\onto J$ making the diagram
\begin{equation}
\begin{tikzcd}
	S && I \\
	\\
	&& J
	\arrow[hook, from=1-1, to=1-3]
	\arrow[hook, from=1-1, to=3-3]
	\arrow[two heads, from=1-3, to=3-3]
\end{tikzcd}
\end{equation}
\noindent commute. 

\subsubsection{} We take $S$ to be a (possibly empty) fixed subset of $X(k)$ consisting of pairwise different points. That is, an ordered tuple $S=\lbrace x_1,...,x_n\rbrace\subset X(k)$ with $x_i\neq x_j$. We will sometimes use the notation $x^S\in X^S$ instead of $S\in X^n$.

For each $I\in \on{fSet}_S$, consider the fiber product
\[
X_S^I:=x^S\underset{X^S}{\times} X^I.
\]

\noindent Note that for any map $\psi: I\onto J$ in $\on{fSet}_S$, we get an induced map 
\[
X_S^J\xrightarrow{\Delta_{\psi}} X_S^I.
\]

\noindent As such, we may form the colimit
\[
\on{Ran}_{X,S}:=\underset{I\in \on{fSet}_S^{\on{op}}}{\on{colim}}\: X_S^I.
\]

\noindent Concretely, the $T$-points of $\on{Ran}_{X,S}$ are given by finite subsets of $\on{Maps}(T,X)$ that contain the image of $S$ under the map
\[
\on{Maps}(\on{pt},X)\to \on{Maps}(T,X)
\]

\noindent given by precomposing with the projection $T\to \on{pt}$.

Replacing $X$ by $X_{\on{dR}}$, we may similarly form the colimit
\[
\on{Ran}_{X_{\on{dR}},S}:=\underset{I\in \on{fSet}_S^{\on{op}}}{\on{colim}} \: (X_S^I)_{\on{dR}}.
\]

\noindent Note that 
\[
\on{Ran}_{X_{\on{dR}},S}\simeq (\on{Ran}_{X,S})_{\on{dR}}.
\]

\subsubsection{} The natural maps
\[
X^I\times X^J\to X^{I\sqcup J}
\]

\noindent induce a commutative semi-group structure on $\on{Ran}_X$:
\[
\on{add}: \on{Ran}_X\times \on{Ran}_X\to \on{Ran}_X, \:\:\: (\underline{x},\underline{y})\mapsto \underline{x}\cup \underline{y},
\]

\noindent providing an action of $\on{Ran}_X$ on $\on{Ran}_{X,S}$ in the natural way.

As a consequence, we get a (non-unital) symmetric monoidal structure on $D(\on{Ran}_X)$:
\[
D(\on{Ran}_X)\otimes D(\on{Ran}_X)\xrightarrow{\on{add}_{*,\on{dR}}} D(\on{Ran}_X),
\]

\noindent which we refer to as convolution. Note that $D(\on{Ran}_{X,S})$ is a module category for the convolution structure on $D(\on{Ran}_X)$. 

\subsubsection{} $\on{Ran}_{X,S}$ similarly carries a non-unital commutative semi-group structure induced by the maps
\[
X_S^I\times X_S^J\to X_S^{\underset{S}{I\sqcup J}}.
\]

\noindent As above, this gives a non-unital symmetric monoidal convolution structure on $D(\on{Ran}_{X,S})$.

\subsubsection{} $D(\on{Ran}_{X,S})$ also carries a module structure for $D(\on{Ran}_X)$ equipped with its $\overset{!}{\otimes}$-monoidal operation induced by the forgetful map $\on{Ran}_{X,S}\to\on{Ran}_X$. In general, if $\sC$ is a module category for $(D(\on{Ran}_X),\overset{!}{\otimes})$, we say $\sC$ is a category \emph{over} $\on{Ran}_{X_{\on{dR}}}$.

Similarly if $\sD$ is a category equipped with an action of $(D(\on{Ran}_{X,S}),\overset{!}{\otimes})$, we say $\sD$ is a category over $\on{Ran}_{X_{\on{dR}},S}$. Finally, we say that $\sD$ is a factorization category over $\on{Ran}_{X_{\on{dR}},S}$ if it is a factorization module category in the sense of \cite{raskin2015chiral} when considered as a category over $\on{Ran}_{X_{\on{dR}}}$.

\subsubsection{Unitality I}\label{s:uni} Let us recall the notions of unitality for factorization categories. We take the liberty of being somewhat informal and referring to \cite[§6]{raskin2015chiral} for precise definitions.

Suppose $\sD$ is a category over $\on{Ran}_{X_{\on{dR}},S}$. For $I\in \on{fSet}_S$, write $\sD_I$ for its fiber over $(X_S^I)_{\on{dR}}\to \on{Ran}_{X_{\on{dR}},S}$. We say that $\sD$ is \emph{unital} if $\sD$ is equipped with a compatible module structure for the convolution operation on $D(\on{Ran}_{X,S})$. In particular, we require the data of $D(X_S^{I\underset{S}{\sqcup}J})$-linear maps
\[
D(X_S^I)\otimes \cD_J\to \cD_{\underset{S}{I\sqcup J}}
\]

\noindent for all $I,J\in \on{fSet}_{S}$ such that for any $I,J,K$, the following diagram of $D(X_S^{{\underset{S}{I\sqcup J}\underset{S}{\sqcup} K}})\simeq D(X_S^I)\otimes D(X_S^J)\otimes D(X_S^{K})$-linear functors commutes:

\[\begin{tikzcd}
	{D(X_S^I)\otimes D(X_S^J)\otimes \sD_K} && {D(X_S^I)\otimes \sD_{\underset{S}{J\sqcup K}}} \\
	\\
	{D(X_S^{I\underset{S}{\sqcup} J})\otimes \sD_K} && {\sD_{\underset{S}{I\sqcup J}\underset{S}{\sqcup} K}}
	\arrow[from=1-1, to=1-3]
	\arrow[from=1-3, to=3-3]
	\arrow[from=1-1, to=3-1]
	\arrow[from=3-1, to=3-3].
\end{tikzcd}\]

\noindent These functors should satisfy a natural system of higher homotopy compatibilities (cf. \cite[\S 6]{raskin2021chiral}). 

\subsubsection{} If $S=\emptyset$, and $\sC$ is a unital category over $\on{Ran}_{X_{\on{dR}},S}$ in the above sense, we say that $\sC$ is unital over $\on{Ran}_{X_{\on{dR}}}$.

\subsubsection{Unitality II}\label{s:unit2} Suppose now that the assignment
\[
I\mapsto \sD_I\in D(X_S^I)\on{-mod}
\]

\noindent defines a unital sheaf of monoidal categories. In this case, we claim that we can equip
\[
\sD:=\underset{I\in \on{fSet}_S}{\on{colim}} \sD_I
\]

\noindent with a monoidal structure over $\on{Ran}_{X_{\on{dR}},S}$. Namely, we have $D(X_S^I)\otimes D(X_S^J)$-linear maps
\begin{equation}\label{eq:mon}
\sD_I\otimes \sD_J\to \sD_{\underset{S}{I\sqcup J}}
\end{equation}

\noindent for all $I,J\in \on{fSet}_S$ in a compatible manner. The functor (\ref{eq:mon}) is defined as the composition
\[
\sD_I\otimes \sD_J\to \sD_{\underset{S}{I\sqcup J}}\otimes \sD_{\underset{S}{I\sqcup J}}\to \sD_{\underset{S}{I\sqcup J}},
\]

\noindent where:

\begin{itemize}
    \item The first map is defined as the tensor product of the two maps $\sD_I\to \sD_{\underset{S}{I\sqcup J}}$, $\sD_J\to \sD_{\underset{S}{I\sqcup J}}$ given by the unital structure on $I\mapsto \sD_I$.
    
    \item The second map comes from the monoidal structure on $\sD_{\underset{S}{I\sqcup J}}$.
\end{itemize}

\subsubsection{} We denote the above monoidal operation by 
\[
(d_1,d_2)\mapsto d_1\star d_2.
\]

\noindent Note that if $d_1\in \sD$ is supported on $\underline{x}\in \on{Ran}_{X_{\on{dR}},S}(k)$, and $d_2\in \sD$ is supported on $\underline{y}$, then $d_1\star d_2$ is supported on $\underline{x}\cup \underline{y}$. We refer to the monoidal structure $-\star -$ as \emph{external convolution}.

\subsubsection{Independence.}\label{s:indep} If $\sC$ is a unital category over $D(\on{Ran}_X)$, we define
\[
\sC_{\on{indep}}=\sC\underset{D(\on{Ran}_X)}{\otimes} \on{Vect},
\]

\noindent where $D(\on{Ran}_{X})$ acts on $\on{Vect}$ via factorization homology
\[
C_{\on{dR}}(\on{Ran}_{X},-): D(\on{Ran}_{X})\to \on{Vect}, 
\]

\noindent which is symmetric monoidal for the convolution structure on $D(\on{Ran}_X)$. If $\sD$ is a unital category over $\on{Ran}_{X,S}$, we similarly define

\[
\sD_{\on{indep}}:=\sD\underset{D(\on{Ran}_{X,S})}{\otimes} \on{Vect}.
\]

\subsection{Some factorization module spaces} 
\subsubsection{} The goal of the next two subsections is to construct a unital symmetric monoidal category $\NR$ over $\on{Ran}_{X_{\on{dR}},S}$ equipped with a continuous fully faithful embedding
\[
\on{QCoh}(\LSN)\into \NR.
\]

\noindent The fiber of $\NR$ over some $\underline{x}\in \on{Ran}_{X,S}(k)$ will be given by 
\[
\underset{x\in S}{\bigotimes} \on{QCoh}(\on{LocSys}_G(\overset{\circ}{D}_x)^{\on{RS}}_{\chi_x})\otimes \underset{x'\in \underline{x}\setminus S}{\bigotimes}\on{Rep}(G).
\]

The category $\NR$ canonically acts on a suitably defined automorphic category (see Section \ref{s:hecke} below), and the above embedding is the first step towards the spectral decomposition theorem.

\subsubsection{} In fact, we do something more general. Namely, for each $x\in S$, let $\sZ_x\to \on{LocSys}_G(\overset{\circ}{D_x})$ be a prestack living over the space of $G$-local systems on $\overset{\circ}{D_x}$. We construct a space
\[
\sZ_{\on{Ran},S}\to \on{Ran}_{X_{\on{dR}},S}
\]

\noindent whose fiber at $\underline{x}\in\on{Ran}_{X_{\on{dR}},S}(k)$ is given by
\[
\prod_{x\in S} \sZ_x\times \prod_{x'\in \underline{x}\setminus S} \bB G.
\]

\noindent By construction, $\sZ_{\on{Ran},S}$ is a factorization module space for $\on{Jets}_{\on{horiz}}(\bB G)$ in the sense of \cite[\S 6.6]{raskin2015chiral}. The latter is known as the space of horizontal jets of $\bB G$, see Section \ref{s:jets} below.

\subsubsection{} The construction of $\sZ_{\on{Ran},S}$ requires the usage of ''fake'' de Rham disks discussed in Appendix \ref{app:A}. As such, we simply sketch the construction of $\sZ_{\on{Ran},S}$ and leave the details to Appendix \ref{s:fin}. Concretely, the subtlety comes from the fact that the functor
\begin{equation}\label{eq:howeasylifewouldbeifthiswaslocsys}
T\mapsto \on{Maps}((\overset{\circ}{D_x})_{\on{dR}}\times T, \bB G)
\end{equation}

\noindent does not produce $\on{LocSys}_G(\overset{\circ}{D_x})$. If one wants to describe the de-Rham stack of $G$-local systems on the punctured disk as a mapping stack, one needs to work with a modified version of $(\overset{\circ}{D_x})_{\on{dR}}$; namely Gaitsgory's $(\overset{\circ}{D_x})_{\on{dR}}^{\on{fake}}$ recalled in Appendix \ref{app:A}. For all intents and purposes, one may ignore this subtlety and pretend that (\ref{eq:howeasylifewouldbeifthiswaslocsys}) really does produce $\on{LocSys}_G(\overset{\circ}{D}_x)$.

\subsubsection{} We define the prestack $\on{Jets}_{\on{horiz}}^{S-\on{mer}}(\bB G)_{\on{Ran},S}$ as follows. For an affine scheme $T$ and a map $x_I:T_{\on{red}}\to X_S^I$, consider the adic disk $D_{x_I}$ as in §\ref{s:disk}. The datum of a lift to $\on{Jets}_{\on{horiz}}^{S-\on{mer}}(\bB G)_{\on{Ran},S}$ is given by an element of the groupoid
\[
\lbrace (D_{x_I})_{\on{dR}}\underset{X_{\on{dR}}}{\times}U_{\on{dR}}\to \bB G\rbrace.
\]

\subsubsection{} For $I\in \on{fSet}_S$, we write
\[
\on{Jets}_{\on{horiz}}^{S-\on{mer}}(\bB G)_I:=\on{Jets}_{\on{horiz}}^{S-\on{mer}}(\bB G)_{\on{Ran},S}\underset{\on{Ran}_{X_{\on{dR}}}}{\times} (X_S^I)_{\on{dR}}.
\]

\subsubsection{} Let $S\subset \underline{x}\subset X(k)$. Note that the fiber of $\on{Jets}^{S-\on{mer}}_{\on{horiz}}(\bB G)_{\on{Ran},S}$ over $\underline{x}$ is given by 
\[
\prod_{x\in S} \on{Maps}((\overset{\circ}{D_x})_{\on{dR}}, \bB G)\times \prod_{x'\in \underline{x}\setminus S} \on{Maps}((D_{x'})_{\on{dR}}, \bB G)\simeq \prod_{x\in S} \on{LocSys}_G(\overset{\circ}{D_x})\times \prod_{x'\in \underline{x}\setminus S} \bB G.
\]

\subsubsection{} By construction, each $\on{Jets}_{\on{horiz}}^{S-\on{mer}}(\bB G)_I$ comes equipped with a map 
\[
\on{Jets}_{\on{horiz}}^{S-\on{mer}}(\bB G)_I\to \prod_{x\in S} \on{LocSys}_G(\overset{\circ}{D_x}).
\]

\noindent We define
\[
\sZ_I:=\on{Jets}_{\on{horiz}}^{S-\on{mer}}(\bB G)_I\underset{\prod_{x\in S} \on{LocSys}_G(\overset{\circ}{D_x})}{\times} \prod_{x\in S} \sZ_x,
\]
\[
\sZ_{\on{Ran},S}:=\on{Jets}_{\on{horiz}}^{S-\on{mer}}(\bB G)_{\on{Ran},S}\underset{\prod_{x\in S} \on{LocSys}_G(\overset{\circ}{D_x})}{\times} \prod_{x\in S} \sZ_x.
\]

\subsubsection{} Note that the space $\sZ_{\on{Ran},S}$ is naturally co-unital over $\on{Ran}_{X_{\on{dR}},S}$. In explicit terms, this means that we have maps
\begin{equation}\label{eq:co-unital}
\psi_{I,J}: \sZ_{\underset{S}{I\sqcup J}}\to X_S^I\times \sZ_{J}
\end{equation}

\noindent over $X_S^{\underset{S}{I\sqcup J}}$ for any $I,J\in \on{fSet}_S$ satisfying a natural system of compatibilities (see \cite[\S 10]{chen2021extension}).

\subsubsection{} Upon applying the functor $\on{QCoh}(-)$, we get a category $\on{QCoh}(\sZ_{\on{Ran},S})$ over $\on{Ran}_{X_{\on{dR},S}}$. The co-unital structure on $\sZ_{\on{Ran},S}$ provides a unital struture on $\on{QCoh}(\sZ_{\on{Ran},S})$ by pulling back via the maps (\ref{eq:co-unital}). Section \ref{s:unit2} now provides a symmetric monoidal structure on $\on{QCoh}(\sZ_{\on{Ran},S})$.

\begin{rem}
We remark that $\on{QCoh}(\sZ_{\on{Ran},S})$ has a natural structure of a factorization module category for $\on{Rep}(G)_{\on{Ran}}$ (the latter being defined e.g. in \cite[\S 11]{arinkin2020stack}).
\end{rem}

\subsection{Spectral localization}\label{lctII}

\subsubsection{} Let us specialize to the case of interest. Namely, when $\sZ_x=\on{LocSys}_G(\overset{\circ}{D}_x)^{\on{RS}}_{\chi_x}$ with its embedding into $\on{LocSys}_G(\overset{\circ}{D_x})$. Let us write $(\on{LocSys}_G(\overset{\circ}{D})^{\on{RS}}_{\chi_S})_{\on{Ran},S}$ for the resulting co-unital space over $\on{Ran}_{X_{\on{dR}},S}$ constructed in $\S 3.3$. Thus, we get a symmetric monoidal unital category 
\[
\NR
\]

\noindent over $\on{Ran}_{X_{\on{dR}},S}$.

\subsubsection{} Let us construct a symmetric monoidal functor
\begin{equation}\label{eq:sf}
\NR\to \on{QCoh}(\LSN).
\end{equation}

\noindent Namely, for each $I\in \on{fSet}_S$ and a map $x_I:T_{\on{red}}\to X_S^I$, we have a tautological map
\begin{equation}
(D_{x_I})_{\on{dR}}\underset{X_{\on{dR}}}{\times} U_{\on{dR}}\to U_{\on{dR}}\times T,
\end{equation}

\noindent thus providing a map\footnote{A priori, this only provides a map $\on{LocSys}_G(U)^{\on{naive}}\times X_S^I\to \on{Jets}_{\on{horiz}}^{S-\on{mer}}(\bB G)_I$, cf. the notation of §\ref{s:lsUnaive}. The fact that the map descends to $\on{LocSys}_G(U)$ follows from the construction of $\on{Jets}_{\on{horiz}}^{S-\on{mer}}(\bB G)_I$ in Appendix \ref{app:A}.}
\begin{equation}\label{eq:mapres}
\on{LocSys}_G(U)\times X_S^I\to \on{Jets}_{\on{horiz}}^{S-\on{mer}}(\bB G)_I.
\end{equation}

\noindent Moreover, (\ref{eq:mapres}) factors through a map
\[
\on{ev}_I: \LSN\times X_S^I \to (\on{LocSys}_G(\overset{\circ}{D})^{\on{RS}}_{\chi_S})_I.
\]

\noindent Pulling back then yields a continuous functor
\[
\on{ev}_I^*: \on{QCoh}((\on{LocSys}_G(\overset{\circ}{D})^{\on{RS}}_{\chi_S})_I)\to \on{QCoh}(\LSN)\otimes D(X_S^I).
\]

\noindent Taking the colimit over $\on{fSet}_S$, we obtain a functor 
\begin{equation}\label{eq:coloc}
\on{ev}^*: \NR\to \on{QCoh}(\LSN)\otimes D(\on{Ran}_{X,S}).
\end{equation}

\noindent Moreover, it follows from definitions that (\ref{eq:coloc}) is symmetric mononidal, where:
\begin{itemize}
\item The left-hand is equipped with the symmetric monoidal structure described in \S \ref{s:unit2}.

\item The symmetric monoidal structure on the right-hand side is induced by the usual tensor product on $\on{QCoh}(\LSN)$ and the convolution structure on $D(\on{Ran}_{X,S})$.

\end{itemize}

\subsubsection{} Denote by $\pi$ the projection map $\on{Ran}_{X,S}\to \on{pt}$. Then the functor
\[
\pi_{*,\on{dR}}: D(\on{Ran}_{X,S})\to \on{Vect}
\]

\noindent is symmetric monoidal, where the left-hand side is equipped with its convolution monoidal structure. Thus, the composition
\[
\NR\xrightarrow{\on{ev}^*} \on{QCoh}(\LSN)\otimes D(\on{Ran}_{X,S})\to 
\]
\[
\xrightarrow{\on{id}\otimes \pi_{*,\on{dR}}} \on{QCoh}(\LSN)
\]

\noindent is symmetric monoidal and is the sought-for functor (\ref{eq:sf}).

\subsubsection{} Since $\on{Ran}_{X,S}$ is pseudo-ind-proper (cf. \cite{gaitsgory2011contractibility}), the right adjoint to $\pi_{*,\on{dR}}$ is given by $\pi^!$.  Consider the right adjoint to the above composition. Namely:
\[
\on{ev}_*\circ (\on{id}\times\pi)^!: \on{QCoh}(\LSN)\to \NR,
\]

\noindent i.e., by pull-push along the diagram
\[\begin{tikzcd}
	{\LSN\times \textrm{Ran}_{X_{\textrm{dR}},S}} && {(\on{LocSys}_G(\overset{\circ}{D})^{\on{RS}}_{\chi_S})_{\on{Ran},S}} \\
	\\
	{\LSN.}
	\arrow["{\textrm{id}\times \pi}"', from=1-1, to=3-1]
	\arrow["{\textrm{ev}}", from=1-1, to=1-3]
\end{tikzcd}\]

\noindent The following proposition is essentially (a special case of) \cite[Cor. C.1.8]{ambidex}.

\begin{prop}\label{p:ff}
The functor
\[
\on{ev}_*\circ (\on{id}\times\pi)^!: \on{QCoh}(\LSN)\to \NR
\]

\noindent is continuous and fully faithful.
\end{prop}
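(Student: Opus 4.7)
The plan is to verify the two claims separately, with the main content going into fully faithfulness via base change plus contractibility of the Ran space. By construction, $R := \on{ev}_*\circ(\on{id}\times\pi)^!$ is tautologically right adjoint to the symmetric monoidal functor $L := (\on{id}\otimes\pi_{*,\on{dR}})\circ \on{ev}^*$ of (\ref{eq:sf}), since $\on{ev}^*\dashv\on{ev}_*$ is the ambient $*$-$_*$ adjunction in $\on{QCoh}$ and $(\on{id}\times\pi)^!\dashv (\on{id}\times\pi_{*,\on{dR}})$ is the adjunction coming from $\pi$ being pseudo-ind-proper.

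\textbf{Continuity.} The functor $(\on{id}\times\pi)^!$ is continuous because it admits a left adjoint, as just noted. For $\on{ev}_*$, I would argue level by level over $\on{fSet}_S$: at each $I$ the map
\[
\on{ev}_I: \LSN\times X_S^I \longrightarrow (\on{LocSys}_G(\overset{\circ}{D})^{\on{RS}}_{\chi_S})_I
\]
has relative structure controlled by the algebraic stack $\LSN$ (which is of finite type by Corollary \ref{c:qcqs}) and by the smoothness of $X_S^I$; the global $\on{ev}_*$ is assembled out of the continuous $(\on{ev}_I)_*$, and continuity propagates through the colimit defining $\on{Ran}_{X_{\on{dR}},S}$.

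\textbf{Fully faithfulness.} It suffices to show the counit $L\circ R\Rightarrow \on{id}_{\on{QCoh}(\LSN)}$ is an isomorphism. I would apply base change to the Cartesian square
\[
\begin{tikzcd}
Z \arrow[r] \arrow[d] & \LSN\times \on{Ran}_{X_{\on{dR}},S} \arrow[d,"\on{ev}"] \\
\LSN\times \on{Ran}_{X_{\on{dR}},S} \arrow[r,"\on{ev}"'] & (\on{LocSys}_G(\overset{\circ}{D})^{\on{RS}}_{\chi_S})_{\on{Ran},S}
\end{tikzcd}
\]
rewriting $\on{ev}^*\circ \on{ev}_*$ as push-pull across $Z$. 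A $T$-point of $Z$ is a pair $\bigl((\sigma_1,\underline{x}_1),(\sigma_2,\underline{x}_2)\bigr)\in \bigl(\LSN\times \on{Ran}_{X_{\on{dR}},S}\bigr)^{\times 2}$ with $\underline{x}_1=\underline{x}_2=:\underline{x}$ and with isomorphic restrictions of $\sigma_1,\sigma_2$ to the formal punctured disks at $\underline{x}$. As $\underline{x}$ grows along the Ran diagram, this agreement condition pins $\sigma_1$ and $\sigma_2$ together, and combined with the contractibility of $\on{Ran}_{X_{\on{dR}},S}$ (the marked variant of Gaitsgory's contractibility theorem \cite{gaitsgory2011contractibility}), the integration $\pi_{*,\on{dR}}$ along the Ran direction collapses $Z$ onto the diagonal copy of $\LSN$, yielding $L\circ R\simeq \on{id}$.

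\textbf{The main obstacle} is making the base change precise in the present setting, since all four prestacks in the square are of infinite type (built from loop-group moduli and Ran colimits), and so proper base change is delicate. This is precisely the content of \cite[Cor. C.1.8]{ambidex}: the proposition follows from that general result once one checks its hypotheses, which in our situation reduce to (a) the factorization-module structure on $(\on{LocSys}_G(\overset{\circ}{D})^{\on{RS}}_{\chi_S})_{\on{Ran},S}$ built in \S 3.3--\S 3.4; (b) quasi-smoothness and finite type of $\LSN$ (Corollary \ref{c:qcqs}); and (c) contractibility of the marked Ran space. With these ingredients in place, the clean contractibility picture sketched above delivers the claim.
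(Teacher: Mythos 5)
Your overall strategy coincides with the paper's: both proofs ultimately rest on \cite[Cor. C.1.8]{ambidex}, and your heuristic picture of the self-fiber product of $\on{ev}$ collapsing onto the diagonal after integrating over the Ran direction is the right intuition. However, the checklist of hypotheses you propose to verify for that corollary is not the one that actually needs checking, and the items you omit are precisely where the paper does its work. First, the corollary applies to a co-unital factorization space over the \emph{unmarked} Ran space whose space of horizontal sections is the global moduli; so the paper first factors the functor as $\on{QCoh}(\LSN)\to(\NR)_{\on{indep}}\into\NR$ (the second arrow being fully faithful by unitality plus contractibility, which is where your item (c) actually lives), passes to the unmarked factorization space $(\on{LocSys}_G(\overset{\circ}{D})^{\on{RS}}_{\chi_S})_{\on{Ran}}$ via the equivalence of independent categories, and then must identify $\LSN\simeq\on{Sect}_{\nabla}(X,(\on{LocSys}_G(\overset{\circ}{D})^{\on{RS}}_{\chi_S})_{X_{\on{dR}}})$ (Lemma \ref{l:sectrschi}, which requires the fake-disk constructions of the appendix and Lemma \ref{l:drinfsimp}). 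Second, the substantive hypotheses of \cite[Cor. C.1.8]{ambidex} are dualizability of $\on{QCoh}(\LSN)$ and of $\on{QCoh}((\on{LocSys}_G(\overset{\circ}{D})^{\on{RS}}_{\chi_S})_{\on{Ran}})$; the former holds because $\LSN$ is a QCA stack of finite type (Corollary \ref{c:qcqs} plus \cite[Thm 0.3.4]{drinfeld2013some} — quasi-smoothness is irrelevant here), and the latter requires a compact-generation argument using the factorization structure. Your items (a)--(c) do not cover these, so as written the appeal to the corollary is not justified.

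Relatedly, continuity should not be handled by a separate levelwise argument: $\on{ev}_*$ is a $\on{QCoh}$-pushforward along a map of infinite-type prestacks, and such pushforwards need not be continuous in general (this failure of passability is exactly the issue flagged around $\Gamma^{\on{QCoh,naive}}_{\check{G},S}$ later in the paper). In the paper, continuity comes out of \cite[Cor. C.1.8]{ambidex} together with the fact that $(\NR)_{\on{indep}}\into\NR$ admits a continuous fully faithful structure, rather than being checked by hand over each $X_S^I$.
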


\begin{proof}
Consider
\[
(\NR)_{\on{{indep}}}=\NR\underset{D(\on{Ran}_{X,S})}{\otimes} \on{Vect}
\]

\noindent as in \S \ref{s:indep}. Note that the functor in question naturally factors as
\[
\on{QCoh}(\LSN)\to (\NR)_{\on{{indep}}}\to \NR.
\]

\noindent Since the second functor above is fully faithful (\cite[\S 2.5]{gaitsgory2011contractibility}), it suffices to show that the map
\begin{equation}\label{eq:indep1}
\on{QCoh}(\LSN)\to (\NR)_{\on{{indep}}}
\end{equation}

\noindent is fully faithful.

In Appendix \ref{s:constr}, we construct a co-unital factorization space $(\on{LocSys}_G(\overset{\circ}{D})^{\on{RS}}_{\chi_S})_{\on{Ran}}$ over $\on{Ran}_{X_{\on{dR}}}$ such that
\[
(\on{LocSys}_G(\overset{\circ}{D})^{\on{RS}}_{\chi_S})_{\on{Ran},S}=(\on{LocSys}_G(\overset{\circ}{D})^{\on{RS}}_{\chi_S})_{\on{Ran}}\underset{\on{Ran}_{X_{\on{dR}}}}{\times} \on{Ran}_{X_{\on{dR}},S}.
\]

\noindent Write $(\on{LocSys}_G(\overset{\circ}{D})^{\on{RS}}_{\chi_S})_{X_{\on{dR}}}$ for the base-change of $(\on{LocSys}_G(\overset{\circ}{D})^{\on{RS}}_{\chi_S})_{\on{Ran}}$ to $X_{\on{dR}}$. By Lemma \ref{l:sectrschi}, we have
\[
\LSN=\on{Sect}_{\nabla}(X,(\on{LocSys}_G(\overset{\circ}{D})^{\on{RS}}_{\chi_S})_{X_{\on{dR}}}),
\]

\noindent where $\on{Sect}_{\nabla}(X,(\on{LocSys}_G(\overset{\circ}{D})^{\on{RS}}_{\chi_S})_{\on{dR}})=\on{Maps}_{/ X_{\on{dR}}}(X_{\on{dR}}, (\on{LocSys}_G(\overset{\circ}{D})^{\on{RS}}_{\chi_S})_{X_{\on{dR}}})$ denotes the stack of horizontal sections of $(\on{LocSys}_G(\overset{\circ}{D})^{\on{RS}}_{\chi_S})_{X_{\on{dR}}}$. Moreover, by construction we have
\[
\on{Jets}_{\on{horiz}}((\on{LocSys}_G(\overset{\circ}{D})^{\on{RS}}_{\chi_S})_{X_{\on{dR}}})_{\on{Ran}}=(\on{LocSys}_G(\overset{\circ}{D})^{\on{RS}}_{\chi_S})_{\on{Ran}}.
\]

\noindent Here, the left-hand side denotes the space of horizontal jets into $(\on{LocSys}_G(\overset{\circ}{D})^{\on{RS}}_{\chi_S})_{X_{\on{dR}}}$, as described in §\ref{s:jets} below.

Consider
\[
(\on{QCoh}((\on{LocSys}_G(\overset{\circ}{D})^{\on{RS}}_{\chi_S})_{\on{Ran}})_{\on{indep}}=\on{QCoh}((\on{LocSys}_G(\overset{\circ}{D})^{\on{RS}}_{\chi_S})_{\on{Ran}})\underset{D(\on{Ran}_X)}{\otimes} \on{Vect}.
\]

\noindent Recall cf. Corollary \ref{c:qcqs} that $\LSN$ is an algebraic stack of finite type. Moreover, both categories
\[
\on{QCoh}(\LSN), \;\; \on{QCoh}((\on{LocSys}_G(\overset{\circ}{D})^{\on{RS}}_{\chi_S})_{\on{Ran}})
\]

\noindent are dualizable. Indeed, this is easy to see for $\on{QCoh}((\on{LocSys}_G(\overset{\circ}{D})^{\on{RS}}_{\chi_S})_{\on{Ran}})$.\footnote{Proof: the category $\on{QCoh}(\on{LocSys}_G(\overset{\circ}{D})^{\on{RS}}_{\chi_x})$ is compactly generated (see e.g. \cite{raskin2015notion}). We have natural functors $\on{Rep}(G)\otimes D(U)\to \on{QCoh}((\on{LocSys}_G(\overset{\circ}{D})^{\on{RS}}_{\chi_S})_{X_{\on{dR}}})$ and $\on{QCoh}(\on{LocSys}_G(\overset{\circ}{D})^{\on{RS}}_{\chi_x})\to \on{QCoh}((\on{LocSys}_G(\overset{\circ}{D})^{\on{RS}}_{\chi_S})_{X_{\on{dR}}})$ for each $x\in S$. It is clear that the images of compact objects under these functors compactly generate the target. Using the factorization structure of $\on{QCoh}((\on{LocSys}_G(\overset{\circ}{D})^{\on{RS}}_{\chi_S})_{\on{Ran}})$, it follows that it too is compactly generated. In particular, $\on{QCoh}((\on{LocSys}_G(\overset{\circ}{D})^{\on{RS}}_{\chi_S})_{\on{Ran}})$ is dualizable.} For $\on{QCoh}(\LSN)$, this follows from \cite[Thm 0.3.4]{drinfeld2013some}.

\cite[Cor. C.1.8]{ambidex} now says that the naturally defined functor
\begin{equation}\label{eq:indep2}
\on{QCoh}(\LSN)\to (\on{QCoh}((\on{LocSys}_G(\overset{\circ}{D})^{\on{RS}}_{\chi_S})_{\on{Ran}}))_{\on{{indep}}}
\end{equation}

\noindent is continuous and fully faithful. Pushforward along the natural map $\on{Ran}_{X,S}\to \on{Ran}_X$ induces an equivalence
\[
(\on{QCoh}((\on{LocSys}_G(\overset{\circ}{D})^{\on{RS}}_{\chi_S})_{\on{Ran},S}))_{\on{{indep}}}\simeq (\on{QCoh}((\on{LocSys}_G(\overset{\circ}{D})^{\on{RS}}_{\chi_S})_{\on{Ran}}))_{\on{{indep}}}.
\]

\noindent This shows that (\ref{eq:indep1}) is fully faithful.

\end{proof}

\subsection{Hecke action}\label{s:hecke}

The goal of this subsection is to construct a canonical Hecke action on the automorphic category occurring in Theorem \ref{t:action}.

\subsubsection{} Recall the notation of \S 3.1. Denote by $\on{Bun}_{\check{G}}^{\infty\cdot S}$ the $(\prod_{x\in S} \check{G}(O_x))$-torsor over $\on{Bun}_{\check{G}}(X)$ parametrizing a $\check{G}$-bundle on $X$ equipped with a trivialization on the formal completion of $X$ along $S$.

As usual, the natural action of $\prod_{x\in S} \check{G}(O_x)$ on $\on{Bun}_{\check{G}}^{\infty\cdot S}$ upgrades to an action of $\prod_{x\in S} \check{G}(K_x)$.

\subsubsection{} We consider the category
\begin{equation}\label{eq:inf}
D(\on{Bun}_{\check{G}}^{\infty\cdot S})^{\check{I}_S, \chi_S}
\end{equation}

\noindent of $(\prod_{x\in S} \check{I}_x, \prod_{x\in S} \chi_x)$-equivariant D-modules on $\on{Bun}_{\check{G}}^{\infty\cdot S}$.

It will be more convenient for us to work with another description of (\ref{eq:inf}). Namely, let $\on{Bun}_{\check{G}}(X)^S$ be the $(\prod_{x\in S} \check{G})$-torsor over $\on{Bun}_{\check{G}}(X)$ parametrizing a $\check{G}$-bundle on $X$ equipped with a trivialization on $S\subset X$. Note that $\on{Bun}_{\check{G}}(X)^S$ is the quotient of $\on{Bun}_{\check{G}}^{\infty\cdot S}$ by the pro-unipotent group $\on{Ker}(\prod_{x\in S} \check{G}(O_x)\to \prod_{x\in S} \check{G})$.

\subsubsection{} Consider the stack $\on{Bun}_{\check{G},\check{N}}(X,S)$ parametrizing a $\check{G}$-bundle on $X$ equipped with a reduction to $\check{N}$ at $S\subset X$. We have a natural identification
\[
\BunN\simeq \on{Bun}_{\check{G}}(X)^S/(\prod_{x\in S} \check{N}).
\]

\noindent As such, $\check{T}$ acts on $\BunN$ at each $x\in S$, and we have an equivalence
\begin{equation}\label{eq:eq}
D(\on{Bun}_{\check{G}}^{\infty\cdot S})^{\check{I}_S, \chi_S}\simeq \Autr,
\end{equation}

\noindent where the latter denotes the category of $(\prod_{x\in S}\check{T}, \prod_{x\in S} \chi_x)$-equivariant D-modules on $\BunN$.

\subsubsection{} We are going to construct a canonical Hecke action
\[
\Autreal\curvearrowright \Autr,
\]

\noindent where $\Autreal$ is a natural Hecke category over $\on{Ran}_{X,S}$ defined below.

\subsubsection{}\label{s:group} Let us construct two co-unital spaces $(\check{I}_S)_{\on{Ran}_{X,S}}$, $\check{G}(K)_{\on{Ran}_{X,S}}$ over $\on{Ran}_{X,S}$ whose fiber at $\underline{x}\subset \on{Ran}_{X,S}(k)$ is given by
\[
\prod_{x\in S} \check{I}_x\times \prod_{x'\in \underline{x}\setminus S} \check{G}(O_{x'}),
\]
\[
\prod_{x\in \underline{x}} \check{G}(K_x),
\]

\noindent respectively. Namely, consider the group scheme $\check{G}(O)_{\on{Ran}_{X,S}}$ over $\on{Ran}_{X,S}$ whose $T$-points are:
\[
\check{G}(O)_{\on{Ran}_{X,S}}(T)=\lbrace x_I: T\to X_S^I, \:\: D_{x_I}\to \check{G}\rbrace,
\]

\noindent where $D_{x_I}$ is defined in \S \ref{s:disk}. By construction, we have a map
\[
\check{G}(O)_{\on{Ran}_{X,S}}\to \prod_{x\in S} \check{G}(O_x),
\]

\noindent and we define
\[
(\check{I}_S)_{\on{Ran}_{X,S}}:=\check{G}(O)_{\on{Ran}_{X,S}}\underset{\underset{x\in S}{\prod} \check{G}(O_x)}{\times}{\prod_{x\in S}} \check{I}_x.
\]

\noindent Similarly, we define $\check{G}(K)_{\on{Ran}_{X,S}}$ by
\[
\check{G}(K)_{\on{Ran}_{X,S}}(T)=\lbrace x_I: T\to X_S^I, \:\: \overset{\circ}{D}_{x_I}\to G\rbrace.
\]

\subsubsection{} From the canonical map
\[
(\check{I}_S)_{\on{Ran}_{X,S}}\to \prod_{x\in S} \check{I}_x,
\]

\noindent we get a multiplicative sheaf on $(\check{I}_S)_{\on{Ran}_{X,S}}$\footnote{I.e., a D-module $\chi$ on $(\check{I}_S)_{\on{Ran}_{X,S}}$ that pulls back to $\chi\underset{\on{Ran}_{X,S}}{\boxtimes}\chi$ under the multiplication map $(\check{I}_S)_{\on{Ran}_{X,S}}\underset{\on{Ran}_{X,S}}{\times} (\check{I}_S)_{\on{Ran}_{X,S}}\to (\check{I}_S)_{\on{Ran}_{X,S}}$.} by pulling back the sheaf $\chi_S=\prod_{x\in S}\chi_x$ on $\prod_{x\in S} \check{I}_x$. We denote this multiplicative sheaf on $(\check{I}_S)_{\on{Ran}_{X,S}}$ by $\chi_S$ as well.

\subsubsection{} Note that we have a canonical action
\[
(\check{I}_S)_{\on{Ran}_{X,S}}\curvearrowright \check{G}(K)_{\on{Ran}_{X,S}}
\]

\noindent over $\on{Ran}_{X,S}$. Thus, we may consider the category
\[
\Autreal:=D((\check{I}_S)_{\on{Ran}_{X,S}}, \chi_S\backslash \check{G}(K)_{\on{Ran}_{X,S}}/ (\check{I}_S)_{\on{Ran}_{X,S}}, \chi_S).
\]

\noindent Note that $\Autreal$ comes equipped two monoidal structures. The first is given by point wise convolution over $\on{Ran}_{X,S}$ coming from the convolution structure on
\[
D((\check{I}_S)_J, \chi_S\backslash \check{G}(K)_J/ (\check{I}_S)_J, \chi_S)
\]

\noindent for all $J\in \on{fSet}_S$. Second, there is the external convolution structure described in §\ref{s:unit2}.

\subsubsection{}\label{s:wildram} We claim that $\Autreal$ equipped with its external convolution structure canonically acts on $\Autr$. Indeed, denote by $\on{Bun}_{\check{G},\on{Ran}_{X,S}}^{\infty}$ the prestack parametrizing triples $(\underline{x},\sP_G, \phi)$, where
\begin{itemize}
    \item $\underline{x}\in \on{Ran}_{X,S}$.
    \item $\sP_G$ is a ,$\check{G}$-bundle on $X$.
    \item $\phi$ is a trivialization of $\sP_G$ on $D_{\underline{x}}$.
\end{itemize}

\noindent We have a canonical action
\[
\check{G}(K)_{\on{Ran}_{X,S}}\curvearrowright \on{Bun}_{\check{G},\on{Ran}_{X,S}}^{\infty}
\]

\noindent over $\on{Ran}_{X,S}$. By taking $((\check{I}_S)_{\on{Ran}_{X,S}}, \chi_S)$-invariance, we get an action
\begin{equation}\label{eq:ranact}
\Autreal\curvearrowright D(\BunN\times \on{Ran}_{X,S})^{\check{T}_S, \chi_S}\simeq \Autr\otimes D(\on{Ran}_{X,S})
\end{equation}

\noindent over $\on{Ran}_{X,S}$, where we consider $\Autreal$ with its \emph{pointwise} monoidal structure.

\noindent In turn, this provides an action of $\Autreal$ equipped with its \emph{external} monoidal structure on $\Autr$ via the composition
\[
\Autreal\otimes \Autr\to \Autreal\otimes \Autr\otimes D(\on{Ran}_{X,S})
\]
\[
\to \Autr\otimes D(\on{Ran}_{X,S})\to  \Autr,
\]

\noindent where:

\begin{itemize}
    \item The first arrow is induced by the map $\pi^!: \on{Vect}\to D(\on{Ran}_{X,S})$.

    \item The second arrow is the action map (\ref{eq:ranact}).

    \item The third arrow is induced by the map $\pi_{*,\on{dR}}: D(\on{Ran}_{X,S})\to \on{Vect}$.
\end{itemize}

\noindent Alternatively, the action of $\Autreal$ on $\Autr$ is given by the map
\[
\Autreal \otimes \Autr\to (\Autreal)_{\on{indep}}\otimes \Autr\to \Autr
\]

\noindent with the evident action of $(\Autreal)_{\on{indep}}$ on $\Autr$.

\subsubsection{} We need an upgraded version of Corollary \ref{c:spec1} that allows for factorization away from the ramification points. The following theorem will appear in forthcoming work of the author and Ekaterina Bogdanova.

\begin{thm}[\cite{Satakefunctor}]\label{p:upgrade}
There is a canonical monoidal functor
\[
\NR\to \Autreal
\]

\noindent of unital categories over $\on{Ran}_{X,S}$ which restricts to the functor (\ref{t:metaplectic}) at $x\in S$ and which restricts to the usual (naïve) geometric Satake functor
\[
\on{Rep}(G)\to D(\check{G}(O_x)\backslash \check{G}(K_x)/\check{G}(O_x))
\]

\noindent away from $S$.
\end{thm}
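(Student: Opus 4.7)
The plan is to construct the functor by using the gluing description of both sides as factorization categories over $\on{Ran}_{X_{\on{dR}},S}$: each is determined by its fiber over $S\subset \on{Ran}_{X,S}$ together with its pullback to $\on{Ran}_{U_{\on{dR}}}$, plus the unital/factorization structure maps relating the two. So the strategy has two ingredients — a local construction at $S$, a local construction over $U$ — followed by a compatibility check.

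First I would build the functor on the two ''pieces''. Over the closed stratum $\lbrace S\rbrace \subset \on{Ran}_{X_{\on{dR}},S}$, the fiber of $\NR$ is $\bigotimes_{x\in S}\on{QCoh}(\on{LocSys}_G(\overset{\circ}{D}_x)^{\on{RS}}_{\chi_x})$ and the fiber of $\Autreal$ is $\bigotimes_{x\in S} D(\check{I},\chi_x\backslash \check{G}(K_x)/\check{I},\chi_x)$, so one takes the tensor product of the monoidal functors produced by Corollary \ref{c:spec1} (equivalently, Theorem \ref{t:metaplectic}) at each $x$. Over $\on{Ran}_{U_{\on{dR}}}$, the fiber of $\NR$ is the spherical spectral factorization category $\on{Rep}(G)_{\on{Ran}_U}$ and the fiber of $\Autreal$ is the spherical Hecke factorization category, and one takes the factorizable (naïve) geometric Satake functor of Mirkovi\'c--Vilonen and Gaitsgory.

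The bulk of the work is then the gluing. Concretely, for $I\in\on{fSet}_S$ and $J=I\underset{S}{\sqcup}\lbrace\star\rbrace$ one must produce commuting squares of monoidal functors relating the fiber of $\NR$ over $X_S^J$ (which contains an extra $\on{Rep}(G)$-tensor factor at the new point $\star$) with the fiber of $\Autreal$ over $X_S^J$, compatibly with the co-unital structure maps $\psi_{I,\lbrace\star\rbrace}$ of (\ref{eq:co-unital}) on the spectral side and the Beilinson--Drinfeld fusion on the geometric side. Spectrally this is pull-push along the correspondence
\[
(\on{LocSys}_G(\overset{\circ}{D})^{\on{RS}}_{\chi_S})_I\times U \leftarrow (\on{LocSys}_G(\overset{\circ}{D})^{\on{RS}}_{\chi_S})_J \to (\on{LocSys}_G(\overset{\circ}{D})^{\on{RS}}_{\chi_S})_I \times \bB G;
\]
geometrically it is the convolution on the Beilinson--Drinfeld affine Grassmannian with Iwahori-level structure imposed at $S$ and spherical level structure at the moving point. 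The compatibility one needs is an instance of the general principle that fusion for factorizable Satake is controlled on the spectral side by restricting horizontal jets of local systems to subsets of points, and this is precisely what is upgraded to the tame Iwahori setting by the combined input of Theorem \ref{t:metaplectic} and the factorizable nature of the DLYZ equivalence.

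The principal obstacle is assembling these fiberwise compatibilities into a genuine morphism of unital factorization (not just pointwise monoidal) categories: one must track an infinite tower of higher coherences indexed by the simplicial structure of $\on{Ran}_{X_{\on{dR}},S}$. The natural way to handle this is to realize both $\NR$ and $\Autreal$ as the image of a single object in a category of lax unital factorization categories that is freely generated by its restrictions to the closed point $S$ and to $\on{Ran}_{U_{\on{dR}}}$ (a Beck--Chevalley / descent argument along $\on{Ran}_{U_{\on{dR}}}\sqcup \lbrace S\rbrace\to \on{Ran}_{X_{\on{dR}},S}$), reducing the problem to the two local statements above together with a single compatibility datum when one point of $U$ collides with $S$. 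This last datum — essentially a ''nearby cycles'' compatibility between sphericalization of Iwahori-tame sheaves and specialization of local systems from the bigger punctured disk to $\overset{\circ}{D}_x$ — is what the companion paper \cite{Satakefunctor} will establish.
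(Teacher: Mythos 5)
The paper does not prove this theorem: it is imported wholesale from the forthcoming companion paper \cite{Satakefunctor}, and the only indication of method given here is the one-sentence remark that both categories are glued from their restrictions to (powers of) $U$ and to $S$, and that the construction amounts to checking compatibility of the gluing maps. Your outline — local functors at $S$ via Theorem \ref{t:metaplectic}/Corollary \ref{c:spec1}, factorizable naive Satake over $U$, then a gluing/collision compatibility deferred to \cite{Satakefunctor} — is exactly that strategy, so it matches the paper's stated approach as closely as can be judged; like the paper, it leaves the essential content (the collision compatibility and the tower of higher coherences) to the companion work rather than establishing it.
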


\begin{cor}\label{c:specact}
There is a canonical Hecke action
\[
\NR\curvearrowright \Autr.
\]
\end{cor}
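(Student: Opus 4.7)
The plan is to obtain the desired action by composing the two pieces of structure that have just been set up: the spectral-to-geometric monoidal functor of Theorem \ref{p:upgrade} and the tautological action of the geometric side on the automorphic category from §\ref{s:wildram}. Concretely, I would first recall that Theorem \ref{p:upgrade} provides a monoidal functor
\[
\NR \longrightarrow \Autreal
\]
of unital factorization categories over $\on{Ran}_{X_{\on{dR}},S}$, both sides being equipped with their external convolution monoidal structures in the sense of §\ref{s:unit2}. Next, §\ref{s:wildram} endows $\Autr$ with a module structure for $\Autreal$ (external convolution) by means of the point-wise action (\ref{eq:ranact}) followed by the unit/counit pair $\pi^! \dashv \pi_{*,\on{dR}}$ for $\pi:\on{Ran}_{X,S}\to\on{pt}$. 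The sought action is then the composite
\[
\NR \otimes \Autr \longrightarrow \Autreal \otimes \Autr \longrightarrow \Autr,
\]
where the first arrow is obtained from Theorem \ref{p:upgrade} and the second is the action of §\ref{s:wildram}.

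The only thing requiring verification is that this composite is genuinely an action, i.e.\ that the associativity and unit constraints are compatible. This is formal: since both $\NR$ and $\Autreal$ are monoidal categories in unital factorization categories over $\on{Ran}_{X_{\on{dR}},S}$, and since Theorem \ref{p:upgrade} produces a morphism between them in this $\infty$-category of monoidal algebras, the functor automatically carries module categories to module categories. Equivalently, one can phrase everything after passing to independent categories: the functor $\NR \to \Autreal$ induces a monoidal functor $(\NR)_{\on{indep}} \to (\Autreal)_{\on{indep}}$, and the action of §\ref{s:wildram} factors through an action of $(\Autreal)_{\on{indep}}$ on $\Autr$, so we obtain an action of $(\NR)_{\on{indep}}$ and hence of $\NR$ on $\Autr$.

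There is no real obstacle beyond ensuring the two inputs fit together in the right $\infty$-categorical framework; the content of the corollary is entirely packaged into Theorem \ref{p:upgrade} and the construction in §\ref{s:wildram}. I would devote a sentence or two to noting that the resulting action restricts, over each $x\in S$, to the Hecke action induced by Corollary \ref{c:spec1} and Theorem \ref{t:metaplectic}, and over $x \notin S$ to the classical Satake action, as this is what will be needed to identify it with the action claimed in Theorem \ref{t:action}.
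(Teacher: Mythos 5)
Your proposal is exactly the paper's (implicit) argument: the corollary is stated with a bare \qed precisely because it is the formal composition of the monoidal functor from Theorem \ref{p:upgrade} with the $\Autreal$-module structure on $\Autr$ constructed in \S\ref{s:wildram}. Your reformulation via independent categories is a clean equivalent phrasing, also used in \S\ref{s:wildram} itself, and does not represent a deviation from the paper's route.
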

\qed

\subsection{Kac-Moody localization}\label{s:kmloc1}

In this subsection, we consider twisted D-modules on $\BunN$ obtained from Kac-Moody localization. We therefore freely use Kac-Moody notation in the sense of \cite{frenkel2006local}, \cite{chen2021extension} in what follows. See also the introduction of \cite{raskin2020homological} for a discussion of the role of Kac-Moody localization in geometric Langlands.

\subsubsection{} Fix a level $\kappa$ for $\check{\fg}$.\footnote{In the following subsections, we may work at any level. Later, however, when we construct Hecke eigensheaves using opers, it will be important to work at critical level.} For $x\in X(k)$, we let $\hat{\check{\fg}}_{\kappa,x}\on{-mod}$ denote the usual (renormalized) category of Kac-Moody modules for $\hat{\check{\fg}}_{\kappa}$ at level $\kappa$ at $x$, defined initially in \cite{frenkel2009d}. In \cite[\S B.14]{Cstterm} (see also \cite[\S 4]{chen2021extension}), a unital factorization category is constructed,
\[
\hat{\check{\fg}}_{\kappa,\on{Ran}_X}\on{-mod},
\]

\noindent over $\on{Ran}_X$ whose fiber at $x\in X(k)$ is given by
\[
\hat{\check{\fg}}_{\kappa,x}\on{-mod}.
\]

\noindent We let
\[
\hat{\check{\fg}}_{\kappa,\on{Ran}_{X,S}}\on{-mod}:=\hat{\check{\fg}}_{\kappa,\on{Ran}_X}\on{-mod}\underset{D(\on{Ran}_X)}{\otimes} D(\on{Ran}_{X,S}),
\]

\noindent which is a unital category over $\on{Ran}_{X,S}$. 

\subsubsection{}\label{s:km} We have a localization functor
\begin{equation}\label{eq:KMlocwild}
\hat{\check{\fg}}_{\kappa,\on{Ran}_{X,S}}\on{-mod}\to D_{\kappa}(\on{Bun}_{\check{G},\on{Ran}_{X,S}}^{\infty}).
\end{equation}

\noindent Moreover, as in \cite{Cstterm}, we have a strong action (at level $\kappa$):
\[
\check{G}(K)_{\on{Ran}_{X,S}}\curvearrowright \hat{\check{\fg}}_{\kappa,\on{Ran}_{X,S}}\on{-mod}.
\]

\noindent Thus, we obtain a functor
\begin{equation}\label{eq:kmkmloc}
\on{Loc}_{\kappa,\check{G},\chi_S}: (\hat{\check{\fg}}_{\kappa,\on{Ran}_{X,S}}\on{-mod})^{(\check{I}_S)_{\on{Ran}_{X,S}}, \chi_S}\to D_{\kappa}(\BunN)^{\check{T}_S, \chi_S}\otimes D(\on{Ran}_{X,S})\to
\end{equation}
\[
\xrightarrow{\on{id}\otimes \pi_{*,\on{dR}}} D_{\kappa}(\on{Bun}_{\check{G},\check{N}}(X,S)^{\check{T}_S, \chi_S}.
\]

\noindent When $\kappa=\on{crit}$ is the critical level, we fix an identification 
\[
D_{\on{crit}}(\BunN)^{\check{T}_S,\chi_S}\simeq \Autr.
\]

\noindent In this case, the above localization functor is $\NR$-linear via the action given by Theorem \ref{p:upgrade}.

\subsubsection{} The following proposition is proved in Section \ref{s:4.3}.

\begin{prop}\label{p:km}
When $\kappa=\on{crit}$ is critical, the action of $\NR$ on the image of $\on{Loc}_{\on{crit},\check{G},\chi_S}$ factors through an action of $\on{QCoh}(\LSN)$.
\end{prop}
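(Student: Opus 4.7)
The plan is to follow the strategy of Drinfeld-Gaitsgory in the unramified case \cite{gaitsgory2010generalized}, adapted to allow tame ramification at $S$. Since $\on{Loc}_{\on{crit},\check{G},\chi_S}$ is defined by Ran-integration via $\pi_{*,\on{dR}}$, the residual $\NR$-action on its image automatically factors through the quotient $(\NR)_{\on{indep}} = \NR \underset{D(\on{Ran}_{X,S})}{\otimes} \on{Vect}$. By the proof of Proposition \ref{p:ff}, the spectral functor (\ref{eq:sf}) realizes $\on{QCoh}(\LSN)$ as a full subcategory of $(\NR)_{\on{indep}}$, so the task reduces to showing that the $(\NR)_{\on{indep}}$-action on the image further descends to this full subcategory.

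The key step is a local-to-global statement. At each point of $\on{Ran}_{X_{\on{dR}},S}$, the action of the fiber of $\NR$ on the corresponding critical-level Kac-Moody category, restricted to $((\check{I}_S)_{\on{Ran}_{X,S}},\chi_S)$-equivariant modules, should factor through quasi-coherent sheaves on the local spectral side. At an unramified point $x'\notin S$, this is the Beilinson-Drinfeld / Feigin-Frenkel statement that the geometric Satake action of $\on{Rep}(G)$ on critical-level modules factors through the action of $\on{QCoh}(\on{Op}_G(D_{x'}))$, which in particular factors through $\on{Rep}(G)\simeq \on{QCoh}(\on{LocSys}_G(D_{x'}))$ compatibly. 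At a ramified point $x\in S$, the analogous statement is that the action of $D(\check{I},\chi_x\backslash \check{G}(K_x)/\check{I},\chi_x)$ on $(\check{I},\chi_x)$-equivariant critical-level modules factors through $\on{QCoh}(\on{LocSys}_G(\overset{\circ}{D}_x)^{\on{RS}}_{\chi_x})$; for $\chi_x=0$ this is Frenkel-Gaitsgory \cite{frenkel2005fusion}, and for general $\chi_x$ this is Conjecture \ref{c:linearity}, the forthcoming work with G. Dhillon.

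These local statements are compatible with the factorization structures on both sides, the compatibility between the unramified and ramified components being ensured by the monoidal functor of Theorem \ref{p:upgrade}. Assembling them yields a factorization of the $\NR$-action on the image of $\on{Loc}_{\on{crit},\check{G},\chi_S}$ through the (independent-of-Ran quotient of) $\on{QCoh}((\on{LocSys}_G(\overset{\circ}{D})^{\on{RS}}_{\chi_S})_{\on{Ran},S})$. The final descent to $\on{QCoh}(\LSN)$ uses the identification $\LSN \simeq \on{Sect}_{\nabla}(X,(\on{LocSys}_G(\overset{\circ}{D})^{\on{RS}}_{\chi_S})_{X_{\on{dR}}})$ from Lemma \ref{l:sectrschi} used in the proof of Proposition \ref{p:ff}: Kac-Moody localization at critical level intrinsically produces horizontal sections on the spectral side, which is exactly the content of the fully faithful embedding $\on{QCoh}(\LSN) \into \on{QCoh}((\on{LocSys}_G(\overset{\circ}{D})^{\on{RS}}_{\chi_S})_{\on{Ran},S})_{\on{indep}}$ obtained there.

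The main obstacle is the ramified local linearity at general $\chi_x$, i.e., Conjecture \ref{c:linearity}, which is not yet in the literature; conditional on that input, the remainder is a direct factorization-theoretic adaptation of the Drinfeld-Gaitsgory template, the principal technical burden being the bookkeeping required to handle mixed unramified/ramified compatibility in the unital factorization structure.
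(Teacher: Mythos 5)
Your overall architecture matches the paper's template: reduce to local linearity statements (Feigin--Frenkel at unramified points, \cite{frenkel2005fusion} and Conjecture \ref{c:linearity} at ramified points), glue them via the factorizable Satake functor of Theorem \ref{p:upgrade}, and then descend to $\on{QCoh}(\LSN)$. The observation that the action automatically factors through $(\NR)_{\on{indep}}$ is also correct, since the external convolution action on $\Autr$ factors through the independent category by construction. However, there is a genuine gap at the final and decisive step. The fully faithful embedding $\on{QCoh}(\LSN)\into (\NR)_{\on{indep}}$ of Proposition \ref{p:ff} is a statement purely about the spectral side; it says nothing about the image of $\on{Loc}_{\on{crit},\check{G},\chi_S}$. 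What must be proven is that the $(\NR)_{\on{indep}}$-action on that image kills the kernel of the colocalization $\NR\to \on{QCoh}(\LSN)$, and your sentence ``Kac-Moody localization at critical level intrinsically produces horizontal sections on the spectral side'' is precisely the assertion requiring justification, not a consequence of Proposition \ref{p:ff}. The pointwise linearity statements, assembled factorizably, only give linearity over the Ran-level spectral category, which is strictly weaker than the proposition.

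The paper supplies the missing mechanism through global opers. First (\S\ref{s:gengoodsh}), the subcategories $\hat{\check{\fg}}_{\on{crit},I,\lambda_S}\on{-mod}^{\check{I}_S, \chi_S}$ --- objects linear over $\on{QCoh}$ of local opers with regular singularities and residue $\varpi(-\lambda_x-\check{\rho})$ --- generate the whole equivariant Kac--Moody category under colimits, because the Verma modules $\bM_{\lambda_x}$ lie in their image by \cite[Cor.~13.3.2]{frenkel2006local}; without this generation step one cannot even invoke the oper-linear structure on arbitrary objects. Second, Theorem \ref{t:factormoving} (whose first part is the statement from \cite{beilinson2004chiral} that critical-level localization is linear over \emph{global} sections of the center, i.e., over functions on $\on{Op}_{G,\lambda_S}^{\on{RS}}(X-\underline{x})_I^{\on{mon-free}}$) shows that the localization of these subcategories factors through $\on{QCoh}$ of global opers, and the forgetful map (\ref{eq:forgetful}) from global opers to $\on{LocSys}_G(U,\chi_S)$ is what converts this into $\on{QCoh}(\LSN)$-linearity. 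Your identification of the conditional inputs (Conjecture \ref{c:linearity} and Theorem \ref{p:upgrade}) is accurate, but without the generation-by-Verma step and the global-oper factorization the argument does not close.
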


\subsection{Image of Kac-Moody localization}\label{s:jets}
In this subsection, we show that the image of Kac-Moody localization is large, suitably understood.

\subsubsection{} For a prestack $\sY$ over $X_{\on{dR}}$, we may consider the factorization space over $\on{Ran}_{X_{\on{dR}}}$ given by horizontal jets into $\sY$, which we denote by $\on{Jets}_{\on{horiz}}(\sY)_{\on{Ran}}$.\footnote{We refer to \cite[\S A.2]{gaitsgory2014day} for a detailed discussion of jets constructions.} We remind that for a test scheme $T$ equipped with a map $x_I: T\to X_{\on{dR}}^I$, the data of a lift to $\on{Jets}_{\on{horiz}}(\sY)_{\on{Ran}}$ is given by an element of the groupoid
\[
\lbrace (\Gamma_{x_I})_{\on{dR}}\underset{T_{\on{dR}}}{\times} T\to \sY\;\on{over} \; X_{\on{dR}}\rbrace,
\]

\noindent where $\Gamma_{x_I}$ denotes the union of the graphs defined by $x_I$ inside $T_{\on{red}}\times X$. Define
\[
\on{Jets}_{\on{horiz}}(\sY)_{\on{Ran},S}:=\on{Jets}_{\on{horiz}}(\sY)_{\on{Ran}}\underset{\on{Ran}_{X_{\on{dR}}}}{\times} \on{Ran}_{X_{\on{dR}},S}.
\]

\subsubsection{} For a prestack $\sZ$ not necessarily living over $X_{\on{dR}}$, we simply write 
\[
\on{Jets}_{\on{horiz}}(\sZ)_{\on{Ran}}:=\on{Jets}_{\on{horiz}}(\sZ\times X_{\on{dR}})_{\on{Ran}}
\]

\noindent and
\[
\on{Jets}_{\on{horiz}}(\sZ)_{\on{Ran},S}:=\on{Jets}_{\on{horiz}}(\sZ\times X_{\on{dR}})_{\on{Ran},S}.
\]

\subsubsection{} If $\sY$ is a prestack over $X$, we may consider its Weil restriction, $\on{Res}_{X_{\on{dR}}}^X(\sY)$, which is a prestack over $X_{\on{dR}}$. By definition, for a test scheme $T\to X_{\on{dR}}$, we have
\[
\on{Maps}_{/X_{\on{dR}}}(T, \on{Res}_{X_{\on{dR}}}^X(\sY))=\on{Maps}_{/X}(T\underset{X_{\on{dR}}}{\times} X, \sY).
\]

\noindent We write 
\[
\on{Jets}(\sY)_{\on{Ran}}:=\on{Jets}_{\on{horiz}}(\on{Res}_{X_{\on{dR}}}^X(\sY))_{\on{Ran}}
\]

\noindent and
\[
\on{Jets}(\sY)_{\on{Ran},S}:=\on{Jets}(\sY)_{\on{Ran}}\underset{\on{Ran}_{X_{\on{dR}}}}{\times} \on{Ran}_{X_{\on{dR}},S}.
\]

\subsubsection{} For a prestack $\sZ$ not necessarily living over $X$, we simply write 
\[
\on{Jets}(\sZ)_{\on{Ran}}:=\on{Jets}(\sZ\times X)_{\on{Ran}}
\]
\noindent and
\[
\on{Jets}(\sZ)_{\on{Ran},S}:=\on{Jets}(\sZ\times X)_{\on{Ran},S}.
\]

\subsubsection{} Let us specialize to $\sZ=\bB \check{G}$. Note that in this case, the fiber of $\on{Jets}(\bB \check{G})_{\on{Ran},S}$ over $\underline{x}\in \on{Ran}_{X_{\on{dR}},S}(k)$ is given by 
\[
\prod_{x\in \underline{x}} \bB \check{G}(O_x),
\]

\noindent where we identify $\bB \check{G}(O_x)$ with the stack of $\check{G}$-bundles on the formal disk at $x$.
Consider the prestack
\[
\on{Jets}(\bB \check{G}, \bB \check{N})_{\on{Ran},S}:=\on{Jets}(\bB \check{G})_{\on{Ran},S}\underset{\prod_{x\in S}\bB \check{G}(O_x)}{\times} \prod_{x\in S}\bB \overset{\circ}{\check{I}_x},
\]

\noindent where $\overset{\circ}{\check{I}_x}$ denotes the unipotent radical of $\check{I}_x$. We may identify $\bB \overset{\circ}{\check{I}_x}$ with the moduli stack of $\check{G}$-bundles on the formal disk at $x$ equipped with a reduction to $\check{N}$ at the point $x$.

\subsubsection{} Note that for any $x\in S$, we have a restriction map
\[
\BunN\to \bB \overset{\circ}{\check{I}_x}.
\]

\noindent Similarly, for $x'\in X-S$, we have a map
\[
\BunN\to \bB \check{G}(O_{x'}).
\]

\noindent These naturally upgrade to a map
\[
\BunN\times \on{Ran}_{X_{\on{dR}},S}\to \on{Jets}(\mathbb{B}\check{G},\mathbb{B}\check{N})_{\on{Ran},S}
\]

\noindent over $\on{Ran}_{X_{\on{dR}},S}$, giving rise to the correspondence 

\[\begin{tikzcd}
	{\BunN\times \textrm{Ran}_{X_{\textrm{dR}},S}} && {\textrm{Jets}(\mathbb{B}\check{G},\mathbb{B}\check{N})_{\on{Ran},S}} \\
	\\
	{\BunN.}
	\arrow["{\textrm{ev}}", from=1-1, to=1-3]
	\arrow["{\textrm{id}\times \pi}"', from=1-1, to=3-1]
\end{tikzcd}\]

\subsubsection{} For a quasi-compact open substack $j: V\into \BunN$, we may consider the composition
\[
\Gamma_{\check{G},S}^{\on{QCoh, naive}}:=\on{ev}_*\circ (\on{id}\times \pi)^*\circ j_*: \on{QCoh}(V)\to \on{QCoh}(\on{Jets}(\mathbb{B}\check{G},\mathbb{B}\check{N})_{\on{Ran},S}).
\]

\noindent The target category is somewhat ill-behaved.\footnote{E.g., the category $\on{QCoh}(\bB \check{G}(O))$ is not compactly generated.} We renormalize to make it more suitable for our purposes.

\subsubsection{} We have a natural identification
\[
\on{Jets}(\mathbb{B}\check{G},\mathbb{B}\check{N})_{\on{Ran},S}=\bB (\check{I}_S)_{\on{Ran}_{X,S}}.
\]

\noindent Here, the right-hand side is the quotient of $\on{Ran}_{X_{\on{dR}},S}$ by the trivial action of $(\check{I}_S)_{\on{Ran}_{X,S}}$. The group $(\check{I}_S)_{\on{Ran}_{X,S}}$ admits a presentation
\[
(\check{I}_S)_{\on{Ran}_{X,S}}\simeq \underset{n}{\on{lim}}\; (\check{I}^{(n)}_S)_{\on{Ran}_{X,S}},
\]

\noindent where each group scheme $(\check{I}^{(n)}_S)_{\on{Ran}_{X,S}}$ is of finite type over $\on{Ran}_{X_{\on{dR},S}}$. Moreover, each transition map in the above limit is smooth and surjective. We define
\[
\on{Rep}((\check{I}_S)_{\on{Ran}_{X,S}}):=\underset{n}{\on{colim}}\on{QCoh}(\bB (\check{I}^{(n)}_S)_{\on{Ran}_{X,S}}),
\]

\noindent where the transition functors are given by $*$-pullback along the transition maps
\[
\bB(\check{I}^{(n)}_S)_{\on{Ran}_{X,S}}\to \bB (\check{I}^{(m)}_S)_{\on{Ran}_{X,S}}.
\]

\noindent As in \cite[§B.14.2]{Cstterm}, the category
\[
\on{QCoh}( \bB (\check{I}^{(n)}_S)_{\on{Ran}_{X,S}})
\]

\noindent is compactly generated. As a consequence, so is the category
\[
\on{Rep}((\check{I}_S)_{\on{Ran}_{X,S}}).
\]

\noindent Moreover, $\on{Rep}((\check{I}_S)_{\on{Ran}_{X,S}})$ is naturally a factorization category.
\begin{rem}
In the language of \cite{chen2021extension}, we have
\[
\on{Rep}((\check{I}_S)_{\on{Ran}_{X,S}})\simeq \on{IndCoh}^{!,\on{ren}}(\on{Jets}(\bB\check{G},\bB \check{N})).
\]
\end{rem}

\subsubsection{} For each $n$, we have a correspondence
\[\begin{tikzcd}
	{\BunN\times \textrm{Ran}_{X_{\textrm{dR}},S}} && {\bB(\check{I}^{(n)}_S)_{\on{Ran}_{X,S}}} \\
	\\
	{\BunN.}
	\arrow["{\textrm{ev}^{(n)}}", from=1-1, to=1-3]
	\arrow["{\textrm{id}\times \pi}"', from=1-1, to=3-1]
\end{tikzcd}\]

\noindent The functors
\[
j^*\circ (\on{id}\otimes \pi_{*,\on{dR}})\circ \on{ev}^{(n),^*}: \on{QCoh}(\bB(\check{I}^{(n)}_S)_{\on{Ran}_{X,S}})\to \on{QCoh}(\BunN)\to \on{QCoh}(V)
\]

\noindent glue to define a functor
\begin{equation}\label{eq:qcohloc}
\on{Loc}_{\check{G},S}^{\on{QCoh}}: \on{Rep}((\check{I}_S)_{\on{Ran}_{X,S}})\to \on{QCoh}(V).
\end{equation}

\noindent It admits a continuous right adjoint
\[
\Gamma_{\check{G},S}^{\on{QCoh}}: \on{QCoh}(V)\to \on{Rep}((\check{I}_S)_{\on{Ran}_{X,S}})
\]

\noindent satisfying
\[
\Gamma_{\check{G},S}^{\on{QCoh,naive}}\simeq \on{oblv}\circ \Gamma_{\check{G},S}^{\on{QCoh}}: \on{QCoh}(V)\to \on{QCoh}(\on{Jets}(\bB\check{G},\bB\check{N})),
\]

\noindent where
\[
\on{oblv}: \on{Rep}((\check{I}_S)_{\on{Ran}_{X,S}})\to \on{QCoh}(\on{Jets}(\bB\check{G},\bB\check{N}))
\]

\noindent is the tautological functor.

\subsubsection{} Similar to Proposition \ref{p:ff}, we have the following:
\begin{prop}\label{p:ppanff}
The functor $\Gamma_{\check{G},S}^{\on{QCoh}}$ is fully faithful.
\end{prop}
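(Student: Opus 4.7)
The plan is to mimic the proof of Proposition \ref{p:ff} essentially verbatim, using the identification of $\BunN$ as a horizontal-sections stack. First I would factor the functor in question as
\[
\on{QCoh}(V) \xrightarrow{\,\overline{\Gamma}\,} \bigl(\on{Rep}((\check{I}_S)_{\on{Ran}_{X,S}})\bigr)_{\on{indep}} \longrightarrow \on{Rep}((\check{I}_S)_{\on{Ran}_{X,S}}),
\]
where the second arrow is the natural embedding. By the contractibility of $\on{Ran}_{X,S}$ (cf.\ \cite{gaitsgory2011contractibility}, already invoked in Proposition \ref{p:ff}), this second arrow is fully faithful, so it suffices to prove that $\overline{\Gamma}$ is fully faithful.

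The second step is to identify the indep category on the right. Arguing as in Proposition \ref{p:ff}, I would observe that $\BunN$ is canonically the stack of horizontal sections of $\on{Jets}(\bB\check{G},\bB\check{N})_{X_{\on{dR}}}$ over $X_{\on{dR}}$ (this is just the moduli description: a horizontal family of $\check{G}$-bundles on formal disks, with $\check{N}$-reductions at the marked points, glues to a $\check{G}$-bundle on $X$ with $\check{N}$-reductions at $S$). Both $\on{QCoh}(\BunN)$ (dualizable by \cite[Thm. 0.3.4]{drinfeld2013some}) and $\on{Rep}((\check{I}_S)_{\on{Ran}_{X,S}})$ (dualizable since compactly generated by construction) are dualizable, so \cite[Cor. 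C.1.8]{ambidex} produces a continuous, fully faithful embedding
\[
\on{QCoh}(\BunN) \hookrightarrow \bigl(\on{Rep}((\check{I}_S)_{\on{Ran}_{X,S}})\bigr)_{\on{indep}}.
\]

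Unwinding definitions via the presentation of $\on{Rep}((\check{I}_S)_{\on{Ran}_{X,S}})$ as the colimit $\colim_n \on{QCoh}(\bB (\check{I}^{(n)}_S)_{\on{Ran}_{X,S}})$ and the characterization $\on{oblv}\circ \Gamma^{\on{QCoh}}_{\check{G},S} \simeq \Gamma^{\on{QCoh,naive}}_{\check{G},S} = \on{ev}_*\circ(\on{id}\times\pi)^*\circ j_*$, I would identify $\overline{\Gamma}$ with the open pushforward $j_* : \on{QCoh}(V) \to \on{QCoh}(\BunN)$, which is fully faithful because $j$ is a quasi-compact open embedding.

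The main obstacle is the middle step: namely, carefully matching the renormalization built into $\on{Rep}((\check{I}_S)_{\on{Ran}_{X,S}})$ with the Ambidex identification. Concretely, one must verify that passing to the $\on{indep}$ quotient kills precisely the non-horizontal directions in the jet space and that, under the resulting equivalence with $\on{QCoh}(\BunN)$, the functor $\Gamma$ indeed degenerates to $j_*$ rather than some more exotic pushforward. Once this bookkeeping is in place, the statement follows by formal nonsense.
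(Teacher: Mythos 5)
Your proposal has a genuine gap at the second step. You want to apply \cite[Cor.~C.1.8]{ambidex} to produce a fully faithful embedding $\on{QCoh}(\BunN)\into \bigl(\on{Rep}((\check{I}_S)_{\on{Ran}_{X,S}})\bigr)_{\on{indep}}$, but the hypotheses of that corollary cannot be verified for $\BunN$: the stack $\BunN$ is not quasi-compact, so it is not passable in the sense of \cite[\S 3]{gaitsgory2019study}, and your appeal to \cite[Thm.~0.3.4]{drinfeld2013some} for dualizability of $\on{QCoh}(\BunN)$ is inapplicable for the same reason (that theorem requires the global QCA condition, hence quasi-compactness; the paper's standing hypothesis is only \emph{locally} QCA). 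This is exactly why $\on{Loc}^{\on{QCoh}}_{\check{G},S}$ and $\Gamma^{\on{QCoh}}_{\check{G},S}$ are set up relative to a quasi-compact open $j:V\into\BunN$ from the start, and why the paper applies the Ambidexterity criterion directly to $\on{QCoh}(V)$, where passability holds. The remark immediately following the proposition is the warning light here: passability failures (there, of $\on{Jets}(\bB\check{G},\bB\check{N})$) are precisely what separate the renormalized $\Gamma^{\on{QCoh}}_{\check{G},S}$ from the naive one, so the concluding claim that "the statement follows by formal nonsense once the bookkeeping is in place" mislocates the difficulty --- the bookkeeping you flag is not the obstruction, the non-quasi-compactness of $\BunN$ is.

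Two secondary discrepancies. First, the paper's proof begins by constructing $\on{Rep}((\check{I}_S)_{\on{Ran}_{X}})$ over the \emph{unmarked} Ran space and identifying its $\on{indep}$ quotient with that of the marked category; \cite[Cor.~C.1.8]{ambidex} is formulated over $\on{Ran}_{X_{\on{dR}}}$, not $\on{Ran}_{X_{\on{dR}},S}$, so you cannot invoke it against the marked version without this preliminary reduction. Second, near the end you speak of "the resulting equivalence with $\on{QCoh}(\BunN)$"; even in the quasi-compact setting the Ambidexterity corollary only yields a fully faithful embedding of $\on{QCoh}$ into the $\on{indep}$ category, not an equivalence, though for the present purpose full faithfulness is all one needs. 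A correct version of your argument is essentially the paper's: replace $\BunN$ by $V$ throughout and insert the reduction to the unmarked Ran space before applying \cite[Cor.~C.1.8]{ambidex}.
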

\begin{rem}
This is not true for the functor $\Gamma_{\check{G},S}^{\on{QCoh,naive}}$. The problem is the failure of $\on{Jets}(\bB \check{G},\bB\check{N})$ to be \emph{passable} in the sense of \cite[§3]{gaitsgory2019study}.
\end{rem}

\begin{proof}
\step 
First, we have a factorization category
\[
\on{Rep}((\check{I}_S)_{\on{Ran}_{X}})
\]

\noindent over $\on{Ran}_{X_{\on{dR}}}$ such that
\[
\on{Rep}((\check{I}_S)_{\on{Ran}_{X,S}})\simeq \on{Rep}((\check{I}_S)_{\on{Ran}_{X}})\underset{D(\on{Ran}_{X})}{\otimes} D(\on{Ran}_{X,S}).
\]

\noindent Indeed, let
\[
(\check{I}_S)_{\on{Ran}_{X}}
\]

\noindent be the group scheme over $\on{Ran}_{X_{\on{dR}}}$ such that
\[
(\check{I}_S)_{\on{Ran}_{X,S}}=(\check{I}_S)_{\on{Ran}_{X}}\underset{\on{Ran}_{X_{\on{dR}}}}{\times}\on{Ran}_{X_{\on{dR}},S}.
\]

\noindent Then 
\[
(\check{I}_S)_{\on{Ran}_{X}}=\underset{n}{\on{lim}} \;(\check{I}^{(n)}_S)_{\on{Ran}_{X}}
\]

\noindent is similarly the inverse limit of finite type group schemes over $\on{Ran}_{X_{\on{dR}}}$. Let
\[
\on{Rep}((\check{I}_S)_{\on{Ran}_{X}}):=\underset{n}{\on{colim}}\on{QCoh}(\bB (\check{I}^{(n)}_S)_{\on{Ran}_{X}}).
\]

\step The functor
\[
\on{Rep}((\check{I}_S)_{\on{Ran}_{X}})\to \on{Rep}((\check{I}_S)_{\on{Ran}_{X,S}})
\]

\noindent induces an equivalence
\[
(\on{Rep}((\check{I}_S)_{\on{Ran}_{X}}))_{\on{indep}}\xrightarrow{\simeq} (\on{Rep}((\check{I}_S)_{\on{Ran}_{X,S}}))_{\on{indep}}.
\]

\noindent Moreover, the functor
\[
\Gamma_{\check{G},S}^{\on{QCoh}}: \on{QCoh}(V)\to \on{Rep}((\check{I}_S)_{\on{Ran}_{X,S}})
\]

\noindent naturally factors as 
\[
\on{QCoh}(V)\to (\on{Rep}((\check{I}_S)_{\on{Ran}_{X}}))_{\on{indep}}\xrightarrow{\simeq} (\on{Rep}((\check{I}_S)_{\on{Ran}_{X,S}}))_{\on{indep}}\to \on{Rep}((\check{I}_S)_{\on{Ran}_{X,S}}).
\]

\noindent Thus, it suffices to show that the functor
\[
\on{QCoh}(V)\to (\on{Rep}((\check{I}_S)_{\on{Ran}_{X}}))_{\on{indep}}
\]

\noindent is fully faithful.

However, this follows from (the proof of) \cite[Cor. C.1.8]{ambidex}. Indeed, we need to verify the conditions of §C.1.1 in \emph{loc.cit}:
\begin{itemize}
    \item The diagonal map
    \[
    \bB(\check{I}_S)_{\on{Ran}_{X}}\to \bB (\check{I}_S)_{\on{Ran}_{X}}\underset{\on{Ran}_{X_{\on{dR}}}}{\times} \bB (\check{I}_S)_{\on{Ran}_{X}}
    \]

    \noindent is clearly affine.

    \item The category $\on{Rep}((\check{I}_S)_{\on{Ran}_{X}})$ is rigged to be compactly generated, and such that the unit is compact.

    \item Since $V$ is a quasi-compact algebraic stack, it is passable.
\end{itemize}
\end{proof}

\subsubsection{} Recall the notation from \S \ref{s:group}. Define the group scheme 
\[
(\overset{\circ}{\check{I}_S})_{\on{Ran}_{X,S}}:=\check{G}(O)_{\on{Ran}_{X,S}}\underset{\underset{x\in S}{\prod}\check{G}(O_x)}{\times}\underset{x\in S}{\prod}\overset{\circ}{\check{I}_x}.
\]

\noindent We have a strong action 
\[
(\overset{\circ}{\check{I}_S})_{\on{Ran}_{X,S}}\curvearrowright \hat{\check{\fg}}_{\kappa,\on{Ran}_{X,S}}\on{-mod},
\]

\noindent and we may form the category
\[
\hat{\check{\fg}}_{\kappa,\on{Ran}_{X,S}}\on{-mod}^{(\overset{\circ}{\check{I}_S})_{\on{Ran}_{X,S}}}.
\]

\noindent For a $k$-point $\underline{x}\in \on{Ran}_{X_{\on{dR}},S}(k)$, the fiber of $\hat{\check{\fg}}_{\kappa,\on{Ran}_{X,S}}\on{-mod}^{(\overset{\circ}{\check{I}_S})_{\on{Ran}_{X,S}}}$ at $\underline{x}$ is given by 
\[
\bigotimes_{x\in \underline{x}\setminus S} \hat{\check{\fg}}_{\kappa,x}\on{-mod}^{\check{G}(O_x)}\otimes \bigotimes_{x'\in S} \hat{\check{\fg}}_{\kappa,x'}\on{-mod}^{\overset{\circ}{\check{I}_{x'}}}.
\]

\subsubsection{} By construction of \cite{Cstterm}, we have a forgetful functor 
\[
\on{\textbf{oblv}}: \hat{\check{\fg}}_{\kappa,\on{Ran}_{X,S}}\on{-mod}^{(\overset{\circ}{\check{I}_S})_{\on{Ran}_{X,S}}}\to \on{Rep}((\check{I}_S)_{\on{Ran}_{X,S}}),
\]

\noindent which over $\underline{x}\in \on{Ran}_{X_{\on{dR},S}}(k)$ restricts to the usual forgetful functor
\[
\bigotimes_{x\in \underline{x}\setminus S} \hat{\check{\fg}}_{\kappa,x}\on{-mod}^{\check{G}(O_x)}\otimes\bigotimes_{x'\in S} \hat{\check{\fg}}_{\kappa,x'}\on{-mod}^{\overset{\circ}{\check{I}_{x'}}}\to 
\bigotimes_{x\in \underline{x}\setminus S} \on{Rep}(\check{G}(O_x))\otimes\bigotimes_{x\in S} \on{Rep}(\overset{\circ}{\check{I}_x}).
\]

\noindent Similarly, we have a natural induction functor 
\[
\on{\textbf{ind}}: \on{Rep}((\check{I}_S)_{\on{Ran}_{X,S}})\to \hat{\check{\fg}}_{\kappa,\on{Ran}_{X,S}}\on{-mod}^{(\overset{\circ}{\check{I}_S})_{\on{Ran}_{X,S}}},
\]

\noindent which is left adjoint to $\on{\textbf{oblv}}$. Since $\on{\textbf{oblv}}$ is conservative, $\on{\textbf{ind}}$ generates the target under colimits.

\subsubsection{} Recall the localization functor from §\ref{s:kmloc1}:
\[
\on{Loc}_{\kappa,\check{G},S}^{\check{N}}: \hat{\check{\mathfrak{g}}}_{\kappa,\textrm{Ran}_{X,S}}\textrm{-mod}^{(\overset{\circ}{\check{I}_S})_{\textrm{Ran}_{X,S}}}\to D_{\kappa}(\BunN).
\]

\noindent By slight abuse of notation, we also denote by $\on{Loc}_{\kappa,\check{G},S}^{\check{N}}$ the composition
\[ \hat{\check{\mathfrak{g}}}_{\kappa,\textrm{Ran}_{X,S}}\textrm{-mod}^{(\overset{\circ}{\check{I}_S})_{\textrm{Ran}_{X,S}}}\to D_{\kappa}(\BunN)\xrightarrow{j^!}D_{\kappa}(V).
\]

\subsubsection{} The main compatibility between the above functors is captured by the commutative diagram:
\[\begin{tikzcd}\label{d:loc}
	{\hat{\check{\mathfrak{g}}}_{\kappa,\textrm{Ran}_{X,S}}\textrm{-mod}^{(\overset{\circ}{\check{I}_S})_{\textrm{Ran}_{X,S}}}} && {D_{\kappa}(V)} \\
	\\
	{\on{Rep}((\check{I}_S)_{\on{Ran}_{X,S}})} && {\textrm{QCoh}(V).}
	\arrow["{\textrm{Loc}^{\check{N}}_{\kappa,\check{G},S}}", from=1-1, to=1-3]
	\arrow["{\textrm{\textbf{ind}}}", from=3-1, to=1-1]
	\arrow["{\on{Loc}^{\on{QCoh}}_{\check{G},S}}", from=3-1, to=3-3]
	\arrow["{\textrm{ind}_{\textrm{dR}}}"', from=3-3, to=1-3]
\end{tikzcd}\]

\begin{cor}\label{c:pc}
Let $j:V\subset \BunN$ be a quasi-compact open substack. Then the functor
\[
\on{Loc}^{\check{N}}_{\kappa,\check{G},S}: \hat{\check{\mathfrak{g}}}_{\kappa,\on{Ran}_{X,S}}\on{-mod}^{(\overset{\circ}{\check{I}}_S)_{\on{Ran}_{X,S}}}\to D(V).
\]

\noindent preserves compact objects and generates the target under colimits.
\end{cor}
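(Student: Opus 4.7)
The plan is to read off both assertions from the commutative square displayed immediately before the statement, by combining properties of the three other functors appearing in it that have already been established. Specifically: (a) the remark just before the corollary says that $\textbf{ind}$ generates its target under colimits (since $\textbf{oblv}$ is continuous and conservative), and moreover $\textbf{ind}$ preserves compact objects because $\textbf{oblv}$ is continuous; (b) by Proposition \ref{p:ppanff}, $\on{Loc}^{\on{QCoh}}_{\check{G},S}$ is the left adjoint to a fully faithful continuous functor $\Gamma_{\check{G},S}^{\on{QCoh}}$, so it is essentially surjective up to colimits and preserves compacts; (c) on the quasi-compact algebraic stack $V$, the forgetful functor $\on{oblv}: D(V)\to \on{QCoh}(V)$ is continuous and conservative, so its left adjoint $\on{ind}_{\on{dR}}$ preserves compact objects and generates $D(V)$ under colimits.

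For the generation claim, since $\textbf{ind}$ generates the source of $\on{Loc}^{\check{N}}_{\kappa,\check{G},S}$ under colimits and $\on{Loc}^{\check{N}}_{\kappa,\check{G},S}$ is cocontinuous, it suffices to show that the composite $\on{Loc}^{\check{N}}_{\kappa,\check{G},S}\circ \textbf{ind}$ generates $D(V)$ under colimits. By the commutative diagram this composite is $\on{ind}_{\on{dR}}\circ \on{Loc}^{\on{QCoh}}_{\check{G},S}$, and each of its two factors generates under colimits by (b) and (c).

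For compact preservation, recall that a cocontinuous functor between compactly generated DG categories preserves compacts as soon as it sends some collection of compact generators to compact objects. Since $\textbf{oblv}$ is continuous and conservative and the target $\on{Rep}((\check{I}_S)_{\on{Ran}_{X,S}})$ is compactly generated, the source $\hat{\check{\fg}}_{\kappa,\on{Ran}_{X,S}}\on{-mod}^{(\overset{\circ}{\check{I}}_S)_{\on{Ran}_{X,S}}}$ is compactly generated by the objects $\textbf{ind}(c)$ with $c$ a compact generator of $\on{Rep}((\check{I}_S)_{\on{Ran}_{X,S}})$. On such a generator the commutative diagram gives $\on{Loc}^{\check{N}}_{\kappa,\check{G},S}(\textbf{ind}(c))\simeq \on{ind}_{\on{dR}}(\on{Loc}^{\on{QCoh}}_{\check{G},S}(c))$, which is compact by (b) and (c). The only substantive input is Proposition \ref{p:ppanff}; once it is granted the corollary is formal, and the only place requiring care is checking that the various adjunctions assemble correctly at the level of the Ran-space factorization categories, which is already built into the constructions of Section 3.
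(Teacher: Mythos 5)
Your proof is correct and follows essentially the same route as the paper: both reduce the claim to the commutative square via the three properties that $\textbf{ind}$, $\on{Loc}^{\on{QCoh}}_{\check{G},S}$, and $\on{ind}_{\on{dR}}$ each preserve compacts and generate under colimits. The only difference is that you spell out the step for $\on{ind}_{\on{dR}}$ explicitly, while the paper leaves it implicit.
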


\begin{proof}
Since $\on{\textbf{ind}}$ generates the target under colimits and preserves compact objects, the assertion follows from the diagram in §3.7.20 using the fact that
\[
\on{Rep}((\check{I}_S)_{\on{Ran}_{X,S}})\xrightarrow{\on{Loc}_{\check{G},S}^{\on{QCoh}}} \on{QCoh}(V)
\]

\noindent preserves compact objects and generates the target under colimits, cf. Proposition \ref{p:ppanff}.

\end{proof}

\subsubsection{} We denote by $\Gamma_{\kappa,\check{G},S}^{\check{N}}$ the right adjoint to $\on{Loc}_{\kappa,\check{G},S}^{\check{N}}$:
\[
\Gamma_{\kappa,\check{G},S}^{\check{N}}: D_{\kappa}(\BunN)\to \hat{\check{\mathfrak{g}}}_{\kappa,\textrm{Ran}_{X,S}}\textrm{-mod}^{(\overset{\circ}{\check{I}}_S)_{\textrm{Ran}_{X,S}}}. 
\]

\noindent From Corollary \ref{c:pc}, we see that the composition 
\begin{equation}\label{eq:compff?}
D_{\kappa}(V)\xrightarrow{j_{*,\on{dR}}} D_{\kappa}(\BunN)\xrightarrow{\Gamma_{\kappa,\check{G},S}^{\check{N}}} \hat{\check{\mathfrak{g}}}_{\kappa,\textrm{Ran}_{X,S}}\textrm{-mod}^{(\overset{\circ}{\check{I}}_S)_{\textrm{Ran}_{X,S}}}
\end{equation}

\noindent is continuous and conservative.

\begin{rem}
Like the unramified situation (see \cite[Prop. 2.14]{gaitsgory2010generalized}), the composition (\ref{eq:compff?}) should be fully faithful. We will not need this, however.
\end{rem}

\subsection{The spectral decomposition}\label{s:spectraldecomp3}

We are now ready to prove Theorem \ref{t:action}. Let $\kappa=\on{crit}$ be the critical level of $\check{\fg}$.

\subsubsection{} Recall from Proposition \ref{p:ff} that we have a fully faithful continuous embedding
\begin{equation}\label{eq:ffagain}
\on{QCoh}(\LSN)\into \NR.
\end{equation}

\subsubsection{} Consider the full subcategory category 
\[
D_{*-\on{gen}}(\BunN)^{\check{T}_S,\chi_S}\subset \Autr
\]

\noindent defined as the cocompletion of the essential images $j_{*,\on{dR}}(D(V)^{\check{T}_S,\chi_S})$, where $j$ runs over the poset of $\check{T}_S$-stable quasi-compact open substacks $V$ of $\BunN$. 

\begin{prop}\label{p:embe}
    The category $D_{*-\on{gen}}(\BunN)^{\check{T}_S,\chi_S}$ lies in the image of the fully faithful embedding
    \begin{equation}\label{eq:emb}
     \on{QCoh}(\LSN)\underset{\NR}{\otimes} \Autr\into \Autr.   
    \end{equation}
\end{prop}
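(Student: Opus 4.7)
Both subcategories in question are closed under colimits inside $\Autr$: the target subcategory is the image of the fully faithful continuous embedding (\ref{eq:emb}), and $D_{*-\on{gen}}(\BunN)^{\check{T}_S,\chi_S}$ is by definition the smallest cocomplete full subcategory containing $j_{*,\on{dR}}(D(V)^{\check{T}_S,\chi_S})$ as $V$ ranges over quasi-compact open $\check{T}_S$-stable substacks of $\BunN$. Consequently it suffices to show that for each such $V$ and each $\cF\in D(V)^{\check{T}_S,\chi_S}$, the object $j_{*,\on{dR}}\cF$ lies in $\on{QCoh}(\LSN)\underset{\NR}{\otimes} \Autr$.

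My plan is to use critical-level Kac-Moody localization as a bridge. On one hand, Proposition \ref{p:km} asserts that objects in the image of $\on{Loc}_{\on{crit},\check{G},\chi_S}$ are acted on by $\NR$ through $\on{QCoh}(\LSN)$, and therefore lie in $\on{QCoh}(\LSN)\underset{\NR}{\otimes}\Autr$. On the other hand, the $(\check{T}_S,\chi_S)$-equivariant version of Corollary \ref{c:pc}---obtained by post-composing $\on{Loc}^{\check{N}}_{\on{crit},\check{G},S}$ with $(\check{T}_S,\chi_S)$-averaging---shows that $j^!\circ \on{Loc}^{\check{N}}_{\on{crit},\check{G},\chi_S}$ generates $D(V)^{\check{T}_S,\chi_S}$ under colimits. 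Writing $\cF=\on{colim}_i j^!\tilde{\cF}_i$ with $\tilde{\cF}_i\in \on{Im}(\on{Loc}_{\on{crit},\check{G},\chi_S})$ and applying the colimit-preserving functor $j_{*,\on{dR}}$ yields $j_{*,\on{dR}}\cF=\on{colim}_i j_{*,\on{dR}}j^!\tilde{\cF}_i$, reducing the problem to showing that each $j_{*,\on{dR}}j^!\tilde{\cF}_i$ belongs to the spectral subcategory.

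The main obstacle is thus to verify that the endofunctor $j_{*,\on{dR}}j^!:\Autr\to\Autr$ preserves $\on{QCoh}(\LSN)\underset{\NR}{\otimes} \Autr$. I would approach this via the open/closed triangle $i_{*,\on{dR}}i^!\to \on{id}\to j_{*,\on{dR}}j^!$ for the closed complement $i:\BunN\setminus V\into \BunN$, reducing the question to the preservation of the spectral subcategory under $i_{*,\on{dR}}i^!$ on Kac-Moody images. The operative principle is that the $\NR$-action on such images factors through $\on{QCoh}(\LSN)$, and functions on $\LSN$ act on $\Autr$ by operations that are essentially local in the bundle direction---unlike Hecke modifications at varying points of $\on{Ran}_{X,S}$, which can physically move bundles between $V$ and its complement. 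Making this compatibility between the spectral action and the stratification of $\BunN$ by quasi-compact opens precise enough to conclude stability under $j_{*,\on{dR}}j^!$ is the main technical point I anticipate needing to work out in detail.
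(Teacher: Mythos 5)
Your setup is right and matches the paper's: the inputs are Proposition \ref{p:km} and Corollary \ref{c:pc}, and it is indeed enough to show each $j_{*,\on{dR}}\cF$, for $\cF\in D(V)^{\check{T}_S,\chi_S}$, lies in $\on{QCoh}(\LSN)\otimes_{\NR}\Autr$. But the step you flag as ``the main technical point'' is a genuine gap, and the route you sketch toward it will not close. You reduce to showing $j_{*,\on{dR}}j^!\tilde{\cF}\in\on{QCoh}(\LSN)\otimes_{\NR}\Autr$ for $\tilde{\cF}$ in the image of Kac--Moody localization, and propose to handle this by the open/closed triangle together with a ``locality'' heuristic: that the $\on{QCoh}(\LSN)$-action on such sheaves is essentially local in the bundle direction. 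This heuristic is false. Even when the $\NR$-action factors through $\on{QCoh}(\LSN)$, it is still implemented by Hecke modifications over $\on{Ran}_{X,S}$, which move bundles freely across the boundary of $V$; the factorization through the local-systems stack does not make the action local on $\BunN$. Concretely, for $\cK$ in the kernel of $\NR\to\on{QCoh}(\LSN)$, the triangle $i_{*,\on{dR}}i^!\tilde{\cF}\to\tilde{\cF}\to j_{*,\on{dR}}j^!\tilde{\cF}$ together with $\cK\star\tilde{\cF}=0$ gives $\cK\star j_{*,\on{dR}}j^!\tilde{\cF}\simeq \cK\star i_{*,\on{dR}}i^!\tilde{\cF}[1]$, and there is no reason for either side to vanish: the closed complement has no analogue of Corollary \ref{c:pc}, so your reduction lands on a problem at least as hard as the one you started with. (That $j_{*,\on{dR}}j^!$ preserves $\on{QCoh}(\LSN)\otimes_{\NR}\Autr$ is, a posteriori, trivially true because by Theorem \ref{t:action'} the subcategory is all of $\Autr$; but that is exactly the conclusion that Proposition \ref{p:embe} feeds, so invoking it would be circular.)

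The paper closes the argument along a different line that never requires $j_{*,\on{dR}}j^!$ to preserve anything. Membership in $\on{QCoh}(\LSN)\otimes_{\NR}\Autr$ is characterized by the vanishing condition $\cK\star(-)=0$ for all $\cK$ in the kernel of $\NR\to\on{QCoh}(\LSN)$, so the goal becomes $\cK\star j_{*,\on{dR}}\cF=0$. Rather than decomposing $\cF$ as a colimit of $j^!\on{Loc}(M)$'s and pushing forward, the paper exploits the right-adjoint form of Corollary \ref{c:pc}, namely that $\Gamma^{\check{N}}_{\on{crit},\check{G},S}\circ j_{*,\on{dR}}$ is conservative (this is (\ref{eq:compff?})), together with the $\NR$-linearity of the $(\on{Loc},\Gamma)$-adjunction, to reduce the vanishing of $\cK\star j_{*,\on{dR}}\cF$ to the vanishing of a localization--global-sections counit whose source is $\cK\star\on{Loc}(\cdots)$ and hence is already zero by Proposition \ref{p:km}. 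This counit/conservativity mechanism is the missing idea in your proposal; it is precisely how the paper sidesteps the noncommutation of Hecke operators with $j_{*,\on{dR}}j^!$.
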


\begin{proof}
Let $V\subset \BunN$ be a quasi-compact open substack, and let $\cF\in D(V)$. Let $\cK\in\NR$ be an object in the kernel of the projection
\[
\NR\to \on{QCoh}(\LSN).
\]

\noindent We need to show that $\cK\star j_{*,\on{dR}}(\cF)=0$. By Corollary \ref{c:pc}, it suffices to show that the adjunction map
\[
\cK\star \on{Loc}_{\on{crit},\check{G},S}\circ \Gamma_{\on{crit},\check{G},S}(\cF)\simeq \on{Loc}_{\on{crit},\check{G},S}\circ \Gamma_{\on{crit},\check{G},S}(\cK\star j_{*,\on{dR}}(\cF))\to \cK\star j_{*,\on{dR}}(\cF)
\]

\noindent is zero. However, we know from Proposition \ref{p:km} that the image of 
\[
\on{Loc}_{\on{crit},\check{G},\chi_S}: (\hat{\check{\fg}}_{\on{crit},\on{Ran}_{X,S}}\on{-mod})^{(\check{I}_S)_{\on{Ran}_{X,S}}, \chi_S}\to D(\BunN)^{\check{T}_S,\chi_S}
\]

\noindent lies in the image of (\ref{eq:emb}).
\end{proof}

\subsubsection{} Let us now finish the proof of the spectral decomposition.

\begin{thm}\label{t:action'}
The action of $\NR$ on $\Autr$ factors through a unique action of $\on{QCoh}(\LSN)$.
\end{thm}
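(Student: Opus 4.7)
The plan is to combine Proposition \ref{p:embe} with a generation argument. The symmetric monoidal functor $\on{ev}^*\colon \NR \to \on{QCoh}(\LSN)$ from (\ref{eq:sf}) is a localization, since by Proposition \ref{p:ff} its right adjoint is fully faithful. Consequently, the action $\NR \curvearrowright \Autr$ factors through a (necessarily unique) action of $\on{QCoh}(\LSN)$ if and only if the fully faithful embedding
\[
\on{QCoh}(\LSN)\underset{\NR}{\otimes}\Autr \into \Autr
\]
is an equivalence, equivalently, if and only if $\cK\star\cF = 0$ whenever $\cK\in\ker(\on{ev}^*)$ and $\cF\in \Autr$. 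Uniqueness of the factorization is then automatic.

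By Proposition \ref{p:embe}, this vanishing already holds for $\cF \in D_{*-\on{gen}}(\BunN)^{\check{T}_S,\chi_S}$. It therefore suffices to show that this subcategory exhausts $\Autr$. For this, I would first produce an exhausting filtered family $\{j_i\colon V_i\hookrightarrow \BunN\}_i$ of $\check{T}_S$-stable quasi-compact open substacks by pulling back the Drinfeld--Simpson truncation of $\on{Bun}_{\check{G}}(X)^S$ along the smooth affine map $\BunN \to \on{Bun}_{\check{G}}(X)^S$. Then, for any $\cF\in \Autr$, I would establish the identification
\[
\cF \;\simeq\; \underset{i}{\on{colim}}\,(j_i)_{*,\on{dR}}\,j_i^!\cF
\]
in $\Autr$. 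Granting this, each $(j_i)_{*,\on{dR}} j_i^!\cF$ lies in $D_{*-\on{gen}}(\BunN)^{\check{T}_S,\chi_S}$, and continuity of $\cK\star(-)$ reduces the desired vanishing to the case already treated in Proposition \ref{p:embe}.

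The main obstacle is the colimit identity above, since $j_{*,\on{dR}}$ is not continuous in general for an open embedding into a non-quasi-compact stack. To overcome this, I would invoke that $\BunN$ is locally QCA and truncative in the sense of \cite{drinfeld2013some}: on each bounded cohomological range the tower $(j_i)_{*,\on{dR}}\,j_i^!\cF$ eventually stabilizes (as any fixed coherent truncation of $\cF$ is supported on some $V_i$), and the filtered colimit recovers $\cF$. This is the analogue for $\BunN$ of the generation statement used in the unramified spectral decomposition of \cite{gaitsgory2010generalized}, now made available in the tamely ramified setting by Theorem \ref{p:upgrade} and the Kac--Moody compatibility of Proposition \ref{p:km}.
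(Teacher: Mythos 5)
Your reduction to showing that
\[
\on{QCoh}(\LSN)\underset{\NR}{\otimes}\Autr \into \Autr
\]
is an equivalence is correct and matches the paper, as is the identification of Proposition \ref{p:embe} as the key input. The gap is in the generation step, and it is a genuine one.

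The colimit identity you write down,
\[
\cF \;\simeq\; \underset{i}{\on{colim}}\,(j_i)_{*,\on{dR}}\,j_i^!\cF,
\]
does not make sense: for $V_1\subset V_2$ the natural map (restrict then pushforward) goes
$(j_2)_{*,\on{dR}} j_2^!\cF \to (j_1)_{*,\on{dR}} j_1^!\cF$, i.e.\ the diagram $\{(j_i)_{*,\on{dR}} j_i^!\cF\}$ is naturally \emph{co}filtered, not filtered, so there is no colimit to take and no canonical map from a colimit into $\cF$. The object-level colimit identity that does hold uses the \emph{left} adjoint $j_!$ rather than $j_{*,\on{dR}}$, namely $\cF\simeq\on{colim}_i (j_i)_! j_i^!\cF$; but Proposition \ref{p:embe} (via Corollary \ref{c:pc}, which pre-composes $\Gamma_{\on{crit},\check G,S}$ with $j_{*,\on{dR}}$) is specifically a statement about $*$-pushforwards, so that substitution does not let you conclude. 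Your stabilization heuristic is also off: a general object of $\Autr$ is not supported on any quasi-compact open, so "any fixed coherent truncation of $\cF$ is supported on some $V_i$" fails, and truncatability of $\BunN$ does not rescue it.

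The correct mechanism, and the one the paper uses, is dual to what you attempted: the embedding $\on{QCoh}(\LSN)\otimes_{\NR}\Autr\into\Autr$ admits a left adjoint, hence preserves cofiltered \emph{limits}; and $\Autr$ is generated under cofiltered limits by the subcategories $j_{*,\on{dR}}(D(V)^{\check T_S,\chi_S})$, since for any nonzero $\cF$ there is a quasi-compact $\check T_S$-stable open $V$ with $j^{*,\on{dR}}\cF\neq 0$ and $\cF\simeq\lim_V (j_V)_{*,\on{dR}} j_V^!\cF$. Once you replace your filtered colimit by this cofiltered limit, the rest of your argument goes through verbatim with Proposition \ref{p:embe}.
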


\begin{proof}
We have to prove that the fully faithful continuous embedding
\[
\on{QCoh}(\LSN)\underset{\NR}{\otimes} \Autr\into \Autr
\]

\noindent is an equivalence. Since the above embedding admits a left adjoint, it follows that it commutes with cofiltered limits. Next, write $\BunN$ as a union of quasi-compact open substacks $V$ each of which are $\check{T}_S$-stable.\footnote{For example, write $\on{Bun}_G$ as a union of quasi-compact open substacks and pull back these under the forgetful map $\BunN\to \on{Bun}_{\check{G}}(X)$.} For any $\cF\in \Autr$, there exists some quasi-compact open $j:V\into \BunN$ such that $j^{*,\on{dR}}(\cF)\neq 0$. This formally implies that the category $\Autr$ is generated under cofiltered limits of sheaves of the form $j_{*,\on{dR}}(\cG)$ for $\cG\in D(V)^{\check{T}_S,\chi_S}$. The theorem now follows from Proposition \ref{p:embe}.

\end{proof}

\subsection{Spectral decomposition with fixed abelianization}\label{s:twist}

\subsubsection{} Recall the notation from §\ref{s:2.3.2}. Theorem \ref{t:action'} gives a spectral decomposition of the category $D(\BunN)^{\check{T}_S,\chi_S}$ over $\LSN$. We now see how this provides a spectral decomposition of a suitable automorphic category over $\LS$.

\subsubsection{} From the map 
\[
\on{ab}: \LSN\to \LSBA
\]

\noindent and Theorem \ref{t:action'}, we obtain an action
\begin{equation}\label{eq:abact1}
\on{QCoh}(\LSBA)\curvearrowright D(\BunN)^{\check{T}_S,\chi_S}.
\end{equation}

\subsubsection{}\label{s:centralstack} Denote by $Z(\check{G})^{\circ}$ the connected component of the unit in the center of $\check{G}$. We denote by
\[
\on{Bun}_{Z(\check{G})^{\circ},1}(X,S)
\]

\noindent the moduli stack of $Z(\check{G})^{\circ}$-bundles on $X$ equipped with a trivialization on $S\subset X$. Note that $\on{Bun}_{Z(\check{G})^{\circ},1}(X,S)$ is naturally a commutative group stack. In particular, $D(\on{Bun}_{Z(\check{G})^{\circ},1}(X,S))$ carries a symmetric monoidal structure and is moreover equipped with an action of $D(Z(\check{G})^{\circ})$ at each $x\in S$. We let 
\[
D(\on{Bun}_{Z(\check{G})^{\circ},1}(X,S))^{Z(\check{G})^{\circ}_S,\chi^{\on{ab}}_S}
\]

\noindent be the category of $(\prod_{x\in S}Z(\check{G})^{\circ},\underset{x\in S}{\boxtimes}\chi^{\on{ab}}_x)$-equivariant objects of $D(\on{Bun}_{Z(\check{G})^{\circ},1}(X,S))$. Note that it is naturally symmetric monoidal.

We have a natural action
\[
\on{Bun}_{Z(\check{G})^{\circ},1}(X,S)\curvearrowright \BunN
\]

\noindent that descends to an action
\[
D(\on{Bun}_{Z(\check{G})^{\circ},1}(X,S))^{Z(\check{G})^{\circ}_S,\chi^{\on{ab}}_S}\curvearrowright D(\BunN)^{\check{T}_S,\chi_S}.
\]

\noindent

\subsubsection{} We have a symmetric monoidal Fourier-Mukai equivalence
\begin{equation}\label{eq:fm}
D(\on{Bun}_{Z(\check{G})^{\circ},1}(X,S))^{Z(\check{G})^{\circ}_S,\chi^{\on{ab}}_S}\simeq \on{QCoh}(\LSBA).
\end{equation}

\noindent Via this equivalence, we also obtain an action
\begin{equation}\label{eq:abact2}
\on{QCoh}(\LSBA)\curvearrowright D(\BunN)^{\check{T}_S,\chi_S}.
\end{equation}

\noindent By construction (see e.g. \cite[\S 11.3.6]{faergeman2022non}), the two actions (\ref{eq:abact1}), (\ref{eq:abact2}) coincide.

\subsubsection{}\label{s:charsheaf} Now, let $\tau\in \LSBA(k)$. Under (\ref{eq:fm}), the delta sheaf corresponding to $\tau$ goes to a character sheaf $\sL_{\tau}\in D(\on{Bun}_{Z(\check{G})^{\circ},1}(X,S))^{Z(\check{G})^{\circ}_S,\chi^{\on{ab}}_S}$. We then have an equivalence
\[
D(\BunN)^{(\check{T}_S,\chi_S),\sL_{\tau}}\simeq D(\BunN)^{\check{T}_S,\chi_S}\underset{\on{QCoh}(\LSBA)}{\otimes} \on{Vect},
\]

\noindent where the action $\on{QCoh}(\LSBA)\curvearrowright \on{Vect}$ is induced by $\tau$, and 
\[
D(\BunN)^{(\check{T}_S,\chi_S),\sL_{\tau}}
\]

\noindent denotes the category of $\sL_{\tau}$-equivariant objects of $D(\BunN)^{\check{T}_S,\chi_S}$. Recall that
\[
\LS=\LSN\underset{\LSBA}{\times} \lbrace\tau\rbrace.
\]

\noindent Hence from Theorem \ref{t:action'}, we obtain an action
\begin{equation}\label{eq:acttwist}
    \on{QCoh}(\LS)\curvearrowright D(\BunN)^{(\check{T}_S,\chi_S),\sL_{\tau}}.
\end{equation}

\section{Existence of Hecke eigensheaves}

In this section, we prove the existence of Hecke eigensheaves associated to irreducible $G$-local system with regular singularities. The proof follows closely that of \cite[Appendix A]{faergeman2022non} where the unramified situation is considered. In particular we will use Arinkin's result \cite{arinkin2016irreducible} that irreducible $G$-local systems on $U$ generically admit oper structures and obtain the corresponding Hecke eigensheaf via Kac-Moody localization as pioneered by Beilinson-Drinfeld.

We will need a generalization of \cite{frenkel2005fusion} to the situation of non-unipotent local systems. More precisely, \emph{loc.cit} shows that passing from the category of quasi-coherent sheaves on the affine scheme of regular singular opers on the punctured disk with nilpotent residues to the category of Iwahori-integrable Kac-Moody modules at critical level for the dual group coming from Feigin-Frenkel duality is compatible with Bezrukavnikov's geometric realization of the affine Hecke algebra. We need a similar compatibility between opers with residues of some fixed eigenvalues and Theorem \ref{t:metaplectic}. This compatibility is current work in progress joint with Gurbir Dhillon.

\subsection{Opers with regular singularities}\label{s:4.1} 

\subsubsection{} We fix a square root $\omega^{\frac{1}{2}}$ of the canonical sheaf on $X$. We set $\rho(\omega_X(S)):=2\rho(\omega_X^{\frac{1}{2}})(S)$.

\subsubsection{} Let $\on{Op}_{G}(\overset{\circ}{D})$ be the ind-affine ind-scheme of $G$-opers on the punctured disk, cf. \cite[§A]{bezrukavnikov2016quantization} (see also \cite{barlevoper}). Let $\on{Op}_{G}(\overset{\circ}{D})^{\on{RS}}$ be the closed subscheme of opers with regular singularities defined in \cite[§2.4]{frenkel2006local}. Explicitly, we have:
\[
\on{Op}_{G}(\overset{\circ}{D})=(f+\fb((t)))dt/N(K),
\]

\noindent where the quotient is taken with respect to the gauge action. Here, $f$ is the sum of simple root generators for $\fn^{-}$. Similarly:
\begin{equation}\label{eq:oprspr}
\on{Op}_{G}(\overset{\circ}{D})^{\on{RS}}=t^{-1}(f+\fb[[t]])dt/N(O).
\end{equation}
\begin{rem}
Note that if $G\to H$ is an isogeny, the resulting map
\[
\on{Op}_{G}(\overset{\circ}{D})\to \on{Op}_{H}(\overset{\circ}{D})
\]

\noindent is an isomorphism.
\end{rem}

\subsubsection{} We have an evident forgetful map
\begin{equation}\label{eq:optoloc}
\on{Op}_{G}(\overset{\circ}{D})\to \on{LocSys}_{G}(\overset{\circ}{D}).
\end{equation}

\subsubsection{}

There is a natural residue map (see \cite[\S 2.4]{frenkel2006local} and \cite[\S 3.8.11]{beilinson1991quantization}):
\[
\on{Res}^{\on{RS}}: \on{Op}_{G}(\overset{\circ}{D})^{\on{RS}}\to\fg/G\to \ft//W.
\]

\noindent For an oper of the form $\nabla=d+t^{-1}(f+\phi(t))dt$ with $\phi(t)\in\fb[[t]]$, the above map attaches the image of the element $f+\phi(0)$ under $\fg/G\to \ft//W$.

\subsubsection{} Let $\varpi$ be the projection $\ft\to \ft//W$. For $\lambda\in\ft$, we denote by $\on{Op}_{G}(\overset{\circ}{D}_x)^{\on{RS},\varpi(\lambda)}$ the fiber of $\varpi(\lambda)$ under $\on{Res}^{\on{RS}}$. The following is an immediate consequence of Lemma \ref{l:cstresidue}:
\begin{lem}
Let $\chi\in\ft$. Then the map
\[
\on{Op}_{G}(\overset{\circ}{D})^{\on{RS},\varpi(-\chi-\check{\rho})}\to \on{LocSys}_G(\overset{\circ}{D})
\]

\noindent factors through
\[
\on{LocSys}_G(\overset{\circ}{D})_{\chi}^{\on{RS}}\to \on{LocSys}_G(\overset{\circ}{D}).
\]
\end{lem}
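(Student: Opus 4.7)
The plan is to apply Lemma~\ref{l:cstresidue} after unpacking the explicit description (\ref{eq:oprspr}). First, I would lift any $S$-point of $\on{Op}_G(\overset{\circ}{D})^{\on{RS},\varpi(-\chi-\check{\rho})}$, étale-locally on the affine scheme $S$, to an operator
\[
\nabla = d + t^{-1}(f + \phi(t))dt, \qquad \phi(t) \in \fb[[t]] \otimes \sO(S),
\]
yielding a $G$-bundle with connection on $S \widehat{\times} D$ whose polar part is of order at most one. The residue constraint defining the indicated fiber then says precisely that the map $S \to \fg \to \fg/G \to \ft//W$ induced by $f + \phi(0)$ is the constant map with value $\varpi(-\chi - \check{\rho})$.

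By Lemma~\ref{l:cstresidue}, to conclude that the induced map $S \to \on{LocSys}_G(\overset{\circ}{D})$ factors through $\on{LocSys}_G(\overset{\circ}{D})^{\on{RS}}_{\chi}$, it suffices to show that the further composition $\ft//W \to \ft//\widetilde{W}^{\on{aff}}$ sends $\varpi(-\chi - \check{\rho})$ to $\chi$. This is essentially a convention check: under the identification of $\ft//\widetilde{W}^{\on{aff}}$ with the space of semisimple conjugacy classes in $G$ given by $\lambda \mapsto \exp(-2\pi i \lambda)$ (consistent with the sign of the gauge action recalled in §\ref{s:ls}), the shift by $-\check{\rho}$ appearing in the hypothesis is exactly the canonical $\check{\rho}$-shift relating the oper residue to the monodromy class of the underlying local system. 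Concretely, this shift reflects the fact that the underlying $B$-bundle of an oper carries a built-in $\rho(\omega_X)$-twist relative to the trivial bundle, so trivializing it locally on $D$ gauges $\nabla$ by an element whose $T$-component is $t^{\check{\rho}}$ and shifts the residue class in $\ft//W$ accordingly.

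The only substantive obstacle is the bookkeeping in the preceding paragraph---namely, a careful matching between the conventions defining $\on{Op}_G(\overset{\circ}{D})^{\on{RS},\varpi(\cdot)}$ (via the Kostant-like presentation (\ref{eq:oprspr})) and those defining $\on{LocSys}_G(\overset{\circ}{D})^{\on{RS}}_\chi$ (via Lemma~\ref{l:cstresidue} and the projection $\ft \to \ft//\widetilde{W}^{\on{aff}}$). Once this compatibility is in hand, Lemma~\ref{l:cstresidue} applies verbatim and the claim follows.
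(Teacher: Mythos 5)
Your overall route is the same as the paper's: the paper states this lemma as an immediate consequence of Lemma \ref{l:cstresidue}, and your first two steps (lift an $S$-point locally to $\nabla=d+t^{-1}(f+\phi(t))dt$, note the pole has order $\leq 1$, identify the residue constraint, and invoke Lemma \ref{l:cstresidue}) are exactly that reduction, done correctly. You also correctly isolate the one remaining point: after applying Lemma \ref{l:cstresidue}, what you get is that the local system lies in $\on{LocSys}_G(\overset{\circ}{D})^{\on{RS}}_{[-\chi-\check{\rho}]}$, where $[-\chi-\check{\rho}]$ is the image of $-\chi-\check{\rho}$ in $\ft//\widetilde{W}^{\on{aff}}$, so the lemma amounts to the identification of this class with (the class of) $\chi$.

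The problem is the justification you give for that last identification. Gauging by an element of $G(K)$, in particular by $\check{\rho}(t)$, does not move the point of $\on{LocSys}_G(\overset{\circ}{D})$ at all, so it cannot change which connected component of $\on{LocSys}_G(\overset{\circ}{D})^{\on{restr}}$ the local system lies in; moreover, applying $\on{Ad}_{\check{\rho}(t)}$ to $t^{-1}(f+\phi(t))dt$ produces a pole of order two, so it does not act on residues of regular singular forms in the way you describe. In other words, the "built-in $t^{\check{\rho}}$-twist" cannot implement a shift of the eigenvalue class: the class in $\ft//\widetilde{W}^{\on{aff}}$ is intrinsic to the local system, and under the paper's stated definitions (where $\on{LocSys}_G(\overset{\circ}{D})^{\on{RS}}_{\chi}$ is the component of $\chi\,\frac{dt}{t}$, and the residue of the oper form is literally $f+\phi(0)$), the comparison to be made is between the classes of $-\chi-\check{\rho}$ and of $\chi$ modulo $X_{\bullet}(T)\rtimes W$. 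These differ by a $\check{\rho}$-translation (and a sign), so this is not a pure convention check valid for every reductive $G$: it requires $\check{\rho}\in X_{\bullet}(T)$ up to the Weyl action — fine for adjoint groups, which is the case relevant to the paper's application in \S\ref{s:Heckeeigensheaf}, but for instance for $G=SL_2$ and $\chi=0$ the opers in $\on{Op}_G(\overset{\circ}{D})^{\on{RS},\varpi(-\check{\rho})}$ have monodromy with semisimple part $-\on{Id}$, which lies in the component labelled by $\check{\rho}$, not in $\cN/G$. So the final step of your argument, as written, is a genuine gap (one the paper itself elides by calling the lemma immediate): it needs either the integrality hypothesis on $\check{\rho}$ or a careful renormalization of the component labels, not the gauge-theoretic shift you propose.
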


\subsubsection{} Consider the critical level $\kappa=\on{crit}$ for $\check{\fg}$. For $\mu\in \ft\simeq (\ft^*)^*$, we have the affine Verma module
$\bM_{\mu}=\on{ind}_{\hat{\check{\fg}}[[t]]\oplus k\mathbf{1}}^{\hat{\check{\fg}}_{\on{crit}}}(M_{\mu})$, where $M_{\mu}$ is the usual Verma module for $\check{\fg}$ with highest weight $\mu$. Denote by $\chi_{\mu}$ the image of $\mu$ under the quotient map $\ft\to \ft/X_{\bullet}(T)$ and note that $\bM_{\mu}$ is $(\check{I},\chi_{\mu})$-equivariant. 

\subsubsection{} By \cite[Corollary 13.3.2]{frenkel2006local}, we have an isomorphism of algebras
\[
\on{Fun}(\on{Op}_{G}(\overset{\circ}{D})^{\on{RS},\varpi(-\mu-\check{\rho})})\simeq \on{End}_{{\hat{\check{\fg}}}_{\on{crit}}}(\bM_{\mu}).
\]

\noindent Write $A_{-\mu-\check{\rho}}$ for the above algebra. We get a functor
\begin{equation}\label{eq:critfle}
\on{QCoh}(\on{Op}_{G}(\overset{\circ}{D})^{\on{RS},\varpi(-\mu-\check{\rho})})\to \hat{\check{\fg}}_{\on{crit}}\on{-mod}^{\check{I},\chi_{\mu}}, \:\: N\mapsto \bM_{\mu}\underset{A_{-\mu-\check{\rho}}}{\otimes} N,
\end{equation}

\noindent By \cite[Cor. 13.3.2]{frenkel2006local}, the functor (\ref{eq:critfle}) is t-exact.

\subsection{Geometric twists and Drinfeld-Sokolov reduction}

\subsubsection{} Let $\check{N}^{-,\omega(S)}$ denote the group scheme over $X$ of endomorphisms of $\rho(\omega_X(S))\overset{\check{T}}{\times} \check{B}^{-}$ preserving the identification $(\rho(\omega_X(S))\overset{\check{T}}{\times} \check{B}^{-})/\check{N}^{-}\simeq \rho(\omega_X(S))$. Let
\[
\check{N}^{-,\omega(S)}(O_x)=\Gamma(D_x,\check{N}^{-,\omega(S)}_{\vert D_x}),  \; \check{N}^{-,\omega(S)}(K_x)=\Gamma(\overset{\circ}{D}_x,\check{N}^{-,\omega(S)}_{\vert \overset{\circ}{D}_x}).
\]

\noindent Write 
\[
\check{\fn}^{-,\omega(S)}(O_x)=\on{Lie}(\check{N}^{-,\omega(S)}(O_x)),\;\; \check{\fn}^{-,\omega(S)}(K_x)=\on{Lie}(\check{N}^{-,\omega(S)}(K_x)).
\]

\noindent For each $x\in X$, we choose a uniformizer $t=t_x$ of $O_x$. Note that we have canonical identifications
\[
\check{N}^{-,\omega(S)}(K_x)=\check{N}^{-,\omega}(K_x), \;\; \check{\fn}^{-,\omega(S)}(K_x)=\check{\fn}^{-,\omega}(K_x).
\]

\subsubsection{} We let $\check{N}_{(1)}^{-,\omega(S)}(O_x)$
be the first congruence subgroup of $\check{N}^{-,\omega(S)}(O_x)$. That is, the subgroup of $\check{N}^{-,\omega(S)}(O_x)$ consisting of endomorphisms that restrict to the identity on $x\in D_x$. Let
\[
\check{\fn}_{(1)}^{-,\omega(S)}(O_x):=\on{Lie}(\check{N}_{(1)}^{-,\omega(S)}(O_x)).
\]

\subsubsection{} For $x\in X$, let $\check{\fn}^{-,\omega(S)}(K_x)\on{-mod}$ be the DG category of smooth modules for $\check{\fn}^{-,\omega(S)}(K_x)$ as in \cite{raskin2020homological}.

\subsubsection{}\label{s:grpscheme} Let $\check{G}^{\omega(S)}$ be the group scheme over $X$ of endomorphisms of $\rho(\omega_X(S))\overset{\check{T}}{\times} \check{G}$. Let $\on{Bun}_{\check{G}^{\omega(S)}}(X)$ be the stack of $\check{G}^{\omega(S)}$-bundles on $X$, and let $\on{Bun}_{\check{G}^{\omega(S)},\check{N}^{\omega(S)}}(X,S)$ be the moduli stack of $\check{G}^{\omega(S)}$-bundles on $X$ equipped with a reduction to $\check{N}^{\omega(S)}$ on $S\subset X$. We have canonical identifications
\[
\on{Bun}_{\check{G}^{\omega(S)}}(X)\simeq \on{Bun}_{\check{G}}(X), \;\; \on{Bun}_{\check{G}^{\omega(S)},\check{N}^{\omega(S)}}(X,S)\simeq \on{Bun}_{\check{G},\check{N}}(X,S).
\]

\subsubsection{} Let $\check{\fg}^{\omega(S)}=\on{Lie}(\check{G}^{\omega(S)})$. We may similarly define the category
\begin{equation}
\hat{\check{\fg}}^{\rho(\omega_X(S))}_{\on{crit},x}\on{-mod}
\end{equation}

\noindent of modules for $\hat{\check{\fg}}_{\on{crit}}^{\rho(\omega_X(S))}$. We have an equivalence
\begin{equation}\label{eq:twistcrit}
\hat{\check{\fg}}^{\rho(\omega_X(S))}_{\on{crit},x}\on{-mod}\simeq \hat{\check{\fg}}_{\on{crit},x}\on{-mod}.
\end{equation}

\noindent We refer to \cite{Cstterm} for a detailed discussion of equivalences (\ref{eq:twistcrit}) arising from geometric twists.

\subsubsection{}\label{s:minusDS} We consider the minus Drinfeld-Sokolov reduction functor studied in \cite{arakawa2004vanishing}. For $x\in S$, it is given by
\begin{equation}
\Psi_x^{-}(-)=H^{\frac{\infty}{2}+\bullet}(\check{\fn}^{-,\omega(S)}(K_x),\check{\fn}_{(1)}^{-,\omega(S)}(O_x),-\otimes \psi_x^{-}): \hat{\check{\fg}}_{\on{crit},x}\on{-mod}\to \on{Vect}.
\end{equation}

\noindent Here, $\psi_x^{-}$ is the standard character of $\check{\fn}^{-,\omega(S)}(K_x)$ of conductor 1.\footnote{More precisely, there is a standard character of $\check{\fn}^{-,\omega(S)}(K_x)=\check{\fn}^{-,\omega}(K_x)$ of conductor 0. Since for $x\in S$, sections of $\omega$ over $D_x$ naturally identify with section of $\omega(S)$ over $D_x$ that vanish at the origin, we obtain a character of $\check{\fn}^{-,\omega(S)}(K_x)$ that vanishes on $\check{\fn}_{(1)}^{-,\omega(S)}(O_x)$.} 

Implicit in the definition of $\Psi_x^{-}$ is the restriction functor
\[
\hat{\check{\fg}}_{\on{crit},x}\on{-mod}\to \check{\fn}^{-,\omega(S)}(K_x)\on{-mod}.
\]

\noindent This uses the identification (\ref{eq:twistcrit}), which we henceforth suppress.

\subsubsection{} By \cite[Cor. 7.2.4]{raskin2021w}, the minus Drinfeld-Sokolov reduction functor is t-exact on the category
\[
\hat{\check{\fg}}_{\on{crit},x}\on{-mod}^{\overset{\circ}{\check{I}_x}}.
\]

\subsubsection{} For $\lambda\in \ft$, we consider the exact functor
\[
\on{QCoh}(\on{Op}_G(\overset{\circ}{D}_x)^{\on{RS},\varpi(-\lambda-\check{\rho})})\to \hat{\check{\fg}}_{\on{crit},x}\on{-mod}^{\check{I}_x,\chi_x}, \:\: N\mapsto \bM_{\lambda}\underset{A_{-\lambda-\check{\rho}}}{\otimes} N.
\]

\noindent Denote by $\chi_{\lambda}$ the image of $\lambda$ under $\ft\to \ft//\widetilde{W}^{\on{aff}}$. The following is folklore.
\begin{lem}\label{l:globsectDS}
The diagram below commutes:
\[\begin{tikzcd}
	{\mathrm{QCoh}(\mathrm{Op}_{G}(\overset{\circ}{D}_x)^{\mathrm{RS},\varpi(-\lambda-\check{\rho})})} && {\hat{\check{\mathfrak{g}}}_{\mathrm{crit},x}\mathrm{-mod}^{\check{I}_x,\chi_{\lambda}}} \\
	\\
	&& {\mathrm{Vect}.}
	\arrow[from=1-1, to=1-3]
	\arrow["{\Psi_x^{-}}", from=1-3, to=3-3]
	\arrow["{\Gamma(\mathrm{Op}_G(\overset{\circ}{D}_x)^{\mathrm{RS},\varpi(-\lambda-\check{\rho})},-)}"', from=1-1, to=3-3]
\end{tikzcd}\]
\end{lem}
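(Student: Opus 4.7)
The plan is to reduce to checking the commutativity on the monoidal unit, and then to identify this special value with the known computation of the minus Drinfeld--Sokolov reduction of an affine Verma module at critical level. Concretely, the source category $\on{QCoh}(\on{Op}_G(\overset{\circ}{D}_x)^{\on{RS},\varpi(-\lambda-\check\rho)})$ is the category of modules over the commutative algebra $A_{-\lambda-\check\rho}$, and both legs of the diagram are continuous $k$-linear functors out of it. The global sections functor $\Gamma(\on{Op}_G(\overset{\circ}{D}_x)^{\on{RS},\varpi(-\lambda-\check\rho)},-)$ sends the free module of rank one to $A_{-\lambda-\check\rho}$, while the top horizontal arrow sends it to $\bM_\lambda$, so it will suffice to produce a natural isomorphism
\[
\Psi_x^-(\bM_\lambda) \simeq A_{-\lambda-\check\rho}
\]
of $A_{-\lambda-\check\rho}$-modules, where the $A_{-\lambda-\check\rho}$-action on the left arises from the identification $A_{-\lambda-\check\rho} \simeq \on{End}_{\hat{\check\fg}_{\on{crit}}}(\bM_\lambda)$ of \cite[Cor.~13.3.2]{frenkel2006local}.

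To upgrade this pointwise identification to a natural isomorphism of functors, I would check that $\Psi_x^-$ intertwines the $A_{-\lambda-\check\rho}$-linear structure on the two sides. The endomorphism algebra acts on $\bM_\lambda$ by $\hat{\check\fg}_{\on{crit}}$-linear maps, hence commutes with the action of $\check\fn^{-,\omega(S)}(K_x)$ used in the definition of $\Psi_x^-$, so this action survives to an action on $\Psi_x^-(\bM_\lambda)$. Given the identification $\Psi_x^-(\bM_\lambda) \simeq A_{-\lambda-\check\rho}$, a projection-formula-style argument (using that $\Psi_x^-$ is a continuous $k$-linear functor and that tensoring with a projective $A_{-\lambda-\check\rho}$-module may be computed fiberwise) upgrades this to the natural isomorphism $\Psi_x^-(\bM_\lambda \otimes_{A_{-\lambda-\check\rho}} N) \simeq N$ required by the diagram.

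The main obstacle is therefore the concrete computation of $\Psi_x^-(\bM_\lambda)$. For $\lambda = 0$ (unipotent residue), this is precisely the content of the Frenkel--Gaitsgory computation \cite{frenkel2005fusion}. For general $\lambda$, I would mimic the same strategy: the t-exactness of $\Psi_x^-$ on $\hat{\check\fg}_{\on{crit},x}\on{-mod}^{\overset{\circ}{\check I_x}}$ (noted in \S\ref{s:minusDS}) reduces the problem to identifying $H^0$ of the semi-infinite complex. Using the PBW decomposition $\bM_\lambda \simeq U(\check\fg[t^{-1}]t^{-1}) \otimes_k M_\lambda$ and the standard chain-level model for semi-infinite cohomology relative to $\check\fn_{(1)}^{-,\omega(S)}(O_x)$ with coefficients twisted by $\psi_x^-$, the computation reduces to a finite-dimensional Koszul computation that identifies $H^0$ with the functions on the quotient $t^{-1}(f + \fb[[t]])dt/N(O)$ cut out by the condition that the residue lies in the preimage of $\varpi(-\lambda - \check\rho)$, i.e., with $A_{-\lambda-\check\rho}$. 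The compatibility with the $A_{-\lambda-\check\rho}$-action is built into the construction, since the Feigin--Frenkel isomorphism identifies $A_{-\lambda-\check\rho}$ with the center of the completed enveloping algebra acting on $\bM_\lambda$.
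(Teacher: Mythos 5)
Your reduction to the monoidal unit is sound and matches the paper's implicit first step: both reduce the lemma to establishing $\Psi_x^-(\bM_\lambda) \simeq A_{-\lambda-\check\rho}$ compatibly with the $A_{-\lambda-\check\rho}$-action, and your observation that the $\on{End}_{\hat{\check\fg}_{\on{crit}}}(\bM_\lambda)$-action commutes with the $\check\fn^{-,\omega(S)}(K_x)$-action and so descends through $\Psi_x^-$ is exactly the right justification. The gap is in the core computation. You assert that after invoking t-exactness (correctly cited from §\ref{s:minusDS}), the identification of $H^0$ "reduces to a finite-dimensional Koszul computation" via PBW. This is not accurate: the semi-infinite cohomology $\Psi_x^-(\bM_\lambda)$ involves the full infinite-rank Clifford/Koszul complex, and identifying it is a substantive theorem, not a finite-rank Koszul calculation. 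The statement being elided is precisely Arakawa's result (\cite{arakawa2004vanishing}, extended to critical level in \cite{dhillon2023localization}) that $\Psi_x^-$ carries the affine Verma $\bM_\lambda$ to the $\cW$-algebra Verma $\mathbf{M}_\lambda$. The paper cites this as the pivotal input and then needs a second nontrivial step that your sketch skips entirely: passing from $\mathbf{M}_\lambda$ to $\on{Fun}(\on{Op}_G(\overset{\circ}{D}_x)^{\on{RS},\varpi(-\lambda-\check\rho)})$ requires the graded Feigin--Frenkel isomorphism $\cW_{\check\fg,\on{crit}} \simeq \on{Fun}(\on{Op}_G(\overset{\circ}{D}_x))$, the Zhu-algebra description of $\on{Fun}(\on{Op}_G(\overset{\circ}{D}_x)^{\on{RS}})$ from \cite[\S 9.4]{frenkel2007langlands}, and the careful bookkeeping of the involution $f \mapsto f^-$ relating the residue map to the Harish-Chandra map, which produces the shift $-\lambda - \check\rho$.

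Your proposal does gesture at Feigin--Frenkel duality at the very end, but only to justify the $A_{-\lambda-\check\rho}$-linearity, not to perform the identification of $\mathbf{M}_\lambda$ itself. Without the Verma-to-Verma theorem and the Zhu-algebra/grading analysis of the $\cW$-algebra side, there is no route from the chain complex for $\Psi_x^-(\bM_\lambda)$ to the function ring on the residue fiber: these are the two load-bearing steps and neither is a Koszul triviality. Concretely, you would need to replace the sentence claiming a "finite-dimensional Koszul computation" with (i) a citation or proof of $\Psi_x^-(\bM_\lambda) = \mathbf{M}_\lambda$, and (ii) the argument that $\mathbf{M}_\lambda \simeq \on{Fun}(\on{Op}_G(\overset{\circ}{D}_x)^{\on{RS}}) \otimes_{\on{Fun}(\on{Op}_G(\overset{\circ}{D}_x))_0} k_{-\lambda-\check\rho}$, including the sign correction on the Harish-Chandra side.
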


\begin{proof}
We need to show that
\[
\Psi_x^{-}(\bM_{\lambda})=\on{Fun}(\mathrm{Op}_{G}(\overset{\circ}{D}_x)^{\mathrm{RS},\varpi(-\lambda-\check{\rho})}).
\]

\noindent Note that $\Psi_x^{-}$ naturally enhances to a functor
\[
\Psi_x^{-}: \hat{\check{\mathfrak{g}}}_{\mathrm{crit},x}\mathrm{-mod}^{\check{I}_x,\chi_{\lambda}}\to \cW_{\check{\fg},\on{crit}}\on{-mod},
\]

\noindent where $\cW_{\check{\fg},\on{crit}}$ denotes the affine $W$-algebra at critical level associated to $\check{\fg}$. The key observation, cf. \cite{arakawa2004vanishing} (see also \cite[Thm. 5.2.1]{dhillon2023localization})\footnote{Strictly speaking, the paper \cite{arakawa2004vanishing} does not work at critical level. \cite{dhillon2023localization} does.} is that $\Psi_x^{-}$ sends Verma modules to Verma modules. That is,
\[
\Psi_x^{-}(\bM_{\lambda})=\mathbf{M}_{\lambda}.
\]

\noindent Here, $\mathbf{M}_{\lambda}$ is the Verma module for $\cW_{\check{\fg},\on{crit}}$ with highest weight $\lambda$, see \cite[§5]{arakawa2004vanishing}. Using Feigin-Frenkel duality, we identify
\begin{equation}\label{eq:FF}
\cW_{\check{\fg},\on{crit}}\simeq \on{Fun}(\on{Op}_{G}(\overset{\circ}{D}_x)).
\end{equation}

\noindent The algebras in (\ref{eq:FF}) carry natural $\bZ$-gradings induced by the operator $L_0=-t\partial_t$ \cite[§9.4]{frenkel2007langlands}, and the isomorphism is compatible with the grading. Consider the module
\begin{equation}\label{eq:Zhu}
\on{Fun}(\on{Op}_{G}(\overset{\circ}{D}_x))\underset{\on{Fun}(\on{Op}_{G}(\overset{\circ}{D}_x))_{\leq 0}}{\otimes} \on{Fun}(\on{Op}_{G}(\overset{\circ}{D}_x))_0,
\end{equation}

\noindent where we identify
\[
\on{Fun}(\on{Op}_{G}(\overset{\circ}{D}_x))_0\simeq \on{Fun}(\on{Op}_{G}(\overset{\circ}{D}_x))_{\leq 0}/\on{Fun}(\on{Op}_{G}(\overset{\circ}{D}_x))_{<0}.
\]

\noindent By \cite[§9.4]{frenkel2007langlands}, we have an isomorphism
\[
\on{Fun}(\on{Op}_{G}(\overset{\circ}{D}_x)^{\on{RS}})\simeq \on{Fun}(\on{Op}_{G}(\overset{\circ}{D}_x))\underset{\on{Fun}(\on{Op}_{G}(\overset{\circ}{D}_x))_{\leq 0}}{\otimes} \on{Fun}(\on{Op}_{G}(\overset{\circ}{D}_x))_0.
\]

\noindent In particular, for any $\lambda\in\ft$, we get
\[
\textbf{M}_{\lambda}=\on{Fun}(\on{Op}_{G}(\overset{\circ}{D}_x))\underset{\on{Fun}(\on{Op}_{G}(\overset{\circ}{D}_x))_{\leq 0}}{\otimes} \big(\on{Fun}(\on{Op}_{G}(\overset{\circ}{D}_x))_0\underset{\on{Fun}(\on{Op}_{G}(\overset{\circ}{D}_x))_0}{\otimes} k_{-\lambda-\check{\rho}}\big)
\]
\[
\simeq \on{Fun}(\on{Op}_{G}(\overset{\circ}{D}_x)^{\on{RS}})\underset{\on{Fun}(\on{Op}_{G}(\overset{\circ}{D}_x))_0}{\otimes} k_{-\lambda-\check{\rho}}\simeq \on{Fun}(\on{Op}_{G}(\overset{\circ}{D}_x)^{\on{RS},\varpi(-\lambda-\check{\rho})}).
\]

\noindent Here we use the fact (cf. \cite[Prop. 9.4.1]{frenkel2007langlands}) that the residue map
\[
\on{Op}_{G}(\overset{\circ}{D}_x)^{\on{RS}}\to \ft//W
\]

\noindent induces an isomorphism
\begin{equation}\label{eq:resiso}
\on{Fun}(\ft)^W\simeq \on{Fun}(\on{Op}_{G}(\overset{\circ}{D}_x)^{\on{RS}})_0
\end{equation}

\noindent such that the composition
\[
\on{Fun}(\ft)^W=Z(U(\fg))\simeq \on{Fun}(\on{Op}_{G}(\overset{\circ}{D}_x)^{\on{RS}})_0\xrightarrow{(\ref{eq:resiso})} \on{Fun}(\ft)^W
\]

\noindent sends $f$ to $f^{-}=(\lambda\mapsto f(-\lambda))$. Here, the first isomorphism in the composition comes from Feigin-Frenkel duality.
\end{proof}

\subsubsection{}\label{ss:workinprogress} As mentioned in the introduction of this section, we need the following result:
\begin{conjecture}\label{c:linearity}
The functor
\[
\on{QCoh}(\on{Op}_{G}(\overset{\circ}{D})^{\on{RS},\varpi(-\lambda-\check{\rho})})\to \hat{\check{\fg}}_{\on{crit}}\on{-mod}^{\check{I},\chi_{\lambda}}
\]

\noindent is $\on{QCoh}(\on{LocSys}_G(\overset{\circ}{D})^{\on{RS}}_{\chi_{\lambda}})$-linear, where $\on{QCoh}(\on{LocSys}_G(\overset{\circ}{D})^{\on{RS}}_{\chi_{\lambda}})$ acts on $\hat{\check{\fg}}_{\on{crit}}\on{-mod}^{\check{I},\chi_{\lambda}}$ via Corollary \ref{c:spec1}.
\end{conjecture}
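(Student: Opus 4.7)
The plan is to prove the conjecture by enhancing both the source and target to module categories over $\on{QCoh}(\on{LocSys}_B(\overset{\circ}{D})_{\chi_\lambda})$, establishing linearity at this $B$-level, and then descending along the proper surjection $\on{LocSys}_B(\overset{\circ}{D})_{\chi_\lambda} \to \on{LocSys}_G(\overset{\circ}{D})^{\on{RS}}_{\chi_\lambda}$ recalled in §\ref{s:altdesc}. This follows the pattern of Frenkel-Gaitsgory's argument \cite{frenkel2005fusion} in the nilpotent case $\chi_\lambda = 0$, adapted to the character-twisted setting now available via Theorem \ref{t:metaplectic}.

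For the $B$-level enhancement, on the oper side I would use the canonical flag reduction of an oper, which produces a map
\[
\on{Op}_G(\overset{\circ}{D})^{\on{RS},\varpi(-\lambda-\check{\rho})} \to \on{LocSys}_B(\overset{\circ}{D})_{\chi_\lambda}
\]
(up to a standard $\rho$-shift), along which pullback gives the desired $\on{QCoh}(\on{LocSys}_B(\overset{\circ}{D})_{\chi_\lambda})$-action on the source. On the Kac-Moody side, Theorem \ref{t:metaplectic} identifies $D(\check{I}, \chi_\lambda \backslash \check{G}(K)/\check{I}, \chi_\lambda)^{\on{ren}}$ with $\on{IndCoh}$ of the Steinberg-type fiber product $\on{LocSys}_B(\overset{\circ}{D})_{\chi_\lambda} \underset{\on{LocSys}_G(\overset{\circ}{D})}{\times} \on{LocSys}_B(\overset{\circ}{D})_{\chi_\lambda}$; pushforward along the diagonal embedding of $\on{LocSys}_B(\overset{\circ}{D})_{\chi_\lambda}$ is monoidal, and its composition with convolution equips $\hat{\check{\fg}}_{\on{crit}}\on{-mod}^{\check{I}, \chi_\lambda}$ with a $\on{QCoh}(\on{LocSys}_B(\overset{\circ}{D})_{\chi_\lambda})$-module structure refining that of $\on{QCoh}(\on{LocSys}_G(\overset{\circ}{D})^{\on{RS}}_{\chi_\lambda})$.

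The key step is to verify that (\ref{eq:critfle}) intertwines these enhanced structures. Since $\on{QCoh}(\on{LocSys}_B(\overset{\circ}{D})_{\chi_\lambda})$ is generated by line bundles $\cO(\lambda')$ for $\lambda' \in X^{\bullet}(T)$, it suffices to check on these. The action of $\cO(\lambda')$ on the Kac-Moody side ought to correspond to convolution with a twisted Wakimoto module $\bW_{\lambda'}^{\chi_\lambda}$, characterized by $\bW_{\lambda'}^{\chi_\lambda} \star \bM_\mu \simeq \bM_{\mu + \lambda'}$. On the oper side, tensoring by the pullback of $\cO(\lambda')$ and then applying Feigin-Frenkel duality should produce precisely the corresponding translate of Verma modules. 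Granting this compatibility on generators, the $\on{QCoh}(\on{LocSys}_B(\overset{\circ}{D})_{\chi_\lambda})$-linearity of (\ref{eq:critfle}) follows, and the conjecture is obtained by descent along the proper surjection above.

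The main obstacle will be constructing the twisted Wakimoto modules $\bW_{\lambda'}^{\chi_\lambda}$ and identifying them with the image of $\cO(\lambda')$ under the composition of pushforward along the diagonal with the equivalence of Theorem \ref{t:metaplectic}. In the nilpotent case $\chi_\lambda = 0$ this is essentially Frenkel-Gaitsgory's theorem; for general $\chi_\lambda$ one must carefully track the interaction between the $(\check{I}, \chi_\lambda)$-equivariance and the semi-infinite cohomology defining Wakimoto modules, as well as the renormalization subtleties inherent to the monoidal equivalence (cf. Remark \ref{r:renorm}). These difficulties are the subject of the author's forthcoming joint work with Dhillon alluded to in §\ref{ss:workinprogress}.
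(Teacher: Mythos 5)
The first thing to say is that the paper does not prove this statement: it is stated as Conjecture \ref{c:linearity}, and §\ref{ss:workinprogress} explicitly defers the proof --- ``the conjecture is expected to follow from similar methods as those employed in \cite{frenkel2005fusion} and is currently work in progress joint with Gurbir Dhillon.'' So there is no proof in the paper to compare yours against; the case $\lambda=0$ is \cite[Theorem 5.4]{frenkel2005fusion}, and the general case is open as far as this paper is concerned. Any complete argument you supplied would be new mathematics, not a reconstruction.

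Your outline does match the strategy the paper gestures at (enhance both sides to $\on{QCoh}(\on{LocSys}_B(\overset{\circ}{D})_{\chi_\lambda})$-modules, reduce to line bundles, match them with Wakimoto modules, descend along the proper map to $\on{LocSys}_G(\overset{\circ}{D})^{\on{RS}}_{\chi_\lambda}$), but as written it is a plan rather than a proof, and the gap sits exactly where the conjecture's content is. Everything hinges on the step you yourself flag as ``the main obstacle'': constructing the twisted Wakimoto modules $\bW_{\lambda'}^{\chi_\lambda}$ at critical level in the $(\check{I},\chi_\lambda)$-equivariant setting, proving $\bW_{\lambda'}^{\chi_\lambda}\star \bM_{\mu}\simeq \bM_{\mu+\lambda'}$, and --- hardest of all --- identifying $\bW_{\lambda'}^{\chi_\lambda}$ with the image of $\cO(\lambda')$ under the diagonal pushforward composed with the equivalence of Theorem \ref{t:metaplectic}, together with all the higher coherences: checking an action ``on generators'' only yields a homotopy-incoherent compatibility, whereas linearity of (\ref{eq:critfle}) as a map of module categories requires intertwining the full monoidal structure (and navigating the renormalization issues of Remark \ref{r:renorm}). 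Granting that identification is essentially granting the conjecture. One smaller inaccuracy: the map $\on{Op}_{G}(\overset{\circ}{D})^{\on{RS},\varpi(-\lambda-\check{\rho})}\to \on{LocSys}_B(\overset{\circ}{D})_{\chi_\lambda}$ cannot come from ``the canonical flag reduction of an oper,'' because the oper $B$-reduction is by definition \emph{not} horizontal; what exists is a residue-type map on opers in canonical form (as in \cite[\S 2.4]{frenkel2006local}, compare Lemma \ref{l:cstresidue}), and constructing it with the correct $\rho$-shift for non-nilpotent residues is itself part of the work to be done.
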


\noindent When $\lambda=0$, this is \cite[Theorem 5.4]{frenkel2005fusion}. In general, the conjecture is expected to follow from similar methods as those employed in \cite{frenkel2005fusion} and is currently work in progress joint with Gurbir Dhillon.

\subsection{The Whittaker functional and unipotent localization}\label{s:comp}

\subsubsection{} Consider the algebraic stack $\on{Bun}_{\check{N}^{-}}^{\omega_X(S)}(X,S)$ classifying triples $(\sP_{\check{B}^{-}},\tau,\phi)$, where 
\begin{itemize}

\item $\sP_{\check{B}^{-}}$ is a $\check{B}^{-}$-bundle on $X$.

\item $\tau$ is an isomorphism $\sP_{\check{B}^{-}}\overset{\check{B}^{-}}{\times} \check{T}\simeq \rho(\omega_X(S))$.

\item $\phi$ is an identification of $\sP_{\check{B}^{-}}$ with $\rho(\omega_X(S))\overset{\check{T}}{\times} \check{B}^{-}$ on $S\subset X$ that preservers $\tau_{\vert S}$.

\end{itemize}

\subsubsection{Whittaker functional} As in \cite[\S 2.5]{nadler2019geometric}, we have a canonical Whittaker functional
\begin{equation}\label{eq:whitfunction}
\psi^{-}: \on{Bun}_{\check{N}^{-}}^{\omega_X(S)}(X,S)\to \bG_a.
\end{equation}

\noindent Concretely, for every positive simple root, we have a map
\[
\on{Bun}_{\check{N}^{-}}^{\omega_X(S)}(X,S)\to \on{Bun}_{\bG_a}^{\omega_X(S)}(X,S),
\]

\noindent where the latter classifies extensions $\omega_X(S)\to \sE\to \cO_X$ with a splitting at each $x\in S$. The natural map
\[
H^1(X,\omega_X)\to \on{Bun}_{\bG_a}^{\omega_X(S)}(X,S)
\]

\noindent induced by $\omega_X\to \omega_X(S)$ is an isomorphism. Thus, the functional (\ref{eq:whitfunction}) is given by summing up over all positive simple roots:
\[
\on{Bun}_{\check{N}^{-}}^{\omega_X(S)}(X,S)\to \underset{i}{\prod} \on{Bun}_{\bG_a}^{\omega_X(S)}(X,S)\simeq \underset{i}{\prod} H^1(X,\omega_X)\xrightarrow{\on{sum}} \bA^1.
\]

\subsubsection{} For any $x\in S$, there is a natural map
\[
\check{N}^{-,\omega(S)}(K_x)\to \check{N}^{-,\omega(S)}(K_x)/\check{N}_{(1)}^{-,\omega(S)}(O_x)\to \on{Bun}_{\check{N}^{-}}^{\omega_X(S)}(X,S).
\]

\noindent By construction, the Whittaker functional is such that the composition 
\[
\check{N}^{-,\omega(S)}(K_x)\to \on{Bun}_{\check{N}^{-}}^{\omega_X(S)}(X,S)\to \bG_a
\]

\noindent coincides with the canonical lift of $\psi_x^{-}$ (see §\ref{s:minusDS}) to $\check{N}^{-,\omega(S)}(K_x)$.

\subsubsection{Localization in the unipotent setting} Consider the category
\[
\check{\fn}^{-,\omega(S)}(K_x)\on{-mod}^{\check{N}^{-,\omega(S)}(O_x)}
\]

\noindent of $\check{N}^{-,\omega(S)}(O_x)$-integrable $\check{\fn}^{-,\omega(S)}(K_x)\on{-modules}$. The category
\[
\check{\fn}^{-,\omega(S)}(K_x)\on{-mod}^{\check{N}_{(1)}^{-,\omega(S)}(O_x)}
\]

\noindent is defined similarly. We may identify the stack $\on{Bun}_{\check{N}^{-}}^{\omega_X(S)}(X,S)$ with the moduli stack of $\check{N}^{-,\omega(S)}$-bundles on $X$ equipped with a trivialization on $S\subset X$. As such, for $x\in S$, we have a localization functor
\[
\check{\fn}^{-,\omega(S)}(K_x)\on{-mod}^{\check{N}_{(1)}^{-,\omega(S)}(O_x)}\to D(\on{Bun}_{\check{N}^{-}}^{\omega_X(S)}(X,S)).
\]

\noindent In particular, we obtain a functor
\[
\on{Loc}_{\check{N}^{-},S}:\bigotimes_{x\in S} \check{\fn}^{-,\omega(S)}(K_x)\on{-mod}^{\check{N}_{(1)}^{-,\omega(S)}(O_x)}\to D(\on{Bun}_{\check{N}^{-}}^{\omega_X(S)}(X,S)).
\]

\subsubsection{} Let 
\[
\check{\fn}^{-,\omega(S)}(K)\on{-mod}_{\on{Ran},S}^{\check{N}_{(1)}^{-,\omega(S)}(O)}
\]

\noindent be the unital category over $\on{Ran}_{X_{\on{dR}},S}$ whose fiber at $x\in S$ is
\[
\check{\fn}^{-,\omega(S)}(K_x)\on{-mod}^{\check{N}_{(1)}^{-,\omega(S)}(O_x)}
\]

\noindent and whose fiber at $x\in X-S$ is
\[
\check{\fn}^{-,\omega(S)}(K_x)\on{-mod}^{\check{N}^{-,\omega(S)}(O_x)}.
\]

\subsubsection{} We also have a factorizable minus Drinfeld-Sokolov reduction functor
\begin{equation}\label{eq:DSenh}
\Psi^{-}_{\on{fact}}: \check{\fn}^{-,\omega(S)}(K)\on{-mod}_{\on{Ran},S}^{\check{N}_{(1)}^{-,\omega(S)}(O)}\to D(\on{Ran}_{X,S})\xrightarrow{C_{\on{dR}}(\on{Ran}_{X,S},-)} \on{Vect}
\end{equation}

\noindent induced by the usual factorizable Drinfeld-Sokolov reduction functor for $\check{\fn}^{-,\omega}$. Explicitly, for $x\in S$, the fiber of the first functor at $x$ is given by the functor defined in §\ref{s:minusDS}, and for $x\in X-S$ the fiber of the first functor is given by the usual Drinfeld-Sokolov reduction functor at $x$ for $\check{\fn}^{-,\omega}(K_x)\on{-mod}$ using that $\omega(S)_{\vert D_x}\simeq \omega_{\vert D_x}$.

\subsubsection{} We denote by $\on{ins}_S$ the functor
\[
\on{ins}_S: \bigotimes_{x\in S} \hat{\check{\mathfrak{g}}}_{\on{crit},x}\on{-mod}^{\check{I}_x,\chi_x}\to \hat{\check{\mathfrak{g}}}_{\on{crit},\on{Ran}_{X,S}}\on{-mod}^{({\check{I}}_S)_{\on{Ran}_{X,S}},\chi_S}
\]

\noindent that inserts the vacuum module away from $S$ as in \cite[Construction 4.4.5]{chen2021extension} (see also \cite[Appendix A 1.6]{faergeman2022non}). By abuse of notation, we also denote by $\on{ins}_S$ the similarly defined functor
\[
 \bigotimes_{x\in S}\check{\fn}^{-,\omega(S)}(K_x)\on{-mod}^{\check{N}_{(1)}^{-,\omega(S)}(O_x)}\to \check{\fn}^{-,\omega(S)}(K)\on{-mod}_{\on{Ran},S}^{\check{N}_{(1)}^{-,\omega(S)}(O)}.
\]

\subsubsection{} Let $\on{exp}$ be the exponential D-module on $\bA^1$ normalized to be concentrated in perverse degree $-1$. Recall the Whittaker functional $\psi^{-}$ (\ref{eq:whitfunction}).

Consider the functor
\[
D(\on{Bun}_{\check{N}^{-}}^{\omega_X(S)}(X,S))\to \on{Vect}, \:\: \cF\mapsto C_{\on{dR}}(\on{Bun}_{\check{N}^{-}}^{\omega_X(S)}(X,S),  \cF\overset{!}{\otimes} (\psi^{-})^!(\on{exp})). 
\]

\subsubsection{} The following proposition follows from \cite[Thm. 4.0.5 (4)]{chen2021extension} (see also \cite[§12.5]{Cstterm}):

\begin{prop}\label{p:semiinf}
The following diagram commutes:
\[\begin{tikzcd}
	{\bigotimes_{x\in S} \check{\mathfrak{n}}^{-,\omega(S)}(K_x)\mathrm{-mod}^{\check{N}_{(1)}^{-,\omega(S)}(O_x)}} &&& {D(\mathrm{Bun}_{\check{N}^{-}}^{\omega_X(S)}(X,S))} \\
	\\
	{\check{\mathfrak{n}}^{-,\omega(S)}(K)\mathrm{-mod}_{\mathrm{Ran},S}^{\check{N}_{(1)}^{-,\omega(S)}(O)}} &&& {\mathrm{Vect}.}
	\arrow["{\mathrm{Loc}_{\check{N}^{-},S}}", from=1-1, to=1-4]
	\arrow["{\mathrm{ins}_S}"', from=1-1, to=3-1]
	\arrow["{\Psi_{\mathrm{fact}}^{-}[\on{dim}\on{Bun}_{\check{N}^{-}}^{\omega_X(S)}(X,S)]}", from=3-1, to=3-4]
	\arrow["{C_{\mathrm{dR}}(\mathrm{Bun}_{\check{N}^{-}}^{\omega_X(S)}(X,S),-\overset{!}{\otimes}(\psi^{-})^!(\mathrm{exp}))}", from=1-4, to=3-4]
\end{tikzcd}\]

\end{prop}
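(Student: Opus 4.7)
The plan is to reduce the statement to the factorizable Whittaker--vs--Drinfeld-Sokolov compatibility of \cite{chen2021extension}, Theorem 4.0.5(4), which establishes exactly such a commutative diagram in the $\omega$-twisted unmarked setting. The marked-point version at hand is obtained from that statement by inserting vacuum at the points of $S$, together with careful tracking of the change of twist from $\omega$ to $\omega(S)$.

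First, I would note that both composite functors are continuous, so it suffices to check the identification on a generating collection of inputs; a convenient choice is the tensor product of induced modules $\bigotimes_{x\in S}\on{ind}_{\check{\mathfrak{n}}_{(1)}^{-,\omega(S)}(O_x)}^{\check{\mathfrak{n}}^{-,\omega(S)}(K_x)}(V_x)$. Next, by the unital factorization structure on $\check{\mathfrak{n}}^{-,\omega(S)}(K)\on{-mod}_{\on{Ran},S}^{\check{N}_{(1)}^{-,\omega(S)}(O)}$ and the compatibility of $\on{ins}_S$ with localization (cf.\ the general yoga of inserting vacuum recalled in \cite[\S 4.4]{chen2021extension}), the image of such an object under $\on{Loc}_{\check{N}^-,S}\circ \on{ins}_S$ agrees with the pointwise localization at $S$ extended by the factorizable unit away from $S$; this reflects that the vacuum module localizes to the structure sheaf of the relevant trivial-bundle locus.

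With this reduction in hand, the result follows by applying Chen's theorem to the pair $\big(\on{Bun}_{\check{N}^{-}}^{\omega_X(S)}(X,S),\psi^{-}\big)$, provided one verifies two compatibilities already made in \S\ref{s:comp}: (i) the global functional $\psi^-$ restricts, along $\check{N}^{-,\omega(S)}(K_x)\to \on{Bun}_{\check{N}^{-}}^{\omega_X(S)}(X,S)$ for $x\in S$, to the canonical lift of the conductor-one character $\psi^{-}_x$ entering the definition of $\Psi^-_x$; and (ii) at any $x'\in X\setminus S$, the restriction of $\psi^-$ along $\check{N}^{-,\omega}(K_{x'})\to \on{Bun}_{\check{N}^{-}}^{\omega_X(S)}(X,S)$ recovers the standard additive character for $\check{\mathfrak{n}}^{-,\omega}(K_{x'})$ used in defining $\Psi^{-}_{\on{fact}}$ away from $S$. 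The cohomological shift by $\on{dim}\on{Bun}_{\check{N}^{-}}^{\omega_X(S)}(X,S)$ records the usual difference between semi-infinite cochain and de Rham cochain normalizations, as in \cite[\S 12.5]{Cstterm}.

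The main bookkeeping obstacle is the twist by $\omega(S)$ rather than $\omega$: one has to identify the trivialization of $\rho(\omega_X(S))\overset{\check T}{\times}\check B^{-}$ along $S\subset X$ entering the definition of $\on{Bun}_{\check{N}^{-}}^{\omega_X(S)}(X,S)$ with the data pinning down both $\psi^{-}_x$ and the congruence subgroup $\check{N}^{-,\omega(S)}_{(1)}(O_x)$, and to check that the isomorphism $\omega(S)_{|D_{x'}}\simeq \omega_{|D_{x'}}$ for $x'\notin S$ intertwines the two characters. Once this is done, the statement is a direct application of Chen's theorem and requires no genuinely new input.
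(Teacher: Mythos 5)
Your approach is exactly the paper's: the paper simply records that the proposition "follows from \cite[Thm.\ 4.0.5(4)]{chen2021extension} (see also \cite[\S 12.5]{Cstterm})," and you invoke the same theorem. The extra bookkeeping you spell out — reduction to induced generators, compatibility of $\on{ins}_S$ with localization, the $\omega(S)$-vs-$\omega$ twist, matching the local characters at $x\in S$ and $x'\notin S$, and the dimension shift — is exactly the content the paper elides, and it is set up correctly by the preceding material in \S\ref{s:comp}.
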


\subsection{Compatibility between localization and Whittaker coefficients}

\subsubsection{}\label{s:whittakercoeff} Recall the group scheme $\check{G}^{\omega(S)}$ and the stack $\on{Bun}_{\check{G}^{\omega(S)},\check{N}^{\omega(S)}}(X,S)$ from §\ref{s:grpscheme}. We have a canonical map
\[
\mathrm{Bun}_{\check{N}^{-}}^{\omega_X(S)}(X,S)\to \on{Bun}_{\check{G}^{\omega(S)},\check{N}^{\omega(S)}}(X,S).
\]

\noindent We denote by $\fp$ the map
\[
\fp: \mathrm{Bun}_{\check{N}^{-}}^{\omega_X(S)}(X,S)\to \on{Bun}_{\check{G}^{\omega(S)},\check{N}^{\omega(S)}}(X,S)\simeq \on{Bun}_{\check{G},\check{N}}(X,S).
\]

\subsubsection{}\label{s:fouriercoeff} Let $\on{coeff}_{\chi_S}$ be the composition
\[
\on{coeff}_{\chi_S}: D(\BunN)^{\check{T}_S,\chi_S}\xrightarrow{\on{oblv}^{\chi_S}} D(\BunN)\xrightarrow{\fp^!} D(\on{Bun}_{\check{N}^{-}}^{\omega_X(S)}(X,S))\to 
\]
\[
\xrightarrow{C_{\on{dR}}(\on{Bun}_{\check{N}^{-}}^{\omega_X(S)}(X,S), -\overset{!}{\otimes} (\psi^{-})^!(\on{exp}))} \on{Vect}.
\]
\begin{rem}\label{r:glc}
The conjectural tamely ramified geometric Langlands equivalence 
\[
D(\BunN)^{\check{T}_S,\chi_S}\simeq \on{IndCoh}_{\on{Nilp}}(\on{LocSys}_G(U)\underset{\underset{x\in S}{\prod}\on{LocSys}_G(\overset{\circ}{D}_x)}{\times} \underset{x\in S}{\prod}\on{LocSys}_B(\overset{\circ}{D}_x)_{\chi_x})
\]

\noindent is supposed to exchange the functor of taking global sections on the right-hand side with $\on{coeff}_{\chi_S}$ on the left-hand side. We emphasize that the choice of $\on{coeff}_{\chi_S}$ is \emph{not} canonical. Instead, we could have chosen our Whittaker character to be defined on $N^{\omega}(K)$ and be of conductor 0. Then the stack $\mathrm{Bun}_{\check{N}^{-}}^{\omega_X(S)}(X,S))$ would be replaced with the stack $\on{Bun}_N^{\omega}(X)$ of $N^{\omega}$-bundles on $X$, which also appears in the unramified story. In this case, we would obtain a different geometric Langlands equivalence from the one above.

The multiple choices for Whittaker-normalization directly reflects the fact that there are multiple realizations of $\on{QCoh}(\fn/B)$ as Whitakker-equivariant sheaves on the (dual) affine flag variety.
\end{rem}

\subsubsection{} Note that we have a natural factorizable restriction functor
\[
\hat{\check{\mathfrak{g}}}_{\mathrm{crit},\mathrm{Ran}_{X,S}}\mathrm{-mod}^{({\check{I}}_S)_{\mathrm{Ran}_{X,S}},\chi_S}\xrightarrow{\on{Res}^{\check{G}}_{\check{N}^{-,\omega(S)}}}\check{\mathfrak{n}}^{-,\omega(S)}(K)\mathrm{-mod}_{\mathrm{Ran},S}^{\check{N}_{(1)}^{-,\omega(S)}(O)}.
\]

\noindent Combining Proposition \ref{p:semiinf} with \cite[Thm. 4.0.5(3)]{chen2021extension}, we obtain:
\begin{prop}\label{p:coeffwhit}
    The following diagram commutes: 
\[\begin{tikzcd}
	{\bigotimes_{x\in S} \hat{\check{\mathfrak{g}}}_{\mathrm{crit},x}\mathrm{-mod}^{\check{I}_x,\chi_x}} &&& {\hat{\check{\mathfrak{g}}}_{\mathrm{crit},\mathrm{Ran}_{X,S}}\mathrm{-mod}^{({\check{I}}_S)_{\mathrm{Ran}_{X,S}},\chi_S}} \\
	\\
	{D(\mathrm{Bun}_{\check{G},\check{N}}(X,S)^{\check{T}_S,\chi_S}} &&& {\check{\mathfrak{n}}^{-,\omega(S)}(K)\mathrm{-mod}_{\mathrm{Ran},S}^{\check{N}_{(1)}^{-,\omega(S)}(O)}} \\
	\\
	&&& {\mathrm{Vect}.}
	\arrow["{\mathrm{ins}_S}", from=1-1, to=1-4]
	\arrow["{\mathrm{Loc}_{\mathrm{crit},\check{G},S}}"', from=1-1, to=3-1]
	\arrow["{\mathrm{Res}^{\check{G}}_{\check{N}^{-,\omega(S)}}}", from=1-4, to=3-4]
	\arrow["{\Psi_{\mathrm{fact}}^{-}[\mathrm{dim}\;\mathrm{Bun}_{\check{N}^{-}}^{\omega_X(S)}(X,S)]}", from=3-4, to=5-4]
	\arrow["{\mathrm{coeff}_{\chi_S}}"', from=3-1, to=5-4]
\end{tikzcd}\]
\end{prop}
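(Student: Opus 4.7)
The strategy is to insert an intermediate row into the diagram passing through $D(\on{Bun}_{\check{N}^{-}}^{\omega_X(S)}(X,S))$, and thereby reduce the statement to the juxtaposition of Proposition \ref{p:semiinf} with the cited Kac-Moody compatibility of \cite[Thm. 4.0.5(3)]{chen2021extension}. Concretely, I would consider the enlarged diagram
\[
\begin{tikzcd}[column sep=small, row sep=scriptsize]
\bigotimes_{x\in S}\hat{\check{\fg}}_{\on{crit},x}\on{-mod}^{\check{I}_x,\chi_x} \ar[r,"\on{ins}_S"]\ar[d,"\on{Loc}_{\on{crit},\check{G},S}"'] & \hat{\check{\fg}}_{\on{crit},\on{Ran}_{X,S}}\on{-mod}^{(\check{I}_S)_{\on{Ran}_{X,S}},\chi_S} \ar[d,"\on{Res}^{\check{G}}_{\check{N}^{-,\omega(S)}}"] \\
D(\BunN)^{\check{T}_S,\chi_S} \ar[r,"\fp^!\circ\on{oblv}^{\chi_S}"] \ar[dr,bend right=20,"\on{coeff}_{\chi_S}"'] & \check{\fn}^{-,\omega(S)}(K)\on{-mod}_{\on{Ran},S}^{\check{N}_{(1)}^{-,\omega(S)}(O)} \ar[d,"{\Psi_{\on{fact}}^{-}[\on{dim}\,\on{Bun}_{\check{N}^{-}}^{\omega_X(S)}(X,S)]}"] \\
& \on{Vect}
\end{tikzcd}
\]
and establish commutativity of each of the two constituent squares/triangles.

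The top square, asserting that Kac-Moody restriction from $\hat{\check{\fg}}_{\on{crit}}$-modules to $\check{\fn}^{-,\omega(S)}(K)$-modules intertwines Kac-Moody localization on $\BunN$ with stack-level $!$-pullback along $\fp:\on{Bun}_{\check{N}^{-}}^{\omega_X(S)}(X,S)\to\BunN$, is essentially a citation: it is \cite[Thm. 4.0.5(3)]{chen2021extension} applied to the parahoric/character-twisted setting of §\ref{s:km}. One only needs to verify that the enhancement through $\on{ins}_S$ matches on both sides, i.e. that inserting the vacuum module at $x\notin S$ on the left is compatible with insertion of the vacuum Kac-Moody-for-$\check{N}^{-,\omega}$ module under restriction, which is immediate from the construction of these insertion functors. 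The lower triangle, saying that the composite of $\fp^!\circ\on{oblv}^{\chi_S}$ followed by Whittaker-twisted global sections on $\on{Bun}_{\check{N}^{-}}^{\omega_X(S)}(X,S)$ agrees with $\Psi^{-}_{\on{fact}}[\on{dim}]$ applied after $\on{ins}_S$ at the $\check{\fn}^{-,\omega(S)}$-level, is precisely Proposition \ref{p:semiinf}: by definition $\on{coeff}_{\chi_S}$ factors through $\fp^!\circ\on{oblv}^{\chi_S}$, and Proposition \ref{p:semiinf} identifies the resulting Whittaker global sections with the insertion-then-$\Psi^{-}_{\on{fact}}$ functor.

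Composing the two commutative pieces yields the claimed identity $\on{coeff}_{\chi_S}\circ\on{Loc}_{\on{crit},\check{G},S}=\Psi^{-}_{\on{fact}}[\on{dim}\,\on{Bun}_{\check{N}^{-}}^{\omega_X(S)}(X,S)]\circ\on{Res}^{\check{G}}_{\check{N}^{-,\omega(S)}}\circ\on{ins}_S$. The main obstacle, as usual with Drinfeld-Sokolov-type comparisons at critical level in the presence of tame ramification, is bookkeeping: tracking that the choice of Whittaker character at each $x\in S$ (of conductor $1$) matches the character induced by $\psi^{-}$ on $\check{N}^{-,\omega(S)}(K_x)$, and that the cohomological shift $[\on{dim}\,\on{Bun}_{\check{N}^{-}}^{\omega_X(S)}(X,S)]$ appears on the correct side. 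Both of these are handled by the conventions already fixed in §\ref{s:comp} and §\ref{s:fouriercoeff}, so the proof reduces to a diagram-chase combining the two cited inputs.
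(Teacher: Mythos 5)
Your overall approach — combining Proposition \ref{p:semiinf} with the Kac-Moody/Drinfeld-Sokolov localization compatibility of \cite[Thm.\ 4.0.5(3)]{chen2021extension} — is precisely the paper's one-line proof, and your prose correctly names $D(\on{Bun}_{\check{N}^{-}}^{\omega_X(S)}(X,S))$ as the key intermediate object. However, the diagram you actually draw does not typecheck, and so neither of the two "constituent squares/triangles" you invoke exists as drawn. The arrow labeled $\fp^!\circ\on{oblv}^{\chi_S}$ has codomain $D(\on{Bun}_{\check{N}^{-}}^{\omega_X(S)}(X,S))$, not $\check{\fn}^{-,\omega(S)}(K)\on{-mod}_{\on{Ran},S}^{\check{N}_{(1)}^{-,\omega(S)}(O)}$; your "top square" therefore has a D-module category and a Lie-algebra module category in the same corner, and cannot be the square furnished by \cite[Thm.\ 4.0.5(3)]{chen2021extension}. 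Likewise your "lower triangle" does not compose: Proposition \ref{p:semiinf} is a square whose non-$\on{Vect}$ vertices are $\bigotimes_{x\in S}\check{\fn}^{-,\omega(S)}(K_x)\on{-mod}^{\check{N}_{(1)}^{-,\omega(S)}(O_x)}$, $D(\on{Bun}_{\check{N}^{-}}^{\omega_X(S)}(X,S))$, and $\check{\fn}^{-,\omega(S)}(K)\on{-mod}_{\on{Ran},S}^{\check{N}_{(1)}^{-,\omega(S)}(O)}$; none of those is $D(\BunN)^{\check{T}_S,\chi_S}$, so a triangle starting there and going through $\fp^!\circ\on{oblv}^{\chi_S}$ cannot simply \emph{be} Proposition \ref{p:semiinf}.

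The fix is to insert \emph{both} of the missing intermediate categories: the product $\bigotimes_{x\in S}\check{\fn}^{-,\omega(S)}(K_x)\on{-mod}^{\check{N}_{(1)}^{-,\omega(S)}(O_x)}$ as a new column (receiving pointwise restriction from $\bigotimes_{x\in S}\hat{\check{\fg}}_{\on{crit},x}\on{-mod}^{\check{I}_x,\chi_x}$, and with $\on{ins}_S$ to $\check{\fn}^{-,\omega(S)}(K)\on{-mod}_{\on{Ran},S}^{\check{N}_{(1)}^{-,\omega(S)}(O)}$), and $D(\on{Bun}_{\check{N}^{-}}^{\omega_X(S)}(X,S))$ as a new row. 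You then get a genuine left square (localization intertwines $\fp^!\circ\on{oblv}^{\chi_S}$ with pointwise restriction and $\on{Loc}_{\check{N}^{-},S}$, which is the correct form of \cite[Thm.\ 4.0.5(3)]{chen2021extension}), a genuine right square (exactly Proposition \ref{p:semiinf}), and a top rectangle that $\on{ins}_S$ commutes with pointwise restriction — the step you flag as immediate from the construction, which it is. Once the diagram is assembled this way, the outer boundary gives $\on{coeff}_{\chi_S}\circ\on{Loc}_{\on{crit},\check{G},S}\simeq\Psi^{-}_{\on{fact}}[\on{dim}\,\on{Bun}_{\check{N}^{-}}^{\omega_X(S)}(X,S)]\circ\on{Res}^{\check{G}}_{\check{N}^{-,\omega(S)}}\circ\on{ins}_S$ as claimed.
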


\noindent Henceforth, we also write $\Psi^{-}_{\on{fact}}$ for the composition $\Psi^{-}_{\on{fact}}\circ  \on{Res}^{\check{G}}_{\check{N}^{-,\omega(S)}}$.

\subsection{Accounting for singularities}\label{s:4.3}

\subsubsection{} Let
\[
\on{Op}_G(\overset{\circ}{D})^{\on{mon-free}}:=\on{Op}_G(\overset{\circ}{D})\underset{\on{LocSys}_G(\overset{\circ}{D})}{\times} \on{LocSys}_G(D)
\]

\noindent be the ind-scheme of monodromy-free opers on the punctured disk. By \cite{frenkel2007langlands}, we have an equivalence
\begin{equation}\label{eq:pointwisecritfle}
\on{QCoh}(\on{Op}_G(\overset{\circ}{D})^{\on{mon-free}})\simeq \hat{\check{\fg}}_{\on{crit}}\on{-mod}^{\check{G}(O)}=:\on{KL}_{\on{crit},\check{G}}.
\end{equation}

\noindent This functor is $\on{Rep}(G)$-linear by \cite[Thm. 1.10]{frenkel2005fusion} (or \cite[§6]{Cstterm}), where $\on{Rep}(G)$ acts on the left-hand side via pullback along the map $\on{Op}_{G}(\overset{\circ}{D})^{\on{mon-free}}\to\on{LocSys}_G(D)\simeq \bB G$ and on the right-hand side via geometric Satake.

\subsubsection{Global opers with singularities}\label{s:xminuss} Fix a finite set of points $T=\lbrace x_1,...,x_n\rbrace\subset X(k)$ that are disjoint from $S$. Let $U'=X\setminus (T\sqcup S)=U\setminus T$. Let $\on{Op}_{G}(U')$ be the ind-scheme of $G$-opers on $U'$, cf. \cite[§A]{bezrukavnikov2016quantization}.

\subsubsection{} For any $x\in T\sqcup S$, we have a restriction map
\[
\on{Op}_{G}(U')\to \on{Op}_{G}(\overset{\circ}{D}_x).
\]

\noindent For each $x\in S$, choose a coweight $\lambda_x\in \ft$.

\subsubsection{} Let
\[
\on{Op}_{G,\lambda_S}^{\on{RS}}(U')^{\on{mon-free}}:=\on{Op}_{G}(U')\underset{\underset{x'\in T}{\prod}\on{Op}_{G}(\overset{\circ}{D}_{x'})\times\underset{x\in S}{\prod}\on{Op}_{G}(\overset{\circ}{D}_x)}{\times} \underset{x'\in T}{\prod}\on{Op}_{G}(\overset{\circ}{D}_{x'})^{\on{mon-free}}\times\underset{x\in S}{\prod}\on{Op}_{G}(\overset{\circ}{D}_x)^{\on{RS},\varpi(-\lambda_x-\check{\rho})}.
\]

\noindent We have a forgetful map
\begin{equation}\label{eq:forgetful}
\on{Op}_{G,\lambda_S}^{\on{RS}}(U')^{\on{mon-free}}\to \on{LocSys}_G(U,\chi_S),
\end{equation}

\noindent where $\chi_x$ is the projection of $\lambda_x$ to $\ft/X_{\bullet}(T)$.

\subsubsection{}
We let $r$ denote the map
\[
r:\on{Op}_{G,\lambda_S}^{\on{RS}}(U')^{\on{mon-free}}\to \underset{x'\in T}{\prod}\on{Op}_{G}(\overset{\circ}{D}_{x'})^{\on{mon-free}}\times\underset{x\in S}{\prod}\on{Op}_{G}(\overset{\circ}{D}_x)^{\on{RS},\varpi(-\lambda_x-\check{\rho})}.
\]

\subsubsection{Spectral localization}\label{s:diagram} Consider the composition 
\[
\on{Loc}_{G,\on{spec}}:\bigotimes_{x'\in T}\on{QCoh}(\on{Op}_{G}(\overset{\circ}{D}_{x'})^{\on{mon-free}})\otimes \bigotimes_{x\in S} \on{QCoh}(\on{Op}_{G}(\overset{\circ}{D}_x)^{\on{RS},\varpi(-\lambda_x-\check{\rho})})\to
\]
\[
\to \bigotimes_{x'\in T} \on{KL}_{\on{crit},\check{G},x'}\otimes \bigotimes_{x\in S} \hat{\check{{\mathfrak{g}}}}_{\on{crit},x}\on{-mod}^{\check{I}_x,\chi_{_x}} \to D(\BunN)^{\check{T}_S,\chi_S},
\]

\noindent where the first functor is $(\ref{eq:pointwisecritfle})\otimes (\ref{eq:critfle})$, and the second functor is given by localization. We have:
\begin{prop}\label{p:factoropreg}
The above functor factors as
\[\begin{tikzcd}
	{\underset{x'\in T}{\otimes}\mathrm{QCoh}(\mathrm{Op}_{G}(\overset{\circ}{D}_{x'})^{\mathrm{mon-free}}) \otimes\underset{x\in S}{\otimes} \mathrm{QCoh}(\mathrm{Op}_{G}(\overset{\circ}{D}_x)^{\mathrm{RS},\varpi(-\lambda_x-\check{\rho})})} \\
	\\
	{\on{QCoh}(\on{Op}_{G,\lambda_S}^{\on{RS}}(U')^{\on{mon-free}})} && {D(\BunN)^{\check{T}_S,\chi_{S}}.}
	\arrow["{r^*}"', from=1-1, to=3-1]
	\arrow["{\mathrm{Loc}_{G,\mathrm{spec}}^{\mathrm{glob}}}"', dashed, from=3-1, to=3-3]
	\arrow["{\mathrm{Loc}_{G,\mathrm{spec}}}", shift right=1, from=1-1, to=3-3]
\end{tikzcd}\]

\noindent Moreover, the resulting functor $\on{Loc}_{G,\on{spec}}^{\on{glob}}$ is $\NR$-linear, where the latter acts on $\on{QCoh}(\on{Op}_{G,\lambda_S}^{\on{RS}}(U')^{\on{mon-free}})$ via its symmetric monoidal functor to $\on{QCoh}(\LSN)$ and the map (\ref{eq:forgetful}).

\end{prop}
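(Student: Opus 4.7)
The plan is to reduce the statement to two pointwise linearity inputs plus the factorizable tame Satake functor of Theorem \ref{p:upgrade}. The functor $\on{Loc}_{G,\on{spec}}$ is the composition of a tensor product of local Feigin--Frenkel/affine Verma functors on the disks around $T\sqcup S$ with the insertion of vacua $\on{ins}_S$ followed by the global critical-level Kac--Moody localization $\on{Loc}_{\on{crit},\check{G},\chi_S}$. I will establish a linearity property for each piece and match them up into $\NR$-linearity.

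On the Feigin--Frenkel side: at each unramified $x'\in T$, the equivalence (\ref{eq:pointwisecritfle}) is $\on{Rep}(G)\simeq\on{QCoh}(\on{LocSys}_G(D_{x'}))$-linear by \cite[Thm.~1.10]{frenkel2005fusion}, while at each ramified $x\in S$ the functor (\ref{eq:critfle}) is $\on{QCoh}(\on{LocSys}_G(\overset{\circ}{D}_x)^{\on{RS}}_{\chi_x})$-linear by Conjecture \ref{c:linearity}. Tensoring these, the total Feigin--Frenkel step is linear over the fiber of $\NR$ at $T\sqcup S\in\on{Ran}_{X_{\on{dR}},S}(k)$, with the source acting through the forgetful maps from opers to local systems. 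On the Kac--Moody side, $\on{ins}_S\circ\on{Loc}_{\on{crit},\check{G},\chi_S}$ is manifestly $\Autreal$-linear at the points of $T\sqcup S$; precomposing with the tame factorizable Satake functor of Theorem \ref{p:upgrade} turns this into $\NR$-linearity. Combining, $\on{Loc}_{G,\on{spec}}$ is linear over the restriction of $\NR$ to $T\sqcup S$, compatibly with the full $\NR$-action on the target.

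By Theorem \ref{t:action'}, the $\NR$-action on $D(\BunN)^{\check{T}_S,\chi_S}$ factors through $\on{QCoh}(\LSN)$, so the forgetful map $\on{Op}_{G,\lambda_S}^{\on{RS}}(U')^{\on{mon-free}}\to\LSN$ equips the target with a canonical $\on{QCoh}(\on{Op}_{G,\lambda_S}^{\on{RS}}(U')^{\on{mon-free}})$-action, compatible via restriction with the fiberwise action used above. Writing $\cA$ for the source of $\on{Loc}_{G,\on{spec}}$, base change for quasi-coherent sheaves identifies
\[
\on{QCoh}(\on{Op}_{G,\lambda_S}^{\on{RS}}(U')^{\on{mon-free}})\simeq\on{QCoh}(\on{Op}_G(U'))\underset{\bigotimes_{x\in T\sqcup S}\on{QCoh}(\on{Op}_G(\overset{\circ}{D}_x))}{\otimes}\cA,
\]
so the universal property of the relative tensor product yields both the factorization $\on{Loc}_{G,\on{spec}}^{\on{glob}}$ and its $\NR$-linearity.

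The hard step is the factorization/globalization input matching the pointwise Feigin--Frenkel linearities at $S$ with the restriction of the $\NR$-action; at the ramified points this is precisely Conjecture \ref{c:linearity} (work in progress with G.~Dhillon), and at the unramified points it is subsumed by Theorem \ref{p:upgrade}. Once these are granted, the remaining arguments are formal manipulations with relative tensor products of module categories and the spectral decomposition of Theorem \ref{t:action'}.
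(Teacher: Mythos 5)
Your proposal misses the key step and replaces it with reasoning that does not work. The paper's proof of this proposition is a direct consequence of Theorem \ref{t:factor} (the factorization of the localization functor through the relative tensor product with $\on{QCoh}(\on{Op}_{G,\lambda_S}^{\on{RS}}(U')^{\on{mon-free}})$), which is itself a special case of Theorem \ref{t:factormoving}. That factorization is emphatically not a formal manipulation with relative tensor products: it is the Beilinson--Drinfeld chiral-homology statement that the global sections of a localized Kac--Moody module are a module over functions on \emph{global} opers, i.e.\ that the localization functor ``sees'' only the support on the space of global sections. The pointwise linearities (\cite[Thm.~1.10]{frenkel2005fusion} at $T$ and Conjecture \ref{c:linearity} at $S$) do not give you this for free; if you have objects of the source that restrict isomorphically to the image of global opers but differ elsewhere, the fact that they have the same image under $\on{Loc}_{G,\on{spec}}$ is exactly what needs to be proved, and it requires the chiral-envelope argument of \cite[§16.6--16.7]{Cstterm}.

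Two further difficulties. First, your appeal to Theorem \ref{t:action'} is problematic because that theorem sits downstream of Proposition \ref{p:km}, whose proof in §\ref{s:4.3} uses Theorem \ref{t:factormoving}. If you allow yourself Theorem \ref{t:factormoving} (implicitly, by citing Theorem \ref{t:action'}) you may as well invoke Theorem \ref{t:factor} directly and be done, which is what the paper does; as a purported independent argument, the route through the spectral decomposition begs the question. Second, the sentence ``the forgetful map $\on{Op}_{G,\lambda_S}^{\on{RS}}(U')^{\on{mon-free}}\to\LSN$ equips the target with a canonical $\on{QCoh}(\on{Op}_{G,\lambda_S}^{\on{RS}}(U')^{\on{mon-free}})$-action'' is incorrect: a map $Z\to W$ induces a symmetric monoidal functor $\on{QCoh}(W)\to\on{QCoh}(Z)$ by pullback, which does \emph{not} make a $\on{QCoh}(W)$-module into a $\on{QCoh}(Z)$-module (the direction is wrong). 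Consequently, the final ``universal property of the relative tensor product'' step cannot produce $\on{Loc}_{G,\on{spec}}^{\on{glob}}$: to factor a $\cC$-linear functor through $\cB\underset{\cC}{\otimes}\cA$ you need a compatible $\cB$-module structure on the target, which you do not have here. The correct approach is the paper's: deduce the proposition from Theorem \ref{t:factor}, and prove the latter by the arguments of §\ref{s:someremarks}, where the factorization is handled by \cite{beilinson2004chiral} while Conjecture \ref{c:linearity} and Theorem \ref{p:upgrade} supply the $\NR$-linearity.
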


\begin{proof}
This follows from Theorem \ref{t:factor} below.
\end{proof}

\subsubsection{} By construction of $\on{Loc}_{G,\on{spec}}^{\on{glob}}$, it follows from Lemma \ref{l:globsectDS} and Proposition \ref{p:coeffwhit} that we have:
\begin{prop}\label{p:globalsect=coeff}
The following diagram commutes:
\[\begin{tikzcd}
	{\mathrm{QCoh}(\on{Op}_{G,\lambda_S}^{\on{RS}}(U')^{\on{mon-free}})} && {D(\BunN)^{\check{T}_S,\chi_{S}}} \\
	\\
	&& {\mathrm{Vect}.}
	\arrow["{\mathrm{Loc}_{G,\mathrm{spec}}^{\mathrm{glob}}}", from=1-1, to=1-3]
	\arrow["{\mathrm{coeff}_{\chi_S}}", from=1-3, to=3-3]
	\arrow["{\Gamma(\on{Op}_{G,\lambda_S}^{\on{RS}}(U')^{\on{mon-free}},-)[\on{dim}\;\mathrm{Bun}_{\check{N}^{-}}^{\omega_X(S)}(X,S)]}"', from=1-1, to=3-3]
\end{tikzcd}\]
\end{prop}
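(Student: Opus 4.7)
The plan is to combine Proposition \ref{p:factoropreg}, Proposition \ref{p:coeffwhit}, and Lemma \ref{l:globsectDS}, and to assemble the local pieces via the factorization structure of the minus Drinfeld--Sokolov reduction.

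First I would reduce to the case of a pure tensor: by the construction of $\on{Loc}_{G,\on{spec}}^{\on{glob}}$ in Proposition \ref{p:factoropreg}, the diagram of the proposition commutes after precomposition with $r^{*}$ if and only if the outer hexagon (obtained by expanding $\on{Loc}_{G,\on{spec}}^{\on{glob}}\circ r^{*}$ as the unfactored composition $\on{Loc}_{G,\on{spec}}$) commutes. It thus suffices to evaluate both sides on a pure tensor
\[
\bigotimes_{x'\in T} F_{x'}\otimes \bigotimes_{x\in S} N_{x}
\]
and compare the outputs.

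Next I would rewrite the left-hand composition $\on{coeff}_{\chi_{S}}\circ \on{Loc}_{G,\on{spec}}$ using Proposition \ref{p:coeffwhit} (augmented, via factorization at the unramified marked points of $T$, to insert the vacuum only on $X\setminus (T\sqcup S)$). This identifies $\on{coeff}_{\chi_{S}}\circ \on{Loc}_{G,\on{spec}}$ applied to the pure tensor with $\Psi^{-}_{\on{fact}}[\on{dim}\,\on{Bun}_{\check{N}^{-}}^{\omega_{X}(S)}(X,S)]$ applied to the factorizable Kac--Moody module with insertions $\bM_{\lambda_{x}}\underset{A_{-\lambda_{x}-\check{\rho}}}{\otimes} N_{x}$ at each $x\in S$ and $\on{ind}_{\check{\fg}[[t]]}^{\hat{\check{\fg}}_{\on{crit}}}(F_{x'}|_{0})$ (the Feigin--Frenkel image of $F_{x'}$) at each $x'\in T$. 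By factorization of $\Psi^{-}_{\on{fact}}$, this breaks up as the tensor product of the local Drinfeld--Sokolov reductions at each $x\in T\sqcup S$, together with the vacuum contribution away from $T\sqcup S$.

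For each $x\in S$, Lemma \ref{l:globsectDS} identifies the local factor with $\Gamma(\on{Op}_{G}(\overset{\circ}{D}_{x})^{\on{RS},\varpi(-\lambda_{x}-\check{\rho})},N_{x})$. For each $x'\in T$, the same argument used in the proof of Lemma \ref{l:globsectDS}, applied in the monodromy-free setting via Feigin--Frenkel duality $\cW_{\check{\fg},\on{crit}}\simeq \on{Fun}(\on{Op}_{G}(\overset{\circ}{D}_{x'}))$, identifies the local factor with $\Gamma(\on{Op}_{G}(\overset{\circ}{D}_{x'})^{\on{mon-free}}, F_{x'})$. The vacuum contribution away from $T\sqcup S$ matches (by the chiral/factorization structure of opers) the coordinate ring of $\on{Op}_{G}(X\setminus (T\sqcup S))$ relative to the local oper spaces at the marked points.

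Finally, assembling these local pieces, and using that $\on{Op}_{G,\lambda_{S}}^{\on{RS}}(U')^{\on{mon-free}}$ is obtained as a fiber product over the local oper spaces at $T\sqcup S$, one recognizes the total output as $\Gamma(\on{Op}_{G,\lambda_{S}}^{\on{RS}}(U')^{\on{mon-free}}, r^{*}(\bigotimes F_{x'}\otimes\bigotimes N_{x}))[\on{dim}\,\on{Bun}_{\check{N}^{-}}^{\omega_{X}(S)}(X,S)]$, as required.

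The main obstacle will be this last assembly step: one must verify that the factorization of $\Psi^{-}_{\on{fact}}$ and of the Feigin--Frenkel/BRST picture correctly reproduces the global sections of the fiber product $\on{Op}_{G,\lambda_{S}}^{\on{RS}}(U')^{\on{mon-free}}$. In essence, this is the (by now standard) statement that opers form a chiral/factorization algebra on $X$ and that global sections of a pullback along a pure tensor of local insertions factors as the corresponding tensor product, but checking this cleanly in the presence of both regular-singular conditions at $S$ and monodromy-freeness at $T$ is where care is needed.
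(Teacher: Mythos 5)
Your proposal follows essentially the same route as the paper, which in fact offers no written proof beyond the assertion that the statement follows "by construction of $\on{Loc}_{G,\on{spec}}^{\on{glob}}$" from Lemma \ref{l:globsectDS} and Proposition \ref{p:coeffwhit}; your reduction to pure tensors via $r^*$, the augmentation of Proposition \ref{p:coeffwhit} to allow Kazhdan--Lusztig insertions at $T$, and the final assembly via the chiral/factorization structure of opers are exactly the details the paper leaves implicit. The step you flag as requiring care (matching the factorized Drinfeld--Sokolov output with global sections of the fiber product $\on{Op}_{G,\lambda_S}^{\on{RS}}(U')^{\on{mon-free}}$) is indeed the content of the Beilinson--Drinfeld construction underlying Theorem \ref{t:factor}, so your account is a faithful, more explicit version of the intended argument.
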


\subsubsection{}\label{s:simplifynot} Recall that the Feigin-Frenkel isomorphism provides actions (see \cite[§4.8]{Cstterm}):
\[
\on{QCoh}(\on{Op}_G(\overset{\circ}{D}_x))\curvearrowright \hat{\check{{\mathfrak{g}}}}_{\on{crit},x}\on{-mod}^{\check{I}_x,\chi_x}, \on{KL}_{\on{crit},\check{G},x}.
\]

\noindent Moreover, the action
\[
\on{QCoh}(\on{Op}_G(\overset{\circ}{D}_x))\curvearrowright \on{KL}_{\on{crit},\check{G},x}
\]

\noindent factors as 
\[
\on{QCoh}(\on{Op}_G(\overset{\circ}{D}_x))\to \on{QCoh}(\on{Op}_G(\overset{\circ}{D}_x)^{\on{mon-free}}) \curvearrowright \on{KL}_{\on{crit},\check{G},x}.
\]

\subsubsection{}\label{s:gengoodsh} Let
\[
\hat{\check{{\mathfrak{g}}}}_{\on{crit},x}\on{-mod}^{\check{I}_x,\chi_x}_{\lambda_x}:=\on{Hom}_{\on{QCoh}(\on{Op}_G(\overset{\circ}{D}_x))\textbf{-mod}}\big(\on{QCoh}(\on{Op}_G(\overset{\circ}{D}_x)^{\on{RS},\varpi(-\lambda_x-\check{\rho})}),\hat{\check{{\mathfrak{g}}}}_{\on{crit},x}\on{-mod}^{\check{I}_x,\chi_x}\big).
\]

\noindent Pullback along $\iota_{\lambda}: \on{Op}_G(\overset{\circ}{D}_x)^{\on{RS},\varpi(-\lambda_x-\check{\rho})}\to \on{Op}_G(\overset{\circ}{D}_x)$ induces a functor
\begin{equation}\label{eq:pullbackoprsop}
\hat{\check{{\mathfrak{g}}}}_{\on{crit},x}\on{-mod}^{\check{I}_x,\chi_x}_{\lambda_x}\to \hat{\check{{\mathfrak{g}}}}_{\on{crit},x}\on{-mod}^{\check{I}_x,\chi_x}.
\end{equation}

\noindent We claim that the functors (\ref{eq:pullbackoprsop}) generate the target under colimits, where $\lambda_x$ ranges over all coweights lying over $\chi_x\in\ft/X_{\bullet}(T)$. Indeed, it follows by \cite[Cor. 13.3.2]{frenkel2006local} that the Verma module $\bM_{\lambda_x}$ is in the image of (\ref{eq:pullbackoprsop})

\subsubsection{} To simplify notation, we write
\[
\on{Op}_{G,\lambda_S,T}^{\on{RS}}(\overset{\circ}{D})^{\on{mon-free}}:=\underset{x'\in T}{\prod}\mathrm{Op}_{G}(\overset{\circ}{D}_{x'})^{\mathrm{mon-free}}\times\underset{x\in S}{\prod}\mathrm{Op}_{G}(\overset{\circ}{D}_x)^{\mathrm{RS},\varpi(-\lambda_x-\check{\rho})}.
\]

\noindent Thus, we have an action
\[
\on{QCoh}(\on{Op}_{G,\lambda_S,T}^{\on{RS}}(\overset{\circ}{D})^{\on{mon-free}})\curvearrowright \underset{x'\in T}{\bigotimes} \on{KL}_{\on{crit},\check{G},x'}\otimes \underset{x\in S}{\bigotimes} \hat{\check{{\mathfrak{g}}}}_{\on{crit},x}\on{-mod}^{\check{I}_x,\chi_x}_{\lambda_x}.
\]

\subsubsection{} Denote by $\on{Loc}_{\check{G}}$ the localization map
\[
\on{Loc}_{\check{G}}:\bigotimes_{x'\in T} \on{KL}_{\on{crit},\check{G},x'}\otimes \bigotimes_{x\in S} \hat{\check{{\mathfrak{g}}}}_{\on{crit},x}\on{-mod}^{\check{I}_x,\chi_{x}}\to D(\BunN)^{\check{T}_S,\chi_S}.
\]

\noindent We have the following theorem, which is a particular case of Theorem \ref{t:factormoving} below.

\begin{thm}\label{t:factor}
The localization functor
\[
\underset{x'\in T}{\bigotimes} \on{KL}_{\on{crit},\check{G},x'}\otimes \underset{x\in S}{\bigotimes} \hat{\check{{\mathfrak{g}}}}_{\on{crit},x}\on{-mod}^{\check{I}_x,\chi_x}_{\lambda_x}\to \underset{x'\in T}{\bigotimes} \on{KL}_{\on{crit},\check{G},x'}\otimes \underset{x\in S}{\bigotimes} \hat{\check{{\mathfrak{g}}}}_{\on{crit},x}\on{-mod}^{\check{I}_x,\chi_x}\to
\]
\[
\to D(\BunN)^{\check{T}_S,\chi_S}
\]

\noindent factors through a functor
\[
\underset{x'\in T}{\bigotimes} \on{KL}_{\on{crit},\check{G},x'}\otimes \underset{x\in S}{\bigotimes} \hat{\check{{\mathfrak{g}}}}_{\on{crit},x}\on{-mod}^{\check{I}_x,\chi_x}_{\lambda_x}\to 
\]
\[
\to \on{QCoh}(\mathrm{Op}_{G,\lambda_S}^{\mathrm{RS}}(U')^{\mathrm{mon-free}})\underset{\on{QCoh}(\on{Op}_{G,\lambda_S,T}^{\on{RS}}(\overset{\circ}{D})^{\on{mon-free}})}{\otimes}\underset{x'\in T}{\bigotimes} \on{KL}_{\on{crit},\check{G},x'}\otimes \underset{x\in S}{\bigotimes} \hat{\check{{\mathfrak{g}}}}_{\on{crit},x}\on{-mod}^{\check{I}_x,\chi_x}_{\lambda_x}
\]
\[
\to D(\BunN)^{\check{T}_S,\chi_S}.
\]

\noindent Moreover, the resulting functor
\[
\on{QCoh}(\mathrm{Op}_{G,\lambda_S}^{\mathrm{RS}}(U')^{\mathrm{mon-free}})\underset{\on{QCoh}(\on{Op}_{G,\lambda_S,T}^{\on{RS}}(\overset{\circ}{D})^{\on{mon-free}})}{\otimes}\underset{x'\in T}{\bigotimes} \on{KL}_{\on{crit},\check{G},x'}\otimes \underset{x\in S}{\bigotimes} \hat{\check{{\mathfrak{g}}}}_{\on{crit},x}\on{-mod}^{\check{I}_x,\chi_x}_{\lambda_x}
\]
\[
\to D(\BunN)^{\check{T}_S,\chi_S}
\]

\noindent is $\NR$-linear, where the latter acts on the left-hand side by acting on $\on{QCoh}(\mathrm{Op}_{G,\lambda_S}^{\mathrm{RS}}(U')^{\mathrm{mon-free}})$.

\end{thm}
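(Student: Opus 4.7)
The plan is to prove the theorem by combining the standard factorization structure of the Kac-Moody localization at critical level (due to Beilinson--Drinfeld) with the spectral/central compatibility results reviewed in the preceding sections. The theorem has two assertions: a factorization (descent through the tensor product over $\on{QCoh}(\on{Op}_{G,\lambda_S,T}^{\on{RS}}(\overset{\circ}{D})^{\on{mon-free}})$), and $\NR$-linearity. I would handle them in turn.

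For the factorization step, I would first note that at each $x' \in T$, the pointwise Feigin--Frenkel action of $\on{QCoh}(\on{Op}_G(\overset{\circ}{D}_{x'}))$ on $\on{KL}_{\on{crit},\check{G},x'}$ factors through $\on{QCoh}(\on{Op}_G(\overset{\circ}{D}_{x'})^{\on{mon-free}})$ cf.~\S\ref{s:simplifynot}, and at each $x \in S$, by the definition in \S\ref{s:gengoodsh}, the category $\hat{\check{\fg}}_{\on{crit},x}\on{-mod}^{\check{I}_x,\chi_x}_{\lambda_x}$ is tautologically $\on{QCoh}(\on{Op}_G(\overset{\circ}{D}_x)^{\on{RS},\varpi(-\lambda_x-\check{\rho})})$-linear. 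So the action of $\on{QCoh}(\on{Op}_{G,\lambda_S,T}^{\on{RS}}(\overset{\circ}{D})^{\on{mon-free}})$ on the source of localization exists by construction. The content of the factorization claim is then that this action, viewed through Kac-Moody localization on $\BunN$, descends to an action of the global oper stack. This is BD's classical argument (adapted by Frenkel--Gaitsgory): the center of $\hat{\check{\fg}}_{\on{crit}}$ forms a factorization algebra whose horizontal sections over $X$ recover functions on the global oper stack, and Kac-Moody localization intertwines the pointwise action with the factorization action on horizontal sections. One has to verify that the support conditions at ramified/punctured points cut out exactly $\on{Op}_{G,\lambda_S}^{\on{RS}}(U')^{\on{mon-free}}$, which follows from the Cartesian-square definition of this stack and the fact that the central support of $\bM_{\lambda_x}$ is $\on{Op}_G(\overset{\circ}{D}_x)^{\on{RS},\varpi(-\lambda_x-\check{\rho})}$ by \cite{frenkel2006local}.

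For the $\NR$-linearity, I would first recall that the action of $\NR$ on $D(\BunN)^{\check{T}_S,\chi_S}$ was constructed (via Corollary \ref{c:specact} and Theorem \ref{p:upgrade}) to factorize through the pointwise monoidal functors $\on{QCoh}(\on{LocSys}_G(\overset{\circ}{D}_x)^{\on{RS}}_{\chi_x}) \to D(\check{I},\chi_x\backslash \check{G}(K_x)/\check{I},\chi_x)$ of Corollary \ref{c:spec1} at $x \in S$, and the naive geometric Satake functor $\on{Rep}(G) \to D(\check{G}(O_{x'})\backslash \check{G}(K_{x'})/\check{G}(O_{x'}))$ at $x' \in T$. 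Thus the $\NR$-linearity reduces to checking pointwise linearities and their factorization compatibility. At unramified points, the relevant compatibility is the Frenkel--Gaitsgory theorem \cite[Thm.~5.4]{frenkel2005fusion}: the pullback $\on{QCoh}(\on{Op}_G(\overset{\circ}{D}_{x'})^{\on{mon-free}}) \to \on{KL}_{\on{crit},\check{G},x'}$ is $\on{Rep}(G)$-linear via the forgetful map to $\bB G$. At ramified points, this is exactly Conjecture \ref{c:linearity}.

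The main obstacle is the ramified-point linearity, namely Conjecture \ref{c:linearity}, which at present is work in progress joint with Dhillon. Assuming this input, the factorization-compatibility upgrade (from pointwise to the Ran-space presentation underlying $\NR$) is a formal consequence of the fact that both the oper realization of the central chiral algebra and the construction in Theorem \ref{p:upgrade} are factorization-theoretic: both sides of the would-be commuting square are determined by their restrictions to $U$ and to $S$, where the compatibility is the content of \cite[Thm.~5.4]{frenkel2005fusion} and Conjecture \ref{c:linearity}, respectively. A secondary technical issue is tracking the geometric twist by $\rho(\omega_X(S))$ consistently between the Feigin--Frenkel identification and the tamely ramified Satake functor, but this is bookkeeping given the setup of \S\ref{s:comp}.
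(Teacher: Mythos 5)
Your proposal matches the paper's own treatment: the paper derives Theorem \ref{t:factor} as a special case of the moving-point version (Theorem \ref{t:factormoving}), whose factorization assertion is attributed to the Beilinson--Drinfeld chiral-algebra argument as in \cite[\S 16.6-16.7]{Cstterm}, and whose $\NR$-linearity is obtained by gluing the unramified linearity of \cite[Cor. 16.5.9]{Cstterm} with Conjecture \ref{c:linearity} at the points of $S$ via Theorem \ref{p:upgrade} --- exactly the two-step reduction you describe. You correctly identify Conjecture \ref{c:linearity} as the one non-formal input at the ramified points, which is also how the paper presents it.
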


\subsubsection{} We now formulate a moving-point version of the above theorem.

Consider the $(X_S^I)_{\on{dR}}$-scheme $\on{Op}_{G,\lambda_S}^{\on{RS}}(\overset{\circ}{D})_I^{\on{mon-free}}$ parametrizing $(\underline{x},\sigma)$, where:
\begin{itemize}
    \item $\underline{x}\in (X_S^I)_{\on{dR}}$.

    \item $\sigma$ is a $G$-oper on $\overset{\circ}{D}_{\underline{x}}$ that lies in $\mathrm{Op}_{G}(\overset{\circ}{D}_{x'})^{\on{mon-free}}$ when restricted to $\overset{\circ}{D}_{x'}$ for each $x'\in \underline{x}-S$, and that lies in $\mathrm{Op}_{G}(\overset{\circ}{D}_{x})^{\on{RS},\varpi(-\lambda_x-\check{\rho})}$ when restricted to $\overset{\circ}{D}_x$ for each $x\in S$.
\end{itemize}

\subsubsection{} We let $\on{Op}_{G,\lambda_S}^{\on{RS}}(X-\underline{x})_I^{\on{mon-free}}$ be the corresponding global space. Namely, it parametrizes pairs $(\underline{x},\sigma)$, where now $\sigma$ is an oper on $X-\underline{x}$ that satisfies the same requirements as above when restricted to each punctured disk around $\underline{x}$.

\subsubsection{} By construction, we have a natural map
\[
\on{Op}_{G,\lambda_S}^{\on{RS}}(X-\underline{x})_I^{\on{mon-free}}\to \on{Op}_{G,\lambda_S}^{\on{RS}}(\overset{\circ}{D})_I^{\on{mon-free}}
\]

\noindent over $(X_S^I)_{\on{dR}}$ by restricting the oper along $\overset{\circ}{D}_{\underline{x}}\to X-\underline{x}$. Moreover, from the map
\[
\on{Op}_{G,\lambda_S}^{\on{RS}}(X-\underline{x})_I^{\on{mon-free}}\to \on{LocSys}_G(U,\chi_S),
\]

\noindent we obtain an action
\begin{equation}\label{eq:actopI}
\NR\to \on{QCoh}(\LSN)\curvearrowright \on{QCoh}(\on{Op}_{G,\lambda_S}^{\on{RS}}(X-\underline{x})_I^{\on{mon-free}}).
\end{equation}

\subsubsection{} Denote by $\on{Op}_G(\overset{\circ}{D})_I$ the relative ind-affine ind-scheme over $(X_S^I)_{\on{dR}}$ parametrizing a point $\underline{x}\in (X_S^I)_{\on{dR}}$ and an oper on the punctured disk around $\underline{x}$.

\subsubsection{} To reduce notation, we write
\[
\hat{\check{\fg}}_{\on{crit},I}\on{-mod}^{\check{I}_S, \chi_S}:=D(X_S^I)\underset{D(\on{Ran}_{X,S})}{\otimes} \hat{\check{\fg}}_{\on{crit},\on{Ran}_{X,S}}\on{-mod}^{(\check{I}_S)_{\on{Ran}_{X,S}}, \chi_S}.
\]

\noindent Similar to above, \cite[§4]{Cstterm} provides an action
\begin{equation}\label{eq:actopoverI}
\on{QCoh}(\on{Op}_G(\overset{\circ}{D})_I)\curvearrowright \hat{\check{\fg}}_{\on{crit},I}\on{-mod}^{\check{I}_S, \chi_S}.
\end{equation}

\noindent We let
\[
\hat{\check{\fg}}_{\on{crit},I,\lambda_S}\on{-mod}^{\check{I}_S, \chi_S}:=\on{Hom}_{\on{QCoh}(\on{Op}_G(\overset{\circ}{D})_I)\textbf{-mod}}\big(\on{QCoh}(\on{Op}_{G,\lambda_S}^{\on{RS}}(\overset{\circ}{D})_I^{\on{mon-free}}),\hat{\check{\fg}}_{\on{crit},I}\on{-mod}^{\check{I}_S, \chi_S}\big).
\]

\subsubsection{} Pullback along
\[
\on{Op}_{G,\lambda_S}^{\on{RS}}(\overset{\circ}{D})_I^{\on{mon-free}}\to \on{Op}_G(\overset{\circ}{D})_I
\]

\noindent induces a forgetful functor
\[
\hat{\check{\fg}}_{\on{crit},I,\lambda_S}\on{-mod}^{\check{I}_S, \chi_S}\to \hat{\check{\fg}}_{\on{crit},I}\on{-mod}^{\check{I}_S, \chi_S}.
\]

\subsubsection{} The following is a moving-point version of Theorem \ref{t:factor}. It is a tamely ramified analogue of \cite[Thm. 16.1.7+Cor. 16.5.9]{Cstterm}. Given Conjecture \ref{c:linearity}, the same proof goes through mutatis mutandis, as we explain in §\ref{s:someremarks} below.
\begin{thm}\label{t:factormoving}
The localization functor
\[
\hat{\check{\fg}}_{\on{crit},I,\lambda_S}\on{-mod}^{\check{I}_S, \chi_S}\to \hat{\check{\fg}}_{\on{crit},I}\on{-mod}^{\check{I}_S, \chi_S}\to D(\BunN)^{\check{T}_S,\chi_S}
\]

\noindent factors through a functor
\[
\hat{\check{\fg}}_{\on{crit},I,\lambda_S}\on{-mod}^{\check{I}_S, \chi_S}\to \on{QCoh}(\on{Op}_{G,\lambda_S}^{\on{RS}}(X-\underline{x})_I^{\on{mon-free}})\underset{\on{QCoh}(\on{Op}_{G,\lambda_S}^{\on{RS}}(\overset{\circ}{D})_I^{\on{mon-free}})}{\otimes} \hat{\check{\fg}}_{\on{crit},I,\lambda_S}\on{-mod}^{\check{I}_S, \chi_S}\to 
\]
\[
\to D(\BunN)^{\check{T}_S,\chi_S}.
\]

\noindent Moreover, the resulting functor
\[
\on{QCoh}(\on{Op}_{G,\lambda_S}^{\on{RS}}(X-\underline{x})_I^{\on{mon-free}})\underset{\on{QCoh}(\on{Op}_{G,\lambda_S}^{\on{RS}}(\overset{\circ}{D})_I^{\on{mon-free}})}{\otimes} \hat{\check{\fg}}_{\on{crit},I,\lambda_S}\on{-mod}^{\check{I}_S, \chi_S}\to D(\BunN)^{\check{T}_S,\chi_S}
\]

\noindent is $\NR$-linear, where the latter acts on the left-hand side by acting on $\on{QCoh}(\on{Op}_{G,\lambda_S}^{\on{RS}}(X-\underline{x})_I^{\on{mon-free}})$ via (\ref{eq:actopI}).
\end{thm}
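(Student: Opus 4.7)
The plan is to follow the argument of \cite[Thm.~16.1.7, Cor.~16.5.9]{Cstterm}, substituting Conjecture~\ref{c:linearity} for the role played by \cite{frenkel2005fusion} at the ramified points. As a first step, I would observe that by construction the localization functor
\[
\on{Loc}_{\on{crit}, \check G, S} : \hat{\check{\fg}}_{\on{crit}, I}\on{-mod}^{\check I_S,\chi_S} \to D(\BunN)^{\check T_S,\chi_S}
\]
intertwines the strong action of $\check G(K)_{\on{Ran}_{X,S}}$ on the source with the Hecke action of $\Autreal$ on the target (after passing to $(\check I_S, \chi_S)$-invariants). Combined with Theorem~\ref{p:upgrade}, this makes $\on{Loc}_{\on{crit},\check G,S}$ into a $\NR$-linear functor over $(X_S^I)_{\dR}$.

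Next, I would check that the $\on{QCoh}(\on{Op}_G(\overset{\circ}{D})_I)$-action on $\hat{\check{\fg}}_{\on{crit},I}\on{-mod}^{\check I_S,\chi_S}$ from (\ref{eq:actopoverI}) matches the restriction of the $\NR$-action along
\[
\on{QCoh}(\on{Op}_{G,\lambda_S}^{\on{RS}}(\overset{\circ}{D})_I^{\on{mon-free}}) \longrightarrow \on{QCoh}((\on{LocSys}_G(\overset{\circ}{D})^{\on{RS}}_{\chi_S})_I) \subset \NR_{|(X_S^I)_{\dR}}.
\]
Away from $S$ this is \cite[Thm.~1.10]{frenkel2005fusion} (Feigin--Frenkel vs.\ Satake), and at each point of $S$ this is exactly Conjecture~\ref{c:linearity}. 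The moving-point statement then follows by the same factorization/limiting argument used in \cite[§16.5]{Cstterm}, since all categories in play are built as limits/colimits of their pointwise counterparts over the stratification of $(X_S^I)_{\dR}$.

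The key geometric input is that the square
\[
\begin{tikzcd}
\on{Op}_{G,\lambda_S}^{\on{RS}}(X - \underline{x})_I^{\on{mon-free}} \ar[r] \ar[d] & \on{Op}_{G,\lambda_S}^{\on{RS}}(\overset{\circ}{D})_I^{\on{mon-free}} \ar[d] \\
\on{LocSys}_G(U,\chi_S)_{I} \ar[r] & (\on{LocSys}_G(\overset{\circ}{D})^{\on{RS}}_{\chi_S})_{I}
\end{tikzcd}
\]
is Cartesian: a $G$-oper on $X - \underline{x}$ with the prescribed behavior at $\underline{x}$ is the same data as a local system on $X - \underline{x}$ together with an oper structure on each formal punctured neighborhood that is compatible with its restriction. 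Combined with Theorem~\ref{t:action'} (which says the $\NR$-action on $D(\BunN)^{\check T_S,\chi_S}$ factors through $\on{QCoh}(\LSN)$, and hence through $\on{QCoh}(\on{LocSys}_G(U,\chi_S)_I \otimes (X_S^I)_{\dR})$ after base change), pulling back along the bottom edge of the square shows that the $\on{QCoh}(\on{Op}_{G,\lambda_S}^{\on{RS}}(\overset{\circ}{D})_I^{\on{mon-free}})$-action on the image of $\on{Loc}_{\on{crit},\check G, S}$ factors canonically through $\on{QCoh}(\on{Op}_{G,\lambda_S}^{\on{RS}}(X - \underline{x})_I^{\on{mon-free}})$. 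The factorization of the functor through the stated relative tensor product, and its $\NR$-linearity through this refinement, are then formal consequences of the universal property of $\otimes$ in $\on{DGCat}_{\on{cont}}$.

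The main obstacle is verifying that Conjecture~\ref{c:linearity} (which, for each individual $x \in S$, upgrades the functor of (\ref{eq:critfle}) to a $\on{QCoh}(\on{LocSys}_G(\overset{\circ}{D}_x)^{\on{RS}}_{\chi_x})$-linear one) interacts correctly with the factorization structure when one lets the away-from-$S$ points collide or approach the ramification locus. This is exactly the issue handled in \cite[§16]{Cstterm} in the unramified case; once Conjecture~\ref{c:linearity} is established, no new idea beyond that reference is needed, but a careful bookkeeping of the unitality and factorization data for $\hat{\check{\fg}}_{\on{crit},\on{Ran}_{X,S}}\on{-mod}^{(\check I_S)_{\on{Ran}_{X,S}},\chi_S}$ as a factorization module over $\hat{\check{\fg}}_{\on{crit},\on{Ran}_X}\on{-mod}$ is required. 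Since (\ref{s:someremarks} of) the paper is devoted precisely to these remarks, I would defer the technical verification there.
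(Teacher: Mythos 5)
Your outline of the $\NR$-linearity part is essentially right: linearity away from $S$ comes from the Feigin--Frenkel/Satake compatibility (\cite[Cor.~16.5.9]{Cstterm}), linearity at the points of $S$ is Conjecture~\ref{c:linearity}, and the gluing data of Theorem~\ref{p:upgrade} assembles these. That matches §\ref{s:someremarks}.

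However, your argument for the first part of the theorem — that the localization functor factors through the relative tensor product — has two genuine problems. First, it is circular. You invoke Theorem~\ref{t:action'} (the spectral decomposition) to show the $\on{QCoh}(\on{Op}_{G,\lambda_S}^{\on{RS}}(\overset{\circ}{D})_I^{\on{mon-free}})$-action on the localization image factors through the global opers. But the logical order in the paper is Theorem~\ref{t:factormoving} $\Rightarrow$ Proposition~\ref{p:km} $\Rightarrow$ Proposition~\ref{p:embe} $\Rightarrow$ Theorem~\ref{t:action'}; you cannot use the spectral decomposition as an input here, since proving it is exactly what Theorem~\ref{t:factormoving} is for. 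Second, the square you call Cartesian is not: an oper on $X-\underline{x}$ is a global oper structure on all of $X-\underline{x}$, which is strictly more data than a local system on $X-\underline{x}$ together with oper structures on the formal punctured disks compatible with restriction. (If it were Cartesian, every local system on $X-\underline{x}$ carrying local oper data would admit a global oper structure, which is false.) The factorization statement is instead a consequence of the Beilinson--Drinfeld chiral homology machinery for the Feigin--Frenkel center — this is what makes it a ``very general consequence'' as the paper says, valid even with wild ramification, and is precisely the content of \cite[\S 16.6--16.7]{Cstterm} that the paper adapts. That argument is a statement about how chiral homology interacts with the center of the Kac--Moody chiral algebra and does not go through any moduli of local systems, so it can be run before the spectral decomposition is available.
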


\subsubsection{}\label{s:someremarks}

Let us make a few remarks about the construction in \cite{Cstterm} and how it adapts to Theorem \ref{t:factormoving}.

\begin{itemize}
    \item The first part of Theorem \ref{t:factormoving}, the fact that the localization functor factors as stated, is a very general consequence of \cite{beilinson2004chiral}. This is evident from the arguments in \cite[§16.6-16.7]{Cstterm}. In fact, the analogous statement holds true in the presence of wild ramification with the same proof.
\end{itemize}

\begin{itemize}
    \item The result \cite[Cor. 16.5.9]{Cstterm} gives $\NR$-linearity away from the ramified points $S$. Conjecture \ref{c:linearity} gives linearity at the ramified points. It then follows from the gluing construction of Theorem \ref{p:upgrade} and the action (\ref{eq:actopoverI}) that the functor in Theorem \ref{t:factormoving} is $\NR$-linear.
\end{itemize}

\vspace{2mm}

\noindent \textbf{Proof of Proposition \ref{p:km}}

\noindent 

We need to show that the localization functor
\[
\on{Loc}_{\on{crit},\check{G},\chi_S}: \hat{\check{\fg}}_{\on{crit},\on{Ran}_{X,S}}\on{-mod}^{(\check{I}_S)_{\on{Ran}_{X,S}}, \chi_S}\to D(\BunN)^{\check{T}_S,\chi_S}
\]

\noindent lands in the full subcategory
\[
\on{QCoh}(\LSN)\underset{\NR}{\otimes} D(\BunN)^{\check{T}_S,\chi_S}\into D(\BunN)^{\check{T}_S,\chi_S}.
\]

\noindent It suffices to show this for the restriction of the localization functor to the categories
\[
\hat{\check{\fg}}_{\on{crit},I}\on{-mod}^{\check{I}_S, \chi_S},
\]

\noindent for each $I\in\on{fSet}_S$. By §\ref{s:gengoodsh}, the functors 
\[
\hat{\check{\fg}}_{\on{crit},I,\lambda_S}\on{-mod}^{\check{I}_S, \chi_S}\to \hat{\check{\fg}}_{\on{crit},I}\on{-mod}^{\check{I}_S,\chi_S}
\]

\noindent generate the target under colimits. Thus, we may further restrict our attention to the category $\hat{\check{\fg}}_{\on{crit},I,\lambda_S}\on{-mod}^{\check{I}_S, \chi_S}$.

The assertion that the localization functor
\[
\hat{\check{\fg}}_{\on{crit},I,\lambda_S}\on{-mod}^{\check{I}_S, \chi_S}\to D(\BunN)^{\check{T}_S,\chi_S}
\]

\noindent lands in the full subcategory
\[
\on{QCoh}(\LSN)\underset{\NR}{\otimes} D(\BunN)^{\check{T}_S,\chi_S}
\]

\noindent is now an immediate consequence of Theorem \ref{t:factormoving}.

\vspace{2mm}

\qed

\subsection{Construction of Hecke eigensheaves}\label{s:Heckeeigensheaf}
We now prove the existence of a Hecke eigensheaf associated to any irreducible local system on an open subvariety $U$ of $X$ with regular singularities. Assume first that our group $G$ is adjoint. The case of an arbitrary connected reductive group reduces to the adjoint case as argued in §\ref{s:adjoint}.

\subsubsection{} Fix $x_1,...,x_n\in X(k)$ and eigenvalues $\chi_{x_1},...,\chi_{x_n}\in\ft//\widetilde{W}^{\on{aff}}$. Let $\sigma\in\on{LocSys}_G(U,\chi_S)^{\on{irr}}(k)$ be an irreducible local system. By \cite[Cor. 1.1]{arinkin2016irreducible}, there exists $T=\lbrace x_1',...,x_m'\rbrace\subset U(k)$ such that $\sigma$ admits a lift to $\on{Op}_G(U')$ with $U'=U-T$.\footnote{The assumption that $G$ be adjoint is crucial here.} Since $\sigma$ has regular singularities, \cite[Prop. 2.4.1]{frenkel2006local} says that the oper necessarily defines an element of $\on{Op}_G(\overset{\circ}{D}_x)^{\on{RS}}$ when restricted to each $x\in S$.

Thus, we may find coweights $\lambda_x\in \ft$ lying over $\chi_x$ such that $\sigma$ admits a lift to $\on{Op}_{G,\lambda_S}^{\on{RS}}(U')^{\on{mon-free}}$.

\subsubsection{} Denote by $\delta_{\sigma}\in \on{QCoh}(\on{Op}_{G,\lambda_S}^{\on{RS}}(U')^{\on{mon-free}})$ the delta sheaf corresponding to some fixed lift of $\sigma$. Let
\[
\cF_{\sigma}:=\on{Loc}_{G,\on{spec}}^{\on{glob}}(\delta_{\sigma})\in D(\BunN)^{\check{T}_S,\chi_S},
\]

\noindent where $\on{Loc}_{G,\on{spec}}^{\on{glob}}$ is the functor given in §\ref{s:diagram}. Recall that $\on{Loc}_{G,\on{spec}}^{\on{glob}}$ is $\on{QCoh}(\LSN)$-linear. In particular, $\cF_{\sigma}$ is a Hecke eigensheaf with eigenvalue $\sigma$. Moreover, it follows from Proposition \ref{p:globalsect=coeff} that $\cF_{\sigma}$ is Whittaker-normalized in the sense that
\[
\on{coeff}_{\chi_S}(\cF_{\sigma})[-\on{dim}\;\mathrm{Bun}_{\check{N}^{-}}^{\omega_X(S)}(X,S)]=k\in \on{Vect}^{\heartsuit}.
\]
\begin{rem}
Suppose $\chi_S=0$, and let $\on{Bun}_{\check{G},\check{B}}(X,S)$ be the moduli of $\check{G}$-bundles with a reduction to $\check{B}$ at $S$. It is not difficult to show that $\on{coeff}_{\chi_S}$ kills anything pulled back via the forgetful map
\[
\on{Bun}_{\check{G},\check{B}}(X,S)\to \on{Bun}_{\check{G}}(X).
\]

\noindent Suppose our local system $\sigma$ above is unramified; i.e., monodromy-free. Then we have just constructed a Whittaker-normalized Hecke eigensheaf with $\sigma$ as eigenvalue. However, we also know from the unramified story that there is a Hecke eigensheaf corresponding to $\sigma$ on $\on{Bun}_{\check{G}}(X)$. Since the latter is killed by $\on{coeff}_{\chi_S}$, these two Hecke eigensheaves have to be different (note, however, that we could have chosen a Whitaker functional that does not kill unramified Hecke eigensheaves, see Remark \ref{r:glc}). As such, in the tamely ramimified setting, one can have different Hecke eigensheaves associated to the same (irreducible) local system.\footnote{Another way to see this is the following. For convenience, suppose $S=\lbrace x\rbrace$ is a single point. Ignoring temperedness issues, one is supposed to have an equivalence of categories $D(\on{Bun}_{\check{G},\check{B}}(X,S))\simeq \on{QCoh}(\on{LocSys}_G(U)\underset{\on{LocSys}_G(\overset{\circ}{D}_x)}{\times} \fn/B)$. Thus, the existence of multiple Hecke eigensheaves simply reflects the fact that the map $\fn/B\to \cN/G$ is not an embedding.}
\end{rem}

\noindent This concludes the proof of Theorem \ref{t:eigen} in the adjoint case.

\subsubsection{Reduction to the adjoint case.}\label{s:adjoint} We will now deduce the existence of Hecke eigensheaves associated to irreducible regular singular local systems by reduction to the adjoint case above. 

Let $G$ be an arbitrary reductive algebraic group, and let $H\to G$ be an isogeny. This induces an isogeny of dual groups $\check{G}\to \check{H}$. To distinguish, we write 
\begin{equation}\label{eq:charsheaves}
(\check{I}_{G,S})_{\on{Ran}_{X,S}}, (\check{I}_{H,S})_{\on{Ran}_{X,S}}
\end{equation}

\noindent for the subgroups of $\check{G}(O)_{\on{Ran}_{X,S}}, \check{H}(O)_{\on{Ran}_{X,S}}$, respectively, defined in §\ref{s:group}. Let $\chi_x$ be a character sheaf on the torus of $\check{H}$ for each $x\in S$. These induce character sheaves on the groups (\ref{eq:charsheaves}).

\subsubsection{} Let $\sC$ be a unital factorization category over $\on{Ran}_{X,S}$ and suppose moreover $\sC$ is equipped with a strong $\check{G}(K)_{\on{Ran}_{X,S}}$-action over $\on{Ran}_{X,S}$. Then the category
\[
\sC^{(\check{I}_{G,S})_{\on{Ran}_{X,S}},\chi_S}
\]

\noindent is also unital, and the independent category
\[
(\sC^{(\check{I}_{G,S})_{\on{Ran}_{X,S}},\chi_S})_{\on{indep}}=(\sC^{(\check{I}_{G,S})_{\on{Ran}_{X,S}},\chi_S})\underset{D(\on{Ran}_{X,S})}{\otimes} \on{Vect}
\]

\noindent is equipped with a $\cH^{\on{geom}}_{\check{G},\chi_S,\on{Ran},S}$-action with its external convolution structure, cf. §\ref{s:wildram}. By Theorem \ref{p:upgrade}, we also obtain a $\NR$-action on $(\sC^{(\check{I}_{G,S})_{\on{Ran}_{X,S}},\chi_S})_{\on{indep}}$.

\subsubsection{} Consider the category 
\[
\widetilde{\sC}:=D(\check{H}(K)_{\on{Ran}_{X,S}})\underset{D(\check{G}(K)_{\on{Ran}_{X,S}})}{\otimes} \sC
\]

\noindent with its natural $D(\check{H}(K)_{\on{Ran}_{X,S}})$-module structure. We similarly obtain an action 
\[
\on{QCoh}((\on{LocSys}_H(\overset{\circ}{D})^{\on{RS}}_{\chi_S})_{\on{Ran},S})\curvearrowright (\widetilde{\sC}^{(\check{I}_{H,S})_{\on{Ran}_{X,S}},\chi_S})_{\on{indep}}.
\]

\noindent We have a natural map
\begin{equation}\label{eq:isog}
(\on{LocSys}_H(\overset{\circ}{D})^{\on{RS}}_{\chi_S})_{\on{Ran},S}\to (\on{LocSys}_G(\overset{\circ}{D})^{\on{RS}}_{\chi_S})_{\on{Ran},S}.
\end{equation}

\noindent Pulling back, we obtain an action
\[
\NR\curvearrowright (\widetilde{\sC}^{(\check{I}_{H,S})_{\on{Ran}_{X,S}},\chi_S})_{\on{indep}}.
\]

\noindent Note that we have a natural $D(\on{Ran}_{X,S})$-linear map
\[
\sC\to \widetilde{\sC}
\]

\noindent induced by pushing forward along $\check{G}(K)_{\on{Ran}_{X,S}}\to \check{H}(K)_{\on{Ran}_{X,S}}$. In particular, we obtain a map
\[
(\sC^{(\check{I}_{G,S})_{\on{Ran}_{X,S}},\chi_S})_{\on{indep}}\to (\widetilde{\sC}^{(\check{I}_{H,S})_{\on{Ran}_{X,S}},\chi_S})_{\on{indep}}.
\]

\begin{prop}\label{p:isogeny}
The functor
\[
(\sC^{(\check{I}_{G,S})_{\on{Ran}_{X,S}},\chi_S})_{\on{indep}}\to (\widetilde{\sC}^{(\check{I}_{H,S})_{\on{Ran}_{X,S}},\chi_S})_{\on{indep}}
\]

\noindent is $\NR$-linear.

\end{prop}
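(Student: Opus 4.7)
The plan is to reduce the $\NR$-linearity to two compatibilities by factoring both actions through geometric tame Hecke categories via Theorem \ref{p:upgrade}. The $\NR$-action on the source is the composition of the functor $\NR\to \sH^{\on{geom}}_{\check{G},\chi_S,\on{Ran},S}$ from Theorem \ref{p:upgrade} for $\check{G}$ with the Hecke action on $(\sC^{(\check{I}_{G,S})_{\on{Ran}_{X,S}},\chi_S})_{\on{indep}}$, while the $\NR$-action on the target is the composition of pullback along (\ref{eq:isog}), the analogous functor for $\check{H}$, and the $\sH^{\on{geom}}_{\check{H},\chi_S,\on{Ran},S}$-action on $(\widetilde{\sC}^{(\check{I}_{H,S})_{\on{Ran}_{X,S}},\chi_S})_{\on{indep}}$. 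Hence it suffices to establish: (a) the existence of a natural monoidal functor $\Phi:\sH^{\on{geom}}_{\check{G},\chi_S,\on{Ran},S}\to \sH^{\on{geom}}_{\check{H},\chi_S,\on{Ran},S}$ induced by the loop-group isogeny $\check{G}(K)_{\on{Ran}_{X,S}}\to \check{H}(K)_{\on{Ran}_{X,S}}$ through which the functor $\sC\to \widetilde{\sC}$ becomes equivariant after passing to twisted invariants; and (b) the commutativity of the resulting square formed by $\Phi$, the horizontal arrows of Theorem \ref{p:upgrade} for $\check{G}$ and $\check{H}$, and the pullback along (\ref{eq:isog}).

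Compatibility (a) is formal from the construction of $\widetilde{\sC}$. The canonical functor $\sC\to \widetilde{\sC}=D(\check{H}(K)_{\on{Ran}_{X,S}})\otimes_{D(\check{G}(K)_{\on{Ran}_{X,S}})}\sC$ is tautologically $\check{G}(K)_{\on{Ran}_{X,S}}$-equivariant for the action on the target through $\check{G}(K)_{\on{Ran}_{X,S}}\to \check{H}(K)_{\on{Ran}_{X,S}}$, and the character $\chi_S$ on $(\check{I}_{G,S})_{\on{Ran}_{X,S}}$ is by definition pulled back from $\chi_S$ on $(\check{I}_{H,S})_{\on{Ran}_{X,S}}$. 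Passing to $\chi_S$-twisted invariants on both sides and unwinding the definition of convolution as in \S\ref{s:wildram} yields the required intertwining property and specifies $\Phi$.

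The main obstacle will be (b), the naturality of Theorem \ref{p:upgrade} under the isogeny $\check{G}\to \check{H}$. Away from the ramified points $S$, the horizontal arrows restrict to the factorizable unramified geometric Satake functors for $\check{G}$ and $\check{H}$, and the required commutativity is the classical naturality of geometric Satake under isogenies of reductive groups, compatible with the factorization structure on the Beilinson--Drinfeld Grassmannian. At each ramified point $x\in S$, the horizontal arrows are the functors (\ref{eq:monfun2}) attached to $\check{G}$ and $\check{H}$ by Theorem \ref{t:dyz} of Dhillon--Li--Yun--Zhu, so commutativity at $x$ reduces to the isogeny-naturality of Theorem \ref{t:dyz}; this in turn follows from the naturality of Bezrukavnikov's equivalence in its twisted form, since the defining correspondences $\on{LocSys}_B(\overset{\circ}{D})_{\chi}\times_{\on{LocSys}_G(\overset{\circ}{D})}\on{LocSys}_B(\overset{\circ}{D})_{\chi}$ pull back functorially along $H\to G$, while the affine Hecke correspondence pushes forward along $\check{G}(K)\to \check{H}(K)$. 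The factorizable construction of the tame Satake functor in the forthcoming work \cite{Satakefunctor} will then glue these pointwise compatibilities into commutativity of the full square over $\on{Ran}_{X_{\on{dR}},S}$.
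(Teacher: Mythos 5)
Your overall architecture coincides with the paper's: reduce to the "universal" Hecke-category level, verify the compatibility pointwise (separately over the locus where the moving points avoid $S$ and at the marked points), and glue over $\on{Ran}_{X_{\on{dR}},S}$ using the factorizable construction of the tame Satake functor from \cite{Satakefunctor}. The gap is that the pointwise statement — which is where essentially all of the mathematical content sits — is asserted rather than proven. The paper isolates it as Lemma \ref{l:adj}: the pushforwards $D(\check{G}(K)/\check{G}(O))\to D(\check{H}(K)/\check{H}(O))$ and $D(\check{G}(K)/\check{I}_G,\chi)\to D(\check{H}(K)/\check{I}_H,\chi)$ are linear over $\on{Rep}(G)$ and $\on{QCoh}(\on{LocSys}_G(\overset{\circ}{D}_x)^{\on{RS}}_{\chi})$ respectively, where the target carries the action through restriction along $G\to H$. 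Away from $S$ this is not quite "classical naturality of geometric Satake": one must compare the $\on{Rep}(G)$-action on $D(\on{Gr}_{\check{G}})$ with the $\on{Rep}(H)$-action on $D(\on{Gr}_{\check{H}})$ across a pushforward, and the paper does this by factoring through $D(\check{H}(K)/\check{G}(O))$ and identifying the two induced actions after killing the $D(\bB\check{Z})$-action. It is an easy argument, but it is an argument, not a citation.

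At the marked points the gap is more serious. The functor (\ref{eq:monfun2}) is not defined by a correspondence on the automorphic side; it is defined by transporting a spectral pull-push through the Dhillon--Li--Yun--Zhu equivalence (equivalently, via central sheaves). So the observation that "the defining correspondences pull back functorially along $H\to G$ while the affine Hecke correspondence pushes forward" does not by itself yield the required intertwining: one must show that the equivalences for $\check{G}$ and $\check{H}$ are themselves compatible with the isogeny, and in particular that the two nilpotent (monodromy) endomorphisms induced on the composite functor — one from the $\on{QCoh}(\cN/G)$-action on the source, one from the $\on{QCoh}(\cN/H)$-action on the target — coincide. The paper establishes this by tracing through Gaitsgory's nearby-cycles construction of central sheaves and using that $\on{Gr}_{\check{G}}\to\on{Gr}_{\check{H}}$ is a closed embedding, then handles general $\chi$ by reducing to the central case via the endoscopic decomposition of \cite{dhillonendo} (or the twisted central sheaves of Bezrukavnikov). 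Your proposal does not address the monodromy identification or the non-central $\chi$ case. A secondary issue: your step (a) posits a \emph{monoidal} functor $\Phi$ between the geometric Hecke categories of $\check{G}$ and $\check{H}$; pushforward along $\check{I}_G\backslash\check{G}(K)/\check{I}_G\to\check{I}_H\backslash\check{H}(K)/\check{I}_H$ need not be monoidal for convolution, and the paper avoids needing this by phrasing everything in terms of module-category linearity of the functor $\sC^{(\check{I}_{G,S})_{\on{Ran}_{X,S}},\chi_S}\to\widetilde{\sC}^{(\check{I}_{H,S})_{\on{Ran}_{X,S}},\chi_S}$ over $\NR$ with its pointwise monoidal structure, checked on the strata $X^I_{S,\on{disj}}$ and $X^I_{S,\on{intersect}}$ and then glued.
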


\noindent Before the proof, we need a preliminary lemma:
\begin{lem}\label{l:adj}
The maps
\[
D(\check{G}(K)/\check{G}(O))\to D(\check{H}(K)/\check{H}(O)),
\]
\[
D(\check{G}(K)/\check{I}_{G},\chi)\to D(\check{H}(K)/\check{I}_H,\chi)
\]

\noindent are linear for the natural actions of $\on{Rep}(G)$ and $\on{QCoh}(\on{LocSys}_G(\overset{\circ}{D}_x)^{\on{RS}}_{\chi})$, respectively.
\end{lem}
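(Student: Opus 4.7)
The plan is to reduce both assertions to the compatibility of convolution with pushforward along the loop group maps induced by the isogeny $\check{G}\to \check{H}$, together with the compatibility of geometric Satake (and its twisted analogue, Theorem \ref{t:metaplectic}) with isogenies. First, I would recall that the action of $\on{Rep}(G)$ on $D(\check G(K)/\check G(O))$ factors as
\[
\on{Rep}(G)\xrightarrow{\on{Sat}_G} D(\check G(O)\backslash \check G(K)/\check G(O))\xrightarrow{\star -} \on{End}(D(\check G(K)/\check G(O))),
\]
where the second arrow is convolution, and the analogous statement holds for $\check H$. The restriction functor $\on{Res}\colon \on{Rep}(G)\to \on{Rep}(H)$ corresponds under geometric Satake to the pushforward $\pi_*\colon D(\check G(O)\backslash \check G(K)/\check G(O))\to D(\check H(O)\backslash \check H(K)/\check H(O))$ induced by $\check G\to \check H$; this is a standard compatibility of geometric Satake with isogeny.

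Next, I would verify that $\pi_*\colon D(\check G(K)/\check G(O))\to D(\check H(K)/\check H(O))$ intertwines convolution on the two sides. This is a base change assertion for the square
\[
\begin{tikzcd}
\check G(K)\overset{\check G(O)}{\times}\check G(K)/\check G(O) \arrow{r}{m_G} \arrow{d} & \check G(K)/\check G(O) \arrow{d}{\pi} \\
\check H(K)\overset{\check H(O)}{\times}\check H(K)/\check H(O) \arrow{r}{m_H} & \check H(K)/\check H(O),
\end{tikzcd}
\]
which is Cartesian modulo the finite central kernel $Z=\on{ker}(\check G\to \check H)$. Since the Satake sheaves coming from $\on{Rep}(G)$ are supported on the image components and are built from $Z$-equivariant input, this central kernel does not obstruct base change, and so $\pi_*(S_V^{\check G}\star \cF)\simeq S_{\on{Res}\,V}^{\check H}\star \pi_*(\cF)$ as required.

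The ramified assertion is proved by the same strategy, replacing geometric Satake with Theorem \ref{t:metaplectic}. The action of $\on{QCoh}(\on{LocSys}_G(\overset{\circ}{D})^{\on{RS}}_\chi)$ on $D(\check G(K)/\check I_G,\chi)$ factors through the monoidal embedding of Corollary \ref{c:spec1},
\[
\on{QCoh}(\on{LocSys}_G(\overset{\circ}{D})^{\on{RS}}_\chi)\to D(\check I_G,\chi\backslash \check G(K)/\check I_G,\chi)\xrightarrow{\star -} \on{End}(D(\check G(K)/\check I_G,\chi)),
\]
and the isogeny $\check G\to \check H$ induces compatible maps $\on{LocSys}_B^G(\overset{\circ}{D})_\chi\to \on{LocSys}_B^H(\overset{\circ}{D})_\chi$ (using that the Iwahori characters $\chi$ are pulled back from the abelianization via the torus, which is preserved by the isogeny). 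One then invokes the analogue of the base-change square above, now at Iwahori level, with $\check I_G\to \check I_H$ again a central isogeny, so that the pushforward $D(\check I_G,\chi\backslash \check G(K)/\check I_G,\chi)\to D(\check I_H,\chi\backslash \check H(K)/\check I_H,\chi)$ is monoidal and compatible with the spectral embedding.

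The main obstacle I anticipate is bookkeeping around the finite central kernel of $\check G\to \check H$: although the base change square is not strictly Cartesian, it becomes so after restricting to the image components, and one has to verify that the Satake and twisted-Satake sheaves of interest are supported on those components and are built from $Z$-equivariant data. In the ramified setting this requires care to see that the character sheaf $\chi$ on $\check I_G$ is genuinely pulled back from $\check I_H$ (this is automatic because $\chi$ is pulled back from the abelianization of the dual Levi, which is insensitive to the isogeny), so that the twisted pushforward is well-defined and the convolution structures match.
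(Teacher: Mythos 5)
Your treatment of the first (spherical) assertion is essentially sound, though it takes a different route from the paper: you argue via a base-change analysis of a convolution square that is only ``Cartesian modulo the central kernel $\check{Z}$,'' whereas the paper factors the pushforward through $D(\check{H}(K)/\check{G}(O))$ and identifies $D(\check{H}(K)/\check{H}(O))$ with the coinvariants $D(\check{H}(K)/\check{G}(O))\underset{D(\bB\check{Z})}{\otimes}\on{Vect}$, so that $\on{Rep}(G)$-linearity is transported formally with no base-change bookkeeping at all. Your version can probably be made to work, but the step ``the central kernel does not obstruct base change'' is exactly the imprecision the coinvariants argument is designed to avoid.

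The genuine gap is in the second assertion. You assert that the Iwahori-level pushforward is ``monoidal and compatible with the spectral embedding,'' but that compatibility \emph{is} the content of the lemma, and it is not a formal base-change statement. The action of $\on{QCoh}(\on{LocSys}_G(\overset{\circ}{D})^{\on{RS}}_{\chi})$ (for $\chi=0$, of $\on{QCoh}(\cN/G)$) carries strictly more data than the underlying $\on{Rep}(G)$-action it refines: for each $V\in\on{Rep}(G)$ one has a nilpotent endomorphism of the convolution functor, encoded in the Arkhipov--Bezrukavnikov / Dhillon--Li--Yun--Zhu construction by the monodromy of Gaitsgory's nearby-cycles (central sheaves) functor. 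The formal argument (coinvariants, or your base change) only yields $\on{Rep}(G)$-linearity through $\on{Rep}(G)\to\on{QCoh}(\cN/G)$ on one side and $\on{Rep}(G)\to\on{Rep}(H)\to\on{QCoh}(\cN/H)$ on the other; what remains is to show that the two resulting nilpotent endomorphisms of the composite $D(\check{G}(K)/\check{I}_G,\chi)\to D(\check{H}(K)/\check{I}_H,\chi)\xrightarrow{-\star V} D(\check{H}(K)/\check{I}_H,\chi)$ coincide naturally in $V$. This is where the paper invokes that pushforward along the closed embedding $\check{G}(K)/\check{G}(O)\to \check{H}(K)/\check{H}(O)$ commutes with the nearby-cycles construction for $\check{G}$ and $\check{H}$ compatibly with restriction along $H\to G$ (and, for $\chi\neq 0$, either the reduction built into the construction of Theorem \ref{t:dyz} or the Bezrukavnikov central-sheaves generalization). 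Your proposal never engages with the central functor or its monodromy, so the key step is missing.
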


\begin{proof}
\emph{Step 1.}
Let us prove the first assertion. Factor the functor in question as
\[
D(\check{G}(K)/\check{G}(O))\to D(\check{H}(K)/\check{G}(O))\to D(\check{H}(K)/\check{H}(O)),
\]

\noindent where the functors are given by pushforward along the evident maps. It suffices to show that the second functor is $\on{Rep}(G)$-linear. Let $\check{Z}$ be the kernel of $\check{G}\to \check{H}$. Then we have an action
\[
D(\check{H}(K)/\check{G}(O))\curvearrowleft D(\bB \check{Z})
\]

\noindent that commutes with the action of $\on{Rep}(G)$. In particular, we get an action of $\on{Rep}(G)$ on 
\begin{equation}\label{eq:equivadj12}
D(\check{H}(K)/\check{G}(O))\underset{D(\bB \check{Z})}{\otimes} \on{Vect}\simeq D(\check{H}(K)/\check{H}(O)).
\end{equation}

\noindent Here, $D(\bB \check{Z})$ acts on $\on{Vect}$ by pushforward along the map $\bB \check{Z}\to \on{pt}$. Now observe that the action of $\on{Rep}(G)$ on $D(\check{H}(K)/\check{H}(O))$ via the map $\on{Rep}(G)\to \on{Rep}(H)$ coincides with the action induced by the equivalence (\ref{eq:equivadj12}) and the action
\[
D(\check{H}(K)/\check{G}(O))\curvearrowleft \on{Rep}(G).
\]

\emph{Step 2.} Next we prove the second assertion of the lemma. Suppose first that $\chi=0$. In exactly the same way as above, we see that the map
\begin{equation}\label{eq:tameisogeny}
D(\check{G}(K)/\check{I}_{G},\chi)\to D(\check{H}(K)/\check{I}_H,\chi)
\end{equation}

\noindent is $\on{Rep}(G)$-linear, where the latter acts on the left-hand side via the symmetric monoidal functor
\begin{equation}\label{eq:nilptendo}
\on{Rep}(G)\to \on{QCoh}(\cN/G),
\end{equation}

\noindent and on the right-hand side via
\[
\on{Rep}(G)\to \on{Rep}(H)\to \on{QCoh}(\cN/H).
\]

\noindent As such, for each $V\in\on{Rep}(G)$, we obtain two nilpotent endomorphisms of the functor 
\[
D(\check{G}(K)/\check{I}_{G},\chi)\to D(\check{H}(K)/\check{I}_H,\chi)\xrightarrow{-\star V}D(\check{H}(K)/\check{I}_H,\chi).
\]

\noindent We need to show that these two nilpotent endomorphisms coincide (naturally in $V$).

By construction of the functor \cite{arkhipov2009perverse}, this follows from the fact that pushforward along the map $\check{G}(K)/\check{G}(O)\to \check{H}(K)/\check{H}(O)$, being a closed embedding, commutes with the nearby cycles construction of Gaitsgory \cite{gaitsgory1999construction} for $\check{G}$ and $\check{H}$, respectively, compatibly with restriction of representations along $H\to G$.

For arbitrary $\chi$, we are reduced to the above case by construction of the functor in Theorem \ref{t:dyz}. Alternatively, we may argue as in the case $\chi=0$ using the generalization of Gaitsgory's central sheaves construction to arbitrary eigenvalues appearing in \cite[Thm. 1]{bezrukavnikov2009tensor} (see also \cite{salmon2023restricted}, especially Proposition 5.28).

\end{proof}

\textbf{Proof of Proposition \ref{p:isogeny}}

Note that $\sC^{(\check{I}_{G,S})_{\on{Ran}_{X,S}},\chi_S}$ and $\widetilde{\sC}^{(\check{I}_{H,S})_{\on{Ran}_{X,S}},\chi_S}$ carry actions of $\NR$ over $\on{Ran}_{X_{\on{dR}},S}$ equipped with its \emph{pointwise} monoidal structure. By definition, it suffices to show that the map
\[
\sC^{(\check{I}_{G,S})_{\on{Ran}_{X,S}},\chi_S}\to \widetilde{\sC}^{(\check{I}_{H,S})_{\on{Ran}_{X,S}},\chi_S}
\]

\noindent is $\NR$-linear with this monoidal structure. Moreover, it suffices to consider the universal case where $\sC=D(\check{G}(K)_{\on{Ran}_{X,S}})$. Note that in this case we have $\widetilde{\sC}=D(\check{H}(K)_{\on{Ran}_{X,S}})$.

\emph{Step 1.} For $I\in\on{fSet}_S$, let
\[
X^I_{S,\on{disj}}=X_S^I\underset{X^{I-S}}{\times} (X-S)^{I-S},\;\;\;\;\; X^I_{S,\on{intersect}}=X^I_S-X^I_{S,\on{disj}}.
\]

\noindent We start by showing that the map
\[
D(X^I_{S,\on{disj}})\underset{D(X^I_S)}{\otimes} D(\check{G}(K)_{X_S^I}/(\check{I}_{G,S})_{X_S^I},\chi_S)\to
D(X^I_{S,\on{disj}})\underset{D(X^I_S)}{\otimes} D(\check{H}(K)_{X_S^I}/(\check{I}_{H,S})_{X_S^I},\chi_S)
\]

\noindent is $D(X^I_{S,\on{disj}})\underset{D(\on{Ran}_{X,S})}{\otimes}\NR$-linear.

Note that over a point $\underline{x}\in X^I_{S,\on{disj}}$, the assertion is equivalent to the statement that the maps
\[
D(\check{G}(K)/\check{G}(O))\to D(\check{H}(K)/\check{H}(O)),
\]
\[
D(\check{G}(K)/\check{I}_{G},\chi)\to D(\check{H}(K)/\check{I}_H,\chi)
\]

\noindent are linear for the natural actions of $\on{Rep}(G)$ and $\on{QCoh}(\on{LocSys}_G(\overset{\circ}{D}_x)^{\on{RS}}_{\chi})$, respectively. This is the content of Lemma \ref{l:adj}.

Over the entire space $X^I_{S,\on{disj}}$, the assertion follows from the combinatorical description:
\[
D(X^I_{S,\on{disj}})\underset{D(\on{Ran}_{X,S})}{\otimes}\NR
\]
\[
\simeq \bigg(D(X^I_{S,\on{disj}})\underset{D(X^S_I)}{\otimes}\underset{(I\onto J\onto K)\in\on{TwArr}_{I/}}{\on{colim}} D(X^K)\otimes \on{Rep}(G)^{\otimes J}\bigg) \otimes\underset{x\in S}{\bigotimes} \on{QCoh}(\on{LocSys}_G(\overset{\circ}{D}_x)^{\on{RS}}_{\chi_x}),
\]

\noindent where $\on{TwArr}_{I/}$ is the twisted arrows category (relative to $I$), see \cite[§2.5.8]{gaitsgory2021semi}.

\vspace{2mm}

\emph{Step 2.} 
Next, let us show that the map
\[
D(X^I_{S,\on{intersect}})\underset{D(X^I_S)}{\otimes} D(\check{G}(K)_{X_S^I}/(\check{I}_{G,S})_{X_S^I},\chi_S)\to D(X^I_{S,\on{intersect}})\underset{D(X^I_S)}{\otimes} D(\check{H}(K)_{X_S^I}/(\check{I}_{H,S})_{X_S^I},\chi_S)
\]

\noindent is $D(X^I_{S,\on{intersect}})\underset{D(\on{Ran}_{X,S})}{\otimes}\NR$-linear. It suffices to consider the situation when $I=S$. In this case, the assertion becomes that the map
\[
D(\check{G}(K)/\check{I}_{G},\chi)\to D(\check{H}(K)/\check{I}_H,\chi)
\]

\noindent is $\on{QCoh}(\on{LocSys}_G(\overset{\circ}{D}_x)^{\on{RS}}_{\chi})$-linear for any character sheaf $\chi$ on $\check{I}_H$ and hence follows from Lemma \ref{l:adj}. 
\vspace{2mm}

\emph{Step 3.} Finally, we need to show that the map
\[
D(\check{G}(K)_{X_S^I}/(\check{I}_{G,S})_{X_S^I},\chi_S)\to D(\check{H}(K)_{X_S^I}/(\check{I}_{H,S})_{X_S^I},\chi_S)
\]

\noindent is $D(X^I_S)\underset{D(\on{Ran}_{X,S})}{\otimes}\NR$-linear. This follows from Step 1 and 2 above using the combinatorical description of $D(X^I_S)\underset{D(\on{Ran}_{X,S})}{\otimes}\NR$ appearing in \cite{Satakefunctor}, which glues the category from its restriction to $X^I_{S,\on{disj}}$ and $X^I_{S,\on{intersect}}$.

\qed

\subsubsection{} Denote by $\check{T}_G, \check{T}_H$ the torus of $\check{G},\check{H}$, respectively. Note that the unipotent radicals of the Borels of $\check{G}$ and $\check{H}$ coincide.

Consider the natural map
\[
f: \on{Bun}_{\check{G},\check{N}}(X,S)\to \on{Bun}_{\check{H},\check{N}}(X,S).
\]

\noindent We get an induced functor
\[
f_{*,\on{dR}}^{\chi_S}: D(\on{Bun}_{\check{G},\check{N}}(X,S))^{\check{T}_{G,S},\chi_S}\to D(\on{Bun}_{\check{H},\check{N}}(X,S))^{\check{T}_{H,S},\chi_S}.
\]

\noindent Note that $\on{QCoh}(\on{LocSys}_G(U,\chi_S))$ naturally acts on the right-hand side via the symmetric monoidal functor
\[
\on{QCoh}(\on{LocSys}_G(U,\chi_S))\to \on{QCoh}(\on{LocSys}_H(U,\chi_S)).
\]

\begin{cor}\label{c:lin}
The functor $f_{*,\on{dR}}^{\chi_S}$ is $\on{QCoh}(\on{LocSys}_G(U,\chi_S))$-linear.
\end{cor}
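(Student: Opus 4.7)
The plan is to realize $f_{*,\on{dR}}^{\chi_S}$ as the functor obtained from Proposition \ref{p:isogeny} applied to the natural factorization category attached to $\check{G}$, and then invoke the spectral decomposition of Theorem \ref{t:action'} to upgrade $\NR$-linearity to $\on{QCoh}(\on{LocSys}_G(U,\chi_S))$-linearity.

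First I would take $\sC:=D(\on{Bun}_{\check{G},\on{Ran}_{X,S}}^{\infty})$, which is a unital factorization category over $\on{Ran}_{X,S}$ equipped with a strong $\check{G}(K)_{\on{Ran}_{X,S}}$-action. Unwinding definitions as in §\ref{s:wildram}, one has a canonical identification
\[
(\sC^{(\check{I}_{G,S})_{\on{Ran}_{X,S}},\chi_S})_{\on{indep}}\simeq D(\on{Bun}_{\check{G},\check{N}}(X,S))^{\check{T}_{G,S},\chi_S},
\]
and similarly for $\widetilde{\sC}:=D(\check{H}(K)_{\on{Ran}_{X,S}})\underset{D(\check{G}(K)_{\on{Ran}_{X,S}})}{\otimes}\sC$ one obtains
\[
\widetilde{\sC}\simeq D(\on{Bun}_{\check{H},\on{Ran}_{X,S}}^{\infty}),\qquad (\widetilde{\sC}^{(\check{I}_{H,S})_{\on{Ran}_{X,S}},\chi_S})_{\on{indep}}\simeq D(\on{Bun}_{\check{H},\check{N}}(X,S))^{\check{T}_{H,S},\chi_S},
\]
by interpreting the base change of $\check{G}(K)_{\on{Ran}_{X,S}}$-equivariant sheaves as pushforward along the map of moduli stacks induced by the isogeny $\check{G}\to\check{H}$. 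Under these identifications, the canonical map $\sC\to\widetilde{\sC}$ of §4.6 descends on $(\check{I}_S,\chi_S)$-invariants to the pushforward functor $f_{*,\on{dR}}^{\chi_S}$.

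Next I would apply Proposition \ref{p:isogeny} to this $\sC$: it yields $\NR$-linearity of the map $(\sC^{(\check{I}_{G,S})_{\on{Ran}_{X,S}},\chi_S})_{\on{indep}}\to(\widetilde{\sC}^{(\check{I}_{H,S})_{\on{Ran}_{X,S}},\chi_S})_{\on{indep}}$, where $\NR$ acts on the target through the pullback along the isogeny-induced map $(\on{LocSys}_H(\overset{\circ}{D})^{\on{RS}}_{\chi_S})_{\on{Ran},S}\to(\on{LocSys}_G(\overset{\circ}{D})^{\on{RS}}_{\chi_S})_{\on{Ran},S}$.

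Finally, by Theorem \ref{t:action'} the $\NR$-action on $D(\on{Bun}_{\check{G},\check{N}}(X,S))^{\check{T}_{G,S},\chi_S}$ factors through $\on{QCoh}(\on{LocSys}_G(U,\chi_S))$, and likewise the action on $D(\on{Bun}_{\check{H},\check{N}}(X,S))^{\check{T}_{H,S},\chi_S}$ factors through $\on{QCoh}(\on{LocSys}_H(U,\chi_S))$. The compatibility of the Hecke action on the two sides with the map $\on{LocSys}_G(U,\chi_S)\to\on{LocSys}_H(U,\chi_S)$ (built into the construction of the spectral action in §\ref{s:hecke} via the isogeny of Langlands duals) then promotes the $\NR$-linearity of $f_{*,\on{dR}}^{\chi_S}$ to the desired $\on{QCoh}(\on{LocSys}_G(U,\chi_S))$-linearity.

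The main obstacle I anticipate is the bookkeeping in the first step: namely checking that the geometric pushforward $f_{*,\on{dR}}$ genuinely corresponds to the categorical operation $D(\check{H}(K)_{\on{Ran}_{X,S}})\otimes_{D(\check{G}(K)_{\on{Ran}_{X,S}})}(-)$ after passing to $(\check{I}_S,\chi_S)$-invariants and then to independent categories. This is essentially the observation that inducing along $\check{G}\to\check{H}$ commutes with formation of the moduli stack of bundles with reductions at $S$, together with the fact that $\check{N}_G=\check{N}_H$, so that the $\check{T}$-equivariance structures on the two sides are compatible; once this identification is in place, the remainder of the argument is a formal consequence of Proposition \ref{p:isogeny} and Theorem \ref{t:action'}.
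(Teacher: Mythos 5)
Your proposal is correct and follows essentially the same route as the paper: take $\sC=D(\on{Bun}_{\check{G},\on{Ran}_{X,S}}^{\infty})$, identify $(\sC^{(\check{I}_{G,S})_{\on{Ran}_{X,S}},\chi_S})_{\on{indep}}$ with $D(\on{Bun}_{\check{G},\check{N}}(X,S))^{\check{T}_{G,S},\chi_S}$, apply Proposition \ref{p:isogeny}, and then descend $\NR$-linearity to $\on{QCoh}(\on{LocSys}_G(U,\chi_S))$-linearity via the spectral decomposition. The one place you overclaim is the asserted equivalence $(\widetilde{\sC}^{(\check{I}_{H,S})_{\on{Ran}_{X,S}},\chi_S})_{\on{indep}}\simeq D(\on{Bun}_{\check{H},\check{N}}(X,S))^{\check{T}_{H,S},\chi_S}$, which is not obviously true (the isogeny need not hit every component of $\on{Bun}_{\check{H}}$) and is not needed: the paper only factors $f_{*,\on{dR}}^{\chi_S}$ \emph{through} $(\widetilde{\sC}^{(\check{I}_{H,S})_{\on{Ran}_{X,S}},\chi_S})_{\on{indep}}$ and observes that the remaining functor to $D(\on{Bun}_{\check{H},\check{N}}(X,S))^{\check{T}_{H,S},\chi_S}$ is $\sH^{\on{geom}}_{\check{H},\chi_S,\on{Ran},S}$-linear, hence $\NR$-linear, which disposes of the ``main obstacle'' you flag.
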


\begin{proof}
It suffices to show that $f_{*,\on{dR}}^{\chi_S}$ is $\NR$-linear, where $\NR$ acts on $D(\on{Bun}_{\check{H},\check{N}}(X,S))^{\check{T}_{H,S},\chi_S}$ via the map (\ref{eq:isog}).

Let $\sC=D(\on{Bun}_{\check{G},\on{Ran}_{X,S}}^{\infty})$, where $\on{Bun}_{\check{G},\on{Ran}_{X,S}}^{\infty}$ is the prestack appearing in §\ref{s:wildram}. Then 
\[
(\sC^{(\check{I}_{G,S})_{\on{Ran}_{X,S}},\chi_S})_{\on{indep}}=D(\on{Bun}_{\check{G},\check{N}}(X,S))^{\check{T}_{G,S},\chi_S}.
\]

\noindent Note that $f_{*,\on{dR}}^{\chi_S}$ naturally factors as
\[
(\sC^{(\check{I}_{G,S})_{\on{Ran}_{X,S}},\chi_S})_{\on{indep}}\to (\widetilde{\sC}^{(\check{I}_{H,S})_{\on{Ran}_{X,S}},\chi_S})_{\on{indep}}\to D(\on{Bun}_{\check{H},\check{N}}(X,S))^{\check{T}_{H,S},\chi_S}.
\]

\noindent By Proposition \ref{p:isogeny}, the first map is $\NR$-linear. The second map is clearly $\cH_{\check{H},\chi_S,\on{Ran},S}^{\on{geom}}$-linear and in particular $\NR$-linear.
\end{proof}

\subsubsection{} We are now in a position to prove the existence of Hecke eigensheaves for any reductive group.
\begin{thm}\label{t:Heckeexistsectfour}
Let $G$ be a connected reductive algebraic group, and let $\sigma\in \on{LocSys}_G(U,\chi_S)^{\on{irr}}$ be an irreducible regular singular $G$-local system on $U=X-S$. Then there exists a Whittaker-normalized Hecke eigensheaf with eigenalue $\sigma$.
\end{thm}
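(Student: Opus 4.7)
My plan is to reduce to the adjoint case established in \S\ref{s:Heckeeigensheaf} via the isogeny
\[
\pi: G \to G_{\on{ad}} \times G^{\on{ab}}
\]
whose kernel is the finite group $Z(G^{\on{der}})$. Given $\sigma \in \on{LocSys}_G(U,\chi_S)^{\on{irr}}$ with abelianization $\tau$, pushing forward under $\pi$ yields a pair $(\sigma_{\on{ad}}, \tau)$ where $\sigma_{\on{ad}}$ is an irreducible $G_{\on{ad}}$-local system with regular singularities (irreducibility of $\sigma$ forces irreducibility of $\sigma_{\on{ad}}$ since the kernel is central). The adjoint case furnishes a Whittaker-normalized Hecke eigensheaf $\cF_{\sigma_{\on{ad}}}$ on $\on{Bun}_{\check{G}_{\on{sc}}, \check{N}_{\on{sc}}}(X,S)$, where $\check{G}_{\on{sc}} = (G_{\on{ad}})^{\vee}$. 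On the torus factor, the Fourier–Mukai equivalence (\ref{eq:fm}) produces a character sheaf $\sL_\tau$ on $\on{Bun}_{(G^{\on{ab}})^{\vee}}(X,S)$ that is tautologically a Hecke eigensheaf with eigenvalue $\tau$. Their exterior product $\cF_{\sigma_{\on{ad}}} \boxtimes \sL_\tau$ is then a Whittaker-normalized Hecke eigensheaf for $(\sigma_{\on{ad}}, \tau)$ on the bundle stack for $\check{G}_{\on{sc}} \times (G^{\on{ab}})^{\vee}$.

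Next, I will push forward along $f$, induced by the dual isogeny $\check{G}_{\on{sc}} \times (G^{\on{ab}})^{\vee} \to \check{G}$. By Corollary \ref{c:lin}, the functor $f_{*,\on{dR}}^{\chi_S}$ is linear over $\on{QCoh}(\on{LocSys}_{G_{\on{ad}} \times G^{\on{ab}}}(U, \chi_S^{\on{ad}}, \chi_S^{\on{ab}}))$, where the latter acts on the target through the isogeny $\on{LocSys}_G(U,\chi_S) \to \on{LocSys}_{G_{\on{ad}} \times G^{\on{ab}}}(U, \chi_S^{\on{ad}}, \chi_S^{\on{ab}})$. Hence, under the spectral decomposition of Theorem \ref{t:action'}, the object $f_{*,\on{dR}}^{\chi_S}(\cF_{\sigma_{\on{ad}}} \boxtimes \sL_\tau)$ is supported on the scheme-theoretic preimage of $(\sigma_{\on{ad}}, \tau)$, which is a finite set of reduced $k$-points (a torsor for $\on{Hom}(\pi_1^{\on{alg}}(U), Z(G^{\on{der}}))$). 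The sheaf therefore splits as a finite direct sum of eigensheaves, one per lift, and I define $\cF_\sigma$ to be the summand indexed by $\sigma$ itself by applying the spectral projector corresponding to the closed point $\sigma$.

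The main obstacle will be verifying that $\cF_\sigma$ is nonzero and Whittaker-normalized, since a priori the projection to one summand could kill the Whittaker coefficient. To handle this I will invoke Proposition \ref{p:coeffwhit}: the functor $\on{coeff}_{\chi_S}$ factors through $!$-pullback to $\on{Bun}_{\check{N}^{-}}^{\omega_X(S)}(X,S)$, and since $\check{N}^{-}$ lies entirely in $\check{G}^{\on{der}}$, the pullback along $f$ identifies it with the analogous stack for $\check{G}_{\on{sc}}$. Using base change, $\on{coeff}_{\chi_S}\bigl(f_{*,\on{dR}}^{\chi_S}(\cF_{\sigma_{\on{ad}}} \boxtimes \sL_\tau)\bigr)$ then factors as a tensor product of $\on{coeff}_{\chi_S^{\on{ad}}}(\cF_{\sigma_{\on{ad}}})$, which is nonzero by Whittaker-normalization of the adjoint eigensheaf, with global sections of $\sL_\tau$ along the fibers of $f$, which are also nonzero by abelian Langlands. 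Because the projection to the $\sigma$-summand is a $!$-fiber at an isolated closed point and $\on{coeff}_{\chi_S}$ factors through it equivariantly for the $\on{QCoh}(\on{LocSys}_G(U, \chi_S))$-action, this nonvanishing descends to $\cF_\sigma$ and the Whittaker coefficient is correctly normalized, finishing the proof.
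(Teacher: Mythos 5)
Your overall strategy -- reduce to the adjoint case via the isogeny $G\to G^{\on{ad}}\times G^{\on{ab}}$, build the eigensheaf upstairs as $\cF_{\sigma_{\on{ad}}}\boxtimes\sL_{\tau}$, and push forward along the dual map $f$ -- is the same as the paper's. But there is a genuine gap at the decisive step: isolating the $\sigma$-summand and showing it is nonzero and Whittaker-normalized. Corollary \ref{c:lin} only tells you that the spectral support of $f_{*,\on{dR}}^{\chi_S}(\cF_{\sigma_{\on{ad}}}\boxtimes\sL_{\tau})$ is \emph{contained in} the preimage of $(\sigma_{\on{ad}},\tau)$; it says nothing about which of the finitely many lifts $\sigma'$ actually occur. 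A priori the pushforward could be supported only at lifts $\sigma'\neq\sigma$, making your $\cF_{\sigma}$ zero. Your Whittaker argument does not repair this: the total coefficient $\on{coeff}_{\chi_S}(f_{*,\on{dR}}^{\chi_S}(\cF_{\sigma_{\on{ad}}}\boxtimes\sL_{\tau}))$ decomposes as the direct sum of the coefficients of the summands, so its nonvanishing is perfectly consistent with all of the Whittaker mass sitting on a single lift $\sigma'\neq\sigma$. The phrase ``$\on{coeff}_{\chi_S}$ factors through it equivariantly for the $\on{QCoh}(\on{LocSys}_G(U,\chi_S))$-action'' is not a meaningful linearity statement ($\on{coeff}_{\chi_S}$ lands in $\on{Vect}$), and it is exactly at this point that an actual argument is needed.

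What closes the gap in the paper is the following chain, none of which appears in your proposal: (i) $f_{*,\on{dR}}^{\chi_S}$ factors through a \emph{fully faithful} embedding of the coinvariant category $\on{Vect}\otimes_{D(\on{Bun}_{\check{Z}}(X))}D(\on{Bun}_{\check{H},\check{N}}(X,S))^{\check{T}_{H,S},\chi_S}$; (ii) this coinvariant category is identified with the spectral base change $\on{QCoh}(\on{LocSys}_G(U,\chi_S))\otimes_{\on{QCoh}(\on{LocSys}_H(U,\chi_S))}D(\on{Bun}_{\check{H},\check{N}}(X,S))^{\check{T}_{H,S},\chi_S}$, which requires the Gaitsgory--Raskin Theorem \ref{t:gerbygerby} matching the geometric $D(\on{Bun}_{\check{Z}})$-action with the spectral one under Fourier--Mukai; and (iii) the Whittaker coefficient on $\check{G}$ restricted to this subcategory is computed by $\on{coeff}^{\check{H}}_{\chi_S}$ after pushforward along $\on{LocSys}_G\to\on{LocSys}_H$. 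With (i)--(iii), one does not project the pushforward at all: one regards $\cF_{\sigma_H}$, together with the chosen lift $\sigma$ of its spectral support, directly as an object of the base-changed category, and its image under the fully faithful $\on{QCoh}(\on{LocSys}_G(U,\chi_S))$-linear embedding is automatically a nonzero eigensheaf at $\sigma$ with coefficient $k$. Without (ii) in particular, there is no way to relate the permutation of the spectral summands by $Z(G^{\on{der}})$-twists to anything on the automorphic side, so your projector argument cannot be completed as written.
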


\begin{proof}

\step
First, we show that if $G\to H$ is an isogeny,\footnote{Note that we swapped the roles of $G$ and $H$ compared to the rest of §\ref{s:adjoint}} and the induced $H$-local system $\sigma_H$ admits a Whittaker-normalized Hecke eigensheaf, then so does $\sigma$.
Let $\cF_{\sigma_H}\in D(\on{Bun}_{\check{H},\check{N}}(X,S))^{\check{T}_{H,S},\chi_S}$ be the Whittaker-normalized Hecke eigensheaf corresponding to $\sigma_H$. By definition, $\cF_{\sigma_H}$ defines an object of the category
\[
\on{QCoh}(\on{LocSys}_G(U,\chi_S))\underset{\on{QCoh}(\on{LocSys}_H(U,\chi_S))}{\otimes}  D(\on{Bun}_{\check{H},\check{N}}(X,S))^{\check{T}_{H,S},\chi_S}.
\]

\noindent Denote by $\check{Z}$ the finite group given by the kernel of $\check{H}\to \check{G}$. Since the restriction of $\chi_S$ to $\check{Z}$ trivializes, we obtain an action
\begin{equation}\label{eq:Zact}
D(\on{Bun}_{\check{Z}}(X))\curvearrowright D(\on{Bun}_{\check{H},\check{N}}(X,S))^{\check{T}_{H,S},\chi_S}.
\end{equation}

\noindent We denote by
\[
\on{Vect}\underset{D(\on{Bun}_{\check{Z}}(X))}{\otimes}D(\on{Bun}_{\check{H},\check{N}}(X,S))^{\check{T}_{H,S},\chi_S}
\]

\noindent the coinvariant category with respect to this action. Note that the functor
\[
f_{*,\on{dR}}^{\chi_S}:D(\on{Bun}_{\check{H},\check{N}}(X,S))^{\check{T}_{H,S},\chi_S}\to D(\on{Bun}_{\check{G},\check{N}}(X,S))^{\check{T}_{G,S},\chi_S}
\]

\noindent factors through a fully faithful embedding
\begin{equation}\label{eq:finffemb}
\on{Vect}\underset{D(\on{Bun}_{\check{Z}}(X))}{\otimes}D(\on{Bun}_{\check{H},\check{N}}(X,S))^{\check{T}_{H,S},\chi_S}\overset{f_{*,\on{dR}}^{\chi_S}}{\into} D(\on{Bun}_{\check{G},\check{N}}(X,S))^{\check{T}_{G,S},\chi_S}.
\end{equation}

Next, we claim that we have a natural equivalence
\begin{equation}\label{eq:claim}
\on{QCoh}(\on{LocSys}_G(U,\chi_S))\underset{\on{QCoh}(\on{LocSys}_H(U,\chi_S))}{\otimes}  D(\on{Bun}_{\check{H},\check{N}}(X,S))^{\check{T}_{H,S},\chi_S}
\end{equation}
\[
\simeq \on{Vect}\underset{D(\on{Bun}_{\check{Z}}(X))}{\otimes}D(\on{Bun}_{\check{H},\check{N}}(X,S))^{\check{T}_{H,S},\chi_S}.
\]

\noindent Indeed, we have a monoidal equivalence 
\begin{equation}\label{eq:gerbe}
D(\on{Bun}_{\check{Z}}(X))\simeq \on{QCoh}(\on{LocSys}_{\bB Z}(X))
\end{equation}

\noindent exchanging convolution on the left-hand side with tensor product on the right-hand side. Here $Z$ denotes the Cartier dual of $\check{Z}$. The isogeny $\check{H}\to \check{G}$ induces a map $H\to \bB Z$. In particular, we get a map
\[
\on{LocSys}_H(U,\chi_S)\to \on{LocSys}_{\bB Z}(X).
\]

\noindent Pulling back, we obtain an action
\[
\on{QCoh}(\on{LocSys}_{\bB Z}(X))\curvearrowright D(\on{Bun}_{\check{H},\check{N}}(X,S))^{\check{T}_{H,S},\chi_S}.
\]

\noindent By Theorem \ref{t:gerbygerby} below, this action coincides with (\ref{eq:Zact}) under the equivalence (\ref{eq:gerbe}). This proves the equivalence (\ref{eq:claim}).

Denote by $\on{coeff}_{\chi_S}^{\check{G}}$ (resp. $\on{coeff}_{\chi_S}^{\check{H}}$) the Whittaker coefficient functor for $\check{G}$ (resp. $\check{H}$) defined in §\ref{s:fouriercoeff}. By construction, the composition
\[
\on{QCoh}(\on{LocSys}_G(U,\chi_S))\underset{\on{QCoh}(\on{LocSys}_H(U,\chi_S))}{\otimes}  D(\on{Bun}_{\check{H},\check{N}}(X,S))^{\check{T}_{H,S},\chi_S}
\]
\[
\simeq \on{Vect}\underset{D(\on{Bun}_{\check{Z}}(X))}{\otimes}D(\on{Bun}_{\check{H},\check{N}}(X,S))^{\check{T}_{H,S},\chi_S}\into D(\on{Bun}_{\check{G},\check{N}}(X,S))^{\check{T}_{G,S},\chi_S}\xrightarrow{\on{coeff}_{\chi}^{\check{G}}} \on{Vect}
\]

\noindent coincides with
\[
\on{QCoh}(\on{LocSys}_G(U,\chi_S))\underset{\on{QCoh}(\on{LocSys}_H(U,\chi_S))}{\otimes}  D(\on{Bun}_{\check{H},\check{N}}(X,S))^{\check{T}_{H,S},\chi_S}\to 
\]
\[
\to D(\on{Bun}_{\check{H},\check{N}}(X,S))^{\check{T}_{H,S},\chi_S}\xrightarrow{\on{coeff}_{\chi_S}^{\check{H}}}\on{Vect},
\]

\noindent where the first functor is given by pushforward along $\on{LocSys}_G(U,\chi_S)\to \on{LocSys}_H(U,\chi_S)$.

It now follows from Corollary \ref{c:lin} that the image of $\cF_{\sigma_H}$ under the embedding (\ref{eq:finffemb}) is a Whittaker-normalized Hecke eigensheaf with eigenvalue $\sigma$.

\step
First, let $G$ be a semisimple algebraic group. Combining Step 1 and the existence of Hecke eigensheaves for adjoint groups, we deduce the existence of a Hecke eigensheaf for $\sigma$ in this case. 

Finally, let $G$ be arbitrary and consider the isogeny $G\to G^{\on{ad}}\times G^{\on{ab}}$. Since we know the existence of Hecke eigensheaves for the group $G^{\on{ad}}\times G^{\on{ab}}$, we are done by Step 1.
\end{proof}  

\subsubsection{} In the above proof, we used the following result of Gaitsgory-Raskin:
\begin{thm}[\cite{multone}]\label{t:gerbygerby}
The two natural actions
\[
D(\on{Bun}_{\check{Z}})\curvearrowright D(\on{Bun}_{\check{H},\check{N}}(X,S))^{\check{T}_{H,S},\chi_S},
\]

\noindent the first coming from the geometry of $\on{Bun}_{\check{H},\check{N}}(X,S)$, the second from the Hecke action and (\ref{eq:gerbe}), coincide.

\end{thm}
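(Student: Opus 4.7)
My plan is to realize both actions as arising from a single factorization action of a sheaf of categories over $\on{Ran}_X$ associated to $\check{Z}$, and then invoke the factorization realization of the Fourier-Mukai equivalence for the finite abelian group $\check{Z}$.

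First, I would reinterpret the geometric action of $D(\on{Bun}_{\check{Z}})$ on $D(\on{Bun}_{\check{H},\check{N}}(X,S))^{\check{T}_{H,S},\chi_S}$ as a Hecke action. Since $\check{Z}\hookrightarrow \check{H}$ is central, translating an $\check{H}$-bundle by a $\check{Z}$-bundle is a special case of Hecke modification. This realization factorizes: $\on{Bun}_{\check{Z}}$ is recovered as global sections over $\on{Ran}_X$ of the $\check{Z}$-affine Grassmannian, and the inclusion into the $\check{H}$-affine Grassmannian intertwines Hecke actions on the automorphic category. Since $\chi_S$ restricts trivially to $\check{Z}$ (as $\check{Z}$ is finite central, and $\chi_S$ is pulled back from $\check{T}_H$), no tame ramification data obstructs this; hence the $D(\on{Bun}_{\check{Z}})$-action is the global sections over $\on{Ran}_X$ of a factorization action of the $\check{Z}$-spherical Hecke category.

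Second, I would apply geometric Satake for the abelian group $\check{Z}$ to identify the latter with the constant factorization category $\on{Rep}(Z)_{\on{Ran}_X}$, where $Z$ is the Cartier dual of $\check{Z}$. By the functoriality of Satake under central inclusions (the same mechanism behind Theorem \ref{p:upgrade}, already used in Proposition \ref{p:isogeny}), the resulting factorization action coincides with the restriction of the $\NR$-action of \S\ref{s:hecke} along the composite $\on{LocSys}_H(U,\chi_S)\to \on{LocSys}_{\bB Z}(X)$. Passing to factorization homology over $\on{Ran}_X$ then matches this with the spectral action of $\on{QCoh}(\on{LocSys}_{\bB Z}(X))$ inherited via (\ref{eq:gerbe}). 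The key point is to verify that the Fourier-Mukai equivalence $D(\on{Bun}_{\check{Z}})\simeq \on{QCoh}(\on{LocSys}_{\bB Z}(X))$ intertwines the factorization presentation of the source (via Step 1) with the horizontal-sections presentation of the target along the lines of Proposition \ref{p:ff}. This is the content of categorical class field theory for the finite abelian group $\check{Z}$ and can be checked directly on generators, e.g., skyscraper sheaves at $k$-points of $\on{Bun}_{\check{Z}}$ and their Fourier-Mukai transforms.

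The main obstacle is the factorization-level bookkeeping: ensuring that the identifications in Steps 1 and 2 are coherent (not merely pointwise) and that the twists by $\chi_S$ and $\sL_\tau$ behave transparently under the central inclusion $\check{Z}\hookrightarrow \check{H}$. In practice this is handled by the unital factorization formalism of \cite{raskin2015chiral} together with the construction of $\cH^{\on{geom}}_{\check{G},\chi_S,\on{Ran},S}$ from \S\ref{s:hecke}, but the extraction of the $\check{Z}$-component from the full tame Hecke category requires some care; no genuinely new input is needed beyond what has already been developed in the construction of $\NR$ and the action of Theorem \ref{t:action'}.
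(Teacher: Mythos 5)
The paper does not actually prove Theorem \ref{t:gerbygerby}: it is imported as a black box from the forthcoming work \cite{multone} of Gaitsgory--Raskin, so there is no internal argument to compare yours against. Judged on its own terms, your proposal has a genuine gap at its very first step, and the gap is located exactly where the real difficulty of the statement lies.

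You propose to realize the geometric (twisting) action of $D(\on{Bun}_{\check{Z}})$ as factorization homology over $\on{Ran}_X$ of a factorization action of the ``$\check{Z}$-spherical Hecke category,'' using that ``$\on{Bun}_{\check{Z}}$ is recovered as global sections over $\on{Ran}_X$ of the $\check{Z}$-affine Grassmannian.'' This is false, because $\check{Z}=\on{Ker}(\check{H}\to\check{G})$ is a \emph{finite} group: $\check{Z}(K_x)=\check{Z}(O_x)=\check{Z}$, so $\on{Gr}_{\check{Z},\on{Ran}}\simeq \on{Ran}_X$ and the uniformization map $\on{Gr}_{\check{Z},\on{Ran}}\to \on{Bun}_{\check{Z}}$ hits only the trivial torsor (a nontrivial $\check{Z}$-torsor on $X$ stays nontrivial on every $X-\underline{x}$, since $\pi_1(X-\underline{x})\onto \pi_1(X)$). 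Hence twisting by a nontrivial $\check{Z}$-torsor is not a modification supported at finitely many points of any ``$\check{Z}$-Grassmannian,'' there is no factorization action whose global sections recover the geometric $D(\on{Bun}_{\check{Z}})$-action (your recipe would output $\on{Vect}$, not $D(\on{Bun}_{\check{Z}})$), and Step 2 has nothing to act on. One can of course exhibit $\sP\otimes\sigma$ non-canonically as an $\check{H}$-Hecke modification of $\sP$ (both bundles are generically trivial), but the modification datum lives in $\on{Gr}_{\check{H}}$ and depends on choices; identifying the resulting point of $\on{LocSys}_{\bB Z}(X)$ produced by the spectral decomposition with the image of $\sigma$ under the class-field-theory equivalence (\ref{eq:gerbe}) is precisely the content of the theorem, not something delivered by the factorization formalism. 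This is why the analogous compatibility for the \emph{connected} center $Z(\check{G})^{\circ}$ --- a torus, for which Ran uniformization does hold --- is handled in §\ref{s:twist} via the Fourier--Mukai equivalence (\ref{eq:fm}), whereas the finite-kernel case genuinely requires the external input of \cite{multone}. So the issue is not the ``factorization-level bookkeeping'' you flag as the main obstacle; it is that the geometric action is global rather than local, and a correct proof must supply a new idea to bridge that.
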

\begin{rem}\label{r:Heckecoh}
Let $\cF_{\sigma}$ be the Hecke eigensheaf constructed in the proof of Theorem \ref{t:Heckeexistsectfour}. It follows from Corollary \ref{c:pc} that the pullback of $\cF_{\sigma}$ to any quasi-compact open substack $U\subset \BunN$ is compact.
\end{rem}

\subsection{Relationship between automorphic and spectral rigidity}
In this subsection, we establish a relationship between automorphic and spectral rigidity as described in §\ref{s:autspecrigidity}. The main goal is to prove Theorem \ref{t:autspecrigidity}. The proof is an easy consequence of the existence of Hecke eigensheaves associated to regular singular irreducible local systems together with the results of \cite{bezrukavnikov2023dimensions}.

\subsubsection{} For a parabolic subgroup $P\subset G$, we denote by $I_P\subset G(O)$ the parahoric subgroup defined as the preimage of $P$ under the evaluation at $t=0$:
\[
G(O)\to G.
\]

\noindent For a dual parabolic $\check{P}\subset \check{G}$, the parahoric $I_{\check{P}}\subset \check{G}(O)$ is defined similarly.

\subsubsection{} Let us denote by a \emph{tamely ramified geometric automorphic datum} a tuple:
\begin{itemize}
    \item $\sL$ a rank 1 character sheaf on $\on{Bun}_{Z(\check{G}),1}(X,S)$.\footnote{This stack was defined in §\ref{s:centralstack}.}

    \item A collection $\check{P}_S=\lbrace \check{P}_x\rbrace_{x\in S}$, where $\check{P}_x\subset \check{G}$ is a parabolic subgroup.

    \item A collection $\chi_S=\lbrace \chi_x\rbrace_{x\in S}$ of rank 1 character sheaves on $\check{P}_x$.
\end{itemize}

\noindent We also denote by $\chi_x$ the induced character sheaf on $I_{\check{P}_x}$. Note that $\chi_x$ is pulled back from a character sheaf on $\check{M}_x^{\on{ab}}=\check{M}_x/[\check{M}_x,\check{M}_x]$, where $\check{M}_x$ denotes the Levi subgroup of $\check{P}_x$.

\subsubsection{} Recall that we say a local system $\sigma\in\on{LocSys}_G(U,\chi_S)$ is afforded by the geometric automorphic datum $(\sL, \check{Q}_S,\chi_S)$ if there exists a non-zero Hecke eigensheaf in the category
\[
D(\on{Bun}_{\check{G}}^{\infty\cdot S})^{(I_{\check{P}_S},\chi_S),\sL}
\]

\noindent with eigenvalue $\sigma$.

\subsubsection{}\label{s:properdef} Let $\lbrace \cO_x\rbrace_{x\in S}$ be the $k$-points of $\on{LocSys}_G(\overset{\circ}{D}_x)_{\chi_x}^{\on{RS}}$ determined by $\sigma$, and let $\tau_{\sigma}$ be the abelianization of $\sigma$. Write
\[
M_{\sigma}:=\on{LocSys}_G(U,\cO_S,\tau_{\sigma}),
\]

\noindent where the right-hand side is defined in §\ref{s:rigid}. We write $M_{\sigma}^{\on{irr}}$ for the open substack of $M_{\sigma}$ whose underlying local systems on $U$ are irreducible. We will prove:
\begin{thm}\label{t:autspecrigidity2}
Let $\sigma$ be an irreducible local system afforded by a tamely ramified geometric automorphic datum $(\sL,\check{P}_S,\chi_S)$. If $\sigma$ is the only irreducible local system afforded by $(\sL,\check{P}_S,\chi_S)$, then $M_{\sigma}^{\on{irr}}$ has a single $k$-point. Similarly, if the datum $ (\sL,\check{P}_S,\chi_S)$ only affords finitely many irreducible local systems, then $M_{\sigma}^{\on{irr}}$ is a disjoint union of finitely many connected components, each of which has a single $k$-point.
\end{thm}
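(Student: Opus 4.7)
The approach is to show that every $\sigma' \in M_\sigma^{\on{irr}}(k)$ is afforded by the datum $(\sL, I_{\check{P}_S}, \chi_S)$. Since $M_\sigma^{\on{irr}}$ has closed points isolated within their connected components (Remark \ref{r:cohorigid}), this produces an injection from $M_\sigma^{\on{irr}}(k)$ into the set of isomorphism classes of afforded irreducible local systems, immediately yielding both assertions of the theorem.

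The strategy is to produce an eigensheaf with eigenvalue $\sigma'$ in
\[
\sA := D(\on{Bun}_{\check{G}}^{\infty\cdot S})^{(I_{\check{P}_S},\chi_S),\sL}
\]
from the Iwahori-equivariant eigensheaf supplied by Theorem \ref{t:Heckeexistsectfour}. Since $\tau_{\sigma'} = \tau_\sigma$ by the definition of $M_\sigma$, Theorem \ref{t:Heckeexistsectfour} combined with Corollary \ref{c:lin} and the twisting of §\ref{s:twist} yields a Whittaker-normalized eigensheaf
\[
\cF_{\sigma'} \in \sB := D(\on{Bun}_{\check{G},\check{N}}(X,S))^{(\check{T}_S,\chi_S),\sL_{\tau_\sigma}}.
\]
I then apply the parahoric averaging functor $\on{Av}_!^{I_{\check{P}_S}/\check{T}_S,\chi_S} : \sB \to \sA$ to obtain a candidate parahoric eigensheaf $\cG_{\sigma'} := \on{Av}_!(\cF_{\sigma'}) \in \sA$. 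By the spectral decomposition (Theorem \ref{t:action'} and §\ref{s:twist}) and $\on{QCoh}(M_\sigma)$-linearity of the averaging, $\cG_{\sigma'}$ remains a Hecke eigensheaf with eigenvalue $\sigma'$ whenever non-zero.

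The non-vanishing of $\cG_{\sigma'}$ is the main obstacle. The key observation is that the existence of a non-zero parahoric averaging of an Iwahori Hecke eigensheaf depends only on the local data of the underlying local system at $S$, namely on $\sigma'|_{\overset{\circ}{D}_x} = \cO_x$ for $x \in S$. Since $\sigma'$ and $\sigma$ share the same local data $\cO_S$, and the averaging is known to be non-vanishing for $\sigma$ (because $\sigma$ is afforded, producing a non-zero object in $\sA$ by hypothesis), the same should hold for $\sigma'$. Concretely, one can test non-vanishing by pairing with a geometric point $\sP \in \on{Bun}_{\check{G}}^{\infty\cdot S}$ chosen so that the $!$-fiber of $\cF_{\sigma'}$ at $\sP$ is non-zero, and tracking how the parahoric averaging interacts with the local Whittaker coefficient of Proposition \ref{p:globalsect=coeff}.

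Making this local comparison rigorous requires the parabolic analogue of Feigin--Frenkel duality and of Theorem \ref{t:dyz} (current work in progress with Dhillon, see §\ref{ss:workinprogress}, which in turn relies on \cite{dhillonchen} and \cite{dhillon2023localization}, as outlined in the Remark following Theorem \ref{t:action}). Once that input is available, the remaining steps are a formal combination of Theorem \ref{t:Heckeexistsectfour}, Corollary \ref{c:lin}, and the spectral decomposition, closely paralleling the argument in §\ref{s:overview} that extracts a motivic Hecke eigensheaf as a direct summand via the skyscraper action at an isolated point of the moduli of local systems.
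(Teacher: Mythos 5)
Your high-level strategy matches the paper exactly: show that every $\rho \in M_\sigma^{\on{irr}}(k)$ is afforded by $(\sL, I_{\check{P}_S},\chi_S)$, then conclude using the closedness of $\rho/\on{Aut}(\rho)\into M_\sigma^{\on{irr}}$. However, there is a genuine gap in your non-vanishing argument, and the key technical input used by the paper is missing.

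You propose to directly apply the parahoric averaging functor to the Whittaker-normalized Iwahori eigensheaf $\cF_{\sigma'}$ from Theorem~\ref{t:Heckeexistsectfour} and claim that $\on{Av}(\cF_{\sigma'})\neq 0$ because the ``existence of a non-zero parahoric averaging depends only on the local data at $S$,'' and $\sigma$ has the same local data $\cO_S$. This is not justified, and as stated it is not even true for $\sigma$ itself: the hypothesis that $\sigma$ is afforded only asserts the existence of \emph{some} non-zero parahoric eigensheaf with eigenvalue $\sigma$, not that $\on{Av}(\cF_\sigma)$ is non-zero. There is no reason the parahoric eigensheaf produced by the hypothesis should relate to the Whittaker-normalized $\cF_\sigma$. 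Your suggestion to test non-vanishing via $!$-fibers at a point $\sP$ and the Whittaker coefficient does not close this gap, since averaging can kill $!$-fibers and does not interact simply with the coefficient functor.

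The paper's argument is more delicate and passes through a different technical input: \cite[Lemma 6.7]{bezrukavnikov2023dimensions} (and its generalization to non-zero $\chi$ via endoscopy), which says that under Bezrukavnikov's equivalence the constant sheaf on $\check{P}/\check{B}$ goes to the structure sheaf of the parabolic Steinberg stack. This is what lets one translate between the spectral statement ``$\rho$ lives over the image of $\prod_x \on{LocSys}_{P_x}(\overset{\circ}{D}_x)_{\chi_x}$'' and the automorphic statement about the parahoric averaging functor. Concretely, the paper's Step 2 shows via the spectral decomposition that $\cF_\rho$ lies in the cocompletion of the essential image of the map $\cH\otimes\cG\mapsto \cH\star\on{oblv}(\cG)$, where $\cG$ ranges over the parahoric category and $\cH$ over $\bigotimes_x D(\check{I}_x,\chi_x\backslash \check{G}(K_x)/\check{I}_x,\chi_x)$. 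Non-vanishing of $\cF_\rho$ then forces the existence of \emph{some} $\cH=\boxtimes_x\cH_x$ with $\on{Av}_*(\cH\star\cF_\rho)\neq 0$, and this object is the parahoric eigensheaf witnessing that $\rho$ is afforded. You cannot avoid introducing the auxiliary Hecke modification $\cH$; applying the averaging functor directly to $\cF_\rho$ may well give zero. A smaller point: the paper uses $\on{Av}_*$ (the right adjoint to $\on{oblv}$), which is the one identified with the constant-sheaf convolution under Bezrukavnikov, not $\on{Av}_!$. Finally, the input you invoke (``the parabolic analogue of Feigin--Frenkel duality and of Theorem~\ref{t:dyz}'') is related to but not the same as the lemma actually used; the needed consequence is precisely \cite[Lemma 6.7]{bezrukavnikov2023dimensions}, and your proposal never explains how to extract it from the parabolic equivalence, nor how it feeds into the non-vanishing claim.
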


\subsubsection{} We write $M_x, P_x$ for the Langlands dual groups of $\check{M}_x,\check{P}_x$. Note that $\chi_x$ corresponds to a $k$-point of $Z(M_x)$, the center of the $M_x$, which we similarly denote by $\chi_x$.

\subsubsection{} Consider the moduli stack
\[
\on{LocSys}_{P_x}(\overset{\circ}{D}_x)_{\chi_x}:=\on{LocSys}_{P_x}(\overset{\circ}{D}_x)\underset{\on{LocSys}_{M_x}(\overset{\circ}{D}_x)}{\times} \lbrace \chi_x\rbrace/M_x
\]

\noindent of $P_x$-local systems on the punctured disk whose underlying $M_x$-local system coincides with $\chi_x$.\\

\textbf{Proof of Theorem \ref{t:autspecrigidity2}}.

\emph{Step 1.} Let $(\sL, \check{P}_S,\chi_S)$ be a tamely ramified geometric automorphic datum and suppose $\sigma$ is the only local system on $U$ afforded by the datum. We want to show that $\sigma$ defines the only $k$-point of $M_{\sigma}^{\on{irr}}$. Suppose $\rho$ is another $k$-point. We claim that $\sigma$ and $\rho$ admit lifts to the stack
\[
\on{LocSys}_G(U)\underset{\underset{x\in S}{\prod}\on{LocSys}_G(\overset{\circ}{D}_x)}{\times}\prod_{x\in S}\on{LocSys}_{P_x}(\overset{\circ}{D}_x)_{\chi_x}.
\]

\noindent When all $\chi_x=0$, this follows from the spectral decomposition, Theorem \ref{t:action}, and \cite[Lemma 6.7]{bezrukavnikov2023dimensions}, which says that, up to cohomological shift, under Bezrukavnikov's equivalence of monoidal categories
\[
D(\check{I}\backslash \check{G}(K)/\check{I})^{\on{ren}}\simeq \on{IndCoh}(\fn/B\underset{\fg/G}{\times}\fn/B),
\]

\noindent the constant sheaf $\underline{k}_{\check{P}/\check{B}}$ goes to the structure sheaf of $\fn_P/B\underset{\fn_P/P}{\times} \fn_P/B$. Here, $\fn_P$ denotes the nilpotent radical of $\fp=\on{Lie}(P)$. 

In general, we need the statement of \cite[Lemma 6.7]{bezrukavnikov2023dimensions} in the case when $\chi_x\neq 0$. That is, if $\chi$ is a central element of a Levi $M$, we need to show that under the equivalence of Dhillon-Li-Yun-Zhu:
\[
D(\check{I},\chi\backslash \check{G}(K)/\check{I}, \chi)^{\on{ren}}\simeq \on{IndCoh}(\on{LocSys}_B(\overset{\circ}{D})_{\chi}\underset{\on{LocSys}_G(\overset{\circ}{D})}{\times}\on{LocSys}_B(\overset{\circ}{D})_{\chi}),
\]

\noindent the character sheaf $\chi\in D(\check{P}/\check{B},\chi)\into D(\check{I},\chi\backslash \check{G}(K)/\check{I}, \chi)$ corresponds (up to shifts) to the structure sheaf of 
\[
(\on{LocSys}_{P_x}(\overset{\circ}{D}_x)_{\chi_x}\underset{\bB P}{\times} \bB B)\underset{\on{LocSys}_{P_x}(\overset{\circ}{D}_x)_{\chi_x}}{\times} (\on{LocSys}_{P_x}(\overset{\circ}{D}_x)_{\chi_x}\underset{\bB P}{\times}\bB B).
\]

\noindent If $\chi$ is central in $G$, the result is immediate from \cite{bezrukavnikov2023dimensions}. In general, the paper \cite{dhillonendo} exhibits the category
\begin{equation}\label{eq:endoscopyequiv}
D(\check{I}_H,\chi\backslash \check{H}(K)/\check{I}_H, \chi)
\end{equation}

\noindent as a direct summand of $D(\check{I},\chi\backslash \check{G}(K)/\check{I}, \chi)$. Here, $H\subset G$ is the group defined as the connected component of the centralizer in $G$ of the element $\on{exp}(\chi)$ in $T$ defining $\chi$, and $\check{I}_H$ denotes the Iwahori subgroup of $\check{H}(O)$. In particular, $\on{exp}(\chi)$ is a central element of $H$. It follows from the construction \cite{dhillonendo} that under the inclusion $D(\check{I}_H,\chi\backslash \check{H}(K)/\check{I}_H, \chi)\subset D(\check{I},\chi\backslash \check{G}(K)/\check{I}, \chi)$, the sheaf $\chi\in D(\check{P}\cap \check{H}/\check{B}\cap \check{H},\chi)$ goes to  $\chi\in D(\check{P}/\check{B},\chi)$. Since the corresponding assertion on the spectral side is immediate, we are reduced to $H=G$, which we considered above.\\

\emph{Step 2.} Let $\cF_{\rho}\in D(\on{Bun}_{\check{G}}^{\infty\cdot S})^{(\check{I}_S,\chi_S),\sL}=D(\on{Bun}_{\check{G},\check{N}}(X,S))^{(\check{T}_S,\chi_S),\sL}$ be the Hecke eigensheaf corresponding to $\rho$ guaranteed by Theorem \ref{t:Heckeexistsectfour}. From the spectral decomposition, it follows that $\cF_{\rho}$ lies in the cocompletion of the image of the map
\[
D(\on{Bun}_{\check{G}}^{\infty\cdot S})^{(\check{I}_S,\chi_S),\sL}\to D(\on{Bun}_{\check{G}}^{\infty\cdot S})^{(\check{I}_S,\chi_S),\sL},\;\; \cF\mapsto (\underset{x\in S}{\boxtimes}\cO_{\on{LocSys}_{P_x}(\overset{\circ}{D}_x)_{\chi_x}})\star \cF.
\]

\noindent By the generalization of \cite[Lemma 6.7]{bezrukavnikov2023dimensions} to general $\chi$ described in Step 1, it follows that $\cF_{\rho}$ lies in the cocompletion of the image of the action map
\[
\underset{x\in S}{\bigotimes} D(\check{I}_x,\chi_x\backslash \check{G}(K_x)/\check{I}_x, \chi_x)\otimes D(\on{Bun}_{\check{G}}^{\infty\cdot S})^{(\check{P}_S,\chi_S),\sL}\to
\]
\[
\xrightarrow{\on{id}\otimes \on{oblv}} \underset{x\in S}{\bigotimes} D(\check{I}_x,\chi_x\backslash \check{G}(K_x)/\check{I}_x, \chi_x)\otimes D(\on{Bun}_{\check{G}}^{\infty\cdot S})^{(\check{I}_S,\chi_S),\sL}\xrightarrow{-\star -} D(\on{Bun}_{\check{G}}^{\infty\cdot S})^{(\check{I}_S,\chi_S),\sL}.
\]

\noindent In particular, we may find $\underset{x\in S}{\boxtimes} \cH_x\in \underset{x\in S}{\bigotimes} D(\check{I}_x,\chi_x\backslash \check{G}(K_x)/\check{I}_x, \chi_x)$ such that
\begin{equation}\label{eq:altHeckeeigenshf}
\on{Av}_*^{(\check{I}_S,\chi_S),\sL)\to (\check{P}_S,\chi_S),\sL}(\underset{x\in S}{\boxtimes} \cH_x\star \cF_{\rho})\in D(\on{Bun}_{\check{G}}^{\infty\cdot S})^{(\check{P}_S,\chi_S),\sL}
\end{equation}

\noindent is non-zero. Then (\ref{eq:altHeckeeigenshf}) is a Hecke eigensheaf with eigenvalue $\rho$. By assumption, this forces $\rho=\sigma$. Thus, $M_{\sigma}^{\on{irr}}$ only has one $k$-point.\\

\emph{Step 3.} Finally, suppose the datum  $(\sL,\check{P}_S,\chi_S)$ affords finitely many local systems (with $\sigma$ being one of them). We want to show that $M_{\sigma}^{\on{irr}}$ is a disjoint union of finitely many stacky points. First, let us show that $M_{\sigma}^{\on{irr}}$ only has finitely many $k$-points. This is an immediate consequence of the analysis above. Namely, we showed that if $\rho\in M_{\sigma}^{\on{irr}}(k)$, then $\rho$ is afforded by the datum $(\sL,\check{P}_S,\chi_S)$. Finally, as noted in Remark \ref{r:cohorigid}, for any $\rho\in M_{\sigma}^{\on{irr}}(k)$, the corresponding embedding
\[
\on{pt}/\on{Stab}_G(\rho)\into M_{\sigma}^{\on{irr}}
\]

\noindent is closed. This implies that the connected components of $M_{\sigma}^{\on{irr}}$ are in bijection with $M_{\sigma}^{\on{irr}}(k)$.

\qed

\begin{rem}
While convenient, it is not necessary to use the spectral decomposition in the above argument. Namely, using \cite{bezrukavnikov2023dimensions} (and Conjecture \ref{c:linearity}), it follows from the construction of Hecke eigensheaves in §\ref{s:Heckeeigensheaf} that for two irreducible local systems with the same local monodromies and abelianization, one is afforded by a given geometric automorphic datum if and only if the other one is.
\end{rem}

\section{Motivic realization of rigid local systems} 

In this section, we prove Theorem \ref{t:main}. The main inputs are the spectral decomposition given by Theorem \ref{t:action} and the existence of Hecke eigensheaves associated to irreducible regular singular local systems.

\subsection{The category of motivic sheaves} 

\subsubsection{} Let $X$ be a smooth variety.\footnote{By variety, we mean a quasi-separated scheme of finite type.} We let $D_0(X)$ be the category of D-modules on $X$ whose singular support is contained in the zero section of $X$. In other words, $D_0(X)$ is the category of D-modules $\cF\in D(X)$ such that each $H^n(\cF)$ is a filtered colimit of vector bundles with a flat connection. The category $D_0(X)$ carries a natural t-structure making the inclusion $D_0(X)\into D(X)$ t-exact.

We will refer to coherent D-modules $\sL\in D_0(X)^{\heartsuit}$ as local systems on $X$.

\subsubsection{} Recall the usual notion of an irreducible local system on a smooth variety being motivic.
\begin{defin}\label{d:mot}
Let $X$ be a smooth variety, and let $\sL\in D_0(X)^{\heartsuit}$ be an irreducible local system. We say that $\sL$ is \emph{motivic} if there exists a smooth Zariski-open dense subscheme $j: U\into X$ and a smooth proper family $f:W\to U$ such that $\sL_{\vert U}$ appears as a subquotient of $H^n(f_{*,\on{dR}}(k_W))$ for some $n\in \bZ$.
\end{defin}

\noindent In other words, an irreducible local system is motivic if on a smooth Zariski-dense open subset $U\subset X$, the restriction of $\sL$ to $U$ appears in the Gauss-Manin connection corresponding to a smooth proper family.

\begin{rem}
    It follows from a combination of Chow's lemma and the decomposition theorem that we may take the family to be projective instead of proper.
\end{rem}

\begin{rem}
    If we require that the motivic local systems extend to $X$; that is, requiring $U=X$ in the above definition, one obtains an a priori stronger notion of being motivic.\footnote{These are referred to as local systems smoothly of geometric origin in \cite[§2.5]{langer2018rank}.} Our main theorem does not show that irreducible rigid local systems on curves with quasi-unipotent monodromies and torsion abelianization are motivic in this stronger sense.
\end{rem}

\subsubsection{} Next, we need to extend the definition of being to motivic to irreducible holonomic D-modules in general.
\begin{defin}\label{d:geom2}
Let $X$ be a (possibly singular) variety, and let $\cF\in D(X)^{\heartsuit}$ be an irreducible holonomic D-module. Write $\cF=i_{!*}(\sL)$ for some smooth locally closed subvariety $i:Z\into X$ and some irreducible local system $\sL$ on $Z$. We say that $\cF$ is motivic if $\sL$ is motivic in the sense of Definition \ref{d:mot}.
\end{defin}

\subsubsection{} For a variety $X$, we let $D(X)^{\on{mot}}$ be the category of D-modules $\cF\in D(X)$ such that any irreducible subquotient $\cG$ of $H^n(\cF)$ is motivic in the above sense. Note that $D(X)^{\on{mot}}$ is closed under finite colimits, shifts, and taking subobjects.

For $\cF\in D(X)$, we say that $\cG\in D(X)^{\heartsuit}$ is a subquotient of $\cF$ if $\cG$ is a subquotient of $H^n(\cF)$ for some $n\in \bZ$.

\subsubsection{}\label{s:motforst} Henceforth, we assume all algebraic stacks (in the sense of \cite[§1.1.3]{drinfeld2013some}) are locally of finite type. For such an algebraic stack $\sY$, we let $D(\sY)^{\on{mot}}$ be the category consisting of $\cF\in D(\sY)$ such that for any smooth surjective map from a variety $f:Y\to \sY$, $f^!(\cF)$ lies in $D(Y)^{\on{mot}}$. Since the pullback of a motivic D-module by any map between varieties is motivic (cf. Lemma \ref{l:pullback} below), this definition extends the notion for varieties.

We refer to $D(\sY)^{\on{mot}}$ as the category of motivic D-modules on $\sY$.
\begin{lem}\label{l:comptstruc}
The category $D(\sY)^{\on{mot}}$ is closed under filtered colimits.
\end{lem}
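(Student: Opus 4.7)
The plan is to reduce first to the case of varieties and then to an abelian-category argument in the heart. Given a filtered colimit $\cF = \colim_i \cF_i$ with each $\cF_i \in D(\sY)^{\on{mot}}$ and an arbitrary smooth surjective map $f: Y \to \sY$ from a variety, the functor $f^!$ is continuous, hence $f^!(\cF) \simeq \colim_i f^!(\cF_i)$ with each term in $D(Y)^{\on{mot}}$. So it suffices to prove that $D(Y)^{\on{mot}}$ is closed under filtered colimits for a variety $Y$.

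For a variety $Y$, the t-structure on $D(Y)$ has the property that filtered colimits are t-exact, so $H^n(\cF) \simeq \colim_i H^n(\cF_i)$. By unraveling the definition, the lemma for varieties reduces to the following assertion in the Grothendieck abelian category $D(Y)^{\heartsuit}$: if $\cM = \colim_i \cM_i$ and every irreducible subquotient of each $\cM_i$ is motivic in the sense of Definition \ref{d:geom2}, then the same holds for $\cM$.

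To establish this, let $\cG$ be an irreducible subquotient of $\cM$, so $\cG = \cG_2/\cG_1$ for some $\cG_1 \subset \cG_2 \subset \cM$. Pick any lift to $\cG_2$ of a nonzero element of $\cG$ and let $\cG' \subset \cG_2$ be the cyclic (hence coherent, hence compact in $D(Y)^{\heartsuit}$) submodule it generates. By irreducibility of $\cG$, the image of $\cG'$ in $\cG$ equals $\cG$, so $\cG$ is a quotient of $\cG'$, namely $\cG' / (\cG' \cap \cG_1)$. By compactness, the inclusion $\cG' \hookrightarrow \cM = \colim_i \cM_i$ factors through some map $\cG' \to \cM_i$. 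The kernel of this factorization maps to zero in $\cM$, and hence vanishes because $\cG' \hookrightarrow \cM$ is injective; thus $\cG' \subset \cM_i$. Consequently $\cG$ is an irreducible subquotient of $\cM_i$, and is motivic by hypothesis.

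The main thing to verify carefully is this last compactness-plus-injectivity step, since it is the one place where the argument uses a nontrivial property of the heart (compactness of finitely generated D-modules in $D(Y)^{\heartsuit}$ and the compatibility of $\Hom$ out of compact objects with filtered colimits). Everything else, namely the continuity of $f^!$ for smooth $f$ and the t-exactness of filtered colimits, is standard in the theory of D-modules on prestacks locally of finite type and can be quoted from \cite{gaitsgory2017study}.
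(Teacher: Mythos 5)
Your argument is correct, and in fact it is more careful than what the paper writes down. The framework is the same: reduce to a variety $Y$ via smooth surjective $f: Y \to \sY$ and continuity of $f^!$, then use t-exactness of filtered colimits to work in the heart. But where the paper then simply invokes the fact that extensions of motivic objects are motivic and declares the lemma to follow, you identify the genuine content: one needs to see that every irreducible subquotient $\cG$ of $\cM = \colim_i \cM_i$ is already an irreducible subquotient of some $\cM_i$, and this is exactly what your compactness argument (coherent/cyclic subobject of $\cG_2$ surjecting onto $\cG$, factoring through some $\cM_i$ by compactness, with injectivity preserved) supplies. Extension-closure alone does not yield closure under filtered colimits; some appeal to compactness of coherent D-modules, or equivalently a ``finitely generated subobjects are contained in some stage'' argument, is unavoidable. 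So your proof fills in a step that the paper leaves implicit. One minor cosmetic remark: where you say ``thus $\cG' \subset \cM_i$,'' it is cleaner to say $\cG'$ embeds into $\cM_i$, since $\cM_i \to \cM$ need not itself be injective; but your preceding sentence already handles this correctly.
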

\begin{proof}
By definition, we are reduced to the case when $\sY=Y$ is a variety. Thus, let $\cF_i\in D(Y)^{\on{mot}}$ be a family of motivic D-modules, and let $\cF:=\underset{i}{\on{colim}}\; \cF_i$. Since the t-structure on $D(Y)$ is compatible with filtered colimits (see \cite[§4]{gaitsgory2011crystals}), we may assume that each $\cF_i$ is in the heart of the t-structure on $D(Y)$. Then the lemma follows from the fact that if we have a triangle
\[
\cG_1\to \cG_2\to \cG_3,
\]

\noindent where $\cG_1$ and $\cG_3$ are motivic, then so is $\cG_2$.

\end{proof}

\subsubsection{}\label{s:coherent} Recall (cf. \cite[§7.3]{drinfeld2013some}) that a D-module $\cF\in D(\sY)$ is called \emph{coherent} if for any smooth map from an affine scheme $f:Y\to \sY$, the sheaf $f^!(\cF)\in D(Y)$ is compact. In this case, the Verdier dual $\bD(\cF)$ is also coherent.
\begin{lem}\label{l:verdiermot}
Let $\cF\in D(\sY)^{\on{mot}}$ be coherent. Then $\bD(\cF)\in D(\sY)$ is motivic.
\end{lem}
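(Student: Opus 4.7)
The plan is to reduce the statement to the case when $\sY$ is a smooth variety and then use the behavior of Verdier duality on intermediate extensions combined with relative Poincaré duality. For the reduction, given a smooth surjective morphism $f : Y \to \sY$ of relative dimension $d$ from a variety, one has $f^! \simeq f^*[2d]$ on D-modules, whence $f^!(\bD(\cF)) \simeq \bD(f^!(\cF))[2d]$. Since motivicity depends only on irreducible subquotients of cohomology sheaves, it is invariant under shift; combined with the definition of $D(\sY)^{\on{mot}}$ via smooth $!$-pullback, this reduces us to proving that $\bD(\cF)$ is motivic when $\sY = Y$ is a smooth variety and $\cF$ is coherent motivic.

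In the variety case, coherence of $\cF$ forces bounded cohomology with each $H^n(\cF)$ finitely generated. Since any irreducible subquotient of a D-module is automatically holonomic, the motivicity hypothesis constrains the Jordan--Hölder constituents of each $H^n(\cF)$ to lie in the Serre subcategory of holonomic D-modules; together with Noetherianity of finitely generated D-modules on $Y$, this exhibits $\cF$ as a bounded complex of holonomic D-modules, and the same then holds for $\bD(\cF)$. Truncating $\cF$ and Verdier dualizing shows that the multiset of irreducible subquotients of the cohomology sheaves of $\bD(\cF)$, read across all cohomological degrees, is exactly the Verdier-dual multiset of the corresponding multiset for $\cF$.

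It thus suffices to check that $\bD$ preserves irreducible motivic D-modules. Writing such an irreducible subquotient as $i_{!*}(\sL)$ for a smooth locally closed embedding $i : Z \into Y$ and an irreducible motivic local system $\sL$ on $Z$, we have $\bD(i_{!*}(\sL)) \simeq i_{!*}(\sL^\vee)$ up to shift. If $\sL$ appears as a subquotient of $H^n(f_{*,\on{dR}}(k_W))$ for some smooth proper family $f : W \to U \subset Z$, then relative Poincaré duality for $f$ exhibits $\sL^\vee$ as a subquotient of $H^{2e - n}(f_{*,\on{dR}}(k_W))$, where $e$ is the relative dimension of $f$; hence $\sL^\vee$ is motivic, and so is $\bD(i_{!*}(\sL))$. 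The main obstacle is the second step: carefully verifying that coherence together with motivicity forces $\cF$ to be a bounded holonomic complex, so that Verdier duality can be controlled at the level of irreducible constituents and transported via the multiset bijection above.
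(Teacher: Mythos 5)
Your overall strategy is essentially the paper's: reduce to the variety case by smooth $!$-pullback and the interchange $f^!\bD \simeq \bD f^{*,\on{dR}}$, reduce to an irreducible constituent $j_{!*}(\sL)$, and conclude from $\bD(j_{!*}\sL)\simeq j_{!*}(\bD\sL)$ together with Poincar\'e duality for the smooth proper family. The key idea is identical.

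However, there is an incorrect auxiliary claim in your second step: it is \emph{not} true that ``any irreducible subquotient of a D-module is automatically holonomic.'' There exist irreducible coherent D-modules that fail to be holonomic (examples of Bernstein and Stafford on the Weyl algebra), so this cannot be taken for free. What saves the argument --- and is in fact the correct justification --- is that Definition~\ref{d:geom2} only attaches the property ``motivic'' to irreducible \emph{holonomic} D-modules, so the hypothesis $\cF\in D(\sY)^{\on{mot}}$ already forces every irreducible subquotient of each $H^n(\cF)$ to be holonomic. You do not need the false universal claim; the motivicity hypothesis alone does the work, and then coherence gives boundedness. Once $\cF$ is known to be a bounded complex with holonomic cohomology, the t-exactness of $\bD$ on holonomic complexes yields the multiset identification you describe, and the final Poincar\'e-duality step is correct. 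The paper itself is terser here --- it asserts t-exactness of $\bD$ on $D(Y)^c$, which is a slight overstatement (only the holonomic part is needed, and only there is $\bD$ t-exact) and immediately reduces to the irreducible case --- so with the fix above your argument lands in the same place.
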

\begin{proof}
Let $f:Y\to \sY$ be a smooth surjective map with $Y$ affine. By definition of $D(\sY)^{\on{mot}}$, we need to check that $f^!(\bD(\cF))$ lies in $D(Y)^{\on{mot}}$. By \cite[§7.3.4]{drinfeld2013some}, we have
\[
f^!(\bD(\cF))\simeq \bD(f^{*,\on{dR}}(\cF)).
\]

\noindent Since $f$ is smooth, the functors $f^!$ and $f^{*,\on{dR}}$ agree up to a cohomological shift (on each connected component of $Y$). Thus, we are reduced to the case when $Y=\sY$ is a variety.

The functor $\bD: D(Y)^c\to D(Y)^{c,\on{op}}$ is t-exact, and so we may assume that $\cF\in D(Y)^{\heartsuit}$ is irreducible. Write $\cF=j_{!*}(\sL)$ for some local  system $\sL$ on a smooth locally closed subscheme $U\subset Y$ that appears in the cohomology of a smooth proper family $\pi:W\to U$. Then
\[
\bD(\cF)=\bD(j_{!*}\sL)\simeq j_{!*}(\bD(\sL)).
\]

\noindent Since $\bD(\sL)$ also appears in the cohomology of the family $\pi:W\to U$, this finishes the lemma.
\end{proof}

\subsubsection{} The goal of this subsection is to prove that the categories of motivic D-modules are stable under sheaf-theoretic operations.
\begin{prop}\label{p:converse}
For a schematic and quasi-compact map $f$, the categories of motivic D-modules are stable under the six functors $\lbrace f_{*,\on{dR}},f^!,f^{*\on{dR}},f_!, -\boxtimes -, \bD\rbrace$.\footnote{For the functor of Verdier duality, it only really makes sense to say that it preserves coherent motivic D-modules.}
\end{prop}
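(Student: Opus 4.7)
The plan is to reduce each statement to an analogous one for varieties via the smooth-descent definition of $D(\sY)^{\on{mot}}$ (see §\ref{s:motforst}), after which each operation is handled by its own argument on motivic local systems. For the pullback functors $f^!$ and $f^{*,\on{dR}}$, one chooses compatible smooth surjections from varieties covering $\sX$ and $\sY$ and invokes the pullback preservation for varieties (Lemma \ref{l:pullback}), using that $f^!$ and $f^{*,\on{dR}}$ differ only by a cohomological shift along smooth maps. For the external tensor product, Künneth shows that $\sL_1\boxtimes \sL_2$ appears in $H^{n_1+n_2}\big((\pi_1\times \pi_2)_{*,\on{dR}}(k_{W_1\times W_2})\big)$ whenever $\sL_i$ appears in $H^{n_i}\big((\pi_i)_{*,\on{dR}}(k_{W_i})\big)$ for smooth projective $\pi_i : W_i\to U_i$, since the product of smooth projective families is itself a smooth projective family. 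Together with Lemma \ref{l:comptstruc} and the fact that $D^{\on{mot}}$ is closed under finite colimits, shifts, and subobjects, these extend from the case of irreducible subquotients to arbitrary motivic D-modules. Verdier duality on coherent objects was already handled by Lemma \ref{l:verdiermot}.

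The main obstacle is the pushforwards $f_{*,\on{dR}}$ and $f_!$. My plan is to Nagata-compactify $f = \overline{f}\circ j$ with $j$ a quasi-compact open immersion into an $\overline{f}$-proper scheme, and treat the two factors independently. For the proper step I reduce to an irreducible motivic $\cF = i_{!*}(\sL)$ and invoke the decomposition theorem: $\overline{f}_{*,\on{dR}}(\cF)$ splits as a finite direct sum of shifted IC-sheaves $i'_{\alpha,!*}(\sL'_\alpha)$, and each $\sL'_\alpha$ arises, over a Zariski-dense open of its support, as a summand of a higher direct image of the constant sheaf under the composite family obtained by base change from the smooth projective $\pi: W\to U$ witnessing the motivicity of $\sL$; generic smoothness and shrinking the open guarantee that this composite remains smooth and projective, yielding the motivicity of $\sL'_\alpha$. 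For the open-immersion step, the irreducible subquotients of $j_{!}(\cF)$ and $j_{*,\on{dR}}(\cF)$ are either the middle extension $\overline{j_{!*}\cF}$, which is motivic by definition since motivicity of a local system is tested on a dense open, or D-modules supported on the boundary $\overline{X}\setminus X$ of strictly smaller dimension; a Noetherian induction on the dimension of this boundary, combined with the already established stability of $!$-pullback and Verdier duality, reduces these boundary contributions to motivic data on lower-dimensional strata.

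The case of $f_!$ on coherent motivic objects follows from that of $f_{*,\on{dR}}$ by Verdier duality (Lemma \ref{l:verdiermot}), and extends to arbitrary motivic D-modules using Lemma \ref{l:comptstruc}. The subtlest technical point will be the decomposition-theorem step: one must check that the smooth projective family witnessing the motivicity of each summand $\sL'_\alpha$ can be constructed from the original family witnessing the motivicity of $\sL$, after a suitable stratification of the target and base change. This is possible because the decomposition theorem is only required to hold on a dense open of each stratum, which matches the generic nature of Definition~\ref{d:mot}, but it is where genuine geometric input (rather than formal six-functor manipulation) is needed.
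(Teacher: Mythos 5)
Your proposal is correct and follows essentially the same route as the paper: reduction to varieties by smooth descent, Nagata compactification plus the decomposition theorem plus induction on dimension for pushforwards, and Verdier duality to deduce $f_!$ and $f^{*,\on{dR}}$ from $f_{*,\on{dR}}$ and $f^!$. The one organizational difference is in the proper step. The paper does not apply the decomposition theorem to $\overline{f}_{*,\on{dR}}(i_{!*}\sL)$ and then chase witnessing families for each summand $\sL'_\alpha$; instead it funnels everything through the single statement that $g_{*,\on{dR}}(\omega_W)$ is motivic for an \emph{arbitrary} map of varieties $g$ (Lemma \ref{l:dualizing}, proved by compactifying, resolving singularities, and inducting on $\on{dim}W$), and then uses the triangle $i_{*,\on{dR}}i^!\cF\to\cF\to j_{*,\on{dR}}\sL$ together with the observation that $j_{*,\on{dR}}\sL$ is a direct summand of $(j\circ\pi)_{*,\on{dR}}(k_W)$, so that $f_{*,\on{dR}}(j_{*,\on{dR}}\sL)$ is a summand of a pushforward of $\omega_W$ up to shift. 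This sidesteps exactly the point you flag as subtle: in your version the composite family $W\to U\to \overline{X}\to Y$ is not proper (the witnessing family $\pi$ only lives over a locally closed subset), so "generic smoothness and shrinking the open" does not by itself produce a smooth \emph{projective} family over a dense open of the support of $\sL'_\alpha$; one still needs the full compactify--resolve--induct argument for non-proper pushforwards of constant sheaves. Since you have those ingredients (Nagata, resolution via the decomposition theorem, Noetherian induction on the boundary), your sketch closes up, but the paper's reduction to $g_{*,\on{dR}}(\omega_W)$ is the cleaner way to package it. One further caveat: you cite Lemma \ref{l:pullback} for the pullback step, but in the paper that lemma is itself part of the proof of the proposition (proved by an induction on $\on{dim}Y$ interleaved with the pushforward results), so in a self-contained write-up you would need to supply that induction rather than quote it.
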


\subsubsection{} Let us start by observing the following:

\begin{lem}\label{l:reducetosimple}
Let $f:\sX\to \sY$ be a schematic and quasi-compact map of algebraic stacks. To prove Proposition \ref{p:converse}, we may assume that $\sX=X$ and $\sY=Y$ are varieties.
\end{lem}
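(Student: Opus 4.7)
The plan is to use that motivicity on an algebraic stack is smooth-local by definition. Each of the six functors commutes with smooth pullback (via base change, K\"unneth, or the duality formula for $\bD$), so the proposition reduces to the variety case by passing to smooth atlases in both the source and target.

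For $\boxtimes$ and $\bD$, which do not involve $f$: motivicity is tested on smooth atlases by varieties, $\boxtimes$ commutes with $!$-pullback via K\"unneth, and $\bD$ commutes with smooth $!$-pullback up to cohomological shift (as in the proof of Lemma \ref{l:verdiermot}), so the reduction to the variety case is immediate.

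For the functors involving $f$: given $f : \sX \to \sY$ schematic and quasi-compact, I would choose a smooth surjection $p_Y : Y \to \sY$ from a variety and form the base change $\sX_Y := \sX \times_\sY Y$ with projections $\tilde p_Y : \sX_Y \to \sX$ and $\tilde f : \sX_Y \to Y$. Since $f$ is schematic and quasi-compact, and a variety in the paper's sense is just a quasi-separated scheme of finite type, $\sX_Y$ is automatically a variety, and $\tilde p_Y$ is a smooth surjection. For $f^!$ (and analogously for $f^{*,\on{dR}}$, up to shifts on each connected component of $\sX_Y$): testing motivicity of $f^! \cF$ on the smooth cover $\tilde p_Y$, base change gives $\tilde p_Y^! f^! \cF \simeq \tilde f^! \, p_Y^! \cF$, and since $p_Y^! \cF$ is motivic on $Y$ by definition of $D(\sY)^{\on{mot}}$, the assumed variety case of Proposition \ref{p:converse} applied to $\tilde f$ concludes. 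For $f_{*,\on{dR}}$ (and analogously for $f_!$, where defined): testing motivicity of $f_{*,\on{dR}} \cF$ on the smooth cover $p_Y$, smooth base change for pushforward of D-modules gives $p_Y^! f_{*,\on{dR}} \cF \simeq \tilde f_{*,\on{dR}}(\tilde p_Y^! \cF)$, and since $\tilde p_Y^! \cF$ is motivic on $\sX_Y$ directly from the definition of $D(\sX)^{\on{mot}}$ (as $\tilde p_Y$ is a smooth surjection from a variety), the variety case of the proposition applied to $\tilde f$ again concludes.

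The main (and essentially only) subtle point is the observation that $\sX_Y$ is a variety; this uses the paper's lenient definition of a variety as any quasi-separated finite type scheme, a class stable under base change along schematic quasi-compact maps. Everything else is mechanical smooth base change for D-modules together with the definition of $D(-)^{\on{mot}}$ on algebraic stacks.
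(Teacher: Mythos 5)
Your treatment of $\boxtimes$, $\bD$, $f_{*,\on{dR}}$, and $f_!$ matches the paper's proof and is correct; for $f_{*,\on{dR}}$ in particular, the atlas $p_Y \colon Y \to \sY$ may be taken arbitrary, so the base-change identity $p_Y^! f_{*,\on{dR}}\cF \simeq \tilde f_{*,\on{dR}}(\tilde p_Y^! \cF)$ does verify the ``for every smooth cover of $\sY$'' clause in the definition of $D(\sY)^{\on{mot}}$.

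The argument for $f^!$ (and hence $f^{*,\on{dR}}$) has a real gap. By definition, $f^!\cF \in D(\sX)^{\on{mot}}$ requires $g^! f^! \cF \in D(Z)^{\on{mot}}$ for \emph{every} smooth surjective $g\colon Z \to \sX$ from a variety, not merely for the particular cover $\tilde p_Y\colon \sX_Y \to \sX$ obtained by base change from an atlas of $\sY$. Your computation $\tilde p_Y^! f^! \cF \simeq \tilde f^! p_Y^! \cF$ only handles the covers of $\sX$ of this special form. Passing from one cover to all of them requires smooth descent for motivicity, which is not a formal consequence of the definitions; the paper supplies it by fixing an arbitrary $g\colon Z \to \sX$, forming $W := \sX_Y \times_\sX Z$ (a variety) with smooth surjective projection $h\colon W \to Z$, noting that $h^! g^! f^! \cF \simeq (W \to \sX_Y \to Y)^!\, p_Y^! \cF$ is motivic by the variety case of Lemma~\ref{l:pullback}, and then invoking \cite[Prop. 2.10]{langer2018rank} to descend from $W$ to $Z$ along $h$. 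Far from being mechanical, this descent step is the genuine content in the $f^!$ direction, and your write-up omits it.
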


\begin{proof}
It is clear that the functor $-\boxtimes -$ preserves motivic objects in general. For the functor $\bD$, this is the content of Lemma \ref{l:verdiermot}.

Thus, we need to prove that the functors $f_{*,\on{dR}},f_!,f^!,f^{*,\on{dR}}$ preserve motivic objects assuming Proposition \ref{p:converse} holds for maps between varieties.

Let $p:Y\to \sY$ be a smooth surjective map with $Y$ a variety. Form the Cartesian diagram
\[\begin{tikzcd}
	{X=\sX\underset{\sY}{\times}Y} && Y \\
	\\
	\sX && \sY.
	\arrow["p", from=1-3, to=3-3]
	\arrow["f"', from=3-1, to=3-3]
	\arrow["{f'}", from=1-1, to=1-3]
	\arrow["q"', from=1-1, to=3-1]
\end{tikzcd}\]

\noindent Note that $X$ is a variety because $f$ is schematic and quasi-compact. 

\step For $\cF\in D(\sX)^{\on{mot}}$, let us start by proving that $f_{*,\on{dR}}(\cF)$ is motivic. We need to check that $p!f_{*,\on{dR}}$ lies in $D(Y)^{\on{mot}}$. By base-change, we have
\[
p^!f_{*,\on{dR}}(\cF)\simeq (f')_{*,\on{dR}}q^!(\cF).
\]

\noindent Since $q^!(\cF)$ is motivic by definition, we have $(f')_{*,\on{dR}}q^!(\cF)\in D(Y)^{\on{mot}}$, since $X$ and $Y$ are varieties.

In exactly the same way, one proves that $f_!$ (whenever defined) preserves motivic objects if this is the case for maps between varieties.

\step Next, let us prove that $f^!(\cF)\in D(Y)^{\on{mot}}$. By assumption, we know that
\[
(f\circ q)^!(\cF)\simeq (p\circ f')^!(\cF)\in D(X)^{\on{mot}}.
\]

\noindent By definition, we need to check that for any smooth surjective map $g:Z\to \sX$ with $Z$ a variety, we have $g^!\circ f^!(\cF)\in D(Z)^{\on{mot}}$. Form the Cartesian diagram
\[\begin{tikzcd}
	{W} && X \\
	\\
	Z && \sX.
	\arrow["q", from=1-3, to=3-3]
	\arrow["g"', from=3-1, to=3-3]
	\arrow["{}", from=1-1, to=1-3]
	\arrow["h"', from=1-1, to=3-1]
\end{tikzcd}\]

\noindent Since $h:W\to Z$ is smooth and surjective, it suffices to check that $(f\circ g\circ h)^!(\cF)$ is motivic, cf. \cite[Prop. 2.10]{langer2018rank}. However, this follows from the fact that $W\to X\to Y$ is a map of varieties.

The same argument shows that $f^{*,\on{dR}}$ preserves motivic objects.

\end{proof}

\subsubsection{} To prove Proposition \ref{p:converse}, we will consider pushforwards and pullbacks independently. Let us begin by proving that pushforwards preserve motivicity. 

\subsubsection{} We divide the proof of Proposition \ref{p:converse} into three parts.
\begin{lem}\label{l:dualizing}
Let $f:X\to Y$ be a map of varieties. Then $f_{*,\on{dR}}(\omega_X)$ is motivic. 
\end{lem}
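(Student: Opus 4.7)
The plan is to prove the lemma by induction on $\dim X$, reducing to the case where $X$ is smooth projective and $f$ is proper, where the Beilinson--Bernstein--Deligne decomposition theorem together with generic smoothness delivers the result. The base case $\dim X = 0$ is immediate: $\omega_X$ is a finite sum of skyscrapers, and $f_{*,\on{dR}}(\omega_X)$ is a finite sum of skyscraper D-modules supported at finitely many points of $Y$, which are trivially motivic.

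For the inductive step I first reduce to the case $f$ proper. Apply Nagata compactification to factor $f$ as $X \xrightarrow{j} \bar{X} \xrightarrow{\bar{f}} Y$ with $j$ an open immersion and $\bar{f}$ proper; write $i : Z = \bar{X} \setminus X \hookrightarrow \bar{X}$. The recollement triangle $i_{*,\on{dR}}(\omega_Z) \to \omega_{\bar{X}} \to j_{*,\on{dR}}(\omega_X)$, pushed forward along $\bar{f}_{*,\on{dR}}$, realizes $f_{*,\on{dR}}(\omega_X)$ as the cofiber of $(\bar{f}|_Z)_{*,\on{dR}}(\omega_Z) \to \bar{f}_{*,\on{dR}}(\omega_{\bar{X}})$. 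Since $\dim Z < \dim X$, the first term is motivic by induction, so it suffices to show $\bar{f}_{*,\on{dR}}(\omega_{\bar{X}})$ is motivic. Next, apply Hironaka's resolution to obtain $\pi : \tilde{X} \to \bar{X}$ proper birational with $\tilde{X}$ smooth projective. The trace triangle $\pi_{*,\on{dR}}(\omega_{\tilde{X}}) \to \omega_{\bar{X}} \to C \to [1]$ has $C$ supported on the singular locus $Z' \subset \bar{X}$, which has dimension strictly less than $\dim X$. Pushing forward by $\bar{f}$ reduces us, modulo handling the $C$-term, to the case where $X$ is smooth projective and $f$ is proper.

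In this final case $\omega_X$ differs from $k_X$ only by a cohomological shift, and the decomposition theorem gives
\[
f_{*,\on{dR}}(\omega_X) \simeq \bigoplus_\alpha \on{IC}(Z_\alpha, L_\alpha)[n_\alpha].
\]
By Definition~\ref{d:geom2} it suffices to show each irreducible local system $L_\alpha$ is motivic. For summands with $Z_\alpha = Y$, generic smoothness produces a dense open $U \subset Y$ over which $f$ is smooth proper, so $L_\alpha|_U$ is a direct summand of $R^{n_\alpha} f_*(k_X)|_U$ and hence motivic by Definition~\ref{d:mot}. For summands supported on proper $Z_\alpha \subsetneq Y$, I would base-change $f$ to a smooth dense open of $Z_\alpha$ and resolve the source to exhibit $L_\alpha$ as a summand of the cohomology of a smooth proper family over that open.

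The key obstacle is the $C$-term in the trace triangle of the reduction step: because $C$ is an arbitrary coherent D-module supported on $Z'$ and not of the form $\omega_{Z'}$, naive induction on $\dim X$ does not directly apply. The natural remedy is to strengthen the induction and prove Lemma~\ref{l:dualizing} jointly with the $f_{*,\on{dR}}$-preservation clause of Proposition~\ref{p:converse} for maps out of varieties of dimension $< \dim X$, using Verdier duality (Lemma~\ref{l:verdiermot}) to translate freely between $\omega$- and $k$-pushforwards along the way.
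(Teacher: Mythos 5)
You correctly identify the obstruction yourself: the cone $C$ of the trace map $\pi_{*,\on{dR}}(\omega_{\tilde X})\to\omega_{\bar X}$ is some mystery complex supported on the singular locus, not a dualizing complex, so the naive dimension induction stalls there. That is a genuine gap. Your proposed remedy (joint induction with the pushforward clause of Proposition \ref{p:converse}) is plausible in outline, but it is not spelled out, it introduces a logical interleaving the paper deliberately avoids (there Lemma \ref{l:dualizing} is proved first and Lemma \ref{l:pushfowards} is deduced from it, not vice versa), and the bookkeeping of which clause may be invoked for which dimension would need to be done carefully to avoid circularity.

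The paper sidesteps the $C$-term entirely by never taking $\omega$ of a singular scheme. It first reduces to dominant $f$, then inducts in three stages on $\dim X$. In Step 1 ($X$ smooth, $f$ proper) the decomposition theorem gives each irreducible subquotient $\cF$ of $f_{*,\on{dR}}(\omega_X)$ as a \emph{direct summand}, and one either restricts to a proper closed subset (induction) or lands on a dense open where $f$ is smooth (Definition \ref{d:mot}). In Step 2 ($X$ smooth, $f$ arbitrary) Nagata gives $X\overset{j}{\into}\bar X\overset{\bar f}{\to} Y$, but because $X$ is smooth the paper uses $\on{IC}_{\bar X}$ rather than $\omega_{\bar X}$: the localization triangle is
\[
(\bar f\circ i)_{*,\on{dR}}\,i^!\on{IC}_{\bar X}\to\bar f_{*,\on{dR}}\on{IC}_{\bar X}\to f_{*,\on{dR}}\omega_X[-n],
\]
and after picking a resolution $\pi:\tilde X\to\bar X$, the decomposition theorem exhibits $\on{IC}_{\bar X}$ as a \emph{direct summand} of $\pi_{*,\on{dR}}(\omega_{\tilde X})$ up to shift. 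Being a summand (hence a subquotient) is all that is needed, so there is no cone to control; base change then shows $i^!\on{IC}_{\bar X}$ is a summand of $\pi_{*,\on{dR}}(\omega_{\tilde Z})$, handled by induction. Finally, in Step 3 arbitrary $X$ is reduced to $X^{\on{sm}}$ via the localization triangle for the (lower-dimensional) singular locus. In short: your route pushes $\omega$-sheaves through possibly singular intermediate spaces, which forces you to understand the cone of the trace map; the paper's route swaps $\omega_{\bar X}$ for $\on{IC}_{\bar X}$ at exactly the point where $\bar X$ could be singular, turning a cone comparison into a direct-summand statement. One further small omission: you do not reduce to the dominant case first, which the paper needs for generic smoothness to produce a dense open of $Y$ in Step 1.
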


\begin{proof}
We may assume that $f$ is dominant. Indeed, if not, we replace $Y$ by $\on{Im}(f)$ and use that pushforward along closed embeddings preserve motivic sheaves.

We induct on $n=\on{dim}X$, the case $n=0$ being trivial, and assume that $X$ is connected.

\step Assume first that $X$ is smooth and that $f$ is proper. Since $f$ is dominant, $Y$ is irreducible.

Thus, let $\cF$ be an irreducible subquotient of $f_{*,\on{dR}}(\omega_X)$. By the decomposition theorem, $\cF$ is a direct summand of $f_{*,\on{dR}}(\omega_X)$ (up to shifts). Let $j': V\into Y$ be a smooth connected locally closed subscheme such that
\[
\cF\simeq (j')_{!*}(\sL)
\]

\noindent for some irreducible local system $\sL$ on $V$. If $\overline{V}$ is a proper subscheme of $Y$, let $Z=f^{-1}(\overline{V})$. Then $\cF\in D(\overline{V})$ is a direct summand of $f_{*,\on{dR}}(\omega_Z)$. Since $\on{dim} Z<n$, this forces $\cF$ to be motivic by the induction hypothesis.

Thus, we may assume that $V\subset Y$ is a smooth Zariski-dense open subscheme. Further restricting to the smooth locus of $Y$, we may assume that $Y$ itself is smooth and that $V=Y$. In this case, $f$ is a dominant map between smooth varieties, and so there exists some non-empty smooth open $V'\subset Y$ (automatically dense by irreducibility of $Y$)  such that $f$ is smooth over $V'$. Hence $\sL_{\vert V'}$ occurs in the cohomology of a smooth and proper family.

\step Assume in this step that $X$ is smooth but $f$ is not necessarily proper. By Nagata's compactification theorem, we may factor $f$ as
    
    \[\begin{tikzcd}
	X && {\overline{X}} \\
	\\
	Y,
	\arrow["j", from=1-1, to=1-3]
	\arrow["f"', from=1-1, to=3-1]
	\arrow["{\overline{f}}", from=1-3, to=3-1]
\end{tikzcd}\]

\noindent where $\overline{f}$ is proper and $j$ is an open embedding. Denote by $i: Z=\overline{X}\setminus X\into \overline{X}$. We have a triangle
\begin{equation}\label{eq:loctriang1}
(\overline{f}\circ i)_{*,\on{dR}}i^!\on{IC}_{\overline{X}}\to \overline{f}_{*,\on{dR}}\on{IC}_{\overline{X}}\to f_{*,\on{dR}}\omega_X[-n].
\end{equation}

\noindent Here, we have used that $X$ is smooth so that $\on{IC}_X\simeq \omega_X[-n]$.

\noindent Choose a resolution of singularities $\pi: \widetilde{X}\to \overline{X}$. By the decomposition theorem, $\on{IC}_{\overline{X}}$ occurs as a direct summand of $\pi_{*,\on{dR}}(\omega_{\widetilde{X}})$ up to shifts. Denoting by $\widetilde{Z}=\pi^{-1}(Z)$, we similarly see that $i^!\on{IC}_{\overline{X}}$ occurs as a direct summand of $\pi_{*,\on{dR}}(\omega_{\widetilde{Z}})$. Since $\on{dim}\widetilde{Z}<n$, it follows that $(\overline{f}\circ i)_{*,\on{dR}}i^!\on{IC}_{\overline{X}}$ lies in $D(Y)^{\on{mot}}$ by the induction hypothesis. From (\ref{eq:loctriang1}), we conclude that $f_{*,\on{dR}}(\omega_X)$ is motivic if and only if $(\bar{f}\circ\pi)_{*,\on{dR}}(\omega_{\widetilde{X}})$ is motivic. However, $(\overline{f}\circ \pi)$ is proper and $\widetilde{X}$ is smooth, and so the latter is motivic by Step 1 above.

\step Finally, consider the arbitrary case. Let $j:X^{\on{sm}}\into X$ be the smooth locus and $Z=X\setminus X^{\on{sm}}$ the complement. We have a triangle
\[
(f\circ i)_{*,\on{dR}}\omega_Z\to f_{*,\on{dR}}\omega_X\to (f\circ j)_{*,\on{dR}}\omega_{X^{\on{sm}}}.
\]

\noindent The sheaf $(f\circ i)_{*,\on{dR}}\omega_Z$ is motivic, since $\on{dim}Z<n$. Moreover, since $X^{\on{sm}}$ is smooth, the sheaf $(f\circ j)_{*,\on{dR}}\omega_{X^{\on{sm}}}$ is motivic by Step 2. Thus, $f_{*,\on{dR}}\omega_X$ is motivic as well.

\end{proof}
\begin{lem}\label{l:pushfowards}
    Let $f: X\to Y$ be a map of varieties. Then $f_{*,\on{dR}}$ sends $D(X)^{\on{mot}}$ to $D(Y)^{\on{mot}}$.
\end{lem}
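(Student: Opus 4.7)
The plan is to induct on $\dim X$ after reducing $\cF$ to a simple shape. Since $f_{*,\on{dR}}$ commutes with filtered colimits (for a schematic quasi-compact map between varieties) and $D(Y)^{\on{mot}}$ is closed under filtered colimits by Lemma \ref{l:comptstruc}, I would first reduce to coherent $\cF$. Motivicity depends only on the irreducible subquotients of cohomology sheaves, so by repeatedly taking triangles and using closure of $D(Y)^{\on{mot}}$ under finite colimits and shifts, it suffices to treat $\cF = i_{!*}\sL$ with $i\colon Z\hookrightarrow X$ a smooth locally closed immersion and $\sL$ an irreducible motivic local system on $Z$. Since $\cF$ is supported on $\overline{Z}$, I would factor $i = \overline{i}\circ j$ where $j\colon Z\hookrightarrow\overline{Z}$ is open and $\overline{i}\colon\overline{Z}\hookrightarrow X$ is closed; using $f_{*,\on{dR}}\circ\overline{i}_{*,\on{dR}} = (f\circ\overline{i})_{*,\on{dR}}$, we may replace $X$ with $\overline{Z}$ and assume $X$ is irreducible and $i\colon Z\hookrightarrow X$ is an open dense embedding.

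With these reductions, I would proceed by induction on $\dim X$, the case $\dim X = 0$ being immediate. Because $\sL$ is motivic, there is a smooth Zariski-open dense $U\subset Z$ and a smooth proper family $\pi\colon E\to U$ such that $\sL_{\vert U}$ is a subquotient---hence, by the decomposition theorem for smooth proper $\pi$, a direct summand---of $\pi_{*,\on{dR}}(\omega_E)$ up to a cohomological shift. Writing $\widetilde{i}\colon U\hookrightarrow X$ for the composite open embedding, Lemma \ref{l:dualizing} applied to $f\circ\widetilde{i}\circ\pi\colon E\to Y$ gives that $(f\circ\widetilde{i}\circ\pi)_{*,\on{dR}}(\omega_E)\simeq f_{*,\on{dR}}\widetilde{i}_{*,\on{dR}}\pi_{*,\on{dR}}(\omega_E)$ lies in $D(Y)^{\on{mot}}$. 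Passing to the direct summand corresponding to $\sL_{\vert U}$ shows that $f_{*,\on{dR}}\widetilde{i}_{*,\on{dR}}(\sL_{\vert U})$ is motivic.

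It then remains to compare $\widetilde{i}_{*,\on{dR}}(\sL_{\vert U})$ with $\cF$. Since $\sL$ is an irreducible local system on the smooth $Z$, we have $\sL = j_{!*}(\sL_{\vert U})$ for $j\colon U\hookrightarrow Z$, so $\cF = \widetilde{i}_{!*}(\sL_{\vert U})$. The cone of the canonical map $\widetilde{i}_{!*}(\sL_{\vert U})\to\widetilde{i}_{*,\on{dR}}(\sL_{\vert U})$ is supported on $X\setminus U$, which has strictly smaller dimension than $X$; by the inductive hypothesis applied to $f$ restricted to $X\setminus U$, the pushforward of this cone is motivic. Hence $f_{*,\on{dR}}(\cF)$ fits into a triangle of motivic D-modules, and is itself motivic.

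I expect the main obstacle to be this final comparison step: the IC-extension $\widetilde{i}_{!*}$ does not decompose under direct sums the way $\widetilde{i}_{*,\on{dR}}$ does, so the summand structure inherited from $\pi_{*,\on{dR}}(\omega_E)$ cannot be transferred directly through the IC-extension. The fix is to work with the $*$-extension and absorb the boundary discrepancy by induction; this is why the argument must be framed as an induction on $\dim X$ over \emph{arbitrary} (possibly singular) source varieties, which is legitimate in view of the earlier support reduction to $\overline{Z}$.
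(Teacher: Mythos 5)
Your proof is essentially the paper's argument, repackaged: the paper uses the localization triangle $i_{*,\on{dR}}i^!\cF \to \cF \to j_{*,\on{dR}}\sL$, while you take the cone of $\widetilde{i}_{!*}(\sL_{\vert U}) \to \widetilde{i}_{*,\on{dR}}(\sL_{\vert U})$; these are the same triangle up to rotation, and both invoke Lemma~\ref{l:dualizing} plus the decomposition theorem for the open part and the inductive hypothesis for the boundary. The one real gap is in the sentence ``by the inductive hypothesis applied to $f$ restricted to $X\setminus U$, the pushforward of this cone is motivic'': invoking the inductive hypothesis requires knowing that the cone $C$ is a \emph{motivic} D-module on $X\setminus U$, but what you have established is only that $f_{*,\on{dR}}\widetilde{i}_{*,\on{dR}}(\sL_{\vert U})$ is motivic on $Y$, not that $\widetilde{i}_{*,\on{dR}}(\sL_{\vert U})$ is motivic on $X$. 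To fix this, apply Lemma~\ref{l:dualizing} to the map $\widetilde{i}\circ\pi\colon E\to X$ itself (in addition to the composite into $Y$); then $\widetilde{i}_{*,\on{dR}}(\sL_{\vert U})$ is a summand of $(\widetilde{i}\circ\pi)_{*,\on{dR}}(\omega_E)$ up to shift, hence motivic on $X$; since $\cF$ is motivic by hypothesis, $C$ is motivic by closure under triangles, and since $C$ is supported on the closed subvariety $X\setminus U$, it is motivic there, so the inductive hypothesis applies. The paper does exactly this step explicitly. One further caveat: your opening reduction ``to coherent $\cF$'' glosses over a genuine point; $f_{*,\on{dR}}$ does not commute with the Postnikov tower of an unbounded complex, and the paper handles the general case via a truncation argument using \cite[Prop.~7.6.8]{drinfeld2013some} that bounded pieces already capture any given irreducible subquotient of a given cohomology sheaf of $f_{*,\on{dR}}\cF$.
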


\begin{proof}
We induct on $n=\on{dim} X$ and assume that $X$ is connected.

\step Let $\cF\in D(X)^{\heartsuit}$ be irreducible and motivic. Let $j: U\into X$ be a smooth connected locally closed subscheme such that $\cF\simeq j_{!*}(\sL)$ for some motivic local system $\sL$ on $U$. Replacing $U$ by a smooth open subscheme, we may assume that $\sL$ occurs in the cohomology of a smooth proper family $\pi: W\to U$. Moreover, replacing $X$ by $\overline{U}$, we may assume that $U\subset X$ is Zariski-dense and open. Let $i: Z=X\setminus U\into X$ be the complement. We have a triangle
\[
i_{*,\on{dR}}i^!\cF\to \cF\to j_{*,\on{dR}}\sL.
\]

\noindent Note that $j_{*,\on{dR}}\sL$ is motivic. Indeed, by the decomposition theorem, $j_{*,\on{dR}}\sL$ is a direct summand of $(j\circ \pi)_{*,\on{dR}}(k_W)$ (up to shifts), and the latter is motivic by Lemma \ref{l:dualizing}. Thus, it follows from the above triangle that $i_{*,\on{dR}}i^!\cF$ is motivic. 

By the induction hypothesis, so is $(f\circ i)_{*,\on{dR}}i^!\cF$. Similarly, $(f\circ j)_{*,\on{dR}}\sL$ is motivic by Lemma \ref{l:dualizing} since it occurs as a direct summand of $(f\circ j\circ \pi)_{*,\on{dR}}k_W$. From the triangle
\[
(f\circ i)_{*,\on{dR}}i^!\cF\to f_{*,\on{dR}}\cF\to (f\circ j)_{*,\on{dR}}\sL,
\]

\noindent we conclude that $f_{*,\on{dR}}\cF$ is motivic as well.

\step 

If $\cF\in D(X)^{\on{mot}}$ is bounded from below, we may write $\cF$ as a filtered colimit of its irreducible subquotients. By Step 1 and Lemma \ref{l:comptstruc}, $f_{*,\on{dR}}(\cF)$ is motivic in this case.

Finally, let $\cF\in D(X)^{\on{mot}}$ be arbitrary. Let $\cG$ be an irreducible subquotient of $H^n(f_{*,\on{dR}}\cF)$ for some $n\in\bZ$. By \cite[Prop. 7.6.8]{drinfeld2013some}, there exists $m\ll 0$ such that $\cG$ is an irreducible subquotient of 
\[
H^n(f_{*,\on{dR}}\tau^{\geq m}(\cF)).
\]

\noindent Since $\tau^{\geq m}(\cF)$ is bounded from below, the lemma follows.

\end{proof}

\subsubsection{} Next, we prove that $!$-pullback preserves the property of being motivic.
\begin{lem}\label{l:pullback}
Let $f:X\to Y$ be a map of varieties. Then $f^!$ sends $D(Y)^{\on{mot}}$ to $D(X)^{\on{mot}}$.
\end{lem}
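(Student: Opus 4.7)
The plan is to factor $f: X \to Y$, Zariski-locally on $X$, as a closed embedding followed by a smooth morphism, and to handle these two cases separately. Motivicity is Zariski-local on $X$, so we may assume $X$ is affine and choose a closed embedding $\iota: X \into \bA^N$. Then $f$ factors as
\[
X \xrightarrow{(\iota, f)} \bA^N \times Y \xrightarrow{p} Y,
\]
where $(\iota, f)$ is a closed embedding (using separatedness of $Y$) and $p$ is smooth. By functoriality of $!$-pullback it suffices to show that $p^!$ and $(\iota, f)^!$ preserve motivicity.

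For a smooth morphism $g: X' \to Y$ of relative dimension $d$, we have $g^! \simeq g^{*,\on{dR}}[2d]$, which is t-exact. The truncation and filtered-colimit arguments of Lemma \ref{l:pushfowards} and Lemma \ref{l:comptstruc} reduce the claim to an irreducible motivic $\cF = j_{!*}(\sL) \in D(Y)^{\heartsuit}$, where $\sL$ is a local system on a smooth locally closed $j: V \into Y$ appearing in the cohomology of a smooth proper family $\pi: W \to V$. Smooth pullback commutes with intermediate extension up to shift, so $g^{*,\on{dR}}(j_{!*}\sL)$ is the IC extension of $(g')^*\sL$ from $V' := g^{-1}(V)$; by smooth base change, $(g')^*\sL$ appears as a subquotient of the cohomology of the base-changed smooth proper family $\pi': W \times_V V' \to V'$, and is therefore motivic.

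For a closed embedding $i: Z \into M$ with open complement $\tilde{j}: M \setminus Z \into M$ and $\cG \in D(M)^{\on{mot}}$, the localization triangle
\[
i_* i^! \cG \to \cG \to \tilde{j}_* \tilde{j}^* \cG
\]
realizes $i_* i^! \cG$ as a fiber. Open restriction trivially preserves motivicity, so $\tilde{j}^*\cG$ is motivic, and hence $\tilde{j}_* \tilde{j}^* \cG$ is motivic by Lemma \ref{l:pushfowards}; together with $\cG$ being motivic, this forces $i_* i^! \cG$ to be motivic using closure of the motivic class under fibers (a long-exact-sequence chase: irreducible subquotients of $H^n \on{fib}(A \to B)$ are either quotients of $H^n A$ or subobjects of $H^{n-1} B$, handled in the unbounded setting by the truncation strategy of Lemma \ref{l:pushfowards}). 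Since $i_*$ is fully faithful and preserves the IC structure on irreducible constituents, we conclude that $i^!\cG$ is motivic on $Z$. The principal subtlety is closure of motivicity under fibers for unbounded objects, which is the same truncation issue resolved in the proof of Lemma \ref{l:pushfowards}.
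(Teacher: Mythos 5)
Your strategy is genuinely different from the paper's: you factor $f$ (Zariski-locally on $X$) through its graph as a closed immersion followed by a smooth projection, whereas the paper runs a Noetherian induction on $\dim Y$, using the localization triangle $i_{*,\on{dR}}i^!\cF\to\cF\to j_{*,\on{dR}}\sL$ on $Y$ together with base change to reduce everything to Lemma \ref{l:dualizing} and Lemma \ref{l:pushfowards}. Both routes ultimately lean on Lemma \ref{l:pushfowards} (you need it for $\tilde{j}_*\tilde{j}^*\cG$ in the closed-immersion step), but yours avoids the induction on the target and isolates the two standard cases; the price is that you must invoke commutation of smooth pullback with intermediate extension and a local-to-global principle on the source (which is available — it is the same principle the paper uses via \cite[Prop.\ 2.10]{langer2018rank} in Lemma \ref{l:reducetosimple}). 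The closed-immersion step is correct as written: $D(X)^{\on{mot}}$ is closed under finite (co)limits and shifts, so the fiber $i_*i^!\cG$ is motivic, and $i_*$ is t-exact, fully faithful and visibly reflects motivicity.

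There is, however, one step in the smooth case that is too quick. You conclude that $g^{*,\on{dR}}(j_{!*}\sL)\simeq j'_{!*}((g')^*\sL)$ is motivic "because $(g')^*\sL$ appears in the base-changed family." But $(g')^*\sL$ is in general a \emph{reducible} local system, and the intermediate extension of a reducible local system can have simple constituents supported on the boundary $\overline{V'}\setminus V'$ (e.g.\ for $j:\bG_m\into\bA^1$ and $M$ the rank-two unipotent Jordan block, $j_{!*}(M[1])$ has $\delta_0$ as a constituent). Membership in $D(X)^{\on{mot}}$ requires \emph{all} irreducible subquotients to be motivic, so these boundary constituents must be accounted for. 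The gap is fillable with the tools already at hand: $j'_{!*}((g')^*\sL)$ is a subquotient of $j'_{*,\on{dR}}((g')^*\sL)$, which is motivic by Lemma \ref{l:pushfowards} since $(g')^*\sL$ is, and $D^{\on{mot}}$ is closed under subquotients — but this must be said. A second, minor point: the paper's varieties are only quasi-separated, so the graph of $f$ is a priori only a locally closed immersion; you should either factor it further as a closed immersion into an open, or note that open immersions are already covered by your argument.
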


\begin{proof}
We induct on $n=\on{dim} Y$ and assume that $Y$ is connected. Note that for $n=0$, the statement becomes that $\omega_X$ is motivic. This is a particular case of Lemma \ref{l:dualizing}.

\step Assume first that $\cF\in D(Y)^{\on{mot}}$ is an irreducible D-module. Write $\cF=j_{!*}(\sL)$, where $j:V\into Y$ is a smooth locally closed subscheme, and $\sL$ is a local system on $V$ appearing in the cohomology of a smooth proper family $\pi: W\to V$. Replacing $Y$ by $\overline{V}$, we may assume that $V$ is a Zariski-dense and open in $Y$.

We need to show that $f^!\cF$ lies in $D(X)^{\on{mot}}$. Let $i: Z\into Y$ be the complement of $V$. We have a triangle
\[
i_{*,\on{dR}}i^!\cF\to \cF\to j_{*,\on{dR}}\sL.
\]

\noindent By Lemma \ref{l:pushfowards}, we see that $j_{*,\on{dR}}\sL$ is motivic. Hence, so are the sheaves $i_{*,\on{dR}}i^!\cF$ and $i^!\cF$. Note that $\on{dim}Z<n$. Applying base-change along the diagram
\[\begin{tikzcd}
	{X\underset{Y}{\times}Z} && Z \\
	\\
	X && Y,
	\arrow[from=1-3, to=3-3]
	\arrow[from=3-1, to=3-3]
	\arrow[from=1-1, to=1-3]
	\arrow[from=1-1, to=3-1]
\end{tikzcd}\]

\noindent it follows that $$f^!i_{*,\on{dR}}i^!\cF$$ is motivic by the induction hypothesis. From the Decomposition Theorem, the sheaf $f^!j_{*,\on{dR}}\sL$ is a direct summand of $f^!(j\circ \pi)_{*,\on{dR}}k_W$ and hence motivic by base-change and Lemma \ref{l:dualizing}. Moreover, from the triangle
\[
f^!i_{*,\on{dR}}i^!\cF\to f^!\cF\to f^!j_{*,\on{dR}}\sL,
\]

\noindent we conclude that $f^!\cF$ is motivic as well.

\step Next, if $\cF\in D(X)^{\on{mot}}$ is bounded from below, we may write $\cF$ as a filtered colimit of its irreducible subquotients. Thus, $f^!(\cF)$ is motivic by Step 1 and Lemma \ref{l:comptstruc}.

Finally, let $\cF\in D(X)^{\on{mot}}$ be arbitrary. Let $\cG$ be an irreducible subquotient of $H^n(f^!(\cF))$ for some $n\in \bZ$. As in the proof of \ref{l:pushfowards}, there exists some $m\ll 0$ such that $\cG$ is a subquotient of the sheaf $H^n(f^!(\tau^{\geq m}(\cF)))$. Since $\tau^{\geq m}(\cF)$ is bounded from below, the lemma follows.
\end{proof}

\textbf{Proof of Proposition \ref{p:converse}} 

We know that $*$-pushforward and $!$-pullback of maps between varieties preserve the property of being motivic. By Lemma \ref{l:verdiermot}, so does Verdier duality. Thus, we see that $!$-pushforward and $*$-pullback of maps between varieties similarly preserve motivic sheaves. Thus, we conclude by Lemma \ref{l:reducetosimple}.

\qed

\subsection{Putting it together}
In this subsection, we prove Theorem \ref{t:main}.

\subsubsection{} Let $X$ be a smooth projective curve and $S=\lbrace x_1,...,x_n\rbrace \subset X(k)$ a finite set of $k$-points of $X$. Let $\sigma$ be an irreducible regular singular rigid $G$-local system on $U=X-S$ with quasi-unipotent monodromy around each $x_i\in S$. Moreover, assume that the abelianization $\tau$ of $\sigma$ (i.e. the image of $\sigma$ under $G\to G^{\on{ab}}$) is torsion.

\subsubsection{} For each $x\in S$, let $\chi_x\in \ft//\widetilde{W}^{\on{aff}}$ be the eigenvalue of the residue of $\sigma$ around $x$. We similarly write $\chi_x\in D(\check{T})$ for the corresponding character sheaf on $\check{T}$. The assumption that the monodromies are quasi-unipotent forces $\chi_x\in D(\check{T})$ to have finite order; that is, $\chi_x^{\otimes n}$ is trivial for some $n\gg 0$. In particular, the local system $\chi_x$ is motivic.

\subsubsection{} We remind that we let $\chi_x^{\on{ab}}$ be the restriction of $\chi_x$ to $Z(\check{G})$. Write
\[
\sL_{\tau}\in D(\on{Bun}_{Z(\check{G})^{\circ},1}(X,S))^{Z(\check{G})^{\circ}_S,\chi^{\on{ab}}_S}
\]

\noindent for the character sheaf corresponding to the abelianization $\tau$ (see §\ref{s:charsheaf} for details). The assumption that $\tau$ is torsion similarly forces $\sL_{\tau}$ to have finite order, and hence to be motivic.

\subsubsection{} By definition of rigidity (see Section \ref{s:rigid}), there exist $\cO_{x}\in \on{LocSys}_G(\overset{\circ}{D}_{x})^{\on{RS}}_{\chi_{x}}$ such that $\sigma$ defines an isolated point in the stack
\[
\LSsigma.
\]

\noindent We remind that
\[
\LSsigma=\LS\underset{\underset{x\in S}{\prod}\on{LocSys}_G(\overset{\circ}{D}_x)^{\on{RS}}_{\chi_x}}{\times} \prod_{x\in S} \bB\on{Aut}(\cO_x).
\]

\subsubsection{} Recall the notation of Section \ref{s:twist}. By (\ref{eq:acttwist}), we have an action
\[
\on{QCoh}(\LS)\curvearrowright D(\BunN)^{(\check{T}_S,\chi_S),\sL_{\tau}}.
\]

\noindent This induces an action
\begin{equation}\label{eq:act3}
\on{QCoh}(\LSsigma)\curvearrowright \on{Vect} \underset{\on{QCoh}(\underset{x\in S}{\prod}\on{LocSys}_G(\overset{\circ}{D}_x)^{\on{RS}}_{\chi_x})}{\otimes}D(\BunN)^{(\check{T}_S,\chi_S),\sL_{\tau}},
\end{equation}

\noindent where the action $\on{QCoh}(\underset{x\in S}{\prod}\on{LocSys}_G(\overset{\circ}{D}_x)^{\on{RS}}_{\chi_x})\curvearrowright \on{Vect}$ is induced by the point
\[
\prod_{x\in S} \cO_x\in \prod_{x\in S}\on{LocSys}_G(\overset{\circ}{D}_x)^{\on{RS}}_{\chi_x}(k).
\]

\subsubsection{} Let $\cF_{\sigma}$ be the non-zero Hecke eigensheaf with eigenvalue $\sigma$ guaranteed by Theorem \ref{t:Heckeexistsectfour}. By definition, we have:
\begin{equation}\label{eq:heckeigenintp}
\cF_{\sigma}\in\on{Vect} \underset{\on{QCoh}(\underset{x\in S}{\prod}\on{LocSys}_G(\overset{\circ}{D}_x)^{\on{RS}}_{\chi_x})}{\otimes}D(\BunN)^{(\check{T}_S,\chi_S),\sL_{\tau}}.
\end{equation}

\subsubsection{Delta sheaves} Let us briefly introduce some notation needed in the proof of Theorem \ref{t:main'} below.

For a $k$-point $\sP: \on{Spec}(k)\to \BunN$, we denote by $\delta_{\sP}=\sP_!(k)\in D(\BunN)$ the corresponding !-delta sheaf. Write 
\[
\widetilde{\delta}_{\sP}=\on{Av}_!^{(\check{T}_S,\chi_S),\sL_{\tau}}(\delta_{\sP})\in D(\BunN)^{(\check{T}_S,\chi_S),\sL_{\tau}},
\]

\noindent where $\on{Av}_!^{(\check{T}_S,\chi_S),\cL^{\tau}}$ denotes the partially defined left adjoint to the forgetful functor
\[
\on{oblv}^{(\check{T}_S,\chi_S),\sL_{\tau}}: D(\BunN)^{(\check{T}_S,\chi_S),\sL_{\tau}}\to D(\BunN).
\]
\begin{rem}
We emphasize that $\widetilde{\delta}_{\sP}$ is very concretely described. For example if each $\chi_x=0$, then $\widetilde{\delta}_{\sP}$ is given by $!$-pushforward of $\sL_{\tau}$ along the map
\[
\on{Bun}_{Z(\check{G})}(X)\to \on{Bun}_{\check{G},\check{B}}(X,S)
\]

\noindent given by acting on $\sP$. In general, $\widetilde{\delta}_{\sP}$ will be a twisted version of such a pushforward.
\end{rem}

\subsubsection{} It follows from the above description that $\widetilde{\delta}_{\sP}$ is motivic (when considered an object of $D(\BunN)$), since $\chi_x$ and $\sL_{\tau}$ are motivic. We will show:

\begin{thm}\label{t:main'}
There exists a motivic Hecke eigensheaf $\cG_{\sigma}\in D(\BunN)^{(\check{T}_S,\chi_S),\sL_{\tau}}$ with eigenvalue $\sigma$.

\end{thm}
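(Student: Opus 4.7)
The plan is to follow the blueprint outlined in §\ref{s:overview}, adapted to the twisted setting of §\ref{s:twist}. First, I would choose $\sP\in\BunN(k)$ such that $\sP^!(\cF_\sigma)\neq 0$, where $\cF_\sigma$ is the Whittaker-normalized Hecke eigensheaf produced by Theorem \ref{t:Heckeexistsectfour}, and form the averaged delta sheaf $\widetilde{\delta}_\sP\in D(\BunN)^{(\check{T}_S,\chi_S),\sL_\tau}$, which is motivic since $\chi_x$ and $\sL_\tau$ are of finite order. Next, I would introduce the Hecke-type endofunctor
\[
\cH\colon D(\BunN)^{(\check{T}_S,\chi_S),\sL_\tau}\to D(\BunN)^{(\check{T}_S,\chi_S),\sL_\tau}
\]
defined via the action \eqref{eq:acttwist} by pull-push along $\prod_{x\in S}\cO_x\to \prod_{x\in S}\on{LocSys}_G(\overset{\circ}{D}_x)^{\on{RS}}_{\chi_x}$. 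Under Theorem \ref{t:dyz}, $\cH$ is concretely realized as convolution with an object of $\bigotimes_{x\in S}D(\check{I},\chi_x\backslash\check{G}(K_x)/\check{I},\chi_x)$, and this convolution description is the basis for later checking motivicity.

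Since $\sigma$ is an isolated point of $\LSsigma$, the embedding $\sigma/\on{Aut}(\sigma)\hookrightarrow(\LSsigma)_{\on{red}}$ is both open and closed, yielding a decomposition of $\on{QCoh}((\LSsigma)_{\on{red}})$ and hence, via the action \eqref{eq:act3}, a decomposition of the relevant automorphic category into the $\sigma$-component and its complement. I would define $\cG_\sigma$ as the image of $\cH(\widetilde{\delta}_\sP)$ under the projection onto the $\sigma$-component. By construction, $\cG_\sigma$ is a Hecke eigensheaf with eigenvalue $\sigma$, and it is motivic as a direct summand of $\cH(\widetilde{\delta}_\sP)$. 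Motivicity of the latter reduces, via Proposition \ref{p:converse}, to motivicity of the convolution kernel of $\cH$ at each $x\in S$, which is where the quasi-unipotence of $\chi_x$ enters.

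For non-vanishing, $\cF_\sigma$ is itself supported at $\sigma$ as a Hecke eigensheaf, so
\[
\on{Hom}(\cG_\sigma,\cF_\sigma)\simeq \on{Hom}(\cH(\widetilde{\delta}_\sP),\cF_\sigma),
\]
and by adjunction this agrees (up to the tautologically invertible action of the skyscraper at $\sigma$ on $\cF_\sigma$) with $\on{Hom}(\widetilde{\delta}_\sP,\cF_\sigma)$, which in turn recovers the non-zero fiber $\sP^!(\cF_\sigma)$ twisted by the motivic character sheaves; hence $\cG_\sigma\neq 0$. The main obstacle I anticipate is the verification of motivicity of $\cH(\widetilde{\delta}_\sP)$: one must identify the convolution kernel on $\check{I},\chi_x\backslash\check{G}(K_x)/\check{I},\chi_x$ corresponding to the closed substack $\cO_x/\on{Aut}(\cO_x)\hookrightarrow\on{LocSys}_G(\overset{\circ}{D}_x)^{\on{RS}}_{\chi_x}$ under Theorem \ref{t:dyz} and check that it is a motivic D-module — a verification that is closely tied to the quasi-unipotence hypothesis on the $\chi_x$, and for which one can invoke the combinatorics of twisted affine flag orbits together with Proposition \ref{p:converse}.
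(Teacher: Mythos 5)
Your proposal reproduces the paper's proof step for step: the choice of $\sP$ via the coherence of $\cF_\sigma$, the averaged delta $\widetilde{\delta}_\sP$, the endofunctor $\cH$ defined by pull-push along $\prod_x\cO_x\to\prod_x\on{LocSys}_G(\overset{\circ}{D}_x)^{\on{RS}}_{\chi_x}$, the idempotent decomposition of $\cH(\widetilde{\delta}_\sP)$ coming from rigidity, and the motivicity check via Proposition \ref{p:converse} applied to the twisted affine Hecke convolution kernel, where quasi-unipotence of $\chi_x$ enters. The one imprecision is in your non-vanishing step: you do not in fact get an isomorphism between $\on{Hom}(\cH(\widetilde{\delta}_\sP),\cF_\sigma)$ and $\on{Hom}(\widetilde{\delta}_\sP,\cF_\sigma)$ ``up to a twist,'' but rather a surjection induced by precomposing with the unit $\widetilde{\delta}_\sP\to\cH(\widetilde{\delta}_\sP)$, using that $\cF_\sigma$ lies in the essential image of the pushforward by (\ref{eq:heckeigenintp}); this surjectivity is exactly what the paper's terse appeal to (\ref{eq:hom}) and (\ref{eq:heckeigenintp}) encodes, and it suffices for the conclusion.
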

\begin{proof}

\step Denote by $\cH$ the endofunctor
\[
D(\BunN)^{(\check{T}_S,\chi_S),\sL_{\tau}}\to \on{Vect} \underset{\on{QCoh}(\underset{x\in S}{\prod}\on{LocSys}_G(\overset{\circ}{D}_x)^{\on{RS}}_{\chi_x})}{\otimes}D(\BunN)^{(\check{T}_S,\chi_S),\sL_{\tau}}\to
\]
\[
\to D(\BunN)^{(\check{T}_S,\chi_S),\sL_{\tau}}
\]

\noindent induced by pull-push along $\on{Spec}(k)\xrightarrow{\underset{x\in S}{\prod}\cO_x} \underset{x\in S}{\prod} \on{LocSys}_G(\overset{\circ}{D}_x)^{\on{RS}}_{\chi_x}$. By Remark \ref{r:Heckecoh}, we may find $\sP\in \BunN(k)$ such that the $!$-fiber of the Hecke eigensheaf $\cF_{\sigma}$ (considered as an object of $D(\BunN)$) at $\sP$ is non-zero. By adjunction, we have 
\begin{equation}\label{eq:hom}
\on{Hom}_{D(\BunN)^{(\check{T}_S,\chi_S),\sL_{\tau}}}(\widetilde{\delta}_{\sP},\cF_{\sigma})\neq 0.
\end{equation}

\step The key point (and the only time we use the rigidity assumption) is that $\sigma$ defines an isolated point in $\LSsigma$, and so it follows from (\ref{eq:act3}) that we have a decomposition
\[
\cH(\widetilde{\delta}_{\sP})\simeq \cG_{\sigma}\oplus \cG_{\neq \sigma},
\]

\noindent where $\cG_{\sigma}$ is a Hecke eigensheaf with eigenvalue $\sigma$. We claim that $\cG_{\sigma}$ is non-zero. Indeed,
\[
\on{Hom}_{D(\BunN)^{(\check{T}_S,\chi_S),\sL_{\tau}}}(\cG_{\sigma},\cF_{\sigma})=\on{Hom}_{D(\BunN)^{(\check{T}_S,\chi_S),\sL_{\tau}}}(\cH(\widetilde{\delta}_{\sP}),\cF_{\sigma}),
\]

\noindent and the latter is non-zero by (\ref{eq:hom}) and (\ref{eq:heckeigenintp}).

\step Finally, we claim that $\cG_{\sigma}$ is motivic. Since $\widetilde{\delta}_{\sP}$ is motivic, we just need to show that the endofunctor $\cH$ from Step 1 preserves motivic objects. The latter is simply given by convolution with an object of the category
\[
\underset{x\in S}{\bigotimes} D(\check{I},\chi_x\backslash \check{G}(K)/\check{I},\chi_x).
\]

\noindent It follows from Proposition \ref{p:converse} and Lemma \ref{l:comptstruc} that convolution with an object of the category $D(\check{I},\chi_x\backslash \check{G}(K)/\check{I},\chi_x)$ preserves motivic objects.

\end{proof}

\subsubsection{} We are now in a position to prove the main theorem of this paper.

\textbf{Proof of Theorem \ref{t:main}}. Let $\cG_{\sigma}$ be as in Theorem \ref{t:main'}, and let $V$ be a finite-dimensional $G$-representation. We need to check that $\sigma_V=\sigma\overset{G}{\times} V$ lies in $D(U)^{\on{mot}}$, where $U=X-S$. Let 
\[
H_V: D(\BunN)^{(\check{T}_S,\chi_S),\sL_{\tau}}\to D(\BunN)^{(\check{T}_S,\chi_S),\sL_{\tau}}\otimes D(U)
\]

\noindent be the global Hecke functor associated to $V$. By Proposition \ref{p:converse}, we know that $H_V(\cG_{\sigma})$ is motivic. Since
\[
H_V(\cG_{\sigma})\simeq \cG_{\sigma}\boxtimes \sigma_V,
\]

\noindent it follows that $\sigma_V\in D(U)^{\on{mot}}$. Indeed, let $\sP\in \BunN(k)$ be a $k$-point where the $!$-fiber of $\cG_{\sigma}$ does not vanish.\footnote{That such a $k$-point exists follows from the fact that, by construction, $\cG_{\sigma}$ is coherent in the sense of §\ref{s:coherent}.} Pulling back $H_V(\cG_{\sigma})$ along $\lbrace \sP\rbrace \times U\into \BunN\times U$, we see that $\sigma_V\in D(U)^{\on{mot}}$ by Proposition \ref{p:converse}.

\qed

\newpage

\appendix

\section{Local systems on the punctured disk as a mapping stack}\label{app:A}

In this section, we recall Gaitsgory's description of the moduli stack of local systems on the punctured disk as a mapping stack, cf. \cite[Appendix A-B]{gaitsgory2014day}. See also \cite[§B.3]{Cstterm}.

\subsection{Fake disks} 

\subsubsection{} Let $\overset{\circ}{D_x}$ be the punctured disk at $x\in X$, cf. §\ref{s:disk}. One would like to say that $\on{LocSys}_G(\overset{\circ}{D_x})$ can be realized as:
\[
\on{LocSys}_G(\overset{\circ}{D_x})\simeq \on{Maps}((\overset{\circ}{D_x})_{\on{dR}},\bB G).
\]

\noindent However, since the prestack $(\overset{\circ}{D_x})_{\on{dR}}$ produces something non-sensical, this is not the right thing to do. By non-sensical, we mean that the commutative diagram
\[\begin{tikzcd}
	{\overset{\circ}{D_x}} && X \\
	\\
	{(\overset{\circ}{D_x})_{\textrm{dR}}} && {X_{\textrm{dR}}}
	\arrow[from=1-1, to=1-3]
	\arrow[from=1-3, to=3-3]
	\arrow[from=1-1, to=3-1]
	\arrow[from=3-1, to=3-3]
\end{tikzcd}\]

\noindent is not Cartesian. The same problem occurs for $D_x$. Instead, Gaitsgory defines a prestack $(\overset{\circ}{D_x})^{\on{fake}}_{\textrm{dR}}$, which we recall below, that does fit into a Cartesian diagram

\[\begin{tikzcd}
	{\overset{\circ}{D_x}} && X \\
	\\
	{(\overset{\circ}{D_x})^{\textrm{fake}}_{\textrm{dR}}} && {X_{\textrm{dR}}.}
	\arrow[from=1-1, to=1-3]
	\arrow[from=1-3, to=3-3]
	\arrow[from=1-1, to=3-1]
	\arrow[from=3-1, to=3-3]
\end{tikzcd}\]

\subsubsection{} Let $T$ be a test scheme and consider a map $x_I: T_{\on{red}}\to X^I$. Let $X_{\on{inf}}^{\bullet}$ be the Čech nerve of the map $X\to X_{\on{dR}}$. That is, the simplicial prestack:
\[\begin{tikzcd}
	\cdots & {X\underset{X_{\textrm{dR}}}{\times}X\underset{X_{\textrm{dR}}}{\times}X} && {X\underset{X_{\textrm{dR}}}{\times}X} && {X.}
	\arrow[shift right=1, from=1-4, to=1-6]
	\arrow[shift left=1, from=1-4, to=1-6]
	\arrow[shift right=2, from=1-2, to=1-4]
	\arrow[shift left=2, from=1-2, to=1-4]
	\arrow[from=1-2, to=1-4]
\end{tikzcd}\]

\noindent Here, $X\underset{X_{\on{dR}}}{\times} X$ identifies with the formal completion of $X\times X$ along the diagonal. Similarly, $X\underset{X_{\textrm{dR}}}{\times}X\underset{X_{\textrm{dR}}}{\times}X$ identifies with the formal completion of $X\times X\times X$ along the diagonal etc.

We denote by $X_{\on{inf},k}^{(n)}$ the $n$'th order neighbourhood of $X^{k+1}$ along its diagonal, and by $X_{\inf}^{\bullet, (n)}$ the pullback of $X_{\on{inf}}^{\bullet}$ to the $n$'th order neighbourhoods.

\subsubsection{} Form the new simplicial prestack
\begin{equation}\label{eq:infsimp}
(\hat{D}_{x_I})_{\on{dR}}^{\bullet}:=\big((\hat{D}_{x_I})_{\on{dR}}\underset{T_{\on{dR}}}{\times}T\big)\underset{X_{\on{dR}}}{\times}X_{\on{inf}}^{\bullet}.
\end{equation}

\noindent We denote by $(\hat{D}_{x_I})_{\on{dR}}^{\bullet, (n)}$ the further pullback of (\ref{eq:infsimp}) to $X_{\inf}^{\bullet, (n)}$.

\noindent Recall the affinization procedure in Section \ref{s:disk}. Apply this to $(\hat{D}_{x_I})_{\on{dR}}^{\bullet, (n)}$ to obtain a simplicial affine scheme
\[
(D_{x_I})_{\on{dR}}^{\bullet, (n)}.
\]

\noindent Note that $(\hat{D}_{x_I})_{\on{dR}}\simeq (\Gamma_{x_I})_{\on{dR}}$. In particular,
\[
(\hat{D}_{x_I})_{\on{dR}}\underset{T_{\on{dR}}}{\times}T
\]

\noindent identifies with the formal completion of $T$ along $\Gamma_{x_I}$.

\subsubsection{} We claim that the canonical map of simplicial prestacks
\[
\big((\hat{D}_{x_I})_{\on{dR}}\underset{T_{\on{dR}}}{\times}T\big)\underset{X_{\on{dR}}}{\times}X_{\on{inf}}^{\bullet, (n)}\to X_{\on{inf}}^{\bullet, (n)}
\]

\noindent factors through a map
\[
(D_{x_I})_{\on{dR}}^{\bullet, (n)}\to  X_{\on{inf}}^{\bullet, (n)}.
\]

\noindent This is immediate if $X$ is affine, since then $X_{\on{inf},k}^{(n)}$ is an affine scheme. In general, this still holds on very general grounds by \cite{bhatt2014algebraization}.

\subsubsection{} We put
\[
(D_{x_I})_{\on{dR}}^{\on{fake}}:=\underset{n\geq 1}{\on{colim}}\; \vert (D_{x_I})_{\on{dR}}^{\bullet, (n)}\vert.
\]

\noindent It follows from the construction (see \cite[\S B.3.8]{Cstterm}) that we have
\[
(D_{x_I})_{\on{dR}}^{\on{fake}}\underset{X_{\on{dR}}}{\times} X\simeq D_{x_I}.
\]

\subsubsection{Allowing for punctures} Let $S\subset X(k)$ be a finite set, and let $U=X-S$. Let $x_I: T\to X^I_{\on{dR}}$ be a map. We will consider the corresponding adic disk with punctures along $S$:
\[
(D_{x_I})_{\on{dR}}^{\on{fake}}\underset{X_{\on{dR}}}{\times} U_{\on{dR}}.
\]

\subsubsection{} When $x\in S$, we simply write
\[
(\overset{\circ}{D}_x)_{\on{dR}}^{\on{fake}}:=(D_{x})_{\on{dR}}^{\on{fake}}\underset{X_{\on{dR}}}{\times} U_{\on{dR}}.
\]

\noindent We similarly have
\[
(\overset{\circ}{D}_x)_{\on{dR}}^{\on{fake}}\underset{X_{\on{dR}}}{\times} X\simeq \overset{\circ}{D}_x.
\]

\subsubsection{} For $x\in S$, we may consider the mapping prestack
\[
\on{Maps}((\overset{\circ}{D}_x)_{\on{dR}}^{\on{fake}}, \on{pt}/G).
\]

\noindent Here, $\on{pt}/G$ denotes the \emph{prestack quotient}; that is, we do not apply étale sheafification to the quotient. We have the following lemma in \cite[App. B.3.3]{gaitsgory2014day}:
\begin{lem}\label{l:asheaf}
There is a canonical map
\[
\on{Maps}((\overset{\circ}{D}_x)_{\on{dR}}^{\on{fake}},\on{pt}/G)\to\on{LocSys}_G(\overset{\circ}{D_x})
\]

\noindent that becomes an isomorphism after étale sheafification of the left-hand side.
\end{lem}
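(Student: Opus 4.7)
The plan is to compute $T$-points of both sides directly and verify they agree before sheafification (up to étale sheafification on the left). For an affine test scheme $T$, a $T$-point of $\on{Maps}((\overset{\circ}{D_x})_{\on{dR}}^{\on{fake}}, \on{pt}/G)$ is, by the very construction of $(\overset{\circ}{D_x})_{\on{dR}}^{\on{fake}}$ as the colimit $\on{colim}_n |(\overset{\circ}{D_x})_{\on{dR}}^{\bullet,(n)}|$ of Čech nerves of infinitesimal neighborhoods of diagonals, a principal $G$-bundle in the prestack sense on $(\overset{\circ}{D_x})_{\on{dR}}^{\on{fake}} \times T$. Pulling back along $X \to X_{\on{dR}}$ and using the identification $(\overset{\circ}{D_x})_{\on{dR}}^{\on{fake}} \underset{X_{\on{dR}}}{\times} X \simeq \overset{\circ}{D_x}$, this amounts to a trivialized $G$-bundle on $\overset{\circ}{D_x} \times T$ (trivialization since we map to the prestack quotient $\on{pt}/G$, not to $\bB G$), together with descent data along the simplicial resolution computing $X_{\on{dR}}$—which is precisely the datum of a flat connection along $\overset{\circ}{D_x}$ relative to $T$.

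Concretely, I would unwind this to show that the groupoid of $T$-points is the action groupoid of $G(K_x)(T)$ acting on $(\fg((t))dt)(T) = \fg((t))dt \widehat{\otimes} \cO_T$ via the gauge action; equivalently, it identifies with the $T$-points of the prestack quotient $\fg((t))dt / G(K_x)$ taken \emph{before} étale sheafification. The nerve direction in the simplicial presentation of $\on{pt}/G$ supplies the $G(K_x)$-orbit relation, while the fake disk supplies the space $\fg((t))dt$ of connection forms (in contrast to $(\overset{\circ}{D_x})_{\on{dR}}$, which would only produce horizontal sections). This identification produces the canonical map to $\on{LocSys}_G(\overset{\circ}{D_x})$ by postcomposing with the étale sheafification map $\fg((t))dt/G(K_x) \to \on{LocSys}_G(\overset{\circ}{D_x})$.

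With this identification in hand, the second assertion is formal: $\on{LocSys}_G(\overset{\circ}{D_x})$ is by definition the étale sheafification of $\fg((t))dt/G(K_x)$, and étale sheafification is idempotent, so the map becomes an isomorphism upon sheafifying the left-hand side.

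The main technical obstacle is verifying the identification of $T$-points in the first step: one must carefully track the interaction between the geometric realization defining $(\overset{\circ}{D_x})_{\on{dR}}^{\on{fake}}$ (a colimit in prestacks) and the mapping prestack construction, as well as the colimit presentation of $\on{pt}/G$ via its simplicial bar resolution $G^{\bullet}$. The cleanest way is to observe that both sides are computed as the realization of bisimplicial objects, and to produce a level-wise identification using that $\on{Maps}((\overset{\circ}{D_x})_{\on{dR}}^{\on{fake}}, G) = G(K_x)$ and $\on{Maps}((\overset{\circ}{D_x})_{\on{dR}}^{\on{fake}}, \fg) = \fg((t))dt$ (the latter capturing the "difference" between the fake disk and its reduction to $\overset{\circ}{D_x}$, which measures connections). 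This computation, although essentially formal, is the content carried out in \cite[App. B]{gaitsgory2014day} that we would invoke.
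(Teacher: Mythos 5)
The paper does not actually prove this lemma: it is quoted verbatim from \cite[App.\ B.3.3]{gaitsgory2014day}, so there is no in-paper argument to compare against. Your outline is the standard one and is consistent with how the lemma is used: since $\on{LocSys}_G(\overset{\circ}{D_x})$ is \emph{defined} in \S\ref{s:ls} as the étale sheafification of the prestack gauge quotient $\fg((t))dt/G(K_x)$, everything reduces to identifying the mapping prestack with that unsheafified quotient, and the second assertion is then formal, exactly as you say. Two imprecisions in your first paragraph are worth correcting. First, a map to the prestack quotient $\on{pt}/G$ does not produce a \emph{trivialized} bundle but a trivializ\emph{able} one: $(\on{pt}/G)(S)\simeq \bB(G(S))$ has a single object with automorphism group $G(S)$, so the choice of trivialization is a torsor under $G(K_x)(T)$ --- this is precisely what produces the action groupoid $\fg((t))dt/G(K_x)$ rather than just $\fg((t))dt$, and your second paragraph implicitly relies on this. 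Second, the relevant affine test object is not the naive product $\overset{\circ}{D_x}\times T=\on{Spec}(k((t))\otimes_k A)$ but the relative adic punctured disk $\on{Spec}(A((t)))$ obtained by affinizing the formal completion along the graph (\S\ref{s:disk}); the discrepancy between these two rings is exactly what the fake-disk construction is designed to repair, and it is what makes your closing identities $\on{Maps}((\overset{\circ}{D_x})_{\on{dR}}^{\on{fake}},G)(T)=G(K_x)(T)$ and $\on{Maps}((\overset{\circ}{D_x})_{\on{dR}}^{\on{fake}},\fg)(T)=\fg((t))dt\,\widehat{\otimes}\,A$ come out correctly. With those corrections, the remaining content --- commuting $\on{Maps}(-,\on{pt}/G)$ through the colimit presentation of the fake disk and matching the resulting totalization against the bar resolution of the gauge groupoid --- is exactly the computation in Gaitsgory's appendix, which both you and the paper ultimately defer to; so your proposal is acceptable as a sketch but does not constitute an independent proof.
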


\subsubsection{}\label{s:fin} Define the prestack
\[
\on{Jets}_{\on{horiz}}^{S-\on{mer}}(\bB G)^{\on{naive}}_{\on{Ran}}
\]

\noindent over $\on{Ran}_{X_{\on{dR}}}$ as follows. For a test scheme $T$, $I\in \on{fSet}$ and a map $x_I: T_{\on{red}}\to X^I$, the datum of a lift to $\on{Jets}_{\on{horiz}}^{S-\on{mer}}(\bB G)^{\on{naive}}_{\on{Ran}}$ is given by an element of the groupoid
\[
\lbrace (D_{x_I})_{\on{dR}}^{\on{fake}}\underset{X_{\on{dR}}}{\times} U_{\on{dR}} \to \on{pt}/G \rbrace.
\]

\noindent Here, we consider $\on{pt}/G$ as a prestack quotient as above. Hence for $x\in S$, the fiber of $\on{Jets}_{\on{horiz}}^{S-\on{mer}}(\bB G)^{\on{naive}}_{\on{Ran}}$ at $x$ is 
\[
\on{Maps}((\overset{\circ}{D}_x)_{\on{dR}}^{\on{fake}},\on{pt}/G).
\]

\noindent We let 
\[
\on{Jets}_{\on{horiz}}^{S-\on{mer}}(\bB G)_{\on{Ran}}
\]

\noindent be the étale sheafification of $\on{Jets}_{\on{horiz}}^{S-\on{mer}}(\bB G)^{\on{naive}}_{\on{Ran}}$. Thus, the fiber of $\on{Jets}_{\on{horiz}}^{S-\on{mer}}(\bB G)_{\on{Ran}}$ at $x\in S$ is
\[
\on{LocSys}_G(\overset{\circ}{D}_x),
\]

\noindent whereas the fiber at $x\in U$ is 
\[
\on{LocSys}_G(D_x)\simeq \bB G.
\]

\noindent We let
\[
\on{Jets}_{\on{horiz}}^{S-\on{mer}}(\bB G)_{\on{Ran},S}:=\on{Jets}_{\on{horiz}}^{S-\on{mer}}(\bB G)_{\on{Ran}}\underset{\on{Ran}_{X_{\on{dR}}}}{\times}\on{Ran}_{X_{\on{dR}},S}.
\]

\subsection{Finishing the proof of Proposition \ref{p:ff}}\label{s:constr}

\subsubsection{} Write
\[
\on{Jets}_{\on{horiz}}^{S-\on{mer}}(\bB G)_{X}:=\on{Jets}_{\on{horiz}}^{S-\on{mer}}(\bB G)_{\on{Ran}}\underset{\on{Ran}_{X_{\on{dR}}}}{\times} X_{\on{dR}}.
\]

\noindent Let
\[
\on{Sect}_{\nabla}(X,\on{Jets}_{\on{horiz}}^{S-\on{mer}}(\bB G)_{X})=\on{Maps}_{/X_{\on{dR}}}(X_{\on{dR}},\on{Jets}_{\on{horiz}}^{S-\on{mer}}(\bB G)_{X})
\]

\noindent denote its stack of horizontal sections. Note that we have a natural map
\begin{equation}\label{eq:horizsecwild}
\on{Sect}_{\nabla}(X,\on{Jets}_{\on{horiz}}^{S-\on{mer}}(\bB G)_{X})\to\on{LocSys}_G(U)
\end{equation}

\noindent given by restricting the section to $U_{\on{dR}}$.
\begin{lem}\label{l:horizsecwilda}
The map (\ref{eq:horizsecwild}) is an isomorphism.
\end{lem}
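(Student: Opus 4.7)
The plan is to prove this lemma by identifying $T$-points of both sides through a direct unfolding of the definitions, in a manner parallel to (and more general than) the argument for Lemma \ref{l:sectrschi}. The key technical tools are the construction of the fake de Rham disks from Appendix \ref{app:A} and the ``sheafification'' Lemma \ref{l:asheaf}, which allow us to interpret maps into the mapping stack $\on{LocSys}_G(\overset{\circ}{D}_x)$ as actual $G$-bundle data.

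First I would fix an affine test scheme $T$ and compare both groupoids. A $T$-point of the left hand side is a section $X_{\on{dR}} \times T \to \on{Jets}_{\on{horiz}}^{S-\on{mer}}(\bB G)_X$ over $X_{\on{dR}}$. Unfolding the definition of $\on{Jets}_{\on{horiz}}^{S-\on{mer}}(\bB G)_{\on{Ran},S}$ in §\ref{s:fin}, and specializing to the universal index set $I = S \sqcup \{x\}$ with $x$ parametrized by $X_{\on{dR}} \times T$, such a section is (after étale sheafification) equivalent to a map
\[
\big((D_{\on{id}})_{\on{dR}}^{\on{fake}} \times T\big)\underset{X_{\on{dR}}}{\times} U_{\on{dR}} \to \on{pt}/G
\]
where $(D_{\on{id}})_{\on{dR}}^{\on{fake}}$ is the universal fake de Rham disk for the identity map $X_{\on{dR}} \to X_{\on{dR}}$ augmented by the marked points $S$. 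The key geometric point is that this universal $S$-meromorphic fake disk, after base change along $X_{\on{dR}} \to X$, decomposes as follows: over $U$ it restricts to $U \times U \to U$ (i.e., recovers the de Rham structure of $U$), whereas over each $x \in S$ it restricts to the punctured adic disk $\overset{\circ}{D}_x$ inside $(D_x)_{\on{dR}}^{\on{fake}}$.

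Second I would use this decomposition together with the monomorphism structure from §\ref{s:actualls}. Restricting the above map to the $U$-part yields a map $U_{\on{dR}} \times T \to \on{pt}/G$, whose étale sheafification is precisely a $T$-point of $\on{LocSys}_G(U)^{\on{naive}}$. Restricting to each point $x \in S$, the resulting data is a map $(\overset{\circ}{D}_x)_{\on{dR}}^{\on{fake}} \times T \to \on{pt}/G$; by Lemma \ref{l:asheaf} this étale-locally gives a $G$-bundle on $\overset{\circ}{D}_x \times T$. The compatibility encoded by the horizontal section structure exactly says that this bundle, together with the restriction of the bundle on $U \times T$, extends (étale-locally on $T$) to a $G$-bundle on the whole of $X \times T$ — this is the content of Lemma \ref{l:drinfsimp} applied in reverse, and it is the defining condition of $\overset{\circ}{\on{Bun}}_G$. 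Running this construction in the opposite direction, a $T$-point of $\on{LocSys}_G(U) = \on{LocSys}_G(U)^{\on{naive}} \times_{\on{Bun}_G(U)} \overset{\circ}{\on{Bun}}_G$ determines, by restriction along each fake de Rham disk (étale-locally trivialized along the extension to $X$), a horizontal section.

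The main obstacle I anticipate is the bookkeeping around étale sheafification: neither $\on{Jets}_{\on{horiz}}^{S-\on{mer}}(\bB G)_X$ nor $\on{LocSys}_G(U)$ is naturally a mapping prestack into $\bB G$, and both are obtained as étale sheafifications of more naive objects. The argument therefore has to proceed by first establishing the bijection on the unsheafified level — between sections of the naive jets space and the fiber product $\on{LocSys}_G(U)^{\on{naive}} \underset{\on{Bun}_G(U)}{\times} \overset{\circ}{\on{Bun}}_G^{\on{naive}}$, where the latter uses the non-sheafified quotient — and then verifying that both sheafifications proceed compatibly. This compatibility is guaranteed by the fact that $G$-bundles on both $U \times T$ and on $\overset{\circ}{D}_x \times T$ trivialize after a common étale cover of $T$, which is the same statement that justifies the sheafification in Lemma \ref{l:asheaf}.
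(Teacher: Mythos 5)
Your proposal matches the paper's argument in its essential content: the substantive direction is the inverse map, obtained by restricting a $T$-point of $\on{LocSys}_G(U)$ along the fake de Rham disks, and the single non-formal point is that the resulting $G$-bundle on the punctured disks at $S$ is étale-locally on $T$ trivial, which is exactly what Lemma \ref{l:drinfsimp} supplies (and you invoke it correctly there). One imprecision in your forward direction: the extension of the bundle from $U\times T$ to $X\times T$ is not ``Lemma \ref{l:drinfsimp} applied in reverse'' --- it is a Beauville--Laszlo-type gluing using that the fibre of the jets space at $x\in S$ is $\on{LocSys}_G(\overset{\circ}{D}_x)=\fg((t))dt/G(K_x)$, whose underlying bundle is, étale-locally on $T$, the trivial bundle on $\overset{\circ}{D}_x$ and hence glues with the trivial bundle on the unpunctured disk; Lemma \ref{l:drinfsimp} asserts the converse implication and is not what justifies this step.
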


\begin{proof}

Let us provide an inverse map to (\ref{eq:horizsecwild}). Let $T\to\on{LocSys}_G(U)$ be a map with $T$ affine. We need to provide a naural map
\[
X_{\on{dR}}\times T\to \on{Jets}_{\on{horiz}}^{S-\on{mer}}(\bB G)_{X}
\]

\noindent over $X_{\on{dR}}$. Thus, let
\[
T'\to X_{\on{dR}}\times T
\]

\noindent be a map with $T'$ affine. Let $x: T'_{\on{red}}\to X$ be the induced map. By construction, we have a map
\begin{equation}\label{eq:finalcountdown}
(D_{x})_{\on{dR}}^{\on{fake}}\underset{X_{\on{dR}}}{\times} U_{\on{dR}}\to T'\times U_{\on{dR}}\to T\times U_{\on{dR}}\to\bB G,
\end{equation}

\noindent where the last map comes from $T\to \on{LocSys}_G(U)$. For (\ref{eq:finalcountdown}) to define a map $T'\to \on{Jets}_{\on{horiz}}^{S-\on{mer}}(\bB G)_{X}$, we need to show that the induced $G$-bundle on $T'\times U$ is étale-locally on $T'$ trivial when restricted to $T'\widehat{\times}\overset{\circ}{D}_S$ (here, $\overset{\circ}{D}_S=\underset{x\in S}{\coprod}\overset{\circ}{D}_x$). However, this follows from Lemma \ref{l:drinfsimp}.

\end{proof}

\subsubsection{} Next, let 
\[
\sY_x\to \on{LocSys}_G(\overset{\circ}{D}_x)
\]

\noindent be a monomorphism of prestacks for each $x\in S$. Define
\begin{equation}\label{eq:modmono}
\on{Jets}_{\on{horiz}}^{S-\on{mer}}(\bB G)_{X,\sY_S}
\end{equation}

\noindent to be the prestack whose $T$-points parametrize maps $T\to \on{Jets}_{\on{horiz}}^{S-\on{mer}}(\bB G)_{X}$ such that the induced map
\[
T\underset{X_{\on{dR}}}{\times} S \to \underset{x\in S}{\prod} \on{LocSys}_G(\overset{\circ}{D}_x)
\]

\noindent factors through
\[
\underset{x\in S}{\prod} \sY_x\to \underset{x\in S}{\prod} \on{LocSys}_G(\overset{\circ}{D}_x).
\]

\subsubsection{} We let 
\[
\on{LocSys}_G(U,\sY_S):=\on{LocSys}_G(U)\underset{\underset{x\in S}{\prod}{\on{LocSys}_G(\overset{\circ}{D}_x)}}{\times} \underset{x\in S}{\prod}\sY_x.
\]

\noindent The following is a formal consequence of Lemma \ref{l:horizsecwilda}:
\begin{lem}
The natural map
\[
\on{Sect}_{\nabla}(X,\on{Jets}_{\on{horiz}}^{S-\on{mer}}(\bB G)_{X,\sY_S})\to \on{LocSys}_G(U,\sY_S)
\]

\noindent is an isomorphism of prestacks.
\end{lem}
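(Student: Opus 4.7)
The strategy is to apply the horizontal sections functor to the Cartesian square defining the modified jets stack and use Lemma \ref{l:horizsecwilda} to identify the resulting pieces. The main point is that taking horizontal sections commutes with limits in the target, and that the restriction-at-$S$ functor translates between the two sides.

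First, I would observe that by the very definition in (\ref{eq:modmono}), the prestack $\on{Jets}_{\on{horiz}}^{S-\on{mer}}(\bB G)_{X,\sY_S}$ is constructed as the pullback
\[
\on{Jets}_{\on{horiz}}^{S-\on{mer}}(\bB G)_{X,\sY_S} \simeq \on{Jets}_{\on{horiz}}^{S-\on{mer}}(\bB G)_{X}\underset{\underset{x\in S}{\prod}\on{LocSys}_G(\overset{\circ}{D}_x)}{\times}\underset{x\in S}{\prod}\sY_x,
\]
where the map $\on{Jets}_{\on{horiz}}^{S-\on{mer}}(\bB G)_{X}\to \underset{x\in S}{\prod}\on{LocSys}_G(\overset{\circ}{D}_x)$ is given by taking the fiber over $S\subset X_{\on{dR}}$. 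Since the defining monomorphisms $\sY_x\into \on{LocSys}_G(\overset{\circ}{D}_x)$ are a condition rather than additional data, the same description makes sense on the level of $T$-points.

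Second, I would apply $\on{Sect}_{\nabla}(X,-)=\on{Maps}_{/X_{\on{dR}}}(X_{\on{dR}},-)$ to this Cartesian square. Since the horizontal sections functor is a right adjoint (to base change along $X_{\on{dR}}\to \on{pt}$), it commutes with limits; hence we obtain a Cartesian square whose top-left corner is $\on{Sect}_{\nabla}(X,\on{Jets}_{\on{horiz}}^{S-\on{mer}}(\bB G)_{X,\sY_S})$, whose top-right corner is $\on{Sect}_{\nabla}(X,\on{Jets}_{\on{horiz}}^{S-\on{mer}}(\bB G)_{X})$, and whose bottom corners are the horizontal-sections-over-$X$ of the constant-along-$X$ prestacks $\underset{x\in S}{\prod}\sY_x$ and $\underset{x\in S}{\prod}\on{LocSys}_G(\overset{\circ}{D}_x)$, respectively. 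The latter two, being constant along $X_{\on{dR}}$ (viewed via the projection $S_{\on{dR}}\to X_{\on{dR}}$ composed with evaluation), simply recover $\underset{x\in S}{\prod}\sY_x$ and $\underset{x\in S}{\prod}\on{LocSys}_G(\overset{\circ}{D}_x)$ themselves.

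Third, I would invoke Lemma \ref{l:horizsecwilda} to identify the top-right corner with $\on{LocSys}_G(U)$, with the structure map to $\underset{x\in S}{\prod}\on{LocSys}_G(\overset{\circ}{D}_x)$ being the restriction map of §2.5. Combined with the identifications in the preceding paragraph, the Cartesian square now matches the defining pullback of $\on{LocSys}_G(U,\sY_S)$, yielding the desired isomorphism. The only genuinely nontrivial ingredient is compatibility of the two restriction-to-$S$ maps (the geometric one on the jets side and the analytic one of §2.5 on the local-systems side), but this is built into the construction of the map (\ref{eq:restric}) via the same fake de Rham disks used to define $\on{Jets}_{\on{horiz}}^{S-\on{mer}}(\bB G)_X$, so no further work is needed.
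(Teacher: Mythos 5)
Your argument is correct and matches the paper's approach: the paper offers no proof beyond declaring the lemma ``a formal consequence of Lemma \ref{l:horizsecwilda}'', and your expansion is precisely that formal argument (sections commute with the limit defining the sub-prestack, the ambient term is identified by Lemma \ref{l:horizsecwilda}, and the two restriction-to-$S$ maps agree by construction of both via fake de Rham disks). One small imprecision: your first displayed fiber product is not literally well-formed, since the restriction map to $\underset{x\in S}{\prod}\on{LocSys}_G(\overset{\circ}{D}_x)$ is only defined on $T\underset{X_{\on{dR}}}{\times}S$ rather than on all of $\on{Jets}_{\on{horiz}}^{S-\on{mer}}(\bB G)_X$ over $X_{\on{dR}}$, so one should either push $\sY_x$ forward along $\lbrace x\rbrace\into X_{\on{dR}}$ before forming the pullback (as your parenthetical about ``constant along $X$'' gestures at) or, more simply, treat the whole thing as a condition cut out by the monomorphisms $\sY_x\into \on{LocSys}_G(\overset{\circ}{D}_x)$, a condition manifestly preserved and detected by taking horizontal sections.
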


\subsubsection{}\label{s:monobix} In particular, taking $\sY_x=\on{LocSys}_G(\overset{\circ}{D}_x)_{\chi_x}^{\on{RS}}$,\footnote{Some justification is required to show that $\on{LocSys}_G(\overset{\circ}{D}_x)_{\chi_x}^{\on{RS}}\to \on{LocSys}_G(\overset{\circ}{D}_x)$ is a monomorphism. In fact, the map $\on{LocSys}_G(\overset{\circ}{D}_x)^{\on{restr}}\to \on{LocSys}_G(\overset{\circ}{D}_x)$ is a monomorphism as shown in \cite{lsrestr}.} we write
\[
(\on{LocSys}_G(\overset{\circ}{D})^{\on{RS}}_{\chi_S})_{X_{\on{dR}}}
\]

\noindent for the resulting prestack (\ref{eq:modmono}). Thus, we get:
\begin{lem}\label{l:sectrschi}
The natural map
\[
\on{Sect}_{\nabla}(X,(\on{LocSys}_G(\overset{\circ}{D})^{\on{RS}}_{\chi_S})_{X_{\on{dR}}})=\on{LocSys}_G(U,\chi_S)
\]

\noindent is an isomorphism.
\end{lem}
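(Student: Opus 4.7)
The plan is to deduce Lemma \ref{l:sectrschi} as a direct application of the preceding lemma (the general statement that
\[
\on{Sect}_{\nabla}(X,\on{Jets}_{\on{horiz}}^{S-\on{mer}}(\bB G)_{X,\sY_S})\xrightarrow{\simeq} \on{LocSys}_G(U,\sY_S)
\]
for any family of monomorphisms $\sY_x \to \on{LocSys}_G(\overset{\circ}{D}_x)$) specialized to $\sY_x = \on{LocSys}_G(\overset{\circ}{D}_x)^{\on{RS}}_{\chi_x}$. The only nontrivial input is the hypothesis of that lemma, i.e. the fact that the maps
\[
\on{LocSys}_G(\overset{\circ}{D}_x)^{\on{RS}}_{\chi_x}\to \on{LocSys}_G(\overset{\circ}{D}_x)
\]
are monomorphisms, which is exactly the content of the footnote in §\ref{s:monobix}.

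To verify this, I would factor the map as the composition
\[
\on{LocSys}_G(\overset{\circ}{D}_x)^{\on{RS}}_{\chi_x}\hookrightarrow \on{LocSys}_G(\overset{\circ}{D}_x)^{\on{restr}}\to \on{LocSys}_G(\overset{\circ}{D}_x).
\]
The second map is a monomorphism by \cite{lsrestr} (quoted in §\ref{s:lsrestr} and again in the footnote of §\ref{s:monobix}). The first map is a monomorphism because, by definition, $\on{LocSys}_G(\overset{\circ}{D}_x)^{\on{RS}}_{\chi_x}$ is the underlying reduced prestack of a connected component of $\on{LocSys}_G(\overset{\circ}{D}_x)^{\on{restr}}$; the inclusion of a connected component is an open-and-closed immersion, and passage to the reduced substack is a closed immersion, so the composition is itself a monomorphism. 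Composing two monomorphisms gives a monomorphism, which verifies the hypothesis.

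With this in hand, the preceding lemma applied to $\sY_x = \on{LocSys}_G(\overset{\circ}{D}_x)^{\on{RS}}_{\chi_x}$ gives exactly the desired isomorphism, since by the construction in §\ref{s:monobix} the resulting modified prestack is precisely $(\on{LocSys}_G(\overset{\circ}{D})^{\on{RS}}_{\chi_S})_{X_{\on{dR}}}$ and by definition $\on{LocSys}_G(U,\sY_S) = \on{LocSys}_G(U,\chi_S)$ in this case. There is no substantive obstacle; the only point requiring any care is the monomorphism check, and that is handled by invoking the result of \cite{lsrestr} together with the elementary observation about reduced connected components above.
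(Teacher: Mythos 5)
Your proposal is correct and follows exactly the paper's route: Lemma \ref{l:sectrschi} is obtained by specializing the preceding general lemma to $\sY_x=\on{LocSys}_G(\overset{\circ}{D}_x)^{\on{RS}}_{\chi_x}$, with the only substantive input being the monomorphism property, which the paper likewise reduces to the statement from \cite{lsrestr} that $\on{LocSys}_G(\overset{\circ}{D}_x)^{\on{restr}}\to \on{LocSys}_G(\overset{\circ}{D}_x)$ is a monomorphism. Your additional remark on why the reduced connected component includes monomorphically into $\on{LocSys}_G(\overset{\circ}{D}_x)^{\on{restr}}$ is a detail the paper leaves implicit in its footnote, and it is a welcome clarification.
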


\bibliographystyle{alpha}
\bibliography{Bip}

\end{document}